\pdfoutput=1
\documentclass[reqno, a4paper]{amsart}
\usepackage{fixltx2e}
\usepackage{bookmark}
\usepackage{etoolbox}
\usepackage[usenames, dvipsnames]{color}
\usepackage{mathtools}

\definecolor{l-gray}{gray}{0.5}
\usepackage[english]{babel}
\usepackage{enumitem}

\usepackage[utf8]{inputenc}  
\usepackage[T1]{fontenc}
\usepackage{pifont}
\usepackage{gensymb}

\usepackage{rotating}
\usepackage{scalerel}

\usepackage{amsmath}
\usepackage{amssymb}
\usepackage{amsthm, remreset}
\usepackage{amscd}
\usepackage{centernot}
\usepackage{mathdots}
\usepackage[all,color,cmtip]{xy}

\makeatletter
\@namedef{subjclassname@2020}{%
  \textup{2020} Mathematics Subject Classification}
\makeatother

\newdir{ >}{{}*!/-7pt/\dir{>}}
\newdir{->-}{\dir{-}*\dir{>}*\dir{-}}
\newdir{  |}{{}*!/-20pt/\dir{|}}
\newdir{ |}{{}*!/-7pt/\dir{|}}
\newdir{|- }{!<2.5pt,0pt>\dir{|}*\dir{-}*{}}
\newdir{> }{!/7pt/\dir{>}*{}}
\newdir{>}{{}*:(1,-.2)@^{>}*:(1,+.2)@_{>}}
\newdir{<}{{}*:(1,+.2)@^{<}*:(1,-.2)@_{<}}

\newdir{>>}{{}*!/3.5pt/:(1,-.2)@^{>}*!/3.5pt/:(1,+.2)@_{>}*!/7pt/:(1,-.2)@^{>}*!/7pt/:(1,+.2)@_{>}}
\newdir{ >>}{{}*!/8pt/@{|}*!/3.5pt/:(1,-.2)@^{>}*!/3.5pt/:(1,+.2)@_{>}}
\newdir{ |>}{{}*!/-3.5pt/@{|}*!/-8pt/:(1,-.2)@^{>}*!/-8pt/:(1,+.2)@_{>}}
\newdir{ >}{{}*!/-8pt/@{>}}

\def\pullback{
 \ar@{-}[]+R+<3pt,-1pt>;[]+RD+<3pt,-3pt>%
 \ar@{-}[]+D+<1pt,-3pt>;[]+RD+<3pt,-3pt>}
 
 \def\lowpullback{
 \ar@{-}[]+R+<6pt,-6pt>;[]+D+<15pt,-16pt>;[]+RD+<6pt,-6pt>%
 \ar@{-}[]+D+<4pt,-10pt>;[]+RD+<6pt,-16pt>}

\def\pullbackdots{%
 \ar@{.}[]+R+<6pt,-1pt>;[]+RD+<6pt,-6pt>%
 \ar@{.}[]+D+<1pt,-6pt>;[]+RD+<6pt,-6pt>}
 
\def\pushout{%
 \ar@{-}[]+L+<-5pt,1pt>;[]+LU+<-5pt,5pt>%
 \ar@{-}[]+U+<-1pt,5pt>;[]+LU+<-5pt,5pt>}
 
\def\uppushout{%
 \ar@[Gray]@{-}[]+L+<-5pt,-1pt>;[]+LD+<-5pt,-5pt>%
 \ar@[Gray]@{-}[]+D+<-1pt,-5pt>;[]+LD+<-5pt,-5pt>}
 
 \def\lowuppushout{%
 \ar@{-}[]+L+<-10pt,-2pt>;[]+LD+<-10pt,-10pt>%
 \ar@{-}[]+D+<-2pt,-10pt>;[]+LD+<-10pt,-10pt>}

\def\splitpullback{%
 \ar@{-}[]+R+<6pt,-.51ex>;[]+RD+<6pt,-6pt>%
 \ar@{-}[]+D+<.51ex,-6pt>;[]+RD+<6pt,-6pt>}

\def\skewpullback{%
 \ar@{-}[]+R+<6pt,-1pt>;[]+RD+<-7pt,-8pt>%
 \ar@{-}[]+D+<-12pt,-8pt>;[]+RD+<-7pt,-8pt>}

\def\dottedpullback{%
 \ar@{.}[]+R+<6pt,-1pt>;[]+RD+<6pt,-6pt>%
 \ar@{.}[]+D+<1pt,-6pt>;[]+RD+<6pt,-6pt>}

\usepackage{geometry}
\let\origappendix\appendix 
\renewcommand\appendix{\clearpage\pagenumbering{roman}\origappendix}


\usepackage{graphicx}
\usepackage{epstopdf}
\DeclareGraphicsRule{.tif}{png}{.png}{`convert #1 `dirname #1`/`basename #1 .tif`.png}



\newtheorem{theorem}[subsubsection]{Theorem}
	
\newtheorem{observation}[subsubsection]{Observation}	
\newtheorem{corollary}[subsubsection]{Corollary}	
\newtheorem{proposition}[subsubsection]{Proposition}	
\newtheorem{notation}[subsubsection]{Notation}	
\newtheorem{lemma}[subsubsection]{Lemma}
\newtheorem{definition}[subsubsection]{Definition}

\newtheorem{example}[subsubsection]{Example}

\newtheorem{remark}[subsubsection]{Remark}
\newtheorem{convention}[subsubsection]{Convention}

\newcommand{\Z}{\mathbb{Z}}

\newcommand{\N}{\mathbb{N}}
\newcommand{\EE}{\mathcal{E}}

\newcommand{\C}{\mathcal{C}}
\newcommand{\G}{\mathcal{G}}

\newcommand{\X}{\mathcal{X}}

\newcommand{\SET}{\mathsf{Set}}

\newcommand{\QND}{\mathsf{Qnd}}

\newcommand{\RCK}{\mathsf{Rck}}
\newcommand{\EXT}{\mathsf{Ext}}

\newcommand{\CEXT}{\mathsf{CExt}}

\newcommand{\ARR}{\mathsf{Arr}}

\newcommand{\GRP}{\mathsf{Grp}}
\newcommand{\GRPD}{\mathsf{Grpd}}

\newcommand{\AB}{\mathsf{Ab}}

\newcommand*{\defeq}{\mathrel{\vcenter{\baselineskip0.5ex \lineskiplimit0pt
                     \hbox{\scriptsize.}\hbox{\scriptsize.}}}%
                     =}

\newcommand*{\qndiop}{\triangleleft^{-1}}
\newcommand*{\qndop}{\triangleleft}

\newcommand\ttop{\scaleobj{0.6}{\top}}
\newcommand\pperp{\scaleobj{0.6}{\perp}}

\DeclareMathOperator{\Co}{C_{0}}
\DeclareMathOperator{\Ci}{C_{1}}

\DeclareMathOperator{\Fi}{F_{1}}
\DeclareMathOperator{\Fii}{F_{1}^i}
\DeclareMathOperator{\Fio}{F_{1}^0}

\DeclareMathOperator{\Fa}{F_{ab}}
\DeclareMathOperator{\Fg}{F_g}
\DeclareMathOperator{\Fgd}{F\degree_g}
\DeclareMathOperator{\Fq}{F_q}
\DeclareMathOperator{\Fr}{F_r}
\DeclareMathOperator{\Frq}{\, _rF_q}
\DeclareMathOperator{\id}{id}

\DeclareMathOperator{\Ss}{S}
\DeclareMathOperator{\ab}{ab}

\DeclareMathOperator{\Cc}{C}

\DeclareMathOperator{\Eq}{Eq}

\DeclareMathOperator{\Aut}{Aut}
\DeclareMathOperator{\Adj}{Adj} 
\DeclareMathOperator{\Inn}{Inn}

\DeclareMathOperator{\Conj}{Conj}
\DeclareMathOperator{\Pth}{Pth}
\DeclareMathOperator{\Loop}{Loop}
\DeclareMathOperator{\pth}{pth}
\DeclareMathOperator{\U}{U}
\DeclareMathOperator{\I}{I}

\DeclareMathOperator{\Ker}{Ker}
\DeclareMathOperator{\Cst}{Cst}

\usepackage{comment}
\includecomment{comment}
\definecolor{britishracinggreen}{rgb}{0.0, 0.26, 0.15}
\definecolor{darkpink}{rgb}{0.91, 0.33, 0.5}

\newcommand{\gr}[1]{{\color{darkpink}{\underline{#1}}}}

\newcommand{\red}[1]{{\color{Red}#1}}

\newcommand{\gray}[1]{{\color{Gray}#1}}
\newcommand{\R}[1]{{\rm(R#1)}}
\newcommand{\Q}[1]{{\rm(Q#1)}}
\newcommand{\K}[1]{{\rm K}_{#1}}
\newcommand{\Inv}{\rm(Inv)}


\setcounter{secnumdepth}{7}

\begin{document}

\title[Higher coverings of racks and quandles -- Part I]{Higher coverings of racks and quandles -- Part I}
\author{Fran\c{c}ois Renaud}

\email[Fran\c{c}ois Renaud]{francois.renaud@uclouvain.be}

\address{Institut de Recherche en Math\'ematique et Physique, Universit\'e catholique de Louvain, che\-min du cyclotron~2 bte~L7.01.02, B--1348 Louvain-la-Neuve, Belgium}

\subjclass[2020]{18E50; 57K12; 08C05; 55Q05; 18A20; 18B40; 20L05}
\keywords{(covering theory of) racks and quandles, categorical Galois theories, central extensions and characterizations, centralization, fundamental groupoid, homotopy classes of paths, weakly universal covers}
\thanks{The author is a Ph.D.\ student funded by \textit{Formation à la Recherche dans l'Industrie et dans l'Agriculture} (\textit{FRIA}) as part of \textit{Fonds de la Recherche Scientifique - FNRS}.}

\begin{abstract}
This article is the second part of a series of three articles, in which we develop a higher covering theory of racks and quandles. This project is rooted in M.~Eisermann's work on quandle coverings, and the categorical perspective brought to the subject by V.~Even, who characterizes coverings as those surjections which are central, relatively to trivial quandles. We extend this work by applying the techniques from higher categorical Galois theory, in the sense of G.~Janelidze, and in particular we identify meaningful higher-dimensional centrality conditions defining our higher coverings of racks and quandles.

In this first article (Part I), we revisit and clarify the foundations of the covering theory of interest, we extend it to the more general context of racks and mathematically describe how to navigate between racks and quandles. We explain the algebraic ingredients at play, and reinforce the homotopical and topological interpretations of these ingredients. In particular we justify and insist on the crucial role of the left adjoint of the conjugation functor $\Conj$ between groups and racks (or quandles). We rename this functor $\Pth$, and explain in which sense it sends a rack to its group of homotopy classes of paths. We characterize coverings and relative centrality using $\Pth$, but also develop a more visual ``geometrical'' understanding of these conditions. We  use alternative generalizable and visual proofs for the characterization of central extensions of racks and quandles. We complete the recovery of M.~Eisermann's ad hoc constructions (weakly universal cover, and fundamental groupoid) from a Galois-theoretic perspective. We sketch how to deduce M.~Eisermann's detailed classification results from the fundamental theorem of categorical Galois theory. As we develop this refined understanding of the subject, we lay down all the ideas and results which will articulate the higher-dimensional theory developed in Part II and III.
\end{abstract}
\maketitle

\section{Introduction}
\subsection{Context}\label{SectionContext}
We like to describe racks as sets equipped with a self-distributive system of symmetries, attached to each point. The term \emph{wrack} was introduced by J.C.~Conway and G.C.~Wraith, in an unpublished correspondence of 1959. Their curiosity was driven towards the algebraic structure obtained from a group, when only the operations defined by conjugation are kept, and one forgets about the multiplication of elements. Sending a group to its so defined ``wreckage'' defines the \emph{conjugation functor} from the category of groups to the category of wracks. We use the more common spelling \emph{rack} as in \cite{FenRou1992} and  \cite{Ryd1993}. Other names in the literature are \emph{automorphic sets} by E.~Brieskorn \cite{Bri1988}, \emph{crossed G-sets} by P.J.~Freyd and D.N.~Yetter \cite{FreYet1989}, and \emph{crystals} by L.H.~Kauffman in \cite{Kau2012}. The former is (as far as we know) the first detailed published study of these structures.

The image of the conjugation functor actually lands in the category of those racks whose symmetries are required to fix the point they are attached to. Such structures were introduced and extensively studied by D.E.~Joyce in his PhD thesis \cite{Joy1979}, under the name of \emph{quandles}. Around the same time (1980's), S.~Matveev was studying the same structures independently, under the name of \emph{distributive groupoids} \cite{Mat1984}. D.E.~Joyce describes the theory of quandles as ``the algebraic theory of group conjugation'', and uses them to produce a complete knot invariant for oriented knots.

Over the last decades, racks and quandles have been applied to knot theory and physics in various works -- see for instance \cite{Joy1982,Bri1988,FenRou1992,Dri1992,FeRoSa1995,CaKaSa2001,CJKLS2003,Kau2012,Eis2014} and references there. More historical remarks are made in \cite{FenRou1992}, including references to applications in computer science. In geometry, the earlier notion of \emph{symmetric space}, as studied by O.~Loos in \cite{Loos1969}, gives yet another context for applications -- see \cite{Ber2008,Hel2012} for up-to-date introductions to the field. This line of work goes back to 1943 with M.~Takasaki's abstraction of a \emph{Riemannian symmetric space}: a \emph{kei} \cite{Tak1943}, which would now be called an \emph{involutive quandle}.

More recently (2007) M.~Eisermann worked on a covering theory for quandles (published in \cite{Eis2014}). He defines quandle coverings, and studies them in analogy with topological coverings. In particular, he derives several classification results for coverings, in the form of Galois correspondences as in topology (or Galois theory). In order to do so, he works with ad hoc constructions such as a (weakly) universal covering or a fundamental group(oid) of a quandle. Even though the link with coverings is unclear a priori, these constructions use the left adjoint of the conjugation functor, which is justified a posteriori by the fact that the theory produces the aforementioned classification results.

In his PhD thesis \cite{Eve2014}, V.~Even applies categorical Galois theory, in the sense of G.~Janelidze \cite{Jan1990}, to the context of quandles. By doing so, he establishes that M.~Eisermann's coverings arise from the admissible adjunction between \emph{trivial quandles} (i.e.~sets) and quandles, in the same way that topological coverings arise from the admissible adjunction between discrete topological spaces (i.e.~sets) and locally connected topological spaces (see Section 6.3 in \cite{BorJan1994}). He also derives that M.~Eisermann's notion of fundamental group of a connected, pointed quandle coincides with the corresponding notion from categorical Galois theory. This, in turn, makes the bridge with the fundamental group of a pointed, connected topological space. By doing so, V.~Even clarifies the analogy with topology even though his results still rely on ad hoc constructions such as M.~Eisermann's weakly universal covers.

\subsection{In this article} We explicitly extend M.~Eisermann and V.~Even's work to the more general context of racks, as it was already suggested in their articles. We then clarify and justify the use of the different algebraic and topological ingredients of their study with the perspective of developing a higher-dimensional covering theory. 

In Section \ref{SectionCategoricalGaloisTheory}, we  describe enough of categorical Galois theory to motivate the overall project and explain the results we seek. In particular, we specify which Galois structures (say $\Gamma$) we are interested in, which conditions on these Galois structures we need in order to achieve our higher-dimensional goals, and finally we comment on the use of projective presentations, and a global strategy to characterize the $\Gamma$-coverings (also called central extensions) arising in the context of such a suitable Galois structure $\Gamma$.

In Section \ref{SectionBiasedintroToRacksAndQuandles}, we introduce the fundamentals of the theory of racks and quandles, with the biased perspective of the covering theory which follows. We start (Section \ref{SectionAxiomsAndBasicConcepts}) with a short study of the axioms, our first comments relating groups, racks and quandles, and the basic concepts of symmetry, inner automorphisms, and their actions. Next (Section \ref{SectionFromAiomsToGeometricalFeatures}), we develop some intuition about the geometrical features of a rack. We illustrate our comments on the construction of the free rack, and recall the construction of the canonical projective presentation of a rack, which presents the elements in a rack with the geometrical features of those in the appropriate free rack. We then introduce the connected component adjunction (Section \ref{SectionConnectedComponentsAdjunction}), from which the covering theory of interest arises. The concepts of trivializing relation, connectedness, primitive path, orbit congruence, etc.~are recalled. We propose to derive the trivializing relation from the geometrical understanding of free objects via projective presentations. We recall the admissibility results for the connected component adjunction and comment on the non-local character of connectedness. We illustrate our visual approach of coverings on the characterization of trivial extensions. 

Section \ref{SubsectionGroupOfPath} follows with a description of the links between the construction of $\Pth$ (the group of paths functor), left adjoint of the conjugation functor, and the equivalence classes of \emph{tails} of formal terms in the language of racks. Again, we propose to look at the simple description of $\Pth$ on free objects, and extend this description to all objects, via the canonical projective presentations. We describe the action of the group of paths and how it relates to inner automorphisms and equivalence classes of primitive paths in general. The free action of this group on free objects is recalled. We emphasize the functoriality of $\Pth$ on all morphisms by contrast with the non-functorial construction of inner automorphisms. We describe the kernels of the induced maps between groups of paths, in preparation for the characterizations of centrality. We insist on the fact that the role of $\Pth$ (as left adjoint of the conjugation functor) is the same in racks and in quandles, although it is more intimately related to racks in design. We conclude this survey with a study of the adjunction between racks and quandles (Section \ref{SectionWorkingWithQuandles}). We derive its admissibility and deduce that all extensions are central with respect to this adjunction. We build the free quandle $\Fq(A)$ on a set $A$ in a way that illustrates best the journey from one context to the other. By doing so, we explain interest for pairs of generators with opposite exponents (the transvection group, understood via the functor $\Pth\degree$). We show that the normal subgroup $\Pth\degree(\Fq(A)) \leq \Pth(\Fq(A))$ of the group of paths acts freely on $\Fq(A)$ as expected. 

In Section \ref{SectionCoveringTheoryOfRacksAndQuandles}, we give a comprehensive Galois-theoretic account of the low-dimensional covering theory of quandles, which we extend to the suitable context of racks. Coverings are described, as well as their different characterizations, using the kernels of induced maps $\Pth(f)$ between groups of paths, but also via the concept of \emph{closing horns}. We recall that primitive extensions are coverings, and coverings are preserved and reflected by pullbacks along surjections (i.e.~central extensions are coverings). We find counterexamples for Theorem 4.2 in \cite{BLRY2010}, and finally illustrate our ``geometrical'' approach to centrality on the characterization of normal extensions. In Section \ref{SectionCharacterizingCentralExtensions}, we give proof(s) -- generalizable to higher dimensions -- for the characterization of central extensions of racks and quandles. We then theoretically understand how the concepts of centrality in racks and in quandles relate (Section \ref{SectionComparingAdjunctionsByFactorization}), using the factorization of the connected component adjunction through the adjunction between racks and quandles. Amongst other results, we derive that the centralizing relations, if they exist, should be the same in both contexts. We then prove (Section \ref{SectionCentralizingExtensions}) several characterizations of these centralizing relations, and extend the results from \cite{DuEvMo2017} on the reflectivity of coverings in extensions. In preparation for the admissibility in dimension 2, we show that coverings are closed under quotients along double extensions (towards ``Birkhoff'') and we show the commutativity property of the kernel pair of the centralization unit (towards ``strongly Birkhoff''). We then move to Section \ref{SectionWucAndFundamentalGroupoid}, and the construction of weakly universal covers from the centralization of canonical projective presentations. From there, we build the fundamental Galois groupoid of a rack and of a quandle, establishing the homotopical interpretations of $\Pth$ and $\Pth\degree$. In Section \ref{SectionFundamentalTheorem} we illustrate the use of the fundamental theorem of categorical Galois theory in this context. We conclude the article with a comment on the relationship between centrality in racks and in groups (Section \ref{SectionRelationshipToAbelianisation}).

\subsection{The point of view of categorical Galois theory}\label{SectionCategoricalGaloisTheory}

Categorical Galois theory (in the sense of \cite{Jan1990}, see also \cite{JK1994}) is a very general (and thus abstract) theory with rich and various interpretations depending on the numerous contexts of application. On a theoretical level, Galois theory exhibits strong links with, for example, factorization systems, commutator theory, homology and homotopy theory (see for instance \cite{JK2000b,CJKP1997,Jan2016}). Looking at applications, it unifies, in particular, the theory of field extensions from classical Galois theory (as well as both of its generalizations by A.~Grothendieck and A.~R.~Magid.), the theory of coverings of locally connected topological spaces, and the theory of central extensions of groups. The covering theory of racks and quandles \cite{Eis2014} is yet another example \cite{Eve2014}, which combines intuitive interpretations inspired by the topological example with features of the group theoretic case. A detailed historical account of the developments of Galois theory is given in \cite{BorJan1994}. In this introduction we avoid the technical details of the general theory, but hint at the very essentials needed by us.

Categorical Galois theory always arises from an \emph{adjunction} (say ``relationship'') between two categories (think ``contexts''). For our purposes, there shall be a ``primitive context'', say $\X$, which sits inside a ``sophisticated context'', say $\C$; such that moreover $\C$ \emph{reflects} back on $\X$ -- e.g. sets, considered as discrete topological spaces, sit inside locally connected topological spaces which reflect back on sets via the \emph{connected component functor} $\pi_0$ \cite[Section 6.3]{BorJan1994}. Under certain hypotheses on these contexts and their relationship, categorical Galois theory studies a (specific) \emph{sphere of influence} of the context $\X$ in the context $\C$, with respect to this relationship -- the idea of a relative notion of centrality \cite{Froe1963,Lue1967}. This influence (centrality) is discussed in terms of a chosen class of morphisms in these categories which we call \emph{extensions} (e.g.~the class of \emph{surjective étale maps}). 
\begin{figure}[hb!]
{\begin{center}
\includegraphics[width=0.48\linewidth]{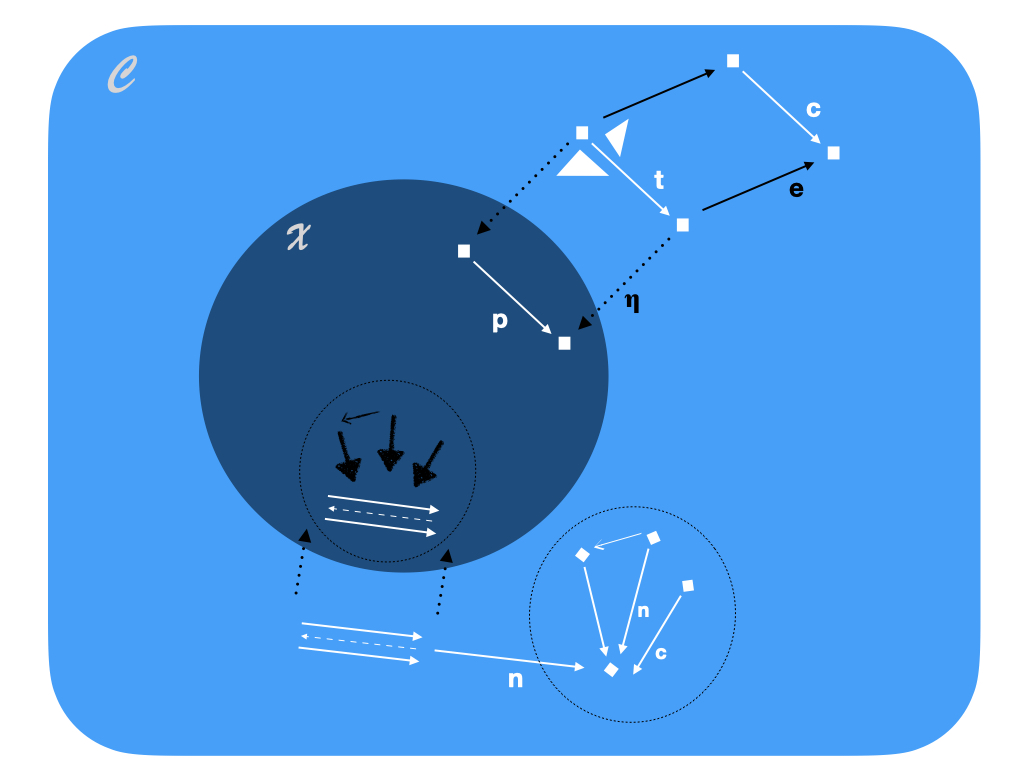}
\end{center}}\caption{A kid's drawing of categorical Galois theory}\label{FigureCategoricalGaloisTheory}
\end{figure}
\begin{convention}\label{RemarkGaloisStructure} For our purposes, a \emph{Galois structure} $\Gamma \defeq (\C,\X,F,\I,\eta,\epsilon,\EE)$ (see \cite{JanComo1991}), is the data of such an inclusion $\I$, of a \emph{full} (\emph{replete}) subcategory $\X$ in $\C$, with left adjoint $F\colon{\C \to \X}$, unit $\eta$, counit $\epsilon$ and a chosen class of \emph{extensions} $\EE$ in $\C$, such that $\EE$ contains all isomorphisms, $\EE$ is closed under composition and ``$F(\EE)\subset \EE$'' in the appropriate sense. Since the fundamental tool at play in the theory is that of taking \emph{pullbacks} \cite{McLane1997}, pullbacks along extensions need to exist and the pullback of an extension should be an extension. For our purposes $\EE$ shall always be a class of \emph{regular epimorphisms} and the components of the unit will be extensions: we say that $\X$ is $\EE$-reflective in $\C$.
\end{convention}

Given such a structure $\Gamma$, the idea is that extensions ``which live in $\X$'', which we call \emph{primitive extensions}, induce, in two steps, two other notions of extensions in $\C$, which are somehow related to primitive extensions ``in a tractable way''. The first step influence: \emph{trivial $\Gamma$-coverings} -- which in this work we refer to mostly as \emph{trivial extensions}, as in \cite{JK1994} -- are those extensions $t$ of $\C$ that are directly constructed from a primitive extension $p$ in $\X$, by pullback along a \emph{unit morphism} $\eta_{\centerdot}$ (see Figure \ref{FigureCategoricalGaloisTheory} -- this gives topological \emph{trivial coverings} in our example). Then the second step influence: \emph{$\Gamma$-coverings}, also called \emph{central extensions} (which are topological \emph{coverings} in our example), are those extensions which ``are locally trivial extensions'', i.e.~extensions which can be \emph{split} by another extension, where an extension $e$ \emph{splits} an extension
$c$ when the pullback $t$ of $c$ along $e$ gives a trivial $\Gamma$-covering (or trivial extension -- see Figure \ref{FigureCategoricalGaloisTheory}). In this article, we prefer to use the terminology \emph{trivial extension} (for trivial $\Gamma$-covering) and \emph{central extension} (for $\Gamma$-covering), as it is the case in \cite{JK1994}, since most of the Galois structures we study are examples of the Galois structures considered in \cite{JK1994}.

For these definitions to be meaningful, we work with Galois structures $\Gamma$ such that: pullbacks of unit morphisms are unit morphisms (admissibility -- see \cite{JK1994} for a precise definition), and moreover: extensions are of (\emph{relative}) \emph{effective descent} (i.e.~pulling back along extensions is an ``algebraic operation'' -- see \cite{JanTho1994,JanSoTho2004}). Under such conditions the central extensions ($\Gamma$-coverings) above a given object can be classified using data which is internal to $\X$ -- in a form which is classically called a \emph{Galois correspondence}, as in the theory of coverings in topology \cite[Theorem 1.38]{Hat2001}. We do not give further details about admissibility (or effective descent), since it is enough to understand it as the condition for Galois theory to be applicable. We actually work with a stronger property for Galois structures described in Section \ref{SectionAdmissibilityStronglyBirkhoff}.

More precisely, there is actually a third class of extensions, called \emph{normal $\Gamma$-coverings}, or for our purposes \emph{normal extensions}, which are those central extensions ($\Gamma$-coverings) that are split by themselves (the projections of their \emph{kernel pair} are trivial extensions). Now if $\G$ is the image by the reflector $F$ of the \emph{groupoid} induced by the kernel pair of a normal extension (normal $\Gamma$-covering) $n \colon{A \to B}$, then $\G$ is a groupoid in $\X$. Internal groupoids and internal actions are well explained in \cite{John1977}, read more about the use of groupoids in \cite{Bro2006}. The \emph{fundamental theorem of categorical Galois theory} then says that \emph{internal presheaves} over that groupoid $\G$ (think ``groupoid actions in $\X$'') yield a category which is equivalent to the category of those extensions above $B$ which are split by $n$. If $n$ splits all extensions above $B$, for instance, in the contexts of interest, when it is a \emph{weakly universal central extension} above $B$ (see Section \ref{SectionProjectivesAndWuc}), then $\G$ is the \emph{fundamental groupoid} of $B$, which thus classifies all central extensions above $B$. A \emph{weakly universal central extension above $B$} is a central extension with codomain $B$, which factors through any other central extension above $B$. Note that the conditions -- connectedness, local path-connectedness and semi-local simply-connectedness -- on the space $X$ in \cite[Theorem 1.38]{Hat2001} are there to guarantee the existence of a weakly universal covering above $X$ \cite[Section 6.6-8]{BorJan1994}.

In the case of groups, the adjunction of interest is $\ab \dashv \I$, the \emph{abelianization adjunction}, where the \emph{left adjoint} $\ab\colon{\GRP \to \AB}$ sends a group $G$ to the abelian group $G / [G,G]$, constructed by quotienting out the \emph{commutator subgroup} $[G,G]$ of $G$. In this context, the \emph{extensions} are chosen to be the \emph{regular epimorphisms}, which are merely the surjective group homomorphisms. Given this Galois structure, say $\Gamma_G$, the concept of a $\Gamma_G$-covering coincides with the concept of a \emph{central extension} from group theory. The fundamental theorem can for instance be used to show that given a \emph{perfect group} $G$, the \emph{second integral homology group} of $G$ can be \emph{presented} as a ``Galois group'' (see \cite{GraVdl2008,Jan2008}), which implies for instance that $G$ has a \emph{universal} central extension. Note that in \cite{JK1994}, G.~Janelidze and M.~Kelly introduce \emph{a general notion of central extension} using the concept of $\Gamma$-covering for those Galois structures $\Gamma$ that are alike $\Gamma_G$ from the group theoretical example.

Note that in Part I, the adjunction which gives rise to the covering theory of racks and quandles is related to $\ab \dashv \I$ and is also such that $\X$ is a \emph{subvariety of algebras} in $\C \defeq \RCK/\QND$. Such data always gives a Galois structure, by defining extensions to be the surjective maps \cite{JK1994}. Moreover, $\X = \SET$ is here equivalent to the category of sets, such that the left adjoint $F \defeq \pi_0\colon{\RCK/\QND = \C \to \X}$ can be interpreted as a connected component functor like in topology.

Now from the example of groups, and the aforementioned observation about links with homology, the development of Galois theory lead for instance to a generalisation \cite{EvGrVd2008} of the Hopf formulae for the (integral) homology of groups \cite{BroEll1988} to other non-abelian settings, leading to a whole new approach to non-abelian homology, by the means of higher central extensions \cite{Jan1991,EvGrVd2008,Ever2010}. This approach is compatible with settings such as the cotriple homology of Barr and Beck \cite{BarBec1969,EvVdl2013}, including, for instance, group homology with coefficients in the cyclic groups $\Z_n$. In order to access the relevant higher-dimensional information, as in \cite{EvGrVd2008}, one actually ``iterates'' categorical Galois theory. The increase in dimension consists in shifting from the context of $\C$ to the \emph{category of extensions} of $\C$: $\EXT\C$ defined as the full subcategory of the arrow category $\ARR\C$ with objects being extensions. A morphism $\alpha\colon{f_A \to f_B}$ in such a category of morphisms is given by a pair of morphisms in $\C$, which we denote $\alpha = (\alpha_{\ttop},\alpha_{\pperp})$ (the \emph{top} and \emph{bottom components} of $\alpha$), such that these form an (oriented) commutative square (on the left).
\[\vcenter{\xymatrix @R=7pt @ C=7pt{
A_{\ttop}  \ar[rr]^-{\alpha_{\ttop}}\ar[dd]_-{f_A} \ar@{{}{}{}}[rrdd]|-{(\rightarrow)}  & & B_{\ttop}  \ar[dd]^-{f_B} \\
\\
A_{\pperp}  \ar[rr]_-{\alpha_{\pperp}} &  & B_{\pperp}
}} \qquad  \qquad \vcenter{\xymatrix @R=0.6pt @ C=11pt{
A_{\ttop} \ar[rrr]^{\alpha_{\ttop}} \ar[rd] |-{p}  \ar[dddd]_{f_A} & & &  B_{\ttop}  \ar[dddd]|{f_B} \\
 & P  \ar[rru]|-{\pi_2} \ar[lddd]|-{\pi_1}  \\
\\
\\
A_{\pperp}  \ar[rrr]_{\alpha_{\pperp}} &  & & B_{\pperp}
}}
\]
We call the \emph{comparison map} of such a morphism (or commutative square) the unique map $p\colon{A_{\ttop} \to P}$ induced by the universal property of $P \defeq A_{\pperp} \times_{B_{\pperp}} B_{\ttop}$, the pullback of $\alpha_{\pperp}$ and $f_B$. Now from the study of the \emph{admissible adjunction} $F\dashv \I$ (within the Galois structure $\Gamma$), Galois theory produces the concept of a central extension ($\Gamma$-covering), and thus we may look at the \emph{full subcategory} $\CEXT\C$ of $\EXT\C$ whose objects are central extensions. The category of central extensions $\CEXT\C$ is not reflective, even less so admissible, in the category of extensions $\EXT\C$ in general (see \cite{JK1997}). In groups one can universally \emph{centralize} an extension, along a quotient of its domain, and there $\CEXT\C$ is actually a full replete \emph{(regular epi)-reflective} subcategory of $\EXT\C$. When such a reflection exists, one may further wonder whether there is a Galois structure behind it, and whether it is admissible. What is the sphere of influence of central extensions in extensions, and with respect to which class of \emph{extensions of extensions}, i.e.~can we re-instantiate Galois theory in this induced (two-dimensional) context? 

An appropriate class of morphisms to work with, in order to obtain an admissible Galois structure in such a two-dimensional setting, is the class of \emph{double extensions} \cite{Jan1991,CKP1993,Bou2003,EvGoeVdl2012}. A \emph{double extension} is a morphsim $\alpha=(\alpha_{\ttop},\alpha_{\pperp})$ in $\EXT\C$ such that both $\alpha_{\ttop}$ and $\alpha_{\pperp}$ are extensions and the comparison map of $\alpha$ is also an extension. Note that double extensions are indeed a subclass of regular epimorphisms in $\EXT\C$, provided $\C$ is a regular category (see \cite{BaGrOs1971}). \emph{Double central extensions of groups} were described in \cite{Jan1991}, and higher-dimensional Galois theory developed further \cite{Jan1995,EvGrVd2008}, leading to the aforementioned results in homology. 

Similarly in topology, higher homotopical information of spaces can be studied via the higher fundamental groupoids in the higher-dimensional Galois theory of locally connected topological spaces. A detailed survey about the study of higher-dimensional homotopy group(oid)s can be found in \cite{Bro1999}, see also \cite{BroHiSi2011}. Some insights are given in \cite{BroJan2004} where higher Galois theory is used to build a homotopy double groupoid for maps of spaces (see also \cite{BroJan1999}).

In this article we consolidate the understanding of the one-dimensional covering theory of racks and quandles, and introduce all the necessary ideas to start a higher-dimensional Galois theory in this context. In Part II we obtain an admissible Galois structure for the inclusion of coverings in extensions; we define and study double coverings, which are shown to describe the \emph{double central extensions of racks and quandles} (see \cite{Jan1991}). In Part III we generalize this to arbitrary dimensions.

\subsubsection{Admissibility via the strong Birkhoff condition, in two steps}\label{SectionAdmissibilityStronglyBirkhoff} Note that in the literature, most instantiations of higher categorical Galois theory are such that the ``base'' category $\C$ is a \emph{Mal'tsev category} (see \cite{CLP1991,CPP1992,CKP1993}), and such that moreover all the induced higher-dimensional categories of extensions ($\EXT\C$, $\EXT\EXT\C$, and so on) are also Mal'tsev categories. Admissibility conditions as well as computations with higher extensions are easier to handle in such a context. The categories we are interested in are not Mal'tsev categories. Showing how higher categorical Galois theory can apply in this more general setting thus requires some refinements on the arguments which are used in the existing examples.

The difficulty is in the induction for higher dimensions: the study of a given Galois structure is one thing, the study of which properties of a Galois structure induce good properties of the subsequent Galois structures in higher dimensions, is another. These subtleties will be discussed in Part III, see also \cite{Eve2015} for the most general example known to the author. In Part I, we lay down the necessary foundations for what comes in Part II and III. Let us sketch here, without technical details, which ingredients to focus on.

In Part I, our context is that of \cite{JK1994} which we refer to for more details. We look at the inclusion $\I\colon{\X \to \C}$ of $\X$, a \emph{full}, \emph{(regular epi)-reflective subcategory} of a \emph{finitely cocomplete} \emph{Barr-exact} category $\C$, such that $\X$ is \emph{closed under isomorphisms} and \emph{quotients}. In short \emph{Barr exactness} means that $\C$ has finite limits; every morphism factors uniquely, up to isomorphism, into a regular epimorphism, followed by a monomorphism, and these factorizations are stable under pullbacks; and, moreover, every \emph{equivalence relation} is the kernel pair of its \emph{coequalizer} \cite{Bar1971}. Here \emph{(regular epi)-reflectiveness} refers to the fact that the unit $\eta$ of the adjunction $F \dashv \I$ (with left adjoint $F\colon{\C \to \X}$) is a regular epimorphism (surjection), which implies (more generally) that $\X$ is also closed under subobjects. Finally, observe that (in general) \emph{monadicity} of $\I$ implies that $\X$ is also closed under \emph{finite limits} in $\C$. 

The fact that $\X$ is closed under quotients is then the remaining condition for $\X$ to be called a \emph{Birkhoff subcategory} of $\C$ \cite{BuSan1981,JK1994}. Given a more general Galois structure $\Gamma=(\C, \X, F, \I, \eta, \epsilon, \EE)$ such as in Convention \ref{RemarkGaloisStructure}, we say that $\Gamma$ is \emph{Birkhoff} if $\X$ is closed in $\C$ under quotients along extensions. In the Galois structures of interest (see for instance \cite{JK1994}), this condition is shown to be equivalent to the fact that the \emph{reflection squares} of extensions are \emph{pushouts}. Given $f\colon{A\to B}$ in $\C$, \emph{the reflection square at} $f$ (with respect to $\Gamma$) is the morphism $(\eta_A, \eta_B)$ with domain $f$ and codomain $\I F(f)$ in $\ARR(\C)$. Finally, $\X$ is said to be \emph{strongly Birkhoff} in $\C$ if moreover these reflection squares of extensions are themselves double extensions.
\begin{equation}\label{DiagramReflectionSquare} \vcenter{\xymatrix @R=0.6pt @ C=11pt{
A \ar[rrr]^{\eta_A} \ar[rd] |-{p}  \ar[dddd]_{f} & & &  \I F(A)  \ar[dddd]|{\I F(f)} \\
 & P  \ar[rru]|-{\pi_2} \ar[lddd]|-{\pi_1}  \\
\\
\\
B  \ar[rrr]_{\eta_B} &  & & \I F(B)
}}
\end{equation}
Proposition 2.6 in \cite{EvGrVd2008} implies that if $\Gamma$ is \emph{strongly Birkhoff}, then it is in particular admissible.

Now observe that in the Barr-exact context from above, Proposition 5.4 in \cite{CKP1993} implies that if $\Gamma$ is Birkhoff, it is strongly Birkhoff if and only if, for any object $A$ in $\C$, the kernel pair of $\eta_A$ \emph{commutes} with any other equivalence relation on $A$ (in the sense of \cite{MalSbo1954,CKP1993}). For instance, in the category of groups, any two equivalence relations commute with each other (see \emph{Mal'tsev categories} \cite{CKP1993}). Hence since $\AB$ is a Birkhoff subcategory of $\GRP$, it is actually strongly Birkhoff in $\GRP$, which implies the \emph{admissibility} of $\ab \dashv \I$ (see \cite[Theorem 3.4]{JK1994}). However, working in a Mal'tsev category is not necessary, as it was for instance noticed by V.~Even in \cite{Eve2014} and \cite{Eve2015}, where he uses the permutability property of the kernel pairs of unit morphisms to conclude the admissibility of his Galois structure. In Part I, we briefly re-discuss these results and illustrate the argument on a new adjunction. In higher dimensions, we shall also aim to obtain strongly Birkhoff Galois structures by splitting the work in two steps: (1) closure by quotients along higher extensions and (2) the permutability condition on the kernel pairs of (the non-trivial component of) the unit morphisms.

\subsubsection{Splitting along projective presentations and weakly universal covers}\label{SectionProjectivesAndWuc}
Remember that in any category, an object $E$ is \emph{projective} -- with respect to a given class of morphisms, which we always take to be our extensions -- if for any extension $f\colon{A \twoheadrightarrow B}$ and any morphism $p\colon{E \to B}$, there exits a factorization of $p$ through $f$ i.e.~$g\colon{E \to A}$ such that $f \circ g = p$. A projective presentation of an object $B$ is then given by an extension $p\colon{E \twoheadrightarrow B}$ such that $E$ is projective (with respect to extensions). For instance, in \emph{varieties of algebras} (in the sense of universal algebra), there are \emph{enough projectives}, i.e.~each object has a canonical projective presentation given by the \emph{counit} of the ``free-forgetful'' \emph{monadic} adjunction with sets \cite{McLane1997}. 

We may assume that in the Galois structures $\Gamma$ (as it is the case in groups or in Part II-III) the ``sophisticated context'' $\C$ has \emph{enough projectives}. Then any central extension (i.e. $\Gamma$-covering) $f$ is in particular split by any projective presentation $p$ of its codomain. We have
\begin{equation}\label{DiagramSplitByProjectivePresentations}
\xymatrix @R=1pt @C=7pt {
 E\times_{B} A  \ar[rr]^{p_{A}} \ar@{{}{..}{>}}[rd]_-{t}  \ar[ddd]_{p_E}  &  & A \ar[ddd]^{f}\\
 & T\times_{B} A   \ar[ddd]^(.36){p_T} \ar[ur]_{}   & \\
\\
 E \ar[rr]^(.4){p} \ar@{{}{--}{>}}[rd]_{p'}   &  &   B \\
 &   T   \ar[ur]_{}&  
}
\end{equation}
where $p'$ is induced by $E$ being projective, $t$ is induced by the universal property of $T\times_{B} A$ and $p_T$ is a trivial extension by assumption. Then with no assumptions on $\C$, the left hand face is a pullback since the back face and the right hand face are. Assuming that the Galois structure we consider is admissible; trivial, central and normal extensions are then pullback stable (see for instance \cite{JK1994}), and thus $p_E$ is a trivial extension, since it is the pullback of a trivial extension. 
Hence if $\C$ has enough projectives, then for any object $B$ in $\C$ the category of central extensions $\CEXT(B)$ above~$B$ is the same as the category of those extensions which are split by one given morphism such as the foregoing projective presentation $p$ of $B$.

Now when central extensions are reflective in extensions, a weakly universal central extension can always be obtained from the centralization of a projective presentation. One can for example recover this idea from \cite{Mil1971}. Consider an extension $f\colon{A \to B}$, and the centralization of a projective presentation of $B$:
\[\xymatrix@R=5pt@C=12pt{
 & E \ar[ddd]|(.3){p} \ar[ld]_-{\text{centralization}}  \ar@{{}{--}{>}}[rd]^-{a}  & \\
E' \ar[rdd]_-{p'}   \ar@/_2ex/@{{}{..}{>}}[rr]|(.35){b} &  & A  \ar[ldd]^-{f} \\
\\
& B & 
}\]
We get $a$ since $E$ is projective and $b$ by the universal property of $p'$. In the contexts of interest (see for instance Proposition \ref{PropositionSplitByCentralization}), a central extension is split by each weakly universal central extension of its codomain. Such weakly universal central extensions above an object $B$ are then split by themselves which makes them normal extensions. The reflection of the kernel pair of such is then the \emph{fundamental Galois groupoid of $B$}, which classifies central extensions above $B$.

\subsubsection{General strategy for characterizing central extensions} \label{SectionStrategy}
Finally we describe our general strategy, suggested by G. Janelidze, when it comes to identifying a property which characterizes central extensions. Observe that if a central extension $f$ is split by a split epimorphism $p$, then it is a trivial extension. \[ \xymatrix @R=1pt @C=11pt {
A \ar@{=}[rr]^{ } \ar@{{>}{-}{>}}[rd]_-{\bar{s}}  \ar[ddd]_{f}  &  & A  \ar[ddd]^{f}\\
 & E \times_{B} A   \ar[ddd]_(.35){\overline{f}} \ar[ur]_{\bar{p}}   & \\
\\
 B \ar@{=}[rr]^(.3){ } \ar@{{>}{-}{>}}[rd]_{s}   &  &   B \\
 &   E   \ar[ur]_{p}&  
}
\] Indeed, if the pullback $\overline{f}$ of $f$ along $p$ is a trivial extension by assumption, then the pullback of $\overline{f}$ along the splitting $s$ of $p$ is again a trivial extension and isomorphic to $f$. As a consequence, split epimorphic normal extensions are trivial extensions. Also, those central extensions that have projective codomains are trivial extensions. Now suppose one has identified a special class of extensions, called \emph{candidate-coverings}, such that candidate-coverings are preserved and reflected by pullbacks along extensions. Provided primitive extensions are candidate-coverings, then all trivial extensions are candidate-coverings and also central extensions are. Moreover, given a candidate-covering $f\colon {A\to B }$, pulling back $f$ along a projective presentation $p$ of $B$ yields a candidate-covering with projective codomain. Since $f$ is central if and only if it is split by such a $p$, we see that candidate-coverings are central extensions if and only if all candidate-coverings with projective codomains are actually trivial extensions, which is usually easier to check.

\section{An introduction to racks and quandles}\label{SectionBiasedintroToRacksAndQuandles}
We introduce all the fundamental ingredients of the theory of racks and quandles, which we describe and develop from the perspective inspired by the covering theory of interest.
\subsection{Axioms and basic concepts}\label{SectionAxiomsAndBasicConcepts}

\subsubsection{Racks and quandles as a system of symmetries}
Symmetry is classically modeled/studied using groups. Informally speaking: given a space $X$, one studies the group of automorphisms $\Aut(X)$ of $X$. In his PhD thesis \cite{Joy1979}, D.E.~Joyce describes quandles as another algebraic approach to symmetry such that, locally, each point $x$ in a space $X$ would be equipped with a global symmetry $S_x$ of the space $X$. Groups themselves always come with such a system of symmetries given by conjugation and the definition of inner automorphisms. Quandles, and more primitively racks, can be seen as an algebraic generalisation of such.

\subsubsection{Describing the algebraic axioms}
Consider a set $X$ that comes equipped with two functions
\[ \xymatrix{X \ar@<+1ex>[r]^-{\Ss} \ar@<-1ex>[r]_-{\Ss^{-1}} & X^X, }\]
which assign functions $\Ss_x$ and $\Ss^{-1}_x$ in $X^X$ (the set of functions from $X$ to $X$) to each element $x$ in $X$. Each element $x$ then acts on any other $y$ in $X$ via those functions $\Ss_x$ and $\Ss^{-1}_x$. By convention we shall always write actions on the right:
\[ y \cdot \Ss_x \defeq \Ss_x(y) \qquad \qquad y \cdot \Ss^{-1}_x \defeq \Ss_x^{-1}(y)  \]

The functions $\Ss_x$ and $\Ss_x^{-1}$ at a given point $x \in X$ are required to be inverses of one another, in particular for all $y$ in $X$ we have
\[
(y \cdot \Ss^{-1}_x) \cdot \Ss_x = y = (y \cdot \Ss_x) \cdot \Ss^{-1}_x.
\]
Note that, under this assumption, $\Ss^{-1}$ and $\Ss$ determine each other. Now we want to call such bijections $\Ss_x$ \emph{symmetries} (or \emph{inner automorphisms}) of $X$. But observe that the set $X$ is now equipped with two binary operations 
\[ \xymatrix{X\times X \ar@<+1ex>[r]^-{\qndop} \ar@<-1ex>[r]_-{\qndiop} & X,}\] 
defined by $x \qndop y \defeq x \cdot \Ss_y$ and $x \qndiop y \defeq x \cdot \Ss^{-1}_y $ for each $x$ and $y$ in $X$. Read ``$y$ acts on $x$ (positively or negatively)''. Automorphisms of $X$ should then preserve these operations. In particular we thus require that for each $x$, $y$ and $z$ in $X$:
\[
(x  \qndop y ) \qndop z = (x \qndop y) \cdot \Ss_z = (x \cdot \Ss_z) \qndop (y \cdot \Ss_z) = (x \qndop z) \qndop (y \qndop z).
\]

\subsubsection{Defining a rack}
Any set $X$ equipped with such structure, i.e.~two binary operations $\qndop$ and $\qndiop$ on $X$ such that for all $x$, $y$ and $z$ in $X$:
\begin{enumerate}
\item[\R1] $ (x \qndop y) \qndiop y = x = (x \qndiop y) \qndop y$; 
\item[\R2]$(x \qndop y ) \qndop z =(x \qndop z) \qndop (y \qndop z)$;
\end{enumerate}
is called a \emph{rack}. We write $\RCK$ for the category of racks with rack homomorphisms defined as usual (functions preserving the operations). 

We refer to the axiom \R2 as \emph{self-distributivity}. For each $x$ in $X$, the \emph{positive} (resp.~\emph{negative}) \emph{symmetry at $x$} is the automorphism $\Ss_x$ (resp.~$\Ss_x^{-1}$) defined before. A \emph{symmetry}, also called \emph{right-translation}, of $X$ is $\Ss_x$ or $\Ss_x^{-1}$ for some $x$ in $X$. The \emph{symmetries} of $X$ refers to the set of those.

\subsubsection{Racks from group conjugation}
One crucial class of examples is given by group conjugation. D.E.~Joyce describes quandles as ``the algebraic theory of conjugation'' \cite{Joy1979}. We have the functor:
\[ \xymatrix{\GRP \ar[r]^-{\Conj} & \RCK}, \]
which sends a group $G$ to the rack $\Conj(G)$ with same underlying set, and whose rack operations are defined by conjugation:
\[ x \qndop a \defeq a^{-1}xa \text{  and  } x \qndiop a \defeq axa^{-1}, \]
for $a$ and $x$ in $G$. Group homomorphisms are sent to rack homomorphisms by just keeping the same underlying function. The forgetful functor $\U \colon{\GRP \to \SET}$ thus factors through $\U \colon{ \RCK \to \SET}$ via $\Conj$. However the functor $\Conj$ is not \emph{full}, since given groups $G$ and $H$, there are more rack homomorphisms between $\Conj(G)$ and $\Conj(H)$ than there are group homomorphisms between $G$ and $H$. 

This peculiar ``inclusion'' functor consists in ``forgetting an operation'' in comparison with subvarieties which are about ``adding an equation''. When forgetting an operation, an obvious question is to ask: what equations should the remaining operations satisfy? Racks form one candidate theory. We will see that quandles (Subsection \ref{SubsectionIdempotencyAxiom}) give another option. In which sense is one different/better than the other? Can we characterize (as a subcategory) those racks which arise from groups? An important ingredient for answering those questions and understanding the relationship between groups, racks and quandles is the left adjoint of $\Conj$ (Subsection \ref{SubsectionGroupOfPath}). The thorough study and understanding of this left adjoint (first defined by D.E.~Joyce as $Adconj$, see also $\Adj$ in \cite{Eis2014}) is central to this piece of work, also with respect to its crucial role in the covering theory of racks and quandles. 

In what follows, we often consider groups as racks without necessarily mentioning the functor $\Conj$.  

\subsubsection{Other identities}\label{ParagraphOtherIdentities}
Note that for the symmetries $\Ss_x$ to define automorphisms of racks, one needs distributivity of $\qndop$ on $\qndiop$, distributivity of $\qndiop$ on $\qndop$, and self-distributivity of $\qndiop$. All these identities are induced by the chosen axioms. Besides, it suffices for a function $f$ to preserve one of the operations in order for it to preserve the other. We do not give a detailed survey of rack identities here. Bear in mind that in the theory of racks, the roles of $\qndop$ and $\qndiop$ are interchangeable. Swapping them in a given equation, gives again a valid equation. Finally we focus on an important characterization of \R2 using \R1:

\subsubsection{Self-distributivity}\label{ParagraphSelfDistr}
\begin{lemma}\label{LemmaSelfDistr}
Under the axiom \R1, the axiom \R2 is equivalent to
\begin{enumerate}
\item[\R{2'}]  $x \qndop (y \qndop z) = ((x \qndiop z) \qndop y ) \qndop z$. 
\end{enumerate}
\end{lemma}
\begin{proof} Given \R1, we formally show that
\begin{align*}
\text{\R2} \Rightarrow \text{\R{2'}:} \qquad x \qndop (y \qndop z)   &= ((x \qndiop z)\qndop z ) \qndop  (y \qndop z)  & \text{(by \R1)} & \\
									&= ((x \qndiop z) \qndop y ) \qndop z & \text{(by \R2)} & 
\end{align*}
\begin{align*}
\text{\R{2'}} \Rightarrow \text{\R{2}:} \qquad (x \qndop z) \qndop (y \qndop z)  &= (((x \qndop z)  \qndiop z) \qndop y) \qndop z   & \text{(by \R{2'})}& \\
									&= (x \qndop y) \qndop z   & \text{(by \R1)} & \qedhere
\end{align*}
\end{proof}
Similarly \R2 is also equivalent to  \R{2''}:  $\ x\qndop (y \qndiop z)  =((x \qndop z)\qndop  y) \qndiop z $. From the preceding discussion we also have
\[ x\qndiop (y \qndiop z)  =((x \qndop z)\qndiop  y) \qndiop z, \quad \text{and finally} \quad x\qndiop (y \qndop z)  =((x \qndop z)\qndop  y) \qndiop z . \]
Considering these as identities between \emph{formal terms in the language of racks} (see for instance Chapter II, Section 10 in \cite{BuSan1981}), we say that the term on the right-hand side is \emph{unfolded}, whereas the term on the left hand side isn't. 

\subsubsection{Composing symmetries -- inner automorphisms}\label{SubsubsectionInnerAutomorphisms}
By construction (see Paragraph \ref{ParagraphOtherIdentities}), given a rack $X$, the images of $\Ss$ and $\Ss^{-1}$ (defined as above) are in the group of automorphisms of $X$. Define the \emph{group of inner automorphisms} as the subgroup $\Inn(X)$ of $\Aut(X)$ generated by the image of $\Ss$. For each rack $X$, we then restrict $\Ss$ to the morphism 
\[ \xymatrix{X \ar[r]^-{\Ss} & \Inn(X).} \]

An inner automorphism is thus a composite of symmetries. Remember that we write actions on the right, hence we use the notation $z \cdot (\Ss_x \circ \Ss_y) \defeq \Ss_y(\Ss_x(z))$ for $x$, $y$, and $z$ in $X$. We use the same notation $\Ss$ for different racks $X$ and $Y$. Note that the construction of the group of inner automorphisms $\Inn$ does not define a functor from $\RCK$ to $\GRP$. It does so when restricted to surjective maps (see for instance \cite{BLRY2010}).

Observe that if $z = x \qndop y$ in $X$, then $\Ss_z = \Ss_y^{-1} \circ \Ss_x \circ \Ss_y$ by self-distributivity \R{2'}. The function $\Ss$ is actually a rack homomorphism from $X$ to $\Conj(\Inn(X))$. Again this describes a natural transformation in the restricted context of surjective homomorphisms.

Of course inner automorphisms of a group coincide with the inner automorphisms of the associated conjugation rack. However, observe that for a group $G$, a composite of symmetries is always a symmetry, whereas in a general rack, the composite of a sequence of symmetries does not always reduce to a one-step symmetry. Indeed, given $a$ and $b$ in a group $G$, then for all $x\in G$:
\[(x\qndop a) \qndop b = b^{-1}a^{-1} x ab = x \qndop (ab) \qquad \text{and, moreover,} \qquad x\qndiop a = x \qndop a^{-1}.\]
So, given a group $G$, the morphism $\xymatrix{G \ar[r]^-{\Ss} & \Conj(\Inn(G)) = \Inn(G)}$ is surjective.

\subsubsection{Acting with inner automorphisms -- representing sequences of symmetries}\label{SubsectionActionByInnerAut}
Given a rack $X$, we have of course an action of $\Inn(X)$ on $X$ given by evaluation. Elements of the group of inner automorphisms $\Inn(X)$ allow for a ``representation'' of successive applications of symmetries, seen as a composite of the automorphisms $\Ss_x$. 

More explicitly, any $g \in \Inn(X)$ decomposes as a product $g = \Ss_{x_1}^{\delta_n} \circ \cdots \circ \Ss_{x_n}^{\delta_1}$ for some elements $x_1,\ \ldots,\ x_n$ in $X$ and exponents $\delta_1,\ \ldots, \ \delta_n$ in $\lbrace -1,\ 1 \rbrace$. Such a decomposition is not unique, but for any $x$ in $X$ the action of $g$ on $x$ is well defined by 
\[x \cdot g \defeq x \cdot (\Ss_{x_1}^{\delta_n} \circ \cdots \circ \Ss_{x_n}^{\delta_1}) = x  \qndop^{\delta_1} x_1 \qndop^{\delta_2} x_2 \cdots\qndop^{\delta_n}  x_n , \]
where we omit parentheses using the convention that one should always compute the left-most operation first.

\paragraph{}\label{ParagraphConventionSymmetries}As we shall see, successive applications of symmetries play an important role in racks. For our purposes, using the group of inner automorphisms for their study is not satisfactory. Note that given $x \neq y$ in a rack $X$, two symmetries $\Ss_x$ and $\Ss_y$ are identified in $\Inn(X)$ if they define the same automorphism. From now on, and informally speaking, we consider the data of the base-point $x$ to be part of the data which we refer to as a \emph{symmetry} denoted $\Ss_x$ or $\Ss_x^{-1}$.

In what follows we study different ways and motivations to organize the set of symmetries into a group acting on $X$. Note that we may understand the definition of \emph{augmented quandles} (or racks) \cite{Joy1979}, see Paragraph \ref{ParagraphActionByInnerAutomorphisms}, as a tool to abstract away from ``representing'' sequences of symmetries via composites of such (in the sense of the group of inner automorphisms).

\subsubsection{Quandles, the idempotency axiom}\label{SubsectionIdempotencyAxiom}
As explained by D.E.~Joyce, it is reasonable (in reference to applications) to require that a symmetry at a given point fixes that point. If for each $x$ in a rack $X$ we have moreover that
\begin{enumerate}
\item[\Q1] $x\qndop x = x$; 
\end{enumerate}
then $X$ is called a \emph{quandle}. We have the category of quandles $\QND$ defined as before. Again, \Q1 is equivalent to  
\Q{1'}: $\ x\qndiop x = x$, under the axiom \R1. 

For the purpose of this article, we shall mainly be working in the more general context of racks since these exhibit all the necessary features for the covering theory of interest. Actually all concepts of centrality and coverings remain the same whether one works with the category of racks or of quandles. Directions for a systematic conceptual understanding of these facts will be provided. The addition of the idempotency axiom still has certain consequences on ingredients of the theory such as the fundamental groupoid or the \emph{homotopy classes of paths}. We shall always make explicit these differences and similarities, also using the enlightening study of the ``free-forgetful'' adjunction between racks and quandles. 

\subsubsection{Idempotency in racks}\label{ParagraphIdempotencyInRacks} An essential observation to make is that, even though \Q1 doesn't hold in each rack, a weaker version of the idempotency axiom still holds in all racks as a consequence of self-distributivity. Indeed, racks and quandles are very close -- which we shall illustrate throughout this article. The axiom \Q1 requires the $\qndop$ operations to be idempotent: $x \qndop x = x$. Now observe that in a rack $X$, such identities can be deduced by self-distributivity in ``the tail of a term'': given any $y$ and $x \in X$, we have
\[x \qndop (y \qndop y) = x \qndiop y \qndop y \qndop y  = x \qndop y.\]
The symmetries $\Ss_y$ and $\Ss_{(y\qndop y)}$, at $y$ and $y\qndop y$ are always identified in $\Inn(X)$, even when $y \neq (y \qndop y)$ in $X$. Similarly, for $x$ and $y$ in $X$ any chain $y \qndop^k y$ (for $k \in \Z$, the action of $y$ on $y$, repeated $|k|$ times -- use $\qndiop$ when $k<0$) is such that $ x \qndop (y \qndop^k y)= x \qndop y$. For more details, the left adjoint $\Frq \colon{\RCK \to \QND}$ to the inclusion $\I \colon {\QND \to \RCK}$ will be described in Section \ref{SubSectionFreeQuandleOnRack}. In what follows, the present comment translates in several different ways, such as in Example \ref{ExampleUnitFrqIsCovering} for instance.

\subsection{From axioms to geometrical features}\label{SectionFromAiomsToGeometricalFeatures}
\paragraph*{We informally highlight two additional elementary features of the axioms which play an important role in what follows. We then illustrate them in the characterization of the free rack on a set $A$}
\subsubsection{Heads and tails -- detachable tails}\label{ParagraphHeadsAndTails}
Observe that on either side of the identities defining racks, the \emph{head} $x$ of each term is the same and does not play any role in the described identifications.
\[ \text{ \R1 } \textcolor{Red}{x}\qndop y \qndiop y = \textcolor{Red}{x} = \textcolor{Red}{x} \qndiop y \qndop y \qquad \qquad  \text{ \R{2'} } \textcolor{Red}{x}  \qndop(y \qndop z) = \textcolor{Red}{x}\qndiop z \qndop y \qndop z \]
Now consider any \emph{formal term} in the language of racks (built inductively from atomic variables and the rack operations --  see Chapter II Section 10 in \cite{BuSan1981}), such as for instance
\begin{equation}\label{EquationFormalTermExample}
 (x \qndop y) \qndiop (\cdots ((a \qndop b) \qndiop  c) \qndop  d ) \cdots \qndop z.
\end{equation}
Remember that roughly speaking, the elements of the \emph{free rack} on a set $A$ can be constructed as equivalence classes of such formal terms, built inductively from the atomic variables in $A$, where two terms are identified if one can be obtained from the other by replacing subterms according to the axioms, or according to any provable equations derived from the axioms.

Given any term such as above, we shall distinguish the \emph{head} $x$ of the term from the rest of it which is called the \emph{tail} of the term. The informal idea is that the ``behaviour'' of the tail is independent from the head it is attached to. It thus makes sense to consider the tails (or equivalence classes of such) separately from the heads these tails might act upon.

Observe that the idempotency axiom plays a slightly different role in that respect since, although the heads of terms are left unchanged under the use of \Q1, the identifications in the tails of terms might depend on the heads these are attached to. We shall however see that the discussion about racks still lays a clear foundation for understanding the case of quandles which we discuss in Section \ref{SectionWorkingWithQuandles}.

\subsubsection{Tails as sequences of symmetries}\label{ParagraphHeadsAndTails2}
By Paragraph \ref{ParagraphSelfDistr}, acting with a symmetry of the form $\Ss_{(x\qndop y)}$ translates into successive applications of $\Ss_y^{-1}, \Ss_x, \Ss_y$ from left to right. 
\[\xymatrix@R=13pt@C=13pt{ \red\bullet \ar@{{}{-}{}}[r]|-{\dir{>}} ^-{\Ss_y^{-1}}  \ar@{{}{-}{}}[d]|-{\dir{>}} _-{\Ss_{x\qndop y}} & \bullet  \ar@{{}{-}{}}[d]|-{\dir{>}} ^-{\Ss_{x}} \\
\color{ForestGreen}\bullet & \bullet  \ar@{{}{-}{}}[l]|-{\dir{>}} ^-{\Ss_{y}}
} \]

Now consider any formal term such as in Equation~\eqref{EquationFormalTermExample} for instance. Using \R{2'} repeatedly, we may \emph{unfold} the tail of a term into a string of successive actions of the form
\[  x\qndop y \qndiop c \qndop  c \qndiop b \qndiop a \qndop b \qndiop c \qndop c \qndop d  \cdots \qndop z. \] 
We can then interpret the tail as a \emph{path} of successive actions of the symmetries which are applied to the head $x$. Using \R1 repeatedly again, we may also discard all possible occurrences of the successive application of a symmetry and its inverse 
\[  x\qndop y  \qndiop b \qndiop a \qndop b \qndop d  \cdots \qndop z. \] 
It is then possible to show that such \emph{unfolded} and \emph{reduced} terms provide normal forms (unique representatives) for elements in the free rack. The elements of a free rack on a set $A$ are thus described with this architectural feature of having a head in $A$ and an independent tail, such that the tail is a sequence of ``representatives'' of the symmetries which organize themselves as the elements of the free group on $A$.

\subsubsection{The free rack}\label{ParagraphTheFreeRack} The following construction can be found in \cite{FenRou1992}. It was also studied in \cite{Kru1998}.

Given a set $A$, the free rack on $A$ is given by
\[ \Fr(A) \defeq A  \rtimes \Fg(A) \defeq \lbrace (a,g) \mid g \in \Fg(A);\ a \in A \rbrace ,\]
where $\Fg(A)$ is the free group on $A$ and the operations on $\Fr(A)$ are defined for $(a,g)$ and $(b,h)$ in $A  \rtimes \Fg(A) $ by 
\[(a,g) \qndop (b,h) \defeq (a,gh^{-1}\gr{b}h) \qquad \text{and} \qquad (a,g) \qndiop (b,h) \defeq (a,gh^{-1}\gr{b}^{-1}h). \]
In order to distinguish elements $x$ in $A$ from their images under the injection $\eta^g_A \colon{A \to \Fg(A)}$, we shall use the convention to write \[\gr{a} \defeq \eta^g_A(a).\]

Looking for the unit of the adjunction, we then have the injective function which sends an element in $A$ to the trivial path starting at that element, i.e.~$\eta^r_A \colon{A \to \Fr(A)} \colon a \mapsto (a,e)$, where $e$ is the empty word (neutral element) in $\Fg(A)$.

Note that since any element $g \in \Fg(A)$ decomposes as a product $g = \gr{g_1}^{\delta_1} \cdots \gr{g_n}^{\delta_n} \in \Fg(A)$ for some $g_i \in A$ and exponents $\delta_i = 1$ or $-1$, with $1 \leq i \leq n$, we have, for any $(a,g) \in \Fr(A)$, a decomposition as
\[
 (a,g) = (a,\gr{g_1}^{\delta_1} \cdots \gr{g_n}^{\delta_n}) = (a,e) \qndop^{\delta_1} (g_1,e) \qndop^{\delta_2} (g_2,e) \cdots \qndop^{\delta_n}  (g_n,e). \]
 
As we discussed before, if we have moreover that $g_i = g_{i+1}$ and $\delta_i = - \delta_{i+1}$ for some $1 \leq i \leq n$, then
\begin{align*}
(a,e) \qndop^{\delta_1} &(g_1,e) \cdots \qndop^{\delta_{i-1}} (g_{i-1},e) \qndop^{\delta_{i}} (g_{i},e) \qndop^{\delta_{i+1}} (g_{i+1},e)\qndop^{\delta_{i+2}} (g_{i+2},e) \cdots \qndop^{\delta_n}  (g_n,e) = \\
&= (a,\gr{g_1}^{\delta_1} \cdots \gr{g_{i-1}}^{\delta_{i-1}} \gr{g_i}^{\delta_i}\gr{g_{i+1}}^{\delta_{i+1}}\gr{g_{i+2}}^{\delta_{i+2}} \cdots \gr{g_n}^{\delta_n})\\
 &= (a,\gr{g_1}^{\delta_1} \cdots \gr{g_{i-1}}^{\delta_{i-1}}\gr{g_{i+2}}^{\delta_{i+2}} \cdots \gr{g_n}^{\delta_n}) \\
 &= (a,e) \qndop^{\delta_1} (g_1,e) \cdots \qndop^{\delta_{i-1}} (g_{i-1},e) \qndop^{\delta_{i+2}} (g_{i+2},e) \cdots \qndop^{\delta_n}  (g_n,e) 
\end{align*}
which expresses the first axiom of racks, using group cancellation. 

From there we derive the universal property of the unit: given a function $f \colon A \to X$ for some rack $X$, we show that $f$ factors uniquely through $\eta^r_A$. Given an element $(a,g)\in \Fr(A)$, we have that for any decomposition $g =\gr{g_1}^{\delta_1} \cdots \gr{g_n}^{\delta_n}$ as above, we must have  
\[f(a,g) \ =\ f(a,\gr{g_1}^{\delta_1} \cdots \gr{g_n}^{\delta_n}) 
		\ =\ f\big( (a,e) \qndop^{\delta_1} (g_1,e) \cdots \qndop^{\delta_n}  (g_n,e) \big) 
		\ =\ f(a) \qndop^{\delta_1} f(g_1) \cdots \qndop^{\delta_n}  f(g_n)\]
which uniquely defines the extension of $f$ along $\eta^r_A$ to a rack homomorphism $f \colon{\Fr(A) \to X}$. This extension is well defined since two equivalent decompositions in $\Fr(A)$ are equivalent after $f$ by the first axiom of racks as displayed in Paragraph \ref{ParagraphTheFreeRack}.

The left adjoint $\Fr \colon {\SET \to \RCK}$ of the forgetful functor $\U \colon \RCK \to \SET$ with unit $\eta^r$ is then defined on functions $f\colon {A \to B}$ by 
\[\Fr(f) \defeq f \times  \Fg(f)  \colon {A \rtimes \Fg(A) \to B \rtimes \Fg(B)}.\]
This is easily seen to define a rack homomorphism. Functoriality of $\Fr$ and naturality of $\eta^r$ are immediate.

\paragraph{Terminology and visual representation}\label{ParagraphTerminologyAndVisualRepr}
In order to emphasise its visual representation, we call an element $(a,g) \in \Fr(A)$  a \emph{trail}. We call $g$ the \emph{path} (or \emph{tail}) component and $a$ the \emph{head} component of the trail $(a,g)$. It is understood that the path $g$ formally acts on $a$ to produce an \emph{endpoint} of the trail (see Paragraph \ref{ParagraphTheFreeRack}). Formally $(a,g)$ stands for both the trail and its endpoint:
\[\xymatrix{ a \ar@{{}{-}{}}[r]|-{\dir{>}} ^-{g} & (a,g).} \]
The action of a trail $(b,h)$ on another trail $(a,g)$ consists in adding, at the end of the path $g$, the contribution of the \emph{symmetry associated to the endpoint} of $(b,h)$ (see Subsection \ref{SubsectionCanonicalPresentation} and further). We say that a trail acts on another \emph{by endpoint}, as in the diagram below, where composition of arrows is computed by multiplication in the path component:
\begin{equation}\label{EquationActionByCodomainRacks}
\vcenter{\xymatrix@C=15pt @R=13pt {a \ar@{{}{-}{}}[dd]|{\dir{>}} _{g} \ar@{{}{}{}}[ddr]|{\qndop} & b \ar@{{}{-}{}}[dd]|{\dir{>}} _{h} \ar@{{}{}{}}[ddr]|{=} & a \ar@{{}{-}{}}[d]|{\dir{>}} ^-{g}    \\ 
 & & (a,g) \ar@{{}{-}{}}[d]|-{\dir{>}} ^-{h^{-1}\gr{b}h}  \\
(a,g) & (b,h) & (a,gh^{-1}\gr{b}h) 
}}
\end{equation}

\subsubsection{Canonical projective presentations}\label{SubsectionCanonicalPresentation}
Since $\RCK$ is a variety of algebras, any object $X$ can be canonically presented as the quotient
\[ \xymatrix@C=60pt{ \Fr \Fr  X \ar@<5pt>[r]^-{\Fr \epsilon^r_X}  \ar@<-5pt>[r]_-{\epsilon^r_{\Fr X}} & \Fr  X \ar[l]|-{\Fr\eta^r_{ X}} \ar[r]^-{\epsilon^r_X}  & X }\]
where we have omitted the forgetful functor $\U \colon{\RCK \to \SET}$ (understand $X$ alternatively as a rack or a set), and $\epsilon^r_X$ is the counit of the ``free-forgetful'' adjunction $\Fr \dashv \U$. This counit $\epsilon^r_X$ is the coequalizer of the reflexive graph on the left. This canonical presentation of racks allows us to capture a sense in which the geometrical features of free objects are carried through to any general rack. We shall illustrate this on the important functorial constructions of the Galois theory of interest. Let us make explicit these objects and morphisms to exhibit some of the mechanics at play. Think of what this \emph{right-exact fork} represents for groups, where the operation is associative. 

First of all we may exhibit heads and tails and rewrite this\emph{ right-exact fork} as
\[ \xymatrix@C=60pt{ (X \rtimes \Fg(X)) \rtimes \Fg(X \rtimes \Fg(X)) \ar@<5pt>[r]^-{\epsilon^r_X \times \Fg[\epsilon^r_X]}  \ar@<-5pt>[r]_-{\epsilon^r_{\Fr X}} & X \rtimes \Fg X \ar[l]|-{\Fr\eta^r_{ X}} \ar[r]^-{\epsilon^r_X}  & X }\]

Then it is immediate from Paragraph \ref{ParagraphTheFreeRack} that the counit $\epsilon^r_X$ should send a pair $(x,g)=(x,\gr{g_1}^{\delta_1} \cdots \gr{g_n}^{\delta_n})$ for $g_i \in X$ to the element in the rack $X$ given by:
\[\epsilon^r_X(x,g) = x\cdot g \defeq x \qndop^{\delta_1} g_1 \cdots \qndop^{\delta_n}  g_n.\] 
Hence the canonical projective presentation $\epsilon^r_X$ of a rack $X$ covers each element $x \in X$ by all possible formal decomposition $(x_0,g)$ of that element $x$, such that $x$ \emph{is the endpoint of the trail} $(x_0,g)$, i.e.~the result of the action of a \emph{path} on a \emph{head}: $x=x_0 \cdot g$. Now this head $x_0$ and each ``representative of a symmetry'' $\gr{g_i}^{\delta_i}$ in the path component $g=\gr{g_1}^{\delta_1} \cdots \gr{g_n}^{\delta_n}$ may itself be expressed as the endpoint of some trail (i.e.~$x_0 = x_{00} \cdot h$, and $g_i = y_i \cdot k_i$ for $h$ and $k_i$ in $\Fg X$). This is what is captured by the object $\Fr\Fr(X)$ on the left of the fork.

Then from the definition of the counit, we may derive the two projections. These may be understood as expressing two things:

First observe that an element $t=[(a,g); e]$ in $\Fr\Fr(X)$ (i.e.~an element which has a trivial path component, but an interesting head) is sent to $((a \cdot g), e)$ by the first projection and to $(a,g)$ by the second projection. 
The two projections thus allow us to move part of the tail of a trail towards the head of that trail and part of the head towards the tail. 

Then an element $[(a,e); \gr{(b,h)}]$ -- i.e.~an element with a trivial head component and a non trivial (but simple) tail -- is sent by the first projection to $(a,\gr{(b\cdot h)})$, and by the second projection to $(a,h^{-1}\gr{b}h)$. Coequalizing these two projections expresses self-distributivity (see Paragraphs \ref{ParagraphSelfDistr} and \ref{ParagraphHeadsAndTails2}). In other words it illustrates how to compute the representative of the symmetry associated to the endpoint of a trail. This is already part of the definition of the rack operation in the free rack. We have the rack homomorphism on the left
\[\vcenter{\xymatrix@R=10pt{X \rtimes \Fg(X) \ar[r]^-{i_X} & \Fg(X) \\
(x,g) \ar@{{|}{-}{>}}[r]^-{i_X} & g^{-1}\gr{x}g }} \qquad \qquad \vcenter{\xymatrix@C=15pt@R=15pt{ X \ar[rr]^-{\eta^g_{X}} \ar[rd]_-{\eta^r_{X}} & &  \Conj(\Fg(X)) \\
& \Fr(X) \ar@{-->}[ru]_-{i_X} &
}}\]
which sends a path to the symmetry associated to its endpoint. It is actually induced by the universal property of free racks as displayed in the diagram on the right.

\subsection{The connected component adjunction}\label{SectionConnectedComponentsAdjunction}

\subsubsection{Trivial racks and trivializing congruence}
Another important theoretical example of racks is given by the so-called \emph{trivial racks} (or trivial quandles) for which each symmetry at a given point is chosen to be the identity. Each point acts trivially on the rest of the rack. This may be expressed as an additional axiom:
\begin{enumerate}
\item[(Triv)] $ x \qndop y = x$. 
\end{enumerate} 

Since each set comes with a unique structure of trivial rack and each function between trivial racks is a homomorphism, we get an isomorphism between the category of sets ($\SET$) and the category of trivial racks. The category of sets is thus a subvariety of algebras within racks. 

The inclusion functor $\I \colon{\SET \to \RCK}$ sends a set to the trivial rack on that set. Now this inclusion functor should have a left adjoint which sends a rack to the \emph{freely trivialized} rack. Since trivial racks are those which satisfy (Triv), a good candidate for the trivialization of a rack $X$ is thus by quotienting out the congruence $\Co X$ generated by the pairs \[(x,x\qndop y).\] 

Using the comments of Section \ref{SectionFromAiomsToGeometricalFeatures}, it is not too hard to show that it actually suffices to consider the transitive closure of the set of pairs $\lbrace (x,x), (x,x\qndop y), (x,x\qndiop y) \mid x,y \in X\rbrace$ which gives the congruence $\Co X$ when endowed with the rack structure of the cartesian product. Symmetry and compatibility with rack operations are obtained for free. This further yields the concepts of \emph{connectedness} and \emph{primitive path} of Paragraph \ref{ParagraphConnectedComponents}. 

\begin{convention} For the purpose of this work, understand sets, or trivial racks, to be the \emph{0-dimensional coverings} of the covering theory of racks (and quandles), in the same way that abelian groups and central extensions of groups are respectively the 0-dimensional coverings and 1-dimensional coverings in groups. Similarly $\Co$ is the \emph{centralizing relation in dimension 0}. In Section \ref{SectionCoveringTheoryOfRacksAndQuandles} we study the subsequent 1-dimensional covering theory of racks and quandles.
\end{convention}

\subsubsection{Connectedness and primitive paths}\label{ParagraphConnectedComponents}
Given two elements $x$ and $y$ in a rack $A$, we say that $x$ and $y$ are \emph{connected} ($[x] = [y]$) if there exists $n \in \N$ and elements $a_1$, $a_2$, $\ldots$, $a_n$ in $A$ such that 
\[ y= x \qndop^{\delta_1}  a_1 \qndop^{\delta_2} a_2  \cdots\qndop^{\delta_n} a_n, \]
for some coefficients $\delta_i \in \lbrace -1,\, 1 \rbrace$ for $1 \leq i \leq n$. 

Such a sequence of elements together with the choice of coefficients is viewed as a formal sequence of symmetries (see Paragraph \ref{ParagraphConventionSymmetries}). Bearing in mind Paragraphs \ref{ParagraphHeadsAndTails} and \ref{ParagraphHeadsAndTails2}, we call such a formal sequence of symmetries $(a_i,\delta_i)_{1\leq i \leq n}$ a \emph{primitive path} of the rack $A$. In particular this specific primitive path \emph{connects} $x$ to $y$ but may be applied to different elements in the rack. We call the data of such a pair $T = (x,(a_i,\delta_i)_{1\leq i \leq n})$ a \emph{primitive trail} in $X$, where $x$ is the head of $T$ and $y$ the \emph{endpoint} of $T$.

We have that $(x,y)$ is in $\Co A$ if and only if there exists a primitive path which connects $x$ to $y$. For the sake of precision, and following the point of view of \cite{Joy1979}, let us take this as definition for $\Co A$. 

\subsubsection{Left adjoint $\pi_0$}
Then any rack homomorphism $f\colon {A \to X}$ for some trivial rack $X$ is such that $\Co A \leq \Eq(f)$ since given $y= x \qndop^{\delta_1}  a_1  \cdots\qndop^{\delta_n} a_n$ in $A$ we must have in $X$:
\[f(y)= f(x) \qndop^{\delta_1}  f(a_1)  \cdots\qndop^{\delta_n} f(a_n) = f(x).\]
Hence we define the functor $\pi_0 \colon{\RCK\to \SET}$ such that $\pi_0(A) \defeq A / (\Co A)$ is the set of connected components of $A$ (i.e.~the set of $\Co A$-equivalence classes) and $\pi_0 \dashv \I$ with unit 
\[ \xymatrix{A \ar[r]^-{\eta_A} & \pi_0(A),}\]
sending an element $a \in A$ to its connected component $\eta_A(a)$ (also denoted $[a]$) in $\pi_0(A)$. For any $f\colon {A \to X}$ as before, there is a unique function $f' \colon{\pi_0(A) \to X}$ defined on a connected component by the image under $f$ of any of its representatives.

\subsubsection{From free objects to all -- definition as a colimit}
Observe that the composite 
\[ \xymatrix{\SET \ar[r]^-{\I} & \RCK  \ar[r]^-{\U} & \SET } \]
gives the identity functor. As a consequence, the composite of left adjoints $\pi_0 \Frq$ also gives the identity functor. More precisely we may deduce from the composite of adjunctions that, given a set $X$, the unit $\eta_{\Fr(X)}\colon{ X \rtimes \Fg(X) \to X}$ is ``projection on $X$'', i.e.~the connected component of a trail $(x,g) \in \Fr(X)$ is given by projection on its head $x$.

Since $\pi_0$ is a left adjoint, it should preserve colimits, hence $\pi_0(X)$ should be the coequalizer, in $\SET$, of the pair:
\[ \xymatrix@C=60pt{ \pi_0((X \rtimes \Fg(X)) \rtimes \Fg(X \rtimes \Fg(X))) \ar@<5pt>[r]^-{\pi_0(\epsilon^r_X \times \Fg[\epsilon^r_X])}  \ar@<-5pt>[r]_-{\pi_0(\epsilon^r_{\Fr\U X})} & \pi_0(X \rtimes \Fg X), }\]
which indeed reduces to being the coequalizer of
\[ \vcenter{\xymatrix@C=60pt{ X \times \Fg(X) \ar@<5pt>[r]^-{p_1}  \ar@<-5pt>[r]_-{p_2} & X }}\qquad \text{where} \qquad
\vcenter{\xymatrix@R=1pt{p_1(x,\gr{g_1}^{\delta_1} \cdots \gr{g_n}^{\delta_n} ) = x \qndop^{\delta_1} g_1 \cdots \qndop^{\delta_n}  g_n; \\ p_2(x,\gr{g_1}^{\delta_1} \cdots \gr{g_n}^{\delta_n} ) = x.\qquad \qquad \qquad \quad}} \]
\subsubsection{Equivalence classes of primitive paths}\label{ParagraphEquivalenceClessesOfPrimitivePaths}
The term \emph{primitive path} is used to express the idea that it is the most unrefined way we shall use to acknowledge that two elements are connected. Literally it is just a formal sequence of symmetries.

As explained in Paragraph \ref{SubsectionActionByInnerAut}, inner automorphisms also ``represent'' sequences of symmetries. Again, each primitive path naturally reduces to an inner automorphism simply by composing all the symmetries in the sequence. We also have that $(x,y)$ is in $\Co A$ if and only if there exists $g \in \Inn(A)$ such that $x \cdot g = y$. In other words, $\Co A$ is the congruence generated by the action of $\Inn(A)$. We call it the \emph{orbit congruence} of $\Inn(A)$ (see Paragraph \ref{ParagraphOrbitCongruences}). In what follows, we like to view inner automorphisms as equivalence classes of primitive paths. As mentioned earlier we shall consider other such equivalence classes of primitive paths which lie in between formal sequences of symmetries and composites of such. Each of these represent different witnesses of how to connect elements in a rack. All of these generate the same trivializing congruence $\Co$.

\subsubsection{Conjugacy classes} Observe that for a group $\Conj(G)$, its set of connected components is given by the set of conjugacy classes in $G$. In this case the congruence $\Co(\Conj(G))$ is characterised as follows: $(a,b) \in \Co(\Conj(G))$ if and only if there exists $c \in G$ such that $b = c^{-1}ac$. Again, any primitive path, or sequence of symmetries, can be described via a single symmetry obtained as the symmetry of the product of the elements in the sequence.

Note that if $H$ is an abelian group, then $\Conj(H)$ is the trivial rack on the underlying set of $H$. More precisely the restriction to $\AB$ of the functor $\Conj$ yields the forgetful functor to $\SET$:
\[\xymatrix{ \AB \ar[rrr]^-{\Conj \text{ restricts to }\U} & & & \SET.} \]

\subsubsection{Racks and quandles have the same connected components}\label{ParagraphRckAndQndHvTheSameConnComp} The functor $\pi_0$ may be restricted to the domain $\QND$ and is then left adjoint to the inclusion functor $\I \colon {\SET \to \QND}$ by the same arguments as above. More precisely we have for any rack $X$ that $\pi_0 \Frq(X) = \pi_0(X)$, where $\Frq(X)$ is the free quandle on the rack $X$.

\subsubsection{Orbit congruences permute}\label{ParagraphOrbitCongruences} In order to obtain the admissibility of $\SET$ in $\QND$, V.~Even shows that certain classes of congruences commute with all congruences. As for quandles, we define \emph{orbit congruences} \cite{BLRY2010} as the congruences induced by the action of a normal subgroup of the group of inner automorphisms.
More precisely, if $X$ is a rack, and $N$ a normal subgroup of $\Inn(X)$ we shall write $\sim_N$ for the \emph{$N$-orbit congruence} defined for elements $x$ and $y$ in $X$ by: $x \sim_N y$ if and only if there exists $g \in N$ such that $x \cdot g = y$. As it is explained in \cite{Eve2015} (see Proposition 2.3.9), this is well defined and yields a congruence (also in $\RCK$).

We then have the following -- see \cite{EveGr2014} and \cite[Lemma 3.1.2]{Eve2015} for the proof, which also holds in $\RCK$.

\begin{lemma}\label{LemmaOrbitCongPermute}
Let $X$ be a rack, $R$ a reflexive (internal) relation on $X$ and $N$ a normal subgroup of $\Inn(X)$, then the relations $\sim_N$ and $R$ permute:
\[ \sim_N \circ R = R \circ \sim_N. \]
\end{lemma}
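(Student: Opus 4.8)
The plan is to reduce the entire statement to a single structural fact about reflexive internal relations in $\RCK$: if $R \hookrightarrow X \times X$ is a reflexive subrack, then $R$ is stable under the \emph{diagonal action} of $\Inn(X)$, in the sense that $(x,y) \in R$ implies $(x \cdot g,\, y \cdot g) \in R$ for every $g \in \Inn(X)$. Once this is available, permutability will follow by elementary manipulation of the orbit relation, using only that $N$ is closed under inverses.

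First I would establish this diagonal invariance. Since $R$ is an internal relation, it is a subrack of the product rack $X \times X$, hence closed under $\qndop$ and $\qndiop$; and reflexivity gives $(a,a) \in R$ for every $a \in X$. Therefore, for any $(x,y) \in R$ and any $a \in X$,
\[ (x,y) \qndop (a,a) = (x \qndop a,\, y \qndop a) \in R, \qquad (x,y) \qndiop (a,a) = (x \qndiop a,\, y \qndiop a) \in R. \]
Any $g \in \Inn(X)$ is a finite composite of symmetries $\Ss_a^{\pm 1}$, so iterating the two displayed closure properties along a decomposition of $g$ yields $(x \cdot g,\, y \cdot g) \in R$ for all $g \in \Inn(X)$. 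This is the only place where the rack axioms and the hypotheses on $R$ are genuinely used; I expect it to be the main (though short) obstacle, the subtlety being to phrase the diagonal action cleanly and to observe that the symmetries $\Ss_{(a,a)}$ of $X \times X$ preserve $R$ precisely because the diagonal lies in $R$.

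Next I would deduce both inclusions, fixing the convention $S \circ T \defeq \lbrace (x,z) : \exists\, y,\ x \mathrel{S} y \text{ and } y \mathrel{T} z \rbrace$. For $\sim_N \circ R \subseteq R \circ \sim_N$, take $(x,z)$ with $x \sim_N y$ and $y \mathrel{R} z$; writing $x = y \cdot g^{-1}$ with $g \in N$ and applying diagonal invariance to $(y,z) \in R$ along $g^{-1}$ gives $(x,\, z \cdot g^{-1}) \in R$, so $w \defeq z \cdot g^{-1}$ satisfies $x \mathrel{R} w$ and $w \sim_N z$. The reverse inclusion is symmetric: given $x \mathrel{R} y$ and $y \cdot g = z$ with $g \in N$, diagonal invariance applied to $(x,y)$ along $g$ produces $(x \cdot g,\, z) \in R$, whence $x \sim_N (x \cdot g)$ and $(x \cdot g) \mathrel{R} z$.

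Both steps use only that $g, g^{-1} \in N$, so the permutability argument itself needs only that $N$ is a subgroup; normality is required elsewhere, namely to guarantee (as recalled just before the statement) that $\sim_N$ is in fact a congruence. Combining the two inclusions yields $\sim_N \circ R = R \circ \sim_N$, as required.
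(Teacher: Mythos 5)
Your proof is correct, and it is essentially the argument the paper relies on: the paper gives no inline proof but defers to \cite{EveGr2014} and \cite[Lemma~3.1.2]{Eve2015}, where permutability is obtained in exactly this way, from the stability of a reflexive subrack $R \leq X \times X$ under the diagonal action of $\Inn(X)$ (acting by the symmetries $\Ss_{(a,a)}$) followed by transporting the middle witness along $g$ or $g^{-1}$. Your observation that normality of $N$ is only needed for $\sim_N$ to be a congruence, not for permutability, also agrees with how the paper organizes these facts (normality is invoked in Paragraph~\ref{ParagraphOrbitCongruences} via \cite[Proposition~2.3.9]{Eve2015}).
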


\subsubsection{Admissibility for Galois theory}\label{ParagraphAdmissibilityForGalois}
Of course the kernel pair of the unit $ \eta_X \colon { X \to \pi_0(X)}$ is an orbit congruence, since by Paragraph \ref{ParagraphEquivalenceClessesOfPrimitivePaths}, two elements are in the same connected component if and only if they are in the same orbit under the action of $\Inn(X)$.

As it was recalled in Section \ref{SectionAdmissibilityStronglyBirkhoff} (see also \cite{JK1994}), this yields Theorem 1 of \cite{Eve2014}:

\begin{proposition}
The subvariety $\SET$ is strongly Birkhoff and thus admissible in $\RCK$. Similarly for $\SET$ in $\QND$.
\end{proposition}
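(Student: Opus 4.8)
The plan is to verify the two ingredients identified in Section \ref{SectionAdmissibilityStronglyBirkhoff}, exploiting that $\RCK$ (and likewise $\QND$) is a variety of algebras, hence a finitely cocomplete Barr-exact category. First I would dispatch the \emph{Birkhoff} half: realized as the subvariety of trivial racks cut out by the extra identity (Triv), $\SET$ is automatically closed under products, subobjects and quotients, so it is a Birkhoff subcategory of $\RCK$; moreover the reflection unit $\eta_X \colon X \to \pi_0(X)$ is a surjection, i.e.\ a regular epimorphism, so $\SET$ is $(\text{regular epi})$-reflective. Nothing here requires computation beyond recalling that subvarieties are Birkhoff.

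For the \emph{strongly} Birkhoff half, I would invoke the criterion recorded in Section \ref{SectionAdmissibilityStronglyBirkhoff} (Proposition 5.4 in \cite{CKP1993}): in the Barr-exact setting, a Birkhoff subcategory is strongly Birkhoff precisely when, for every object $X$, the kernel pair of $\eta_X$ permutes with every equivalence relation on $X$. The decisive step is to identify that kernel pair. By Paragraph \ref{ParagraphAdmissibilityForGalois}, two elements of $X$ share a connected component exactly when they share an $\Inn(X)$-orbit, so the kernel pair of $\eta_X$ is nothing but the orbit congruence $\sim_N$ for $N = \Inn(X)$ (normal in itself), equivalently the trivializing congruence $\Co X$. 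Lemma \ref{LemmaOrbitCongPermute} then applies verbatim: the orbit congruence $\sim_N$ permutes with any reflexive relation on $X$, in particular with any equivalence relation. Hence the permutability hypothesis is satisfied and $\SET$ is strongly Birkhoff in $\RCK$.

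Admissibility is then formal: as recalled in Section \ref{SectionAdmissibilityStronglyBirkhoff}, Proposition 2.6 of \cite{EvGrVd2008} gives ``strongly Birkhoff $\Rightarrow$ admissible''. Finally, the quandle case needs no new argument. Here $\SET$ sits in $\QND$ again as the subvariety of trivial (quandles, i.e.\ trivial) racks, the reflection is the same $\pi_0$ restricted along $\QND \hookrightarrow \RCK$, and Lemma \ref{LemmaOrbitCongPermute} is stated to hold in $\QND$ as well; the identification of the kernel pair of $\eta_X$ as an orbit congruence and the ensuing permutability conclusion are therefore identical.

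I expect the main obstacle to be not a calculation but the \emph{recognition} that the kernel pair of the trivialization unit is precisely an orbit congruence, which is exactly what collapses the problem onto the already-established Lemma \ref{LemmaOrbitCongPermute}. The genuine technical content, that orbit congruences permute with arbitrary reflexive relations, has been factored out into that lemma; the remaining task is simply to assemble the Barr-exact framework and chain the two cited propositions.
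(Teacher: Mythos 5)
Your proposal is correct and is essentially the paper's own argument: the paper likewise observes (Paragraph \ref{ParagraphAdmissibilityForGalois}) that the kernel pair of $\eta_X$ is the orbit congruence of $\Inn(X)$ acting on $X$, invokes Lemma \ref{LemmaOrbitCongPermute} for permutability with all (reflexive) relations, and then chains Proposition 5.4 of \cite{CKP1993} with Proposition 2.6 of \cite{EvGrVd2008} to get strongly Birkhoff and hence admissible, with the quandle case handled identically. Your write-up just makes explicit the Birkhoff/(regular epi)-reflective bookkeeping that the paper leaves implicit.
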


The Galois structure $\Gamma \defeq (\RCK, \SET, \pi_0,\I, \eta, \epsilon, \EE)$ (respectively $\Gamma^q \defeq (\QND, \SET, \pi_0,\I, \eta, \epsilon, \EE)$) (see \cite{JK1994}) where $\EE$ is the class of surjective morphisms of racks (respectively quandles), is thus admissible, i.e.~the study of Galois theory is relevant in this context and gives rise, in principle, to a meaningful notion of relative centrality. 

\subsubsection{Connected components are not connected}\label{ParagraphConnectedComponentsAreNotConnected}
Given an element $a$ in a rack $A$, we may consider its \emph{connected component} $\Cc_a$, i.e.~the elements of $A$ which are connected to $a$. The set $\Cc_a$ is actually a \emph{subrack} of $A$ as it is closed under the operations in $A$. We may construct the rack $\Cc_a$ as a pullback in $\RCK$:
\begin{equation}\label{EquationConnectedComponent}
\vcenter{\xymatrix@R=15pt{ \Cc_a \pullback \ar[r] \ar[d] & 1 \ar[d]^-{[a]} \\
A \ar[r]^-{ \eta_A} & \pi_0(A),
}}
\end{equation}
where $1 = \lbrace * \rbrace$ is the one element set, which is the terminal object in $\RCK$ and also the free quandle on the one element set. Note that if $A$ is connected, then by definition $\pi_0(A) = \lbrace \ast \rbrace$ and thus $\Cc_a = A$. However if $\Cc_a \subset A$, then $\Cc_a$ might have more than one connected component itself (i.e.~$\pi_0(\Cc_a)$ has cardinality $|\pi_0(\Cc_a)| > 1$), since the existence of a primitive path between some $c$ and $b$ in $\Cc_a$, might depend on elements which are not connected to $a$.
\begin{example}\label{ExampleaConnectedComponents}
A rack $A$ is called \emph{involutive} if the two operations $\qndop$ and $\qndiop$ coincide. The subvariety of \emph{involutive racks} is thus obtained by adding the axiom
\begin{enumerate}
\item[\Inv] $x \qndop y \qndop y = x$.
\end{enumerate}
We define the involutive quandle $Q_{ab\star}$ with three elements $a$, $b$ and $\star$ such that the operation $\qndop$ is defined by the following table (see $Q_{(2,1)}$ from \cite[Example 1.3]{Eis2014}).
\begin{center}
\begin{tabular}{c | c c c}
$\triangleleft$    & $a$ & $b$ & $\star$ \\
\hline
$a$ & $a$ & $a$ & $b$ \\
$b$ & $b$ & $b$ & $a$ \\
$\star$ & $\star$ & $\star$ & $\star$ \\
\end{tabular}
\end{center}
The connected component of $a$ is the trivial rack $\Cc_a = \lbrace a,\, b \rbrace$ which has itself two connected components $\lbrace a \rbrace$ and $\lbrace b \rbrace$.
\end{example}
We like to say that, for racks (and quandles) the notion of connectedness is not local. In categorical terms, we may say that the functor $\pi_0$ is not \emph{semi-left-exact} \cite{CaHeKe1985,CJKP1997}. This property is indeed characterised, in this context, by the preservation of pullbacks such as in Equation~\eqref{EquationConnectedComponent} above, i.e.~$\pi_0$ is semi-left-exact if and only if any such connected component ($\Cc_a$) is connected ($\pi_0(\Cc_a) = \lbrace \ast \rbrace$) (see for instance \cite{BorJan1994}). This is an important difference with the case of topological spaces for instance, where the connected components are connected and thus the corresponding $\pi_0$ functor is semi-left-exact. See also \cite{EveGr2014} for further insights on connectedness.

Finally note that the same comments apply to the context of $\QND$. Looking at \cite[Corollary 2.5]{dCosta2013}, we compute that $\pi_0(\Fr(1) \times \Fr(1)) = \Z$ and thus that $\pi_0\colon{\RCK \to \SET}$ does not preserve finite products; wheareas $\pi_0\colon{\QND \to \SET}$ does, as was shown in \cite[Lemma 3.6.5]{Eve2014}.

\subsubsection{Towards covering theory}

Knowing that $\Gamma$ is admissible, we may now wonder what is the ``sphere of influence'' of $\SET$ in $\RCK$, with respect to surjective maps, and start to develop the covering theory. Since $\SET$ is strongly Birkhoff in $\RCK$, trivial extensions (first step influence) are easy to characterize as those surjections which are ``injective on connected components'':

\begin{proposition}(See also \cite{Eve2014,Eve2015})
Given a surjective morphism of racks $t \colon{X \to Y}$, the following conditions are equivalent:
\begin{enumerate}[label=(\roman*)]
\item $t$ is a trivial extension;
\item $\Eq(t) \cap \Co X = \Delta_X$;
\item if $a$ and $b$ in $X$ are connected, then $t(a)=t(b)$ implies $a=b$.
\end{enumerate} 
\end{proposition}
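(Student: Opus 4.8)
The equivalence of (ii) and (iii) is immediate once we recall the definitions involved: $(a,b)$ lies in $\Co X$ precisely when $a$ and $b$ are connected, and $(a,b)\in\Eq(t)$ precisely when $t(a)=t(b)$. Since $\Delta_X\subseteq\Eq(t)\cap\Co X$ always holds, the equality $\Eq(t)\cap\Co X=\Delta_X$ says exactly that any two connected elements with the same image under $t$ must coincide, which is (iii). I would therefore dispatch (ii)$\Leftrightarrow$(iii) in one line and concentrate on (i)$\Leftrightarrow$(ii).

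Because $\SET$ is strongly Birkhoff, and hence admissible, in $\RCK$ (by the preceding Proposition), I would invoke the standard characterization of trivial extensions in the admissible setting: a surjection $t\colon X\to Y$ is a trivial extension if and only if the naturality square of the unit $\eta$ at $t$ is a pullback, equivalently the comparison map
\[ c\colon X \to Y\times_{\pi_0(Y)}\pi_0(X), \qquad x\mapsto (t(x),[x]), \]
is an isomorphism. This map is a well-defined rack homomorphism (induced by the commuting unit square), and since a bijective rack homomorphism is an isomorphism, the task reduces to analysing the injectivity and the surjectivity of $c$ separately.

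For injectivity, $c(a)=c(b)$ means $t(a)=t(b)$ and $[a]=[b]$, that is $(a,b)\in\Eq(t)\cap\Co X$; hence $c$ is injective if and only if $\Eq(t)\cap\Co X=\Delta_X$, which is precisely (ii). For surjectivity, I would show that the surjectivity of $t$ alone already forces $c$ to be onto: given $(y,[x])$ in the pullback, the defining condition $[y]=[t(x)]$ in $\pi_0(Y)$ furnishes a primitive path connecting $t(x)$ to $y$ in $Y$, say $y=t(x)\qndop^{\delta_1} b_1\cdots\qndop^{\delta_n} b_n$; lifting each $b_i$ through the surjection $t$ to some $a_i\in X$ and putting $x'\defeq x\qndop^{\delta_1} a_1\cdots\qndop^{\delta_n} a_n$, I obtain $t(x')=y$ because $t$ preserves the operations, while $x'$ is connected to $x$ by construction, so $[x']=[x]$ and $c(x')=(y,[x])$. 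Combining the two analyses, $c$ is an isomorphism exactly when it is injective, i.e.\ exactly when (ii) holds, which yields (i)$\Leftrightarrow$(ii).

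The main obstacle is the surjectivity step, since it is the only place where genuine content enters: one must lift the primitive path witnessing $[y]=[t(x)]$ along $t$ and use that $t$ is a rack homomorphism to transport it into a primitive path in $X$ landing on a preimage of $y$. This is precisely the argument that consumes the surjectivity hypothesis together with the primitive-path description of $\Co$ from Paragraph \ref{ParagraphConnectedComponents}; everything else is a direct unwinding of definitions.
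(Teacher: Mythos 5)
Your proof is correct. Note that the paper itself prints no proof of this proposition (it defers to Even's work); its intended argument is the one it writes out for the analogous lemma on $\Frq$-trivial extensions: since $\SET$ is strongly Birkhoff in $\RCK$ (Paragraph \ref{ParagraphAdmissibilityForGalois}, resting on Lemma \ref{LemmaOrbitCongPermute} and Proposition 5.4 of \cite{CKP1993}), the reflection square at any extension $t$ is a double extension, so the comparison map $c\colon X \to Y\times_{\pi_0(Y)}\pi_0(X)$ is \emph{automatically} surjective, and since the square is a pushout the kernel pair of $c$ is $\Eq(t)\cap\Co X$; hence $t$ is trivial if and only if $c$ is injective, i.e.\ if and only if (ii). You follow the same skeleton -- trivial extension iff the reflection square is a pullback iff $c$ is an isomorphism, with injectivity of $c$ amounting to $\Eq(t)\cap\Co X=\Delta_X$ -- but where the paper invokes the strongly Birkhoff property to dispose of surjectivity, you establish it by hand, lifting a primitive path in $Y$ through the surjection $t$ and pushing $x$ along the lifted path. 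The two arguments are equivalent in content: your lifting computation is precisely the concrete verification, in this particular Galois structure, of the fact that reflection squares of extensions are double extensions. What your version buys is self-containedness -- it needs only the primitive-path description of $\Co$ from Paragraph \ref{ParagraphConnectedComponents} and no permutability machinery. What the paper's version buys is reusability: the one-line structural argument applies verbatim to every strongly Birkhoff Galois structure (the $\Frq$-adjunction, and the higher-dimensional structures of Parts II--III), which is why the paper phrases the characterization that way.
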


Recall that the construction of inner automorphisms ($\Inn$) induces a functor on surjective morphisms: given a surjective morphism $t\colon{X \to Y}$, we write $\hat{t}$ or $\Inn(t)\colon{ \Inn(X) \to \Inn(Y)}$ for the induced homomorphism between the inner automorphism groups (see first two sections of \cite{BLRY2010}).

We may then also describe a trivial extension as an extension which \emph{reflects loops}:
trivial extensions are those extensions such that for any $a$ in $A$, if $g$ in $\Inn(A)$ is such that $t(a) \cdot \hat{t}(g) = t(a)$, then $a \cdot g=a$. 
\[ \xymatrix{ (a \ar@{{}{-}{}}[r]|-{\dir{>}} ^-{g} & a \cdot g)\quad  \ar@{{|}{-}{>}}[r]^-{t} & \quad t(a) = t(a \cdot g) \quad \ar@{{}{-}{}}@(ul,ur)[]|-{\dir{>}}^-{\hat{t}(g)}} \quad \Rightarrow  \xymatrix{\quad a = a \cdot g \quad \ar@{{}{-}{}}@(ul,ur)[]|-{\dir{>}}^-{g}} \] 

In what follows, we shall use such geometrical interpretations to make sense of the algebraic conditions of interest for the covering theory. However, the non-functoriality of $\Inn$ on general morphisms appears as a serious weakness (see for instance the need for Remark \ref{RemarkFunctorialityOfPth} in the proof of Proposition \ref{PropositionCoversingsAboveFreeObjects}). It will become clear from what follows that a more suitable way to represent sequences of symmetries is needed. This is achieved by the \emph{group of paths} which we motivate and describe in the next section. It is not a new concept, but our name for the left adjoint of the conjugation functor, which was described by D.E.~Joyce and then used by M.~Eisermann to construct weakly universal covers and an ad hoc fundamental groupoid for quandles. However, we provide a hopefully enlightening description of the construction and the role of this functor, which naturally arises from the geometrical features described in Section \ref{SectionFromAiomsToGeometricalFeatures}.

\subsection{The group of paths}\label{SubsectionGroupOfPath}

\subsubsection{Definition}
Consider a rack $X$ and two elements $x$ and $y$ in $X$ which are connected by a primitive path $\Ss^{\delta_1}_{x_1},\, \ldots,\, \Ss^{\delta_n}_{x_n}$: 
\[ x \cdot (\Ss^{\delta_1}_{x_1},\, \ldots,\, \Ss^{\delta_n}_{x_n}) \defeq x \qndop^{\delta_1} x_1 \cdots \qndop^{\delta_n} x_n = y. \]
Because of \R1, we discussed that it makes sense to identify such formal sequences so as to obtain elements of the free group on $X$. Now in the same way that we used Paragraph \ref{ParagraphSelfDistr} to unfold formal terms, we still have that whenever $x_i = b \qndop c$ for $1\leq i \leq n$ and $b$, $c$ in $X$, acting with $\Ss_{x_i}$ amounts to successively acting with $\Ss_c^{-1}$, $\Ss_b$ and $\Ss_c$. From a rack $X$ we may thus build the quotient:
\[ \xymatrix{ \Fg(X) \ar[r]^-{q_X} & \Pth(X) \defeq  \Fg(X)/\langle \gr{c}^{-1}\gr{a}^{-1}\gr{x}\,\gr{a} \mid a,x,c \in X \text{ and } c= x \qndop a \rangle,}\]
which is understood as a group of equivalence classes of primitive paths. Two primitive paths are identified in the group of paths if and only if one can be formally obtained from the other, using the identities induced by the graph of the rack operations (such as $c= x \qndop a$), as well as the axioms of racks (or more precisely the axiom-induced identities between tails of formal terms).

\subsubsection{Unit and universal property}\label{ParagraphUnitAndUniversalPropertyPth}
The function $\eta^g \colon {X \to \Fg(X)}$ composed with this quotient $q_X \colon{\Fg(X) \to \Pth(X)}$ yields a rack homomorphism
\[\xymatrix{ X \ar[r]^-{\pth_X} & \Conj(\Pth(X))}\]
which sends each element $x$ of $X$ to $\pth_X(x)$ in $\Pth(X)$, such that $\pth_X(x)$ ``represents'' the positive symmetry at $x$ in the same way $\Ss_x$ does in $\Inn(X)$ (see Paragraph \ref{ParagraphActionByInnerAutomorphisms}). As for the inclusion in the free group, we shall use the convention
 \[\gr{x} \defeq \pth_X(x).\]

Now given a rack homomorphism 
$f \colon {X \to \Conj(G)}$ for some group $G$, there is a unique group homomorphism $f'$ induced by the universal property of the free group, which, moreover, factors uniquely through the quotient $q_X \colon{\Fg(X) \to \Fg(X)/\langle (\gr{x\qndop a})^{-1}\gr{a}^{-1}\gr{x}\,\gr{a}\mid a,x \in X \rangle}$, since for any $a$ and $x$ in $X$, $f(x\qndop a) = f(a)^{-1}f(x)f(a)$ in $G$:
\[ \xymatrix{ \gray{X} \ar@[Gray][r]^-{\gray{\eta^g_{X}}} \ar@[Gray][rd]_-{\gray f} &  \Fg(X) \ar@{..>}[d]^-{\exists ! f'} \ar[r]^-{q_X} & \Pth(X) \ar@{-->}[ld]^{\exists ! \bar{f}} \\
& G &  
}\]
Hence, the construction $\Pth$ uniquely defines a functor which is the left adjoint of $\Conj$ with unit $\pth\colon{1_{\RCK} \to \Conj \Pth}$. As usual, given $f\colon{X\to Y}$ in $\RCK$, there is a unique morphism $\Pth(f)$, such that
\[ \xymatrix@R=14pt@C=35pt{ X \ar[r]^-{\pth_{X}} \ar[d]_-{f} &  \Conj(\Pth(X)) \ar@{..>}[d]^-{\exists ! \Conj(\Pth(f))} \\
Y  \ar[r]^-{\pth_{Y}} & \Conj(\Pth(Y)),
}\]
which defines the functor $\Pth$ on morphisms.
\begin{notation}\label{NotationPthF} In what follows, we write $\vec{f}$ for the image $\Pth(f)$ of a morphism $f$ from $\RCK$.
\end{notation}

\subsubsection{From free objects to all -- construction as a colimit}\label{ParagraphConstructionOfPthAsColimit}

Again, observe that the composite $\Pth \Fr$ is left adjoint to the forgetful functor $\U\colon{\GRP \to \SET}$, i.e.~$\Pth(\Fr(X)) = \Fg(X)$. More precisely, we may interpret $\pth$ as the extension to all objects of the functorial construction on free objects
\[i_X\colon{X \rtimes \Fg(X) \to \Fg(X) \  \colon \ (x,g) \mapsto g^{-1}\gr{x}g}\]
which sends a trail to the ``representative of the symmetry'' associated to its endpoint (Subsection \ref{SubsectionCanonicalPresentation}). Indeed, by the composition of adjunctions, as before, this $i$ is easily seen to define the restriction to free objects of the unit $\pth$ of the $\Pth \dashv \Conj$ adjunction:
\begin{equation}\label{EquationReprOfSymmInFreeRacks} \vcenter{\xymatrix@!0@C=63pt@R=25pt{ X \ar[rr]^-{\eta^g_{X}} \ar[rd]_-{\eta^r_{X}} & &  \Conj(\Fg(X)) \ar@{-->}[dd]^-{\exists ! \Conj(f'')} \\
& \Fr(X) \ar[rd]_-{\forall f} \ar[ru]|-{i_X = \pth_{\Fr(X)}} &  \\
 & & \Conj(G)
}} \quad \text{where} \quad 
 i_X(x,e) = i_X \eta^r_X (x) = \eta^g_X(x) = \gr{x}.
\end{equation}
Then since $\Pth$ is a left adjoint, $q_X\colon {\Fg(X) \to\Pth(X)}$ should be the coequalizer of the pair
\[ \xymatrix@C=60pt{ \Pth((X \rtimes \Fg(X)) \rtimes \Fg(X \rtimes \Fg(X))) \ar@<5pt>[r]^-{\Pth(\epsilon^r_X \times \Fg[\epsilon^r_X])}  \ar@<-5pt>[r]_-{\Pth(\epsilon^r_{\Fr\U X})} & \Pth(X \rtimes \Fg X) }\]
which, using $i$ above, we compute to be
\[ \xymatrix@C=60pt{ \Fg(X \times \Fg(X)) \ar@<5pt>[r]^-{p_1}  \ar@<-5pt>[r]_-{p_2} & \Fg(X) }\]
where $p_1$ and $p_2$ are defined by 
\[ p_1(x,g)= i_X(x \cdot g,e) = \eta^g_X(x \cdot g) = \gr{x \cdot g}\quad \text{ and }\quad p_2(x,g) = i_X(x,g)= g^{-1}\gr{x}g. \]

The universal property of the unit and definition on morphisms then follows easily as before. We insist on the tight relationship between the left adjoint $\Pth$ of the conjugation functor $\Conj$, and the geometrical features of the free racks as described in Subsection \ref{SectionFromAiomsToGeometricalFeatures}. 

We also use this detailed construction of $\Pth$ as a colimit, in the proof of Proposition \ref{PropositionKernelOfPthF}.

Finally, note that this pair $p_1, p_2$ is reflexive and thus from the coequalizer $q_X$ we also get the pushout $q_X,q_X \colon {\Fg(X) \rightrightarrows \Pth(X)}$ of $p_1$ and $p_2$. Even though the original fork in $\RCK$ is not necessarily a \emph{double extension}, the resulting fork in $\GRP$ is a \emph{double extension} (because $\GRP$ is an exact Mal'tsev category \cite{CKP1993}) i.e.~the comparison map $p\colon{\Fg(X \times \Fg(X)) \to \Eq(q_X)}$ to the kernel pair of the coequalizer $q_X$, is a surjection. 

\subsubsection{Action by inner automorphisms}\label{ParagraphActionByInnerAutomorphisms}

It is already clear from the construction of $\Pth$ that the group of paths $\Pth(X)$ acts on the rack $X$ ``via representatives of the symmetries''. For any $x$ and $y$ in $X$ we have
\[ x \cdot (\gr{y}) = x\qndop y, \]
which uniquely defines the action of any element in $\Pth(X)$.

Compare this action with the action by inner automorphisms: for each rack~$X$, the universal property of $\pth_{X}$ on $\Ss \colon{X \to \Inn(X)}$ (defined in Subsection \ref{SubsubsectionInnerAutomorphisms}) gives
\[ \xymatrix@R=15pt{ X \ar[r]^-{\pth_{X}} \ar[rd]_-{\Ss} & \Pth(X) \ar@{-->}[d]^-{s} \\
& \Inn(X), 
}\]
where we have omitted $\Conj$, and $s$ is the group homomorphism which relates the representatives of symmetries in $\Pth(X)$ to those in $\Inn(X)$. Then the action of $g \in \Pth(X)$ on $X$ is also uniquely described by the action of the inner automorphism $s(g)$. If preferred, the reader may use this as the definition of action by the group of paths. The morphism $s$ is called the \emph{excess} of $X$ in \cite{FenRou1992}. It is shown to be a central extension of groups in \cite[Proposition 2.26]{Eis2014}. Note that if $N \triangleleft \Pth(X)$ is a normal subgroup of $\Pth(X)$, then $s(N)$ is a normal subgroup of $\Inn(X)$. Hence the congruence $\sim_N$ induced by the action of $N$ on $X$ always defines an \emph{orbit congruence} ($\sim_N\ =\ \sim_{s(N)}$) in the sense of Paragraph \ref{ParagraphOrbitCongruences}.

We extend the concept of a \emph{trail} from Paragraph \ref{ParagraphTerminologyAndVisualRepr}.
\begin{definition} Given a rack $X$, a \emph{trail} (in $X$) is the data of a pair $(x,g)$ given by a \emph{head} $x \in X$ and a \emph{path} $g \in \Pth(X)$. The \emph{endpoint} of such a trail is then the element obtained by the action $x \cdot g$, of $g$ on $x$.
\end{definition}

In some sense, $\Pth(X)$ is the initial such group containing representatives of the symmetries of $X$ and acting via those symmetries on $X$ -- whereas $\Inn(X)$ is the terminal such. This can be described via the notion of an \emph{augmented rack}. Those are given by a group $G$ and a rack homomorphism $\iota \colon {X \to G}$ together with a right action of $G$ on $X$ such that for $g$, $h$ in $G$ and $x$, $y$ in $X$,
\begin{enumerate}\label{IdentitiesAugmentedRacks}
\item if $e$ is the neutral element in $G$, then $x \cdot e = x$;
\item $x \cdot (gh) = (x \cdot g) \cdot h$;
\item $(x\qndop y) \cdot g = (x \cdot g) \qndop (y \cdot g)$;
\item $\iota(x \cdot g) = g^{-1} \iota(x)  g$.
\end{enumerate}
Morphisms of such are as $s$ above. Now if we fix $X$, then $\pth_X \colon {X \to \Pth(X)}$ is initial amongst augmented racks (on $X$) whereas $\Ss \colon{X \to \Inn(X)}$ is terminal. This describes why $\Inn$ can be used as the reference to define such \emph{actions by representatives of the symmetries}, described as \emph{actions by inner automorphisms}. On the other hand, it also illustrates that $\Pth(A)$ is the \emph{freest way} to produce an augmented rack. Note the resemblance between the concept of an augmented rack and the concept of a \emph{crossed module} (see for instance \cite{McLane1997}). 

\begin{remark}\label{RemarkFunctorialityOfPth}
As mentioned before, $\Pth$ has the crucial advantage of functoriality, i.e.~for any morphism of racks $f \colon{X \to Y}$ (including non-surjective ones), and for any $x \in Y$, $g= \gr{g_1}^{\delta_1} \cdots \gr{g_n}^{\delta_n} \in \Pth(X)$, we have that 
\[ x \cdot (\vec{f}(g)) = x \cdot (\vec{f}(\gr{g_1}^{\delta_1} \cdots \gr{g_n}^{\delta_n})) = x \cdot (\gr{f(g_1)}^{\delta_1} \cdots \gr{f(g_n)}^{\delta_n})= x \qndop^{\delta_1} f(g_1) \cdots \qndop^{\delta_n} f(g_n). \]
\end{remark}

In the next paragraph, we observe that in the case of free objects $\Fr(X)$, these two constructions coincide ($\Pth(\Fr(X)) = \Inn(\Fr(X))$ is $\Fg(X)$) and, most importantly for what follows, they act freely on $\Fr(X)$ (also see \cite{FenRou1992,Kru1998}, where these results were first discussed). Our hope is that, in view of the preceding discussion, these results do not take the reader by surprise any more. 

\subsubsection{Free actions on free objects} By Paragraph \ref{ParagraphConstructionOfPthAsColimit}, and for any set $X$, the group of paths $\Pth(\Fr(X)) \cong \Fg(X)$ is freely generated by the elements \[\pth_{\Fr(X)}[\eta^r_X(x)] = \pth_{\Fr(X)}[(x,e)] = \gr{(x,e)} \] for $x \in X$.  Using the identification $\gr{(x,e)} \leftrightarrow \gr{x}$, for any element $(x,g)$ of $\Fr(X)$ and any word $h =\gr{h_1}^{\delta_1} \cdots \gr{h_n}^{\delta_n}$ in $\Pth(\Fr(X)) = \Fg(X)$, with $h_i \in X$ and $\delta_i \in \lbrace -1,\, 1 \rbrace$ for each $1 \leq i \leq n$, we have that
\[ (x,g) \cdot h = (x,g) \cdot (\gr{h_1}^{\delta_1} \cdots \gr{h_n}^{\delta_n}) = (x,g) \qndop^{\delta_1} (h_1,e) \cdots \qndop^{\delta_n} (h_n,e) =(x,gh). \]

\begin{proposition}
The action of $\Fg(X)=\Pth(\Fr(X))$ on $\Fr(X) = X \rtimes \Fg(X)$ corresponds to the usual $\Fg(X)$ right action in $\SET$
\[(X\times \Fg(X)) \times  \Fg(X)  \to X \times \Fg(X) \colon ((a,g),h) \mapsto  (a,g) \cdot h = (a,gh), \] 
given by multiplication in $\Fg(X)$. Such an action is free, since if $(a,hg)=(a,g)$, then $hg = g$ and thus $h=e$. 
\end{proposition}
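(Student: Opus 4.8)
The plan is to verify the two assertions in turn, both of which reduce to the explicit description of the rack operation on $\Fr(X)$ from Paragraph \ref{ParagraphTheFreeRack}, combined with the identification $\Pth(\Fr(X)) \cong \Fg(X)$ established in Paragraph \ref{ParagraphConstructionOfPthAsColimit} and the description of the action by the group of paths from Paragraph \ref{ParagraphActionByInnerAutomorphisms}. First I would pin down how a \emph{single} generator acts. By Paragraph \ref{ParagraphActionByInnerAutomorphisms} the path $\gr{y}$ acts on a head $z$ by $z \cdot \gr{y} = z \qndop y$. Specializing the free rack operation $(a,g) \qndop (b,h) = (a, gh^{-1}\gr{b}h)$ (resp.\ $(a,g) \qndiop (b,h) = (a, gh^{-1}\gr{b}^{-1}h)$) of Paragraph \ref{ParagraphTheFreeRack} to the second trail $(h_i,e)$ gives $(a,g) \qndop (h_i,e) = (a, g\gr{h_i})$ and $(a,g) \qndiop (h_i,e) = (a, g\gr{h_i}^{-1})$, so that $(a,g) \qndop^{\delta_i} (h_i,e) = (a, g\gr{h_i}^{\delta_i})$; each elementary symmetry simply right-multiplies the path component by $\gr{h_i}^{\pm 1}$.

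Then I would take an arbitrary $h \in \Pth(\Fr(X)) = \Fg(X)$, write it as a word $h = \gr{h_1}^{\delta_1}\cdots \gr{h_n}^{\delta_n}$ in the free generators under the identification $\gr{(h_i,e)} \leftrightarrow \gr{h_i}$, and apply the previous step iteratively. The computation telescopes to
\[ (a,g) \cdot h = (a,g) \qndop^{\delta_1} (h_1,e) \cdots \qndop^{\delta_n} (h_n,e) = (a, g\gr{h_1}^{\delta_1}\cdots \gr{h_n}^{\delta_n}) = (a, gh), \]
which is exactly the display preceding the proposition; this establishes that the action coincides with the usual right multiplication of $\Fg(X)$ on the path component, proving the first assertion.

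For freeness I would fix any trail $(a,g)$ and any $h \in \Fg(X)$ with $(a,g) \cdot h = (a,g)$. By the first part this reads $(a, gh) = (a,g)$, so comparing path components yields $gh = g$ in the free group $\Fg(X)$; since groups are cancellative, $h = e$. As $(a,g)$ was arbitrary, every stabilizer is trivial and the action is free. I do not anticipate a genuine obstacle here: the single point requiring a little care is the bookkeeping of the identification $\gr{(x,e)} \leftrightarrow \gr{x}$, which is what guarantees that a word in the free generators of $\Pth(\Fr(X))$ acts precisely by right multiplication in $\Fg(X)$; once that is in place, both claims follow immediately from cancellation in the free group.
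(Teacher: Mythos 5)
Your proposal is correct and follows essentially the same route as the paper: the paper likewise identifies the free generators $\gr{(x,e)} \leftrightarrow \gr{x}$ of $\Pth(\Fr(X)) = \Fg(X)$, expands an arbitrary $h$ as a word in them, and telescopes the free-rack operation to get $(a,g)\cdot h = (a,g) \qndop^{\delta_1} (h_1,e) \cdots \qndop^{\delta_n} (h_n,e) = (a,gh)$, with freeness then read off from cancellation in $\Fg(X)$. Your only addition is to isolate the single-generator case $(a,g)\qndop^{\pm 1}(h_i,e) = (a,g\gr{h_i}^{\pm 1})$ before iterating, which the paper does implicitly in one display.
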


Observe that $\Inn(\Fr(X))$ is generated as a group by the elements in the image of $\Ss \eta^r_X$. Indeed for each
\[(a,g) = (a,g_1^{\delta_1} \cdots g_n^{\delta_n}) = (a,e)  \qndop^{\delta_1} (g_1,e) \cdots \qndop^{\delta_n}  (g_n,e) = (a,e) \cdot g, \]
in $\Fr(A)$, as before, we have
\[ \Ss_{(a,g)} = \Ss_{(g_n,e)}^{-\delta_n} \cdots \Ss_{(g_1,e)}^{-\delta_1} \Ss_{(a,e)}\, \Ss_{(g_1,e)}^{\delta_1} \cdots \Ss_{(g_n,e)}^{\delta_n}; \]
see identity (4) from page \pageref{IdentitiesAugmentedRacks}: $\Ss_{(a,e) \cdot g} = g^{-1} \Ss_{(a,e)} g$.

We conclude that $\Inn(\Fr(X))$ is actually freely generated. Indeed, the group homomorphism \[s \colon { \Pth(\Fr(X)) = \Fg(X) \to  \Inn(\Fr(X))}\] defined in Subsection \ref{ParagraphActionByInnerAutomorphisms}, is such that:
\begin{itemize}
\item it is surjective, since the generating set $s(X) = \lbrace \Ss_{(x,e)} \mid x \in X\rbrace \subset \Inn(\Fr(X))$ is the image of $X \subset \Fg(X)$ by $s$;
\item it is injective, since $s(\gr{h_1}^{\delta_1} \cdots \gr{h_n}^{\delta_n}) = e$ for some $h_i \in X$ and $\delta_i \in \lbrace -1,\, 1 \rbrace$ for $1 \leq i \leq n$, if and only if 
\[(x,g) =(x,g) \cdot (\Ss^{\delta_1}_{(h_1,e)}\cdots \Ss^{\delta_n}_{(h_n,e)})= (x,g) \cdot (\gr{h_1}^{\delta_1} \cdots \gr{h_n}^{\delta_n}),\] for all $(x,g) \in \Fr(X)$, which implies that $\gr{h_1}^{\delta_1} \cdots \gr{h_n}^{\delta_n}= e$ since the action of $\Fg(X)$ is free.
\end{itemize}

\begin{proposition}
We may always identify $\Inn(\Fr(X))$, $\Pth(\Fr(X))$ and $\Fg(X)$ as well as their action on $\Fr(X)$, which is free. We refer to them as the \emph{group of paths of $\Fr(X)$}. 
\end{proposition}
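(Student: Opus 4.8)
The plan is to consolidate the three identifications from the material developed immediately above, since the substantive work has already been carried out. First I would recall from Paragraph \ref{ParagraphConstructionOfPthAsColimit} that $\Pth\Fr$ is left adjoint to the forgetful functor $\U\colon{\GRP \to \SET}$, which yields the group isomorphism $\Pth(\Fr(X)) \cong \Fg(X)$ sending the generator $\pth_{\Fr(X)}[(x,e)] = \gr{(x,e)}$ to $\gr{x}$. Under this identification, the preceding Proposition already describes the action of $\Pth(\Fr(X)) = \Fg(X)$ on $\Fr(X) = X \rtimes \Fg(X)$ as the right regular action $((a,g),h) \mapsto (a,gh)$ given by multiplication in $\Fg(X)$, and records that this action is free.

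Next I would invoke the excess homomorphism $s\colon{\Pth(\Fr(X)) \to \Inn(\Fr(X))}$ of Paragraph \ref{ParagraphActionByInnerAutomorphisms}. The two displayed items just above the statement have already verified that $s$ is surjective (its image contains the generating set $\lbrace \Ss_{(x,e)} \mid x \in X \rbrace$ of $\Inn(\Fr(X))$) and injective (using the freeness of the $\Fg(X)$-action to force $\gr{h_1}^{\delta_1} \cdots \gr{h_n}^{\delta_n} = e$ whenever $s$ sends it to the identity). Hence $s$ is a group isomorphism, supplying the remaining identification $\Pth(\Fr(X)) \cong \Inn(\Fr(X))$, and composing with the first isomorphism identifies all three groups with $\Fg(X)$.

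It then remains to see that the three actions agree under these isomorphisms, so that the freeness established for one transports to all. The hard part, such as it is, is precisely this compatibility, but it is built into the construction of $s$: by Paragraph \ref{ParagraphActionByInnerAutomorphisms} the action of $g \in \Pth(\Fr(X))$ on $\Fr(X)$ is \emph{defined} to be the action of the inner automorphism $s(g)$, while the evaluation action of $\Inn(\Fr(X))$ is the standard one, so $s$ intertwines them tautologically. Thus all three groups act through the single free action of the preceding Proposition, and we may legitimately call them the \emph{group of paths of} $\Fr(X)$. In short, the proposition is a packaging statement, and the only step meriting attention -- the intertwining of the evaluation action with the $\Pth$-action -- is immediate from the definition of $s$.
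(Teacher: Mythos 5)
Your proposal is correct and takes essentially the same route as the paper: it identifies $\Pth(\Fr(X))\cong\Fg(X)$ via the composite adjunction, records the free right-multiplication action, proves the excess $s\colon{\Pth(\Fr(X)) \to \Inn(\Fr(X))}$ is an isomorphism by surjectivity on the generators $\Ss_{(x,e)}$ and injectivity from freeness of the action, and observes that the actions agree because the $\Pth$-action is defined through $s$. There is nothing to add.
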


\subsubsection{The kernels of induced morphisms $\vec{f}$}
In this section we introduce the results which we use to describe the relationship between the group of paths $\Pth$, and the central extensions (coverings) and centralizing relations of racks and quandles.

Our Lemma \ref{LemmaGroupCharacterization1} is only a slight generalization of a Lemma in \cite{BonSta2021}. We further generalize to \emph{higher dimensions} in Part II.

\begin{definition}\label{DefinitionK1}
Given a group homomorphism $f \colon{G \to H}$, and a generating set $A \subseteq G$ (i.e.~such that $G = \langle a \, \mid\, a \in A \rangle_G$), we define (implicitly with respect to $A$) 
\begin{enumerate}[label=(\roman*)]
\item two elements $g_a$ and $g_b$ in $G$ are \emph{$f$-symmetric (to each other)} if there exists $n\in \N$ and a sequence of pairs $(a_1,b_1)$, $\ldots$, $(a_n,b_n)$ in the set $(A\times A)\cap \Eq(f)$, such that \[ g_a = a_1^{\delta_1} \cdots  a_n^{\delta_n}, \quad \text{ and } \quad g_b = b_1^{\delta_1} \cdots  b_n^{\delta_n},\]
for some $\delta_i \in \lbrace -1,\, 1\rbrace$, where $1\leq i \leq n$. Alternatively say that $g_a$ and $g_b$ are an \emph{$f$-symmetric pair}.
\item $\K{f}$ is the set of \emph{$f$-symmetric paths} defined as the elements $g \in G$ such that $g = g_a g_b^{-1}$ for some $g_a$ and $g_b \in G$ which are $f$-symmetric to each other.
\end{enumerate}
\end{definition}

\begin{lemma}\label{LemmaGroupCharacterization1}
Given the hypotheses of Definition \ref{DefinitionK1}, the set of $f$-symmetric paths $\K{f} \subseteq G$ defines a normal subgroup in $G$. More precisely it is the normal subgroup generated by the elements of the form $ab^{-1}$ such that  $a$, $b \in A$, and $(a,b) \in \Eq(f)$:
\[ \K{f} = G_f \defeq \langle\langle ab^{-1} \mid (a,b) \in (A \times A) \cap \Eq(f) \rangle \rangle_G. \]
\end{lemma}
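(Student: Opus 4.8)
The plan is to prove the two inclusions $\K{f}\subseteq G_f$ and $G_f\subseteq \K{f}$ separately. The inclusion $\K{f}\subseteq G_f$ is immediate by passing to the quotient $G/G_f$, whereas the reverse inclusion will follow once we know that $\K{f}$ is a \emph{normal subgroup} of $G$ containing each generator $ab^{-1}$: indeed $G_f$ is by definition the smallest normal subgroup with this property. The only genuinely non-obvious point is that $\K{f}$, defined as a set of ``differences'' $g_ag_b^{-1}$, is a subgroup at all, since the map $(x,y)\mapsto xy^{-1}$ is not a group homomorphism (it is one only when $G$ is abelian).

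For $\K{f}\subseteq G_f$, I would consider the canonical projection $\pi\colon G \to G/G_f$. If $g=g_ag_b^{-1}$ is an $f$-symmetric path, witnessed by a sequence $(a_i,b_i)_{1\le i\le n}$ in $(A\times A)\cap \Eq(f)$ and exponents $\delta_i$, then each $a_ib_i^{-1}$ is one of the defining generators of $G_f$, so $\pi(a_i)=\pi(b_i)$ for every $i$. Since $g_a$ and $g_b$ are the \emph{same word} in the letters $a_i$, resp.\ $b_i$, with the same exponents, we get $\pi(g_a)=\pi(a_1)^{\delta_1}\cdots\pi(a_n)^{\delta_n}=\pi(b_1)^{\delta_1}\cdots\pi(b_n)^{\delta_n}=\pi(g_b)$, whence $\pi(g)=e$ and $g\in G_f$.

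The key step is to realise $\K{f}$ as a subgroup by lifting the problem into $G\times G$. Let $S\subseteq G\times G$ be the set of all $f$-symmetric pairs $(g_a,g_b)$. I claim $S$ is a subgroup: concatenating witnessing sequences (and their exponents) shows $S$ is closed under coordinatewise multiplication; reversing a sequence while negating its exponents shows it is closed under inversion; and since every element of $G$ is a word in $A$, each diagonal pair $(g,g)$ is $f$-symmetric, so $\Delta \defeq \{(g,g)\mid g\in G\}\subseteq S$. Now for any $f$-symmetric pair one has $(g_ag_b^{-1},e)=(g_a,g_b)\,(g_b,g_b)^{-1}\in S$, and conversely $(z,e)\in S$ gives $z=z\,e^{-1}\in \K{f}$; hence $\K{f}=\{z\in G\mid (z,e)\in S\}$. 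This last set is the image under the first projection of the subgroup $S\cap(G\times\{e\})$, so it is a subgroup of $G$. It is moreover normal: given $z\in\K{f}$ and $g\in G$, conjugating $(z,e)\in S$ by the diagonal element $(g,g)\in S$ yields $(gzg^{-1},e)\in S$, that is, $gzg^{-1}\in \K{f}$.

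Finally, each generator $ab^{-1}$ with $(a,b)\in (A\times A)\cap \Eq(f)$ is itself an $f$-symmetric path (take the length-one sequence $(a,b)$ with $\delta_1=1$), so $\K{f}$ is a normal subgroup containing all these generators; by minimality of the normal closure, $G_f\subseteq \K{f}$. Combined with the first inclusion this gives $\K{f}=G_f$, which is normal by construction. I expect the main obstacle to be precisely the subgroup-and-normality claim for $\K{f}$: the clean route around it is the auxiliary group $S$ of $f$-symmetric pairs together with the observation that it contains the diagonal, which simultaneously delivers closure under the group operations and normality.
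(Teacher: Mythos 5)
Your proof is correct, but it takes a genuinely different route from the paper's in both key steps. For the inclusion $\K{f}\subseteq G_f$, the paper argues by induction on the minimal length of a witnessing sequence, peeling off the first pair $(a_1,b_1)$ and writing $g = a_1ha_1^{-1}\cdot a_1b_1^{-1}$ with $h$ covered by the inductive hypothesis; your one-line homomorphism argument through the quotient $G/G_f$ (the words $g_a$ and $g_b$ have letterwise equal images, hence equal images) eliminates the induction entirely. For the subgroup-and-normality claim, the paper checks closure under inverses, products and conjugation directly on elements of $\K{f}$, exhibiting an explicit new witnessing sequence each time (e.g.\ writing $gh=k_ak_b^{-1}$ with $k_a=h_ah_a^{-1}g_a$ and $k_b=h_bh_a^{-1}g_b$); your auxiliary subgroup $S\leq G\times G$ of $f$-symmetric pairs relies on exactly the same combinatorial moves (concatenation, reversal with negated exponents, padding by diagonal pairs $(a,a)\in\Eq(f)$), but packages them so that both the subgroup property and the normality of $\K{f}$ fall out formally from the single observation that $S$ is a subgroup containing the diagonal, via the identification $\K{f}\times\{e\}=S\cap(G\times\{e\})$ and conjugation by $(g,g)$. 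What the paper's route buys is a fully elementary, self-contained verification; what yours buys is a cleaner separation between the witness combinatorics and the group theory, avoiding both the induction and the fiddly explicit products. One pedantic point to spell out: if Definition \ref{DefinitionK1} is read as requiring $n\geq 1$, the identity $(e,e)\in S$ should be witnessed separately, e.g.\ by the sequence $(a,a),(a,a)$ with exponents $1,-1$ for any $a\in A$ (the case $A=\emptyset$ being trivial).
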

\begin{proof}
First we show that $\K{f}$ is a normal subgroup of $G$. Let $g_a$ and $g_b$ be $f$-symmetric (to each other). Observe that $g_b^{-1}$ and $g_a^{-1}$ are also $f$-symmetric, and thus $\K{f}$ is closed under inverses. Moreover, if $h_a$ and $h_b$ are $f$-symmetric, and $g=g_ag_b^{-1}$, $h=h_ah_b^{-1}$, then $gh=k_a k_b^{-1}$, with $k_a = h_ah_a^{-1}g_a$ and $k_b =h_bh_a^{-1} g_b$ which are $f$-symmetric. Finally since $A$ generates $G$, for any $k \in G$, $kg_a$ and $kg_b$ are $f$-symmetric to each other, and thus $kgk^{-1} \in \K{f}$ is an $f$-symmetric path.

Since the generators of $G_f$ are in the normal subgroup $\K{f}$, it suffices to show that $\K{f} \leq G_f$. Given an $f$-symmetric pair $g_a$ and $g_b$, we show that $g=g_ag_b^{-1} \in G_f$ by induction, on the minimum length $n_g$ of the sequences $(a_i,b_i)_{1\leq i \leq  n}$ in the set $(A\times A)\cap \Eq(f)$ such that $g_a = a_1^{\delta_1} \cdots  a_n^{\delta_n}$ and $g_b = b_1^{\delta_1} \cdots  b_n^{\delta_n}$ for some $\delta_i \in \lbrace -1,\, 1\rbrace$. If $n_g=1$, then $g$ is a generator of $G_f$. Suppose that $g=g_ag_b^{-1} \in G_f$ for all such $f$-symmetric pair with $n_g < n $ for some fixed $n \in \N$. Then given a pair $g_a = a_1^{\delta_1} \cdots  a_n^{\delta_n}$ and $g_b = b_1^{\delta_1} \cdots  b_n^{\delta_n}$ for some $(a_1,b_1)$, $\ldots$, $(a_n,b_n)$ in the set $(A\times A)\cap \Eq(f)$, and $\delta_i \in \lbrace -1,\, 1\rbrace$, we have that $h_a \defeq a_1^{-1} g_a$ and $h_b \defeq b_1^{-1} g_b$ are such that $h=h_ah_b^{-1} \in G_f$ by assumption. Moreover, $g = a_1 h a_1^{-1} a_1 b_1^{-1}$ which is a product of elements in $G_f$.
\end{proof}

\begin{observation}\label{ObservationPairingElementsInKernelWords}
Consider a function $f\colon {A \to B}$, and a word $\nu = a_1^{\delta_1} \cdots a_n^{\delta_n}$ with $a_i \in A$ and $\delta_i \in \lbrace -1,\, 1\rbrace$, for $1 \leq i \leq n$. This word represents an element $g$ in the free group $\Fg(A)$. As usual, a \emph{reduction} of $\nu$ consists in eliminating, in the word $\nu$, an adjacent pair $a_i^{\delta_i}a_{i+1}^{\delta_{i+1}}$ such that $\delta_{i} = -\delta_{i+1}$ and $a_{i} = a_{i+1}$. Every element $g \in \Fg(A)$ represented by a word $\nu$ admits a unique \emph{normal form} i.e.~a word $\nu'$ obtained from $\nu$ after a sequence of reductions, such that there is no possible reduction in $\nu'$, but $\nu'$ still represents the same element $g$ in $\Fg(A)$.

Suppose that $\nu$ represents an element $g$ which is in the kernel $\Ker(\Fg(f))$. The normal form of the word $f[\nu] \defeq f(a_1)^{\delta_1} \cdots f(a_n)^{\delta_n}$ (which represents $\Fg(f)(g) = e \in \Fg(B)$) is the empty word $\emptyset$, and thus there is a sequence of reductions of $f[\nu]$ such that the end result is $\emptyset$. From this sequence of reductions, we may deduce that $n=2m$ for some $m \in \N$ and the letters in the word (or sequence) $\nu$ organize themselves in $m$ pairs $(a_i^{\delta_i},a_j^{\delta_j})$ (the pre-images of those pairs that are reduced at some point in the aforementioned sequence of reductions) such that $i < j$, $f(a_i)= f(a_j)$, $\delta_i = - \delta_j$, each letter of the word $g$ appears in only one such pair and finally given any two such pairs $(a_i^{\delta_i},a_j^{\delta_j})$ and $(a_l^{\delta_l},a_m^{\delta_m})$, then $l < i$ (respectively $l> i$) if and only $m> j$ (respectively $m< j$),  i.e.~drawing lines which link those letters of the word $\nu$ that are identified by the pairing, none of these lines can cross. 
\[\xymatrix@R=20pt@C=4pt{ \\ 
a_1^{\delta_1}\ar@{{}{-}{}}@/^7ex/[rrrrr] & a_2^{\delta_2} \ar@{{}{-}{}}@/^4ex/[r] & a_3^{\delta_3} & a_4^{\delta_4} \ar@{{}{-}{}}@/^4ex/[r] & a_5^{\delta_5}  & a_6^{\delta_6} & a_7^{\delta_7} \ar@<1ex>@{{}{-}{}}@/^8ex/[rrrrrrr] & a_8^{\delta_8}\ar@{{}{-}{}}@/^4ex/[r]  & a_9^{\delta_9} & a_{10}^{\delta_{10}} \ar@{{}{-}{}}@/^6ex/[rrr] & a_{11}^{\delta_{11}} \ar@{{}{-}{}}@/^4ex/[r]  & a_{12}^{\delta_{12}} & a_{13}^{\delta_{13}} & a_{14}^{\delta_{14}}}\]
Given such a pairing of the letters of $\nu$, for each $k \in \lbrace 1,\ \ldots,\ n \rbrace$ we write $(a_{i_k}^{\delta_{i_k}},a_{j_k}^{\delta_{j_k}})$ for the unique pair such that either $i_k = k$ or $j_k=k$. Note that, conversely, any element $g$ in $\Fg(A)$ which is represented by a word $\nu$ which admits such a pairing of its letters, is necessarily in $\Ker(\Fg(f))$.
\end{observation}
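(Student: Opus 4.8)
The statement is an instance of the classical description of words representing the identity in a free group by means of non-crossing (planar) matchings, transported along $f$. The plan is to prove both implications by reducing to this single combinatorial fact about $\Fg(B)$ and then pulling the resulting matching back to the letters of $\nu$ via $f$, using only that matched positions carry mutually inverse images. Throughout it is convenient to think of each letter $a_i^{\delta_i}$ as an opening or closing bracket of ``type'' $f(a_i)$, so that the claim becomes the standard correspondence between identity words and balanced bracket sequences.

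For the forward implication, I would start from the observation that $g \in \Ker(\Fg(f))$ means precisely that the word $f[\nu] = f(a_1)^{\delta_1} \cdots f(a_n)^{\delta_n}$ represents the neutral element of $\Fg(B)$. By the normal form theorem for free groups, a word represents $e$ if and only if it can be brought to the empty word by a finite sequence of elementary reductions, each cancelling two adjacent letters $x^{\epsilon} x^{-\epsilon}$. I would then argue by induction on $n$: if $f[\nu]$ is nonempty and represents $e$, it is not reduced, so there is a position $k$ with $f(a_k) = f(a_{k+1})$ and $\delta_k = -\delta_{k+1}$; I pair the positions $k$ and $k+1$, delete them, and apply the induction hypothesis to the remaining word, which has length $n-2$ and still represents $e$. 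Re-inserting the deleted adjacent pair produces an innermost arc that crosses none of the arcs built on the surviving positions, so the matching stays non-crossing. Since every position gets matched, $n = 2m$ and each matched pair $(a_i^{\delta_i}, a_j^{\delta_j})$ satisfies $i<j$, $f(a_i) = f(a_j)$ and $\delta_i = -\delta_j$, exactly as claimed.

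For the converse, suppose $\nu$ admits a non-crossing matching of its letters with $f(a_i) = f(a_j)$ and $\delta_i = -\delta_j$ on each pair. The same matching on $f[\nu]$ then pairs mutually inverse letters, and I would show $f[\nu]$ reduces to the empty word by peeling off innermost arcs: any non-crossing matching has a pair $(i,j)$ with $j = i+1$, since choosing a pair of minimal width $j-i$ forces, by non-crossingness, that any position strictly between $i$ and $j$ would be matched to another such interior position and hence yield a pair of strictly smaller width, a contradiction. Cancelling this adjacent inverse pair leaves a shorter word still carrying a non-crossing matching, and induction finishes the reduction; hence $f[\nu]$ represents $e$ and $g \in \Ker(\Fg(f))$.

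The routine bookkeeping --- well-definedness of the matching, evenness of $n$, and the fact that each letter lies in exactly one pair --- is immediate once the induction is set up. The one point requiring genuine care, and the step I expect to be the main obstacle, is the pair of planarity arguments: showing that the matching read off from a reduction sequence is non-crossing, and dually that every non-crossing matching exposes an innermost adjacent arc. Both are the standard balanced-parenthesis arguments, and I would make sure in particular that the minimal-width argument for the converse is stated carefully, since it is precisely what guarantees that a cancellable adjacent pair exists at every stage of the reduction.
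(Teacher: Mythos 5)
Your proof is correct and follows essentially the same route as the paper: the pairing is extracted from a cancellation sequence of $f[\nu]$ (your induction just makes explicit, one adjacent cancellation at a time, what the paper phrases as taking pre-images of the pairs reduced along a given reduction sequence), and your innermost-arc argument supplies the converse that the paper only asserts. No gaps; the extra care you give to the two planarity steps is exactly the detail the Observation leaves implicit.
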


Using this observation, we characterize the kernels of maps between free groups.

\begin{proposition}\label{LemmaKernelOfFgF}
Given a function $f\colon {A \to B}$, the kernel $\Ker(\Fg(f))$ of the induced group homomorphism $\Fg(f) \colon {\Fg(A) \to \Fg(B)}$ is given by the normal subgroup $\K{\Fg(f)}$ of $\Fg(f)$-symmetric paths (as in Definition \ref{DefinitionK1}): $\Ker(\Fg(f)) = \K{\Fg(f)}$.
\end{proposition}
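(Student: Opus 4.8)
The plan is to prove the two inclusions separately, using Lemma \ref{LemmaGroupCharacterization1} to replace $\K{\Fg(f)}$ by the concrete normal subgroup $G_{\Fg(f)} = \langle\langle\, \gr{a}\gr{b}^{-1} \mid (a,b)\in (A\times A)\cap\Eq(\Fg(f)) \,\rangle\rangle$, where $A$ denotes the standard generating set $\{\gr{a}\mid a\in A\}$ of $\Fg(A)$ and a pair of generators $(\gr{a},\gr{b})$ lies in $\Eq(\Fg(f))$ exactly when $f(a)=f(b)$ in $B$. For the hard direction I would lean on Observation \ref{ObservationPairingElementsInKernelWords}.

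The inclusion $\K{\Fg(f)}\subseteq \Ker(\Fg(f))$ is immediate. Each generator $\gr{a}\gr{b}^{-1}$ of $G_{\Fg(f)}$ with $f(a)=f(b)$ is sent by $\Fg(f)$ to $\gr{f(a)}\,\gr{f(b)}^{-1}=e$, so all these generators lie in $\Ker(\Fg(f))$; since the latter is a normal subgroup it contains their normal closure $G_{\Fg(f)}=\K{\Fg(f)}$.

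For the reverse inclusion I would take $g\in\Ker(\Fg(f))$, fix a word $\nu=\gr{a_1}^{\delta_1}\cdots\gr{a_n}^{\delta_n}$ representing it, and invoke Observation \ref{ObservationPairingElementsInKernelWords} to get the non-crossing pairing of its letters, each matched pair $(\gr{a_i}^{\delta_i},\gr{a_j}^{\delta_j})$ satisfying $f(a_i)=f(a_j)$ and $\delta_i=-\delta_j$. I then induct on $n$, the base case being the empty word representing $e\in G_{\Fg(f)}$. The key combinatorial input is that a non-crossing perfect matching always contains an \emph{adjacent} matched pair (reading the matching as balanced parentheses, an innermost pair consists of two consecutive letters). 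So there is a position $k$ with $\gr{a_k}^{\delta_k}\gr{a_{k+1}}^{\delta_{k+1}}$ matched; then $f(a_k)=f(a_{k+1})$ and $\delta_{k+1}=-\delta_k$, and in either sign case the two-letter factor $p\defeq\gr{a_k}^{\delta_k}\gr{a_{k+1}}^{\delta_{k+1}}$ lies in $G_{\Fg(f)}$ — for $\delta_k=1$ it is a generator, and for $\delta_k=-1$ it is the conjugate $\gr{a_k}^{-1}(\gr{a_{k+1}}\gr{a_k}^{-1})\gr{a_k}$ of the generator $\gr{a_{k+1}}\gr{a_k}^{-1}$, hence in the normal subgroup. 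Writing $\nu=u\,p\,v$ with $u,v$ the flanking subwords, normality gives $g(uv)^{-1}=upu^{-1}\in G_{\Fg(f)}$, so $g$ and the element represented by the shorter word $uv$ lie in the same coset of $G_{\Fg(f)}$. Deleting an innermost adjacent pair keeps the remaining pairing non-crossing and keeps $uv$ in $\Ker(\Fg(f))$ (since $\Fg(f)(p)=e$), so the induction hypothesis places $uv$, and therefore $g$, in $G_{\Fg(f)}=\K{\Fg(f)}$.

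The main obstacle — and the only genuinely non-formal step — is the combinatorial claim that a non-crossing matching admits an adjacent matched pair whose deletion again leaves a non-crossing matching; once this is in hand the rest is bookkeeping with normal closures. I would also be careful to treat both sign conventions $\delta_k=\pm1$ when identifying $p$ as an element of $G_{\Fg(f)}$, since for $\delta_k=-1$ the factor is only a conjugate of a generator rather than a generator on the nose, which is precisely where normality of $G_{\Fg(f)}$ is used.
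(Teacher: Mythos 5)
Your proof is correct, and the easy inclusion is handled exactly as in the paper; but for the hard inclusion you take a genuinely different route. Both arguments start from the non-crossing pairing supplied by Observation \ref{ObservationPairingElementsInKernelWords}, but they exploit it differently. You peel off an innermost (hence adjacent) matched pair $p$ by induction on word length, check in both sign cases that $p$ lies in the normal closure $G_{\Fg(f)} = \langle\langle ab^{-1} \mid (a,b) \in (A\times A)\cap \Eq(\Fg(f))\rangle\rangle$, use normality to write $g = (upu^{-1})(uv)$, and finally invoke Lemma \ref{LemmaGroupCharacterization1} to identify $G_{\Fg(f)}$ with $\K{\Fg(f)}$. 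The paper instead concludes in one shot, directly in terms of Definition \ref{DefinitionK1}: from the pairing $(a_{i_k},a_{j_k})$ of the word $\nu = a_1^{\delta_1}\cdots a_n^{\delta_n}$ it forms the word $\nu' = b_1^{\delta_1}\cdots b_n^{\delta_n}$ with $b_k \defeq a_{i_k}$ (each letter replaced by the first letter of its pair); since paired positions of $\nu'$ carry literally equal letters with opposite exponents, $\nu'$ reduces to the empty word, so it represents $h = e$, while $f(a_k)=f(b_k)$ for every $k$ makes $(g,h)$ an $f$-symmetric pair, whence $g = gh^{-1} \in \K{\Fg(f)}$ immediately -- no induction, no detour through Lemma \ref{LemmaGroupCharacterization1}, and no need for your adjacent-pair claim. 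What your route buys is a more familiar group-theoretic argument (normal closures and conjugation), and the combinatorial fact you flag as the main obstacle is indeed standard and unproblematic: a matched pair of minimal gap must be adjacent, since any letter strictly between its endpoints would have to be matched inside by non-crossing, giving a smaller gap. What the paper's route buys is that it exhibits the $f$-symmetric pair explicitly, which is the form of the statement actually used downstream (notably in Proposition \ref{PropositionKernelOfPthF} and the description of coverings via horns) and the one the author intends to generalize to higher dimensions.
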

\begin{proof}
The inclusion $\Ker(\Fg(f)) \supseteq \K{\Fg(f)}$ is obvious. Consider a reduced word $\nu = a_1^{\delta_1} \cdots a_n^{\delta_n} $ of length $n\in \N$ which represents an element $g$ in $\Fg(A)$ with $\delta_i \in \lbrace -1,\, 1\rbrace$, for $1 \leq i \leq n$ and suppose that $g \in \Ker(\Fg(f))$. Then the letters $a_k^{\delta_k}$ of the sequence (or word) $\nu \defeq (a_k^{\delta_k})_{1\leq k \leq n}$ organize themselves in pairs $(a_{i_k}^{\delta_{i_k}},a_{j_k}^{\delta_{j_k}})$ as in Observation \ref{ObservationPairingElementsInKernelWords}. Define the word $\nu' = b_1^{\delta_1} \cdots b_n^{\delta_n}$ such that for each $1 \leq k \leq n$, $b_k \defeq a_{i_k}$. Then by construction $\nu'$ represents an element $h$ which reduces to the empty word in $\Fg(A)$, so that $g = gh^{-1}$. Moreover, $g$ and $h$ form an $f$-symmetric pair, which shows that $g \in \K{\Fg(f)}$. \qedhere

\end{proof}

Finally the same characterization holds for kernels of maps $\Pth(f)=\vec{f}\colon{ \Pth(X) \to \Pth(Y)}$ induced by a surjective morphism of racks $f\colon{X \to Y}$.

\begin{proposition}\label{PropositionKernelOfPthF}
Given a surjective morphism of racks $f\colon {X \twoheadrightarrow Y}$, the kernel $\Ker(\vec{f})$ of the group homomorphism $\vec{f} \colon {\Pth(X) \twoheadrightarrow \Pth(Y)}$ is given by the normal subgroup $\K{\vec{f}}$ of $\vec{f}$-symmetric paths (as in Definition \ref{DefinitionK1}): 
\[ \Ker(\vec{f}) = \K{\vec{f}} =  \langle\langle ab^{-1} \mid (a,b) \in \Eq(f) \rangle \rangle_{\Pth(X)}.\]
\end{proposition}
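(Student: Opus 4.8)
The plan is to transport Proposition~\ref{LemmaKernelOfFgF} along the defining quotient $q_X\colon{\Fg(X)\to\Pth(X)}$, i.e.\ to deduce $\Ker(\vec{f})$ from the already-computed kernel $\Ker(\Fg(f))$. Write $K_X\defeq\Ker(q_X)$ and $K_Y\defeq\Ker(q_Y)$ for the normal subgroups of relators, so that $\Pth(X)=\Fg(X)/K_X$ and $\Pth(Y)=\Fg(Y)/K_Y$ (equivalently, $q_X$ is the coequalizer of Paragraph~\ref{ParagraphConstructionOfPthAsColimit}). Evaluating on the generators $\gr{x}$ shows that both composites send $\gr{x}\mapsto\gr{f(x)}$, so the square $\vec{f}\circ q_X=q_Y\circ\Fg(f)$ commutes; surjectivity of $f$ moreover makes all four maps surjective.

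The crucial step is the identity $\Fg(f)(K_X)=K_Y$. The inclusion ``$\subseteq$'' is immediate, since $\Fg(f)$ carries each defining relator $\gr{c}^{-1}\gr{a}^{-1}\gr{x}\,\gr{a}$ (with $c=x\qndop a$) to $\gr{f(c)}^{-1}\gr{f(a)}^{-1}\gr{f(x)}\,\gr{f(a)}$, which is again a relator of $\Pth(Y)$ because $f(c)=f(x)\qndop f(a)$, and conjugates and products are preserved. For ``$\supseteq$'' I use surjectivity: every relator of $\Pth(Y)$ arises from a relation $c=y\qndop a$ in $Y$; choosing preimages $\tilde{a},\tilde{y}$ and setting $\tilde{c}\defeq\tilde{y}\qndop\tilde{a}$ produces a relator of $\Pth(X)$ in $K_X$ mapping onto it, and lifting the conjugating words along the surjection $\Fg(f)$ finishes the inclusion. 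From $\Fg(f)(K_X)=K_Y$ and surjectivity one obtains $\Fg(f)^{-1}(K_Y)=K_X\cdot\Ker(\Fg(f))$, whence, using that $q_X$ is surjective with kernel $K_X$,
\[ \Ker(\vec{f})=q_X\big(\Fg(f)^{-1}(K_Y)\big)=q_X\big(\Ker(\Fg(f))\big). \]

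It then remains to substitute Proposition~\ref{LemmaKernelOfFgF}, which gives $\Ker(\Fg(f))=\K{\Fg(f)}=\langle\langle\gr{a}\gr{b}^{-1}\mid(a,b)\in\Eq(f)\rangle\rangle_{\Fg(X)}$; here the condition $(\gr{a},\gr{b})\in\Eq(\Fg(f))$ forces $f(a)=f(b)$ since the free generators $\gr{y}$ of $\Fg(Y)$ are pairwise distinct. As a surjective homomorphism carries a normal closure onto the normal closure of the images, applying $q_X$ yields $\Ker(\vec{f})=\langle\langle\gr{a}\gr{b}^{-1}\mid(a,b)\in\Eq(f)\rangle\rangle_{\Pth(X)}$, which is the right-hand side of the statement. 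Finally this agrees with $\K{\vec{f}}$: by Lemma~\ref{LemmaGroupCharacterization1} applied to $\vec{f}$ with generating set $\pth_X(X)$, $\K{\vec{f}}$ is the normal closure of those $\gr{a}\gr{b}^{-1}$ with $\vec{f}(\gr{a})=\vec{f}(\gr{b})$; all such lie in $\Ker(\vec{f})$, giving $\K{\vec{f}}\subseteq\Ker(\vec{f})$, while the generators with $f(a)=f(b)$ are among them, giving the reverse inclusion.

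The main obstacle is the identity $\Fg(f)(K_X)=K_Y$, and precisely its ``$\supseteq$'' half: this is where surjectivity of $f$ is indispensable, as one must lift each relation of $\Pth(Y)$ to an honest relation of $\Pth(X)$ (not merely to some word) and then also lift the conjugators. A secondary point to handle with care is the gap between the condition $f(a)=f(b)$ and the a priori weaker $\vec{f}(\gr{a})=\vec{f}(\gr{b})$ appearing in Lemma~\ref{LemmaGroupCharacterization1}; the sandwich argument in the previous paragraph shows both describe the same normal subgroup.
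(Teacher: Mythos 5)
Your proof is correct, and it shares the paper's global strategy --- both arguments reduce the statement to Proposition~\ref{LemmaKernelOfFgF} by establishing the identity $\Ker(\vec{f}) = q_X\big(\Ker(\Fg(f))\big)$ --- but the key step is carried out with genuinely different tools. The paper views the square $\vec{f}\circ q_X = q_Y \circ \Fg(f)$ as a morphism between the coequalizer diagrams of Paragraph~\ref{ParagraphConstructionOfPthAsColimit} and invokes Lemma~1.2 of \cite{Bou2003}: since the induced map between the reflexive pairs of free groups is surjective, the square is a regular pushout (double extension) in $\GRP$, whence the comparison map $\Ker(\Fg(f)) \to \Ker(\vec{f})$ is surjective. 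You instead prove the concrete content of that regular-pushout claim by hand: the identity $\Fg(f)(K_X) = K_Y$ between the relator subgroups, whose nontrivial inclusion rests on lifting each defining relation $c = y \qndop a$ of $\Pth(Y)$ along the surjection $f$ to an honest relation of $\Pth(X)$ and lifting the conjugating words along $\Fg(f)$; the elementary consequence $\Fg(f)^{-1}(K_Y) = K_X \cdot \Ker(\Fg(f))$ then yields $\Ker(\vec{f}) = q_X\big(\Ker(\Fg(f))\big)$. What each approach buys: the paper's argument is shorter, fits the categorical machinery (exactness and Mal'tsev properties of $\GRP$) used throughout the article, and would survive in contexts where no explicit presentation is available; yours is self-contained, needs nothing beyond first-course group theory, and makes visible exactly where surjectivity of $f$ enters. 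A further merit of your write-up is the closing sandwich argument $\langle\langle \gr{a}\,\gr{b}^{-1} \mid (a,b)\in\Eq(f) \rangle\rangle_{\Pth(X)} \subseteq \K{\vec{f}} \subseteq \Ker(\vec{f})$, which explicitly reconciles the condition $f(a)=f(b)$ appearing in the statement with the a priori weaker condition $\vec{f}(\gr{a})=\vec{f}(\gr{b})$ demanded by Lemma~\ref{LemmaGroupCharacterization1} for the generating set $\pth_X(X)$; the paper's proof leaves this identification implicit.
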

\begin{proof}
From Subsection \ref{ParagraphConstructionOfPthAsColimit}, we reconstruct the image $\vec{f}$ as in the following diagram, where we also draw the kernels of $\Fg(f)$ and $\vec{f}$:
\[ \xymatrix@R=20pt@C=50pt{ & \Fg(X \times \Fg(X)) \ar@<+1ex>[d]^-{} \ar@<-1ex>[d]_-{} \ar@{{}{-}{>>}}[r]^-{\Fg(f \times \Fg(f))}  & \Fg(Y \times \Fg(Y)) \ar@<+1ex>[d]^-{} \ar@<-1ex>[d]_-{} \\
\Ker(\Fg(f)) \ar@{{}{--}{>}}[d]_-{k_1} \ar@{{>}{-}{>}}[r]^-{\ker(\Fg(f)} & \Fg(X) \ar@{{}{}{}}[rd]|-{(*)} \ar@{{}{-}{>>}}[d]|-{q_X} \ar@{{}{-}{>>}}[r]^-{\Fg(f)} & \Fg(Y)  \ar@{{}{-}{>>}}[d]|-{q_Y} \\
 \Ker(\vec{f})\ar@{{>}{-}{>}}[r]^-{\ker(\Fg(f)} & \Pth(X) \ar@{{}{--}{>>}}[r]^-{\vec{f}} & \Pth(Y). }\]
Since $q_X$ and $q_Y$ are the coequalizers of the pairs above (see Subsection \ref{ParagraphConstructionOfPthAsColimit} for more details), and the map $\Fg(f \times \Fg(f))$ is surjective, by Lemma 1.2 in \cite{Bou2003}, the square $(*)$ is a double extension (regular pushout), and thus the comparison map $k_1$ is surjective. Then $\Ker(\vec{f})$ coincides with the image $\ker{\Fg(f)}$ along of $q_X$, by uniqueness of (regular epi)-mono factorizations in $\GRP$. We may compute this image to be $\K{\vec{f}}$. Indeed, in elementary terms, any $g \in \Pth(X)$ such that $\vec{f}(g)=e$ can be ``covered'' by an element $h \in \Fg(X)$ such that $q_X(h)=g$ and $\Fg(f)[h]=e$ as well. Then by Lemma \ref{LemmaKernelOfFgF}, we have that $h = h_ah_b^{-1}$ for some $h_a$ and $h_b$ in $\Fg(X)$ which are $\Fg(f)$-symmetric to each other. The images $q_X(h_a)$ and $q_X(h_b)$ are then $\vec{f}$-symmetric to each other by commutativity of $(*)$, hence the quotient $g = q_X(h) = q_X(h_a) q_X(h_b)^{-1} \in \K{\vec{f}}$ is an $\vec{f}$-symmetric path. 
\end{proof}

\begin{notation}
For a morphism of racks $f$, we often write \emph{$f$-symmetric} (pair or path) instead of $\vec{f}$-symmetric (pair or path). A \emph{$f$-symmetric trail} $(x,g)$ is a trail with an $f$-symmetric path $g$. 
\end{notation}

\subsubsection{The left adjoint $\Pth$ is not faithful}
Observe that given a set $A$, the morphism 
\[\xymatrix{\Fr(A) \ar[rr]^-{i_A = \Pth_{\Fr(A)}} & & \Fg(A)},\]
is not injective. Indeed the elements $(a,\gr{a}g)$ and $(a,g)$ have the same image. We shall see that the kernel pair of $i_A$ yields the quotient producing the free quandle from the free rack.
Then the free quandle $\Fq(A)$ on the set $A$ embeds in the group $\Conj(\Fg(A))$, which is why Joyce calls quandles the \emph{algebraic theory of conjugation}. Observe, though, that not all quandles embed in a group.

\begin{example}\label{ExamplePthOfQab}
In the involutive quandle $Q_{ab\star}$ defined in Example \ref{ExampleaConnectedComponents}, the elements $\gr a$ and $\gr b$ are identified in $\Pth(Q_{ab\star})$. Indeed, $\gr a$ and $\gr b$ act trivially on $Q_{ab\star}$, hence they are in the center of the group $\Pth(Q_{ab\star})$. Moreover, $a$ and $b$  are in the same connected component, and thus they are also sent to conjugates in $\Pth(Q_{ab\star})$, which yields $\gr{a} = \gr{b}$. Note that from there we have $\Pth(Q_{ab\star}) = \Fg(\lbrace a,\, \star \rbrace)/\langle\langle a^{-1}\star^{-1}a\star  \rangle \rangle_{\Fg(\lbrace a,\, \star \rbrace)} = \Fa(\lbrace a,\, \star \rbrace) = \Z \times \Z$, where $\Fa$ is the free abelian group functor, and in $\Z \times \Z$, we have $\gr a = \gr b = (1,0)$ and $\gr {\star} = (0,1)$ (also see \cite[Proposition 2.27]{Eis2014}). 

In particular, the unit of the adjuntion $\Pth \dashv \Conj$ is not injective and $\Pth$ is not faithful (note that the right adjoint $\Conj$ is faithful, but not full). As a consequence $Q_{ab\star}$ is not a subquandle of a quandle in $\Conj(\GRP)$ since this would imply that $\pth_{Q_{ab\star}}$ is injective. We may also observe that a subquandle of a conjugation quandle is such that $(x \qndop y = x) \Leftrightarrow (y \qndop x = y)$.
\end{example}

\subsubsection{Racks and quandles have the same group of paths}\label{SubsectionRckAndQndHaveTheSameGrpOfPth} Observe that we may restrict $\Pth$ to the domain $\QND$. By the same argument $\Pth \I \colon {\QND \to \GRP}$ (which we denote $\Pth$) is then left adjoint to $\Conj\colon{\GRP \to \QND}$. We may conclude by uniqueness of left adjoints that if $\Frq $ is the left adjoint to the inclusion $\I\colon{\QND \to \RCK}$, then $\Pth\, \Frq  \cong \Pth \colon {\RCK \to \GRP}$. The adjunction between racks and groups factorizes into
\[\xymatrix@C=35pt@R=20pt{ \RCK \ar@<+3pt>@/^7pt/[rr]^-{\Frq}  \ar@<-3pt>@/_7pt/[rdd]|(.36){\Pth} \ar@{{}{}{}}[rr]|-{\perp}  & & \QND  \ar@<+3pt>@/^7pt/[ll]^-{\I} \ar@<-3pt>@/_7pt/[ddl]|(.6){\, \Pth\ \, \dashv} \\
\\
 & \GRP \ar@<-3pt>@/_7pt/[uur]|(.61){\Conj} \ar@<-3pt>@/_7pt/[uul]|(.4){\dashv\ \ \Conj\ }    &
}\]
in which all possible triangles of functors commute. Considering the comment of Paragraph \ref{ParagraphIdempotencyInRacks} about the idempotency axiom, we may want to rephrase this as follows: for each rack $X$, the quotient defining $\Pth(X)$ always identifies generators that would be identified in the free quandle on $X$.

More informally, considering the way $\Pth$, the left adjoint of $\Conj$, is constructed from equivalence classes of tails in the theory of racks, we may wonder in which sense racks could be a better context to study group conjugation. From the perspective of their respective covering theories, we further describe the relationship between groups, racks and quandles in what follows (see for instance Section \ref{SectionRelationshipToAbelianisation}).

\subsection{Working with quandles}\label{SectionWorkingWithQuandles}
We introduce the necessary material to make the transition from the context of racks to the context of quandles. See also the \emph{associated quandle} in \cite{FenRou1992}.
\subsubsection{The free quandle on a rack}\label{SubSectionFreeQuandleOnRack}
Remember from Paragraph \ref{ParagraphIdempotencyInRacks} that the idempotency axiom is a consequence of the axioms of racks ``for elements in the tail of a term''. In order to turn a rack into a quandle the identifications that matter are thus of the form 
\[   x \qndop^{\delta_x} x\qndop^{\delta_x} \cdots x\qndop^{\delta_x} x \qndop^{\delta_1} a_1 \cdots \qndop^{\delta_{n}}  a_{n} = x \qndop^{\delta_1} a_1 \cdots \qndop^{\delta_{n}}  a_{n},  \]
where a use of the idempotency axiom cannot be avoided. Now by self-distributivity of the operations, we may write $ y\defeq (x \qndop^{\delta_1} a_1 \cdots \qndop^{\delta_{n}}  a_{n})$, and then rewrite these identities as
\[  y \qndop^{\delta_x}  y \qndop^{\delta_x} \cdots  y \qndop^{\delta_x} y = y. \]
\begin{definition}
Given a rack $X$, define $Q_X$ as the relation (in $\SET$) defined for $(x,y) \in X \times X$ by $(x,y) \in Q_X$ if and only if $x = y \qndop^{k} y$ for some integer $k$ (see Paragraph \ref{ParagraphIdempotencyInRacks}), where $y \qndop^0 y \defeq y$.
\end{definition}
\begin{lemma}
Given a rack $X$, the relation $Q_X$ defines a congruence on $X$.
\end{lemma}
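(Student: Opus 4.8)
The plan is to verify the two ingredients of being a congruence: that $Q_X$ is an equivalence relation, and that it is compatible with the rack operations. Throughout I write $y^{\langle k\rangle} \defeq y \qndop^{k} y$ for $k \in \Z$ (with $y^{\langle 0\rangle}=y$), so that $(x,y)\in Q_X$ means $x = y^{\langle k\rangle}$ for some $k$. The single fact driving everything is the observation recalled in Paragraph \ref{ParagraphIdempotencyInRacks}: for all $z,y\in X$ and all $k\in\Z$ one has $z\qndop y^{\langle k\rangle} = z\qndop y$, and, applying the inverse symmetry, $z\qndiop y^{\langle k\rangle} = z\qndiop y$; in other words $\Ss_{y^{\langle k\rangle}}=\Ss_y$ (cf.\ also Paragraph \ref{SubsubsectionInnerAutomorphisms}). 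Intuitively, $y^{\langle k\rangle}$ carries exactly the same symmetry as $y$, only its head has been shifted along that symmetry.

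For the equivalence-relation part I would first prove the additivity lemma $\bigl(y^{\langle k\rangle}\bigr)^{\langle j\rangle} = y^{\langle k+j\rangle}$ for all $j,k\in\Z$. This is an easy induction on $j$: to pass from $j$ to $j+1$ one computes $\bigl(y^{\langle k\rangle}\bigr)^{\langle j+1\rangle} = \bigl(y^{\langle k\rangle}\bigr)^{\langle j\rangle}\qndop y^{\langle k\rangle} = y^{\langle k+j\rangle}\qndop y = y^{\langle k+j+1\rangle}$, the middle equality using $\Ss_{y^{\langle k\rangle}}=\Ss_y$, and symmetrically for the step $j\to j-1$ with $\qndiop$. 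Reflexivity is then the case $k=0$; symmetry follows because if $x=y^{\langle k\rangle}$ then $x^{\langle -k\rangle}=y^{\langle 0\rangle}=y$; and transitivity follows because if $x=y^{\langle k\rangle}$ and $y=z^{\langle l\rangle}$ then $x=\bigl(z^{\langle l\rangle}\bigr)^{\langle k\rangle}=z^{\langle k+l\rangle}$.

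For compatibility with the operations, the key computation is the identity $(y_1\qndop y_2)^{\langle k\rangle} = y_1^{\langle k\rangle}\qndop y_2$, again proved by induction on $k$, where the inductive step is exactly self-distributivity \R2 in the form $(a\qndop c)\qndop(b\qndop c)=(a\qndop b)\qndop c$ (together with the companion identity $(a\qndiop c)\qndop(b\qndiop c)=(a\qndop b)\qndiop c$, expressing that $\Ss_c$ is an automorphism, for the negative and the $\qndiop$ variants). Granting this, given $(x_1,y_1),(x_2,y_2)\in Q_X$ with $x_i=y_i^{\langle k_i\rangle}$, I absorb the second exponent using $\Ss_{y_2^{\langle k_2\rangle}}=\Ss_{y_2}$ to obtain $x_1\qndop x_2 = y_1^{\langle k_1\rangle}\qndop y_2^{\langle k_2\rangle} = y_1^{\langle k_1\rangle}\qndop y_2 = (y_1\qndop y_2)^{\langle k_1\rangle}$, so that $(x_1\qndop x_2,\,y_1\qndop y_2)\in Q_X$; the case of $\qndiop$ is entirely parallel, using the analogous identity $(y_1\qndiop y_2)^{\langle k\rangle}=y_1^{\langle k\rangle}\qndiop y_2$. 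Hence $Q_X$ is closed under both operations, i.e.\ a subrack of $X\times X$, and being an equivalence relation it is a congruence.

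The only place requiring care — the main obstacle — is the uniform treatment of the integer exponents, in particular the negative ones, so that each inductive step is matched by the correct distributivity identity and the two symmetries $\Ss_y$ and $\Ss_y^{-1}$ are never conflated. Once the single fact $\Ss_{y^{\langle k\rangle}}=\Ss_y$ is established, however, both inductions are routine and the absorption of the second coordinate's exponent is immediate.
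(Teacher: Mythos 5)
Your proof is correct and takes essentially the same route as the paper's: both rest on the single fact $\Ss_{y\qndop^{k}y}=\Ss_{y}$ together with the automorphism property of the symmetries, the paper's symmetry, transitivity and internality steps being exactly your additivity lemma and your identity $(y_1\qndop y_2)\qndop^{k}(y_1\qndop y_2)=(y_1\qndop^{k}y_1)\qndop y_2$ in compressed form. The only difference is presentational — you spell out the integer-exponent inductions (including the negative cases and the $\qndiop$ variant) where the paper simply invokes \R1, \R2 and the interchangeability of the two operations.
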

\begin{proof}
\begin{enumerate}
\item The relation $Q_X$ is reflexive by definition.
\item As aforementioned, for $x$ and $a$ in some rack, any chain $a \qndop^k a$ for some $k \in \Z$ is such that $x  \qndop (a \qndop^k a) = x \qndop a$. Hence $Q_X$ is symmetric since $b=a \qndop^k a$ implies that $b \qndop^{-k} b = b \qndop^{-k} a = a$.
\item Now $Q_X$ is transitive by self-distributivity.
\item And finally it is internal since if $a=b\qndop^k b$ and $c = d \qndop^l d$ then $a \qndop c = (b \qndop^k b) \qndop ( d \qndop^l d) = (b \qndop^k b) \qndop d = (b\qndop d) \qndop^k (b \qndop d)$. \qedhere
\end{enumerate}
\end{proof}

\begin{lemma}\label{KACharacterization}
Given a rack $X$, then a pair of elements $(x,y) \in X \times X$ is in the kernel pair $\Eq(^r\eta^q_X)$ of $^r\eta^q_X \colon { X \to \Frq(X)}$ if and only if $y = x \qndop^n x$ for some integer $n$, i.e.~$Q_X = \Eq(^r\eta^q_X)$.
\end{lemma}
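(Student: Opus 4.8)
The plan is to prove the asserted equality $Q_X = \Eq({}^r\eta^q_X)$ by establishing the two inclusions separately. Write $\eta := {}^r\eta^q_X$ for brevity. The inclusion $Q_X \subseteq \Eq(\eta)$ exploits only that $\Frq(X)$ is a quandle, while the reverse inclusion $\Eq(\eta) \subseteq Q_X$ exploits the universal property of the reflection $\Frq \dashv \I$ together with the fact (to be checked) that $X/Q_X$ is itself a quandle.

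First I would prove $Q_X \subseteq \Eq(\eta)$. Since $\eta$ is a rack homomorphism and $\Frq(X)$ is a quandle, both idempotency axioms $\Q1$ and $\Q{1'}$ hold in $\Frq(X)$. Setting $z := \eta(y)$ we get $z \qndop z = z = z \qndiop z$, and a trivial induction then shows $z \qndop^{k} z = z$ for every integer $k$ (the case $k = 0$ being the convention $z \qndop^{0} z = z$, the cases $k>0$ following from $\Q1$ and the cases $k<0$ from $\Q{1'}$). Hence for any pair $(x,y)$ with $x = y \qndop^{k} y$ one computes $\eta(x) = \eta(y \qndop^{k} y) = \eta(y) \qndop^{k} \eta(y) = z \qndop^{k} z = z = \eta(y)$, so that $(x,y) \in \Eq(\eta)$; this is exactly $Q_X \subseteq \Eq(\eta)$.

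For the reverse inclusion I would first verify that $X/Q_X$ is a quandle. By the preceding lemma $Q_X$ is a congruence, so $X/Q_X$ is a rack and the projection $\pi \colon X \to X/Q_X$ is a rack homomorphism; it remains to check $\Q1$. For each $x$ the pair $(x \qndop x,\, x)$ lies in $Q_X$, since $x \qndop x = x \qndop^{1} x$ exhibits $x \qndop x$ in the form $x \qndop^{k} x$ with $k = 1$, and therefore $[x] \qndop [x] = [x \qndop x] = [x]$ in the quotient. Thus $X/Q_X$ is a quandle, and by the universal property of the unit $\eta$ the homomorphism $\pi$ factors as $\pi = \bar\pi \circ \eta$ for a (unique) quandle morphism $\bar\pi \colon \Frq(X) \to X/Q_X$. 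Consequently $\Eq(\eta) \subseteq \Eq(\pi) = Q_X$, and combining the two inclusions gives $Q_X = \Eq({}^r\eta^q_X)$.

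I do not anticipate a genuine obstacle here; the proof is essentially a formal consequence of the adjunction together with the definition of $Q_X$. The only points requiring care are purely bookkeeping: making sure idempotency in $\Frq(X)$ propagates to all integer powers $\qndop^{k}$ (in particular to $k < 0$ via $\Q{1'}$), and confirming that $X/Q_X$ genuinely satisfies $\Q1$, which is immediate from the definition. The conceptual content is simply that $Q_X$ is the \emph{smallest} congruence on $X$ whose quotient is a quandle, which is precisely what the two inclusions encode.
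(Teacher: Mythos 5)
Your proof is correct and takes essentially the same route as the paper: both arguments hinge on verifying that $X/Q_X$ is a quandle and then comparing the quotient map $q\colon X \to X/Q_X$ with the unit ${}^r\eta^q_X$ via universal properties, with effectiveness of congruences identifying $Q_X$ as the kernel pair. The paper packages this as ``$q$ has the same universal property as ${}^r\eta^q_X$'' (invoking Barr-exactness), while you unwind the identical ingredients into two explicit inclusions; the difference is organizational, not mathematical.
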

\begin{proof}
Since $\RCK$ is a Barr-exact category \cite{Bar1971}, it suffices to show that the quotient of $X$ by the equivalence relation $Q_X$ (on the left) is the same as the quotient of $X$ by $\Eq(^r\eta^q_X)$ (on the right): \[\xymatrix{X \ar[r]^-q & X/Q_X }\qquad  \qquad \xymatrix{X \ar[r]^-{^r\eta^q_X} & \Frq(X).}\]
For this we show that $X/Q_X$ is a quandle and that $q$ has the same universal property as $^r\eta^q_X$. Indeed we have that $q(a) \qndop q(a) = q(a \qndop a) = q(a)$ since $(a, a\qndop a) \in Q_X$ for each $a$.
Finally observe that if $f \colon{X \to Q}$ is a rack homomorphism such that $Q$ is a quandle, then we necessarily have that $f$ coequalizes the projections $\pi_1,\ \pi_2 \colon{Q_X\rightrightarrows X}$ of the congruence $Q_X$. We then conclude by the universal property of the coequalizer.
\end{proof}

\subsubsection{Galois theory of quandles in racks}
We study the Galois structure $_r\Gamma_q \defeq $($\RCK$, $\QND$, $\Frq$, $ ^r\eta^q$, $^r\epsilon^q$, $\EE$) where $\EE$ is the class of surjective morphisms (see Section \ref{SectionCategoricalGaloisTheory} and \cite{JK1994}).

Since $\QND$ is a Birkhoff subcategory of $\RCK$, for $_r\Gamma_q$ to be admissible, it suffices to show that for each rack $X$ the kernel pair $\Eq(^r\eta^q_X)$ of the unit permutes with other congruences on $X$ (see Section \ref{SectionAdmissibilityStronglyBirkhoff}). Observe that this is not a consequence of Lemma \ref{LemmaOrbitCongPermute}.

\begin{lemma}\label{KACommute}
Given a rack $X$, then the congruence $Q_X = \Eq(^r\eta^q_X)$ commutes with any other internal relation $R$ on $X$.
\end{lemma}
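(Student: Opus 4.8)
The plan is to prove permutability directly by exhibiting explicit witnesses, rather than appealing to a Mal'tsev-type term (the ambient categories here are not Mal'tsev). Writing relational composition as $Q_X \circ R = \{(x,z) \mid \exists y,\ (x,y) \in Q_X \text{ and } (y,z) \in R\}$, I would prove the two inclusions $Q_X \circ R \subseteq R \circ Q_X$ and $R \circ Q_X \subseteq Q_X \circ R$ separately. The only structural inputs needed are: that $R$ is an internal relation, hence a subrack of $X \times X$ closed under $\qndop$ and $\qndiop$ componentwise and therefore under every power $\qndop^{k}$; the explicit description $Q_X = \{(a,b) \mid a = b \qndop^{k} b,\ k \in \Z\}$ from Lemma \ref{KACharacterization}; and the tail-idempotency fact of Paragraph \ref{ParagraphIdempotencyInRacks}, namely $\Ss_{b \qndop^{k} b} = \Ss_b$ for all integers $k$.

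For the first inclusion, suppose $x \mathrel{Q_X} y \mathrel{R} z$ with $x = y \qndop^{k} y$. I would set $w \defeq z \qndop^{k} z$. Applying internality of $R$ to the pair $(y,z)$, iterated $k$ times, gives $(y \qndop^{k} y,\ z \qndop^{k} z) = (x,w) \in R$, while $(w,z) = (z \qndop^{k} z, z) \in Q_X$ directly by the description of $Q_X$. Hence $(x,z) \in R \circ Q_X$. This direction is essentially formal and uses only internality plus the definition of $Q_X$.

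The genuinely delicate direction is the second. Given $x \mathrel{R} y \mathrel{Q_X} z$ with $y = z \qndop^{k} z$, the natural candidate is $w \defeq x \qndop^{-k} x$, which satisfies $(x,w) \in Q_X$ at once. The work is to verify $(w,z) \in R$. Internality yields $(x \qndop^{-k} x,\ y \qndop^{-k} y) \in R$, so it suffices to establish the identity $y \qndop^{-k} y = z$. This is exactly where the idempotency fact enters: since $\Ss_y = \Ss_{z \qndop^{k} z} = \Ss_z$, acting on the head $y$ by $\Ss_y^{-1}$ precisely $k$ times agrees with acting by $\Ss_z^{-1}$, so that $y \qndop^{-k} y = y \qndop^{-k} z = (z \qndop^{k} z) \qndop^{-k} z = z \qndop^{k-k} z = z$, all powers being iterations of the single symmetry $\Ss_z$ on the head $z$. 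This gives $(w,z) \in R$ and hence $(x,z) \in Q_X \circ R$.

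I expect the reduction $y \qndop^{-k} y = z$ to be the main obstacle and the step I would scrutinize most carefully: it is precisely the point at which one must exploit that the symmetry attached to $z \qndop^{k} z$ collapses to that of $z$, rather than merely transporting along $R$; the naive choice $w = x \qndop^{k} x$ fails for exactly this reason, leading only to $z \qndop^{2k} z$ instead of $z$. Once this identity is secured, both inclusions close, permutability $Q_X \circ R = R \circ Q_X$ follows, and with it the admissibility of the Galois structure ${}_r\Gamma_q$.
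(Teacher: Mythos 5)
Your proof is correct and follows essentially the same route as the paper's: both arguments verify permutability element-wise, using that the internal relation $R$ is closed under the diagonal powers $\qndop^{k}$ together with the key identity $\Ss_{z \qndop^{k} z} = \Ss_{z}$ to build the middle witness (the paper's choice $c_2 = b \qndop^{-k} b$ is exactly your $w$). The only difference is presentational: you spell out both inclusions, whereas the paper treats one direction explicitly and notes that the other ``translates similarly.''
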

\begin{proof}
We prove that a pair $(a,b) \in X \times X$ is in $\Eq(^r\eta^q_X) \, R$ if and only if it is in $R\, \Eq(^r\eta^q_X)$. As in Lemma \ref{LemmaOrbitCongPermute}, we show that if there is $c \in X$ such that $(a,c)$ is in one of these relations (say for instance $\Eq(^r\eta^q_X) \, R$) and $(c,b)$ in the other one, then there is a $c' \in X$ such that $(a,c')$ is in the latter ($R\, \Eq(^r\eta^q_X)$) and $(c',b)$ in the former ($\Eq(^r\eta^q_X) \, R$). Now observe that if $(x,y) \in R$, then $(x,y) \qndop^{k} (x,y) = (x\qndop^{k}x, y\qndop^{k} y) $ is in $R$ for any integer $k$. The result then follows from reading the following diagram for any $k \in \Z$, where horizontal arrows represent membership in $\Eq(^r\eta^q_X)$ and vertical arrows represent membership in $R$. Indeed from the top right corner below we construct the bottom left corner and the other way around:
\[ 
\xymatrix @C= 60pt@R=20pt {
c_1 \qndop^{-k} c_1 = a \ar@<+2pt>[r]^{\Ss_a^{k}}  \ar@{--}[d] & c_1 = a \qndop^{k} a  \ar@{--}[d] \ar@<+2pt>[l]^{\Ss_{c_1}^{-k}}\\
b \qndop^{-k} b  = c_2  \ar@<+2pt>[r]^{\Ss_{c_2}^{k}}  & b = c_2 \qndop^{k} c_2  \ar@<+2pt>[l]^{\Ss_b^{-k}}
}\]
where we use the fact that if $x = y \qndop^{k} y$ then $\Ss_x = \Ss_y$. Algebraically we read $(a,c_1) \in Q_X$ implies $c_1 \qndop^{-k} c_1 = a$ for some $k \in \Z$ and $(c_1,b) \in R$ implies $(c_1 \qndop^{-k} c_1,b \qndop^{-k} b) \in R$, thus choosing $c_2 = b \qndop^{-k} b$ yields one of the implications. The other direction translates similarly.
\end{proof}

\begin{remark}
Given a rack $X$, the congruence $Q_X$ is not an orbit congruence in general. For instance, observe that $Q_{\Fr(\lbrace a, b \rbrace)}$ contains the pairs $(a,a\qndop a)$ and $(b, b \qndop b)$. Suppose by contradiction that there is a normal subgroup $N \leq \Inn(\Fr(\lbrace a, b \rbrace)) = \Fg(\lbrace a, b \rbrace)$ for which $\sim_N = Q_{\Fr(\lbrace a, b \rbrace)}$. Then since $\Fg(\lbrace a, b \rbrace)$ acts freely on $\Fr(X)$, both inner automorphisms $\Ss_{a}$ and $\Ss_{b}$ need to be in $N$. This leads to a contradiction since $a \sim_N (a \qndop b)$ but $(a, a \qndop b) \centernot\in Q_{\Fr(\lbrace a, b \rbrace)}$. By contrast $Q_{\Fr(\lbrace \ast \rbrace)}$ is of course an orbit congruence.
\end{remark}

\begin{corollary}\label{CorollaryFrqStronglyBirkhoff}
Quandles form a strongly Birkhoff (and thus admissible) subcategory of $\RCK$.
\end{corollary}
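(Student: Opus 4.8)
The plan is to reduce the statement to the commutativity result already established in Lemma \ref{KACommute}, by way of the standard criterion for the strong Birkhoff property in a Barr-exact setting. First I would recall the two structural facts already in place: the ambient category $\RCK$ is Barr-exact, and $\QND$ is a Birkhoff subcategory of $\RCK$, so that the Galois structure $_r\Gamma_q$ is in particular Birkhoff, with surjective unit components $^r\eta^q_X$. These are exactly the hypotheses under which Proposition 5.4 of \cite{CKP1993} applies, as recalled in Section \ref{SectionAdmissibilityStronglyBirkhoff}: in a Barr-exact category, a Birkhoff subcategory is strongly Birkhoff precisely when, for every object $X$, the kernel pair of the reflection unit commutes with every equivalence relation on $X$.

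The decisive step is then simply to feed this criterion the commutativity it requires, which is the content of Lemma \ref{KACommute}: for every rack $X$, the congruence $Q_X = \Eq(^r\eta^q_X)$ (Lemma \ref{KACharacterization}) commutes with \emph{any} internal relation $R$ on $X$, and hence in particular with every equivalence relation on $X$. Invoking the criterion with this input yields at once that $\QND$ is strongly Birkhoff in $\RCK$.

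To obtain admissibility, I would finally appeal to Proposition 2.6 of \cite{EvGrVd2008}, quoted in Section \ref{SectionAdmissibilityStronglyBirkhoff}, which guarantees that a strongly Birkhoff Galois structure is admissible; this closes the argument, and likewise settles the quandle analogue.

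I do not expect a genuine obstacle here: the corollary is essentially a bookkeeping combination of the criterion from \cite{CKP1993} with the already-proved commutativity lemma, the real work having been done in Lemma \ref{KACommute}. The only point meriting care is to verify that the criterion's hypotheses are honestly met — namely that $Q_X$ is genuinely the kernel pair of the unit, and that its commutativity has been checked against \emph{all} equivalence relations rather than some distinguished subclass. In particular one cannot shortcut this through the orbit-congruence permutability of Lemma \ref{LemmaOrbitCongPermute}, since, as the remark preceding the corollary points out, $Q_X$ need not be an orbit congruence; it is precisely this gap that Lemma \ref{KACommute} was designed to fill.
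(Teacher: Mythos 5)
Your proposal is correct and follows essentially the same route as the paper: both invoke Proposition 5.4 of \cite{CKP1993} (as recalled in Section \ref{SectionAdmissibilityStronglyBirkhoff}) with the permutability of $Q_X = \Eq(^r\eta^q_X)$ from Lemma \ref{KACommute} as input, and then deduce admissibility from \cite[Proposition 2.6]{EvGrVd2008}. Your closing remark about why Lemma \ref{LemmaOrbitCongPermute} cannot be used as a shortcut also matches the paper's own observation that $Q_X$ is not in general an orbit congruence.
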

\begin{proof}
By Proposition 5.4 in \cite{CKP1993}, the reflection squares of surjective morphisms are double extensions (see Section \ref{SectionAdmissibilityStronglyBirkhoff}). This implies the admissibility of the Galois structure $_r\Gamma_q$, for instance by \cite[Proposition 2.6]{EvGrVd2008}.
\end{proof}

Note that the left adjoint $\Frq$ is actually semi-left-exact as we may deduce from the fact that ``connected components are connected'' (see Paragraph \ref{ParagraphConnectedComponentsAreNotConnected}).

\begin{proposition}\label{PropositionSemiLeftExactnessFrq}
Any pullback of the form 
\[ \xymatrix@R=14pt@C=10pt{
C_a \pullback \ar[rr]^{p_2}  \ar[d]_{p_1} & & 1 \ar[d]^{[a]}\\
X \ar[rr]_{^r\eta^q_X}  & & \Frq(X),
}\]  
in $\RCK$, is preserved by the reflector $\Frq$, i.e.~$\Frq(C_a) = 1$, and thus $\Frq$ is \emph{semi-left-exact} in the sense of \cite{CaHeKe1985,CJKP1997}.
\end{proposition}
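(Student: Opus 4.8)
The plan is to reduce the statement to an explicit computation of the fibre $C_a$ and of its reflection. First I would read off what $C_a$ is. By construction $C_a$ is the pullback of ${}^r\eta^q_X$ along the point $[a]\colon 1 \to \Frq(X)$, so as a subrack of $X$ it is the fibre $\{x \in X \mid {}^r\eta^q_X(x) = [a]\}$. By Lemma \ref{KACharacterization} this fibre is exactly the $Q_X$-class of $a$, namely $C_a = \{\,a \qndop^{k} a \mid k \in \Z\,\}$. I would then check that this set is genuinely closed under the rack operations of $X$, which is immediate from the identity $\Ss_{a \qndop^{k} a} = \Ss_a$ recalled in Paragraph \ref{ParagraphIdempotencyInRacks}: for any $k,l$ one has $(a \qndop^{k} a)\qndop(a \qndop^{l} a) = (a \qndop^{k} a)\qndop a = a \qndop^{k+1} a$, and symmetrically for $\qndiop$.

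Writing $a_k \defeq a \qndop^{k} a$, the previous computation shows that inside $C_a$ every right-translation acts as the single shift $a_k \mapsto a_{k+1}$ (and its inverse $a_k \mapsto a_{k-1}$), independently of the point one translates by. Consequently $a_l \qndop^{m} a_l = a_{l+m}$ for every $m \in \Z$. Taking $m = k-l$ gives $a_k = a_l \qndop^{k-l} a_l$, so $(a_k,a_l) \in Q_{C_a}$ for all $k,l$; that is, $Q_{C_a}$ is the indiscrete congruence on $C_a$. Since $\Frq(C_a) = C_a/Q_{C_a}$ by Lemma \ref{KACharacterization}, this forces $\Frq(C_a) = 1$, which is exactly the asserted preservation of the pullback.

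Finally I would deduce semi-left-exactness. As recalled for $\pi_0$ in Paragraph \ref{ParagraphConnectedComponentsAreNotConnected}, in this setting semi-left-exactness of the reflector is characterised by the preservation of precisely the pullbacks of units along points (see \cite{CaHeKe1985,CJKP1997} and \cite{BorJan1994}); the computation above verifies this condition for $\Frq$. The contrast with $\pi_0$ is what makes the result work: the fibre $C_a$ plays for $\Frq$ the role that a connected component $\Cc_a$ plays for $\pi_0$, but here the fibre always collapses under the reflector, so the analogue of ``components are connected'' does hold. The one genuinely delicate point is the passage from these point-pullbacks to the full class of pullbacks demanded by semi-left-exactness: unlike the $\pi_0$ case, where a pullback along a set is a disjoint union of fibres and $\pi_0$ preserves coproducts, here the rack structure couples the fibres, so I would lean on the cited characterisation rather than a fibrewise argument. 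Everything else is the short orbit computation above.
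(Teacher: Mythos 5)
Your proof is correct and takes essentially the same route as the paper's: identify $C_a$ as the fibre $\lbrace x \in X \mid {}^r\eta^q_X(x) = [a]\rbrace$, invoke Lemma \ref{KACharacterization} to see that any two of its elements are related by a chain $x = y \qndop^k y$, and conclude $\Frq(C_a) = C_a/Q_{C_a} = 1$. Your explicit orbit computation inside $C_a$ (checking that the witnessing chains live in the subrack $C_a$ itself, so that $Q_{C_a}$, not merely the restriction of $Q_X$, is indiscrete) and your caveat about passing from point-pullbacks to full semi-left-exactness simply make explicit details that the paper's shorter argument leaves implicit, relying on the same cited characterisation.
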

\begin{proof}
Observe that $X \times 1 \cong X$ and thus elements of the pullback $C_a$ are merely elements $x\in X$ such that that $\, ^r\eta^q(x) = [a] \in \Frq(X)$ i.e.~all elements $x$ and $y$ in $C_a$ are such that there is $k\in \Z$ such that $x = y \qndop^{k} y$. Hence by Lemma \ref{KACharacterization} the image of this pullback by $\Frq$ gives indeed $1$, which concludes the proof. 
\end{proof}

As a consequence we could have used \emph{absolute Galois theory} in this context \cite{Jan1990}. In order to not overload this article, we stick to the relative approach which we developed here.

Observe that there is a limit to the exactness properties satisfied by $\Frq$: we already saw in Paragraph \ref{ParagraphConnectedComponentsAreNotConnected} that $\Frq$ cannot preserve finite products, since $\pi_0\colon \QND \to \SET$ does but $\pi_0 \Frq \colon \RCK \to \SET$ does not. Moreover, since $\QND$ is an idempotent subvariety of $\RCK$, Proposition 2.6 of \cite{dCosta2013} induces that $\Frq$ does not have \emph{stable units} (in the sense of \cite{CaHeKe1985}).

To conclude, we show that, besides semi-left-exactness, the $\Frq$-covering theory is ``trivial'' in the sense that all surjections are $\Frq$-central. We use the general strategy which was stated in Section \ref{SectionStrategy}. Since the Galois structure is strongly Birkhoff, the ``first step influence'' is as usual:

\begin{lemma}
A surjective morphism $f \colon {X \to Y}$, in the category of racks, is $\Frq$-trivial if and only if $Q_X \cap \Eq(f) = \Delta_X$.
\end{lemma}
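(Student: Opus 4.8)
The plan is to reduce $\Frq$-triviality of $f$ to a pullback condition and then read that condition off as an intersection of kernel pairs. Since $\QND$ is strongly Birkhoff, hence admissible, in $\RCK$ (Corollary \ref{CorollaryFrqStronglyBirkhoff}), I would first invoke the standard characterization of trivial extensions for such Galois structures \cite{JK1994}: a surjection $f\colon X \to Y$ is $\Frq$-trivial if and only if its reflection square with respect to ${}^r\eta^q$ (as in \eqref{DiagramReflectionSquare}) is a pullback. This is the exact analogue, for the adjunction $\Frq \dashv \I$, of the proposition characterizing trivial extensions for $\pi_0 \dashv \I$ recalled above, and the argument runs along the same lines. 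It then suffices to analyze when this square is a pullback.

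Form the pullback $P \defeq Y \times_{\Frq(Y)} \Frq(X)$ of $\Frq(f)$ along ${}^r\eta^q_Y$, and let $p\colon X \to P$ be the comparison map induced by $f$ and ${}^r\eta^q_X$, so that $p(x) = (f(x), {}^r\eta^q_X(x))$. The square is a pullback precisely when $p$ is an isomorphism. A direct computation of the kernel pair gives $\Eq(p) = \Eq(f) \cap \Eq({}^r\eta^q_X)$, since $p(x) = p(x')$ forces both $f(x) = f(x')$ and ${}^r\eta^q_X(x) = {}^r\eta^q_X(x')$. By Lemma \ref{KACharacterization} we have $\Eq({}^r\eta^q_X) = Q_X$, so $\Eq(p) = Q_X \cap \Eq(f)$; hence $p$ is a monomorphism if and only if $Q_X \cap \Eq(f) = \Delta_X$.

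Finally I would upgrade ``monomorphism'' to ``isomorphism.'' Because $\QND$ is strongly Birkhoff in $\RCK$, the reflection square of the surjection $f$ is a double extension, so its comparison map $p$ is itself a surjection. In the Barr-exact category $\RCK$ a morphism that is at once a regular epimorphism and a monomorphism is an isomorphism; thus $p$ is an isomorphism if and only if it is a monomorphism, i.e. if and only if $Q_X \cap \Eq(f) = \Delta_X$. Chaining the three equivalences yields the claim. The substantive inputs are the admissibility-based characterization ``trivial $\Leftrightarrow$ reflection square a pullback'' and the surjectivity of the comparison map $p$ (which is precisely the strongly Birkhoff content from Corollary \ref{CorollaryFrqStronglyBirkhoff}); the main obstacle is marshalling these two facts, since the kernel-pair computation itself is routine.
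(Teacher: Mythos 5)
Your proof is correct and follows essentially the same route as the paper's: both reduce $\Frq$-triviality to the reflection square being a pullback, use the strongly Birkhoff property (Corollary \ref{CorollaryFrqStronglyBirkhoff}) to get surjectivity of the comparison map, and identify its kernel pair with $Q_X \cap \Eq(f)$ via Lemma \ref{KACharacterization}, so that mono plus regular epi gives iso. The only cosmetic difference is that you compute the kernel pair of the comparison map elementwise, while the paper deduces it from the square being a pushout; the conclusion is identical.
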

\begin{proof}
The morphism $f$ is trivial if and only if the reflection square at $f$ is a pullback (see Section \ref{SectionAdmissibilityStronglyBirkhoff}, Diagram \eqref{DiagramReflectionSquare}). Since this reflection square is a double extension, it suffices for the comparison map to be injective. Since the square is a pushout, the kernel pair of the comparison map is given by the intersection $Q_X \cap \Eq(f)$ of the kernel pairs of $q_X$ and $f$ respectively.
\end{proof}

\begin{proposition}\label{PropositionAllSurjAreFrqCentral}
All surjections $f \colon {X \to Y}$ in the category of racks are $\Frq$-central.
\end{proposition}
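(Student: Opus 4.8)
The plan is to prove $\Frq$-centrality directly from the definition, by splitting an arbitrary surjection along the canonical projective presentation of its codomain and invoking the characterization of $\Frq$-trivial extensions just established. Recall that an extension is $\Frq$-central precisely when it can be split by some extension, i.e.\ when pulling it back along a suitable extension yields an $\Frq$-trivial extension. So, given a surjection $f\colon X \to Y$, I would split it along $p \defeq \epsilon^r_Y \colon \Fr(Y) \to Y$, the counit of $\Fr \dashv \U$ from Subsection \ref{SubsectionCanonicalPresentation}: this $p$ is a surjection, hence an extension, and its codomain $\Fr(Y)$ is free, hence projective.

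First I would form the pullback
\[
\vcenter{\xymatrix@R=14pt@C=24pt{
P \pullback \ar[r]^-{p_X} \ar[d]_-{\overline f} & X \ar[d]^-{f} \\
\Fr(Y) \ar[r]_-{p} & Y,
}}
\]
with $P = \Fr(Y)\times_Y X$. Since $\RCK$ is Barr-exact (hence regular) and $f$ is a regular epimorphism, the projection $\overline f$ is again a surjection. It then suffices, by the preceding lemma characterizing $\Frq$-trivial extensions, to prove that $\overline f$ is $\Frq$-trivial, namely that $Q_P \cap \Eq(\overline f) = \Delta_P$; for then $p$ splits $f$, and $f$ is $\Frq$-central by definition.

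To verify triviality of $\overline f$, I would take a pair $(u,v) \in Q_P \cap \Eq(\overline f)$ and show $u = v$. By definition of $Q_P$ we have $u = v \qndop^k v$ for some $k \in \Z$, together with $\overline f(u) = \overline f(v)$. Applying the homomorphism $\overline f$ gives $\overline f(v) = \overline f(u) = \overline f(v) \qndop^k \overline f(v)$, so the element $\overline f(v)$ of the \emph{free} rack $\Fr(Y) = Y \rtimes \Fg(Y)$ is its own $k$-fold self-translate. The crux is then the explicit computation in the free rack: writing $\overline f(v) = (c,g)$, one finds $(c,g) \qndop^k (c,g) = (c,\gr c^{\,k} g)$, so $\overline f(v)\qndop^k \overline f(v) = \overline f(v)$ forces $\gr c^{\,k} = e$ in $\Fg(Y)$; since $\gr c = \eta^g_Y(c)$ is a free generator, it has infinite order, whence $k = 0$ and $u = v \qndop^0 v = v$.

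The only load-bearing point — the ``main obstacle'', light though it is here — is precisely this last computation: it is the torsion-freeness of $\Fg(Y)$, surfacing through the semidirect-product description $\Fr(Y) = Y \rtimes \Fg(Y)$, that forbids nontrivial idempotency witnesses $y \qndop^k y = y$ in a projective codomain. Equivalently, in the language of Section \ref{SectionStrategy} with candidate-coverings taken to be all surjections (which are preserved and reflected by pullbacks along surjections in the exact category $\RCK$, and among which the primitive extensions sit automatically), the whole statement reduces to showing that every surjection with projective codomain is $\Frq$-trivial, and the free-group computation above is exactly what delivers this.
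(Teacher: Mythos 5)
Your proof is correct and follows essentially the same route as the paper's: pull back $f$ along the canonical projective presentation $\epsilon^r_Y\colon \Fr(Y)\to Y$, and show that any surjection onto a free rack is $\Frq$-trivial via the characterization $Q_X\cap\Eq(f)=\Delta_X$. Your explicit semidirect-product computation $(c,g)\qndop^k(c,g)=(c,\gr{c}^{\,k}g)$ is just an unpacking of the step the paper handles by invoking the previously established free action of $\Pth(\Fr(Y))=\Fg(Y)$ on $\Fr(Y)$ (plus torsion-freeness of the free group), so the two arguments coincide in substance.
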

\begin{proof}
In order to show this, consider the canonical projective presentation $\epsilon^r_Y \colon {\Fr(UY) \to Y}$, and take the pullback of $f$ along $\epsilon^r_Y$. This yields a morphism \[\bar{f} \colon { X \times_{Y} \Fr(UY) \to \Fr(UY)}.\]

Now any morphism $ g\colon {X \to \Fr(Y)} $ with free codomain is $\Frq$-trivial since if $x = x \qndop^{k} x$ in $X$ for some integer $k$ and if, moreover, $f(x) = f(x) \qndop^{k} f(x)$ in $\Fr(Y)$, then $f(x)^k = e$ by the free action of $\Pth(\Fr(Y))$ on $\Fr(Y)$. However this can only be if $k=0$, which implies that $Q_X \cap \Eq(f) = \Delta_X$.
\end{proof}

\subsubsection{Towards the free quandle}\label{SubsectionFreeQuandle}
Given a set $A$, in order to develop a good candidate description for the free quandle on $A$ (see also \cite{Joy1979}), we may now consider $\Fq(A)$ as the free quandle on the rack $\Fr(A)$. As aforementioned and roughly speaking, the following identifications between terms:
\begin{equation}\label{EquationQuandleInRackIdentity}
x \qndop^{\delta_x} x\qndop^{\delta_x} \cdots x\qndop^{\delta_x} x \qndop^{\delta_1} a_1 \cdots \qndop^{\delta_{k}}  a_{k} = x \qndop^{\delta_1} a_1 \cdots \qndop^{\delta_{k}}  a_{k},  
\end{equation}   
define the relation $Q_{\Fr(A)}$ such that $\Fq(A) = \Fr(A)/Q_{\Fr(A)}$.

We want to select one representative $(a,g) \in A \rtimes \Fg(A)$ for each equivalence class determined by these identifications. Thinking in terms of trails, we observe that if $(a,g)$ and $(b,h)$ are identified, then they must have the same head $a=b$. We thus focus on the paths and use a clever semi-direct product decomposition of $\Fg(A)$. 

\paragraph{Characteristic of a path} We have the following commutative diagram in $\SET$,
\[ \xymatrix@R=15pt{ A \ar[r]^-{\eta^g_{A}} \ar[d]_-{\Cst} &  \Fg(A) \ar@{..>}[d]^-{\chi \defeq \Fg(\Cst)} \\
1 \ar[r]_-{\eta^g_{1}} & \Z = \Fg(1),
}\]
where $\Z$ is the underlying set of the additive group of integers, and the composite $\eta^g_1 \Cst$ is the constant function with image $1 \in \Z$. Given an element $g \in \Fg(A)$, there exists a decomposition $g = g_1^{\delta_1} \cdots g_n^{\delta_n}$ for some $g_i \in A$ and exponents $\delta_i = \lbrace -1, \, 1 \rbrace$, with $1 \leq i \leq n$. The characteristic function sums up the exponents $\chi(g) = \sum_{i=1}^n \delta_i$ (of course the result doesn't depend on the chosen decomposition of $g$). We may then classify paths in $\Fg(A)$ in terms of their characteristic (i.e.~their image by $\chi$). Looking at Equation~\eqref{EquationQuandleInRackIdentity}, two terms with same head, and same characteristic, that are moreover identified by $Q_{\Fr(A)}$, must actually be equal. In other words, given a fixed head $a$ each equivalence class $[(a,g)]$ in $\Fq(A)$ has only one representative $(a,g')$ such that the path $g'$ is of a given characteristic.

\paragraph{Characteristic zero and semi-direct product decomposition} The kernel of $\chi$ defines a normal subgroup $\Fgd(A) \leq \Fg(A)$ which is characterized (see \cite{Joy1979} and Lemma \ref{LemmaKernelOfFgF}) by 
\[\Fgd(A) = \langle ab^{-1} \mid a,b \in A \rangle_{\Fg(A)}.  \]
Then for each $a \in A$, we may identify $\Z$ with the subgroup $\langle a^n \mid n \in \Z \rangle \leq \Fg(A)$ which may be seen as the subgroup of $\Fg(A)$ which fixes $[(a,e)] \in \Fq(A) \defeq \Fr(A)/Q_{\Fr(A)}$. This then gives a splitting for $\chi$, on the left, yielding the split short exact sequence on the right:
\[\iota_a\colon{\Z \to \Fg(A)} \ \colon \ k \mapsto a^k \qquad \qquad \xymatrix{
\Fgd(A) \ar @{{>}{-}{>}}[r]^-{\nu_A} & \Fg(A) \ar@{{}{-}{>>}}[r]^-{\chi} & \Z \ar @{{>}{-}{>}} @<2pt>@/^2ex/[l]^-{\iota_a}
}\]

\paragraph{Characteristic zero representatives}
Given an element $a\in A$, any $g \in \Fg(A)$ decomposes uniquely as $\gr a^{\chi(g)} g_0$, where $g_0 = \gr a^{-\chi(g)}g$. This defines a function sending equivalence classes $[(a,g)] \in \Fq(A)$, to their representatives of characteristic zero $(a,g_0)$. Note that, for two different $a$ and $b$ in $A$, the construction of $g_0$ will vary, however elements of $\Fr(A)$ with different heads are always sent to different equivalence classes in $\Fq(A)$. 

\paragraph{Transporting structure} This function is indeed bijective, and thus we may transport the quandle structure from the quotient $\Fr(A)/Q_{\Fr(A)}$ to the set of representatives $A \times \Fgd(A)$. More explicitly we compute for $(b,h)$ and $(a,g)$ in $\Fr(A)$ that
\[ (a,g_0) \qndop (b,h_0) = (a, g_0 h_0^{-1}\gr b h_0), \]
where $w \defeq g_0 h_0^{-1}\gr b h_0$ is not of characteristic zero. We then want to take $w_0 =\gr a^{-1} g_0 h_0^{-1}\gr b h_0$ and define in $\Fq(A)$:
\[ (b,h_0) \qndop (a,g_0) \defeq (a,w_0). \]

\subsubsection{The free quandle}\label{ParagraphDefinitionFreeQuandle}
After this analysis, we may confidently build the free quandle (first described in \cite{Joy1979}) as follows.

Given a set $A$ the free quandle on $A$ is given by
\[ \Fq(A) \defeq A \rtimes \Fgd(A)\defeq \lbrace (a,g) \mid g \in \Fgd(A);\ a \in A \rbrace,\]
where the operations on $\Fq(A)$ are defined for $(a,g)$ and $(b,h)$ in $A \rtimes \Fgd(A)$ by 
\[(a,g) \qndop(b,h) \defeq (a,\gr a^{-1}gh^{-1}\gr bh)\quad \text{and} \quad (a,g) \qndiop(b,h) \defeq (a,\gr a^{-1}gh^{-1}\gr b^{-1}h). \]
As before, $g$ is the \emph{path} component and $a$ is the \emph{head} component of the so-called \emph{trail} $(a,g) \in \Fq(A)$ and we say that an element \emph{$(b,h)$ acts on an element $(a,g)$ by endpoint}. 
These operations indeed define a quandle structure.

From there, we translate all main results from the construction of free racks. Looking for the unit of the adjunction, we have the injective function $\eta^q_A \colon {A \to \Fq(A)}  :  a \mapsto (a,e)$.

Moreover, since any element $g \in \Fgd(A)$ decomposes as a product $g = \gr{g_1}^{\delta_1} \cdots \gr{g_n}^{\delta_n} \in \Fg(A)$ for some $g_i \in A$ and exponents $\delta_i \in \lbrace -1,\, 1 \rbrace$, with $1 \leq i \leq n$, and $\sum_i \delta_i = 0$, we have, for any $(a,hg) \in \Fq(A)$ with $g$ and $h \in \Fgd(A)$, a decomposition as
\begin{align*}
 (a,hg)   &= (a,h\gr {g_1}^{\delta_1} \cdots \gr {g_n}^{\delta_n}) 
 			= (a,\gr a^{\sum_i -\delta_i} h\gr {g_1}^{\delta_1} \cdots \gr {g_n}^{\delta_n}) 
 			= (a, \gr a^{-\delta_n}\cdots \gr a^{-\delta_1}h \gr {g_1}^{\delta_1} \cdots \gr {g_n}^{\delta_n})\\
 			&= (a,h) \qndop^{\delta_1} (g_1,e) \cdots \qndop^{\delta_n}  (g_n,e). 
\end{align*}
 
Observing that if $\gr{g_i}^{-\delta_i} = \gr{g_{i+1}}^{\delta_{i+1}}$ for some $(a,g) = (a,\gr{g_1}^{\delta_1} \cdots \gr{g_n}^{\delta_n}) \in \Fq(A)$ as above, then
\begin{align*}
 (a,e) \qndop^{\delta_1} & (g_1,e)  \cdots \qndop^{\delta_{i-1}} (g_{i-1},e) \qndop^{\delta_{i+2}} (g_{i+2},e)  \cdots \qndop^{\delta_n} (g_n,e)  = \\
&= (a,\gr{g_1}^{\delta_1} \cdots \gr{g_{i-1}}^{\delta_{i-1}}  \gr{g_{i+2}}^{\delta_{i+2}} \cdots \gr{g_n}^{\delta_n})
= (a,\gr{g_1}^{\delta_1} \cdots \gr{g_{i-1}}^{\delta_{i-1}} \gr{g_i}^{\delta_i} \gr{g_{i+1}}^{\delta_{i+1}} \gr{g_{i+2}}^{\delta_{i+2}} \cdots \gr{g_n}^{\delta_n}) \\
&= (a,e) \qndop^{\delta_1} (g_1,e) \cdots \qndop^{\delta_n} (g_n,e) ,
\end{align*}
which expresses the first axiom of racks, using group cancellation, as before. 

From there we derive the universal property of the unit: given a function $f \colon A \to Q$ for some quandle $Q$, we show that $f$ factors uniquely through $\eta^q_A$. Given an element $(a,g)\in \Fq(A)$, we have that for any decomposition $g =\gr{g_1}^{\delta_1} \cdots \gr{g_n}^{\delta_n}$ as above, we must have  
\[
f(a,g)\ =\  f(a,\gr{g_1}^{\delta_1} \cdots \gr{g_n}^{\delta_n}) 
		\ =\  f((a,e)  \qndop^{\delta_1} (g_1,e) \cdots \qndop^{\delta_n} (g_n,e) ) 
		\ =\  f(a)\qndop^{\delta_1}f(g_1) \cdots  \qndop^{\delta_n}  f(g_n)\]
which uniquely defines the extension of $f$ along $\eta^q_A$ to a quandle homomorphism $f \colon{\Fq(A) \to Q}$. This extension is well defined since equal such decompositions in $\Fq(A)$ are equal after $f$ by the first axiom of racks as displayed in Paragraph \ref{ParagraphDefinitionFreeQuandle}.

Finally the left adjoint $\Fq \colon \SET \to \RCK$ of the forgetful functor $\U \colon \RCK \to \SET$ with unit $\eta^q$ is then defined on functions $f\colon {A \to B}$ by 
\[\Fq(f) \defeq f \times \Fgd(f)  \colon {A \rtimes \Fgd(A)\to B \rtimes \Fgd(B)},\]
where $ \Fgd(f)$ is the restriction of $ \Fg(f)$ to the normal subgroup $ \Fgd(A) \leq  \Fg(A)$, whose image is in $\Fgd(B)$. This defines quandle homomorphisms. Also functoriality of $\Fq$ and naturality of $\eta^q$ are immediate.

\paragraph{Free action of $\Fgd(A)$}
Now remember the action by inner automorphisms of $\Fg = \Pth(\Fq(A))$ defined by the commutative diagram in $\SET$:
\[ \xymatrix@!0@R=23pt@C=65pt{ A \ar[rr]^-{\eta^g_{A}} \ar[rd]_-{\eta^q_{A}} & &  \Fg(A) \ar@{..>}[dd]^-{s} \\
& \Fq(A) \ar[ru]_-{\pth_{\Fq(A)}}  \ar[rd]_-{\Ss} &  \\
 & & \Inn(\Fq(A)),
}\]
where $s$ is the group homomorphism induced by the universal property of $\eta^g_A$ or equivalently that of $\pth_{\Fq(A)}$.

This action is \emph{not} in general given by left multiplication in $\Fgd(A)$, since in particular an $h$ in $\Fg(A)$ is of course not always of characteristic zero... However, from Paragraph \ref{ParagraphDefinitionFreeQuandle} we deduce that whenever $h\in \Fgd(A)$, the action of $h$ on an element $(a,g) \in \Fq(A)$  gives $(a,gh)$ as before. 
\begin{proposition}\label{CorollaryFreeActionOfFreeGroupQnd}
The action of $\Fgd(A)$ on $\Fq(A)$ given via the restriction 
\[ \xymatrix{ \Fgd(A) \ar[r]^-{s\degree} & \Inn\degree(\Fq(A)),}\]
of $s$ thus corresponds to the usual left-action of $\Fgd(A)$ in $\SET$: $(A \times \Fgd(A)) \times \Fgd(A)) \to A \times \Fgd(A)$, given by multiplication in $\Fgd(A)$. Such an action is free since if $(a,gh)=(a,g)$, then $gh= g$ and thus $h=e$. 
\end{proposition}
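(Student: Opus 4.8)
The plan is to reduce the statement to ordinary group cancellation in $\Fgd(A)$ by combining the explicit description $\Fq(A) = A \rtimes \Fgd(A)$ with the endpoint action of the group of paths that was already recorded during the construction. Nothing new needs to be built; the work is to identify the restricted action $s\degree$ with multiplication and then observe that a faithful group action is automatically free here.

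First I would recall, from Paragraph \ref{SubsectionRckAndQndHaveTheSameGrpOfPth}, that the group of paths of $\Fq(A)$ is $\Fg(A)$, and that by Paragraph \ref{ParagraphActionByInnerAutomorphisms} the action of a path on a trail is the endpoint action $(a,g)\cdot\gr{y} = (a,g)\qndop y$. The homomorphism $s\degree$ is simply the restriction to the characteristic-zero subgroup $\Fgd(A)\leq\Fg(A)$ of the map $s$ relating representatives of symmetries in $\Pth(\Fq(A))$ to those in $\Inn(\Fq(A))$, its image being the transvection group $\Inn\degree(\Fq(A))$. The independence of this action from the chosen word representing an element of $\Fgd(A)$ is inherited from the well-definedness of the $\Pth$-action and from Remark \ref{RemarkFunctorialityOfPth}.

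Next I would identify this restricted action with multiplication on the path component. Taking $h = \gr{g_1}^{\delta_1}\cdots\gr{g_n}^{\delta_n}\in\Fgd(A)$, so that $\sum_i\delta_i = 0$, and applying it to a trail $(a,g)$, the decomposition already computed in Paragraph \ref{ParagraphDefinitionFreeQuandle} gives
\[(a,g)\cdot h = (a,g)\qndop^{\delta_1}(g_1,e)\cdots\qndop^{\delta_n}(g_n,e) = (a,gh),\]
where $gh$ remains in $\Fgd(A)$ precisely because $\chi$ is a homomorphism and both $g$ and $h$ have characteristic zero. This is exactly the multiplication action of $\Fgd(A)$ on $A\times\Fgd(A)$ on the path component. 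Freeness is then immediate: if $h$ fixes some trail, $(a,gh) = (a,g)$, comparing path components yields $gh = g$ in the group $\Fgd(A)$, whence $h = e$ by cancellation.

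The one point genuinely requiring care — and the place where restricting to $\Fgd(A)$ is essential — is the identity $(a,g)\cdot h = (a,gh)$. For a general path $h\in\Fg(A)$ the endpoint action does \emph{not} reduce to plain multiplication: one must first absorb the factor $\gr{a}^{\chi(h)}$ into the head (the renormalisation discussed just before the statement), so the result would no longer sit in $A\rtimes\Fgd(A)$ without adjusting the head. Thus the main (and only) obstacle is purely bookkeeping about characteristics, and it is already dispatched by the computation of Paragraph \ref{ParagraphDefinitionFreeQuandle}; the freeness itself is then formally identical to the free-rack case.
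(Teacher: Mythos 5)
Your proof is correct and follows essentially the same route as the paper: both reduce the statement to the computation of Paragraph \ref{ParagraphDefinitionFreeQuandle}, where the vanishing total exponent $\sum_i \delta_i = 0$ makes the $\gr{a}$-corrections cancel so that the endpoint action of $h \in \Fgd(A)$ on a trail $(a,g)$ is right multiplication $(a,gh)$, and then deduce freeness by cancellation in $\Fgd(A)$. Your closing caveat about general $h \in \Fg(A)$ not acting by plain multiplication is exactly the remark the paper makes immediately before the proposition.
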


\subsubsection{The group of paths of a quandle} 
Observe that the construction of $\chi$ for the free group $\Fg(A) = \Pth(\Fr(A))$ generalizes to any rack $X$. The function $\Cst \colon {X \to 1}$ is actually a rack homomorphism to the trivial rack $1$. It thus induces a group homomorphism $\chi = \Pth(\Cst)$: 
\[ \xymatrix@R=15pt@C=40pt{ X \ar[r]^-{\pth_{X}} \ar[d]_-{\Cst} &  \Pth(X) \ar@{..>}[d]^-{\chi = \Pth(\Cst)} \\
1 \ar[r]_-{\pth_{1}} & \Z = \Pth(1).
}\]
As in the case of the free rack, we have the short exact sequence of groups:
\[\xymatrix{
\Pth\degree(X) \ar @{{>}{-}{>}}[r]^-{\nu_X} & \Pth(X) \ar@{{}{-}{>>}}[r]^-{\chi} & \Z = \Pth(1),
}\]
where $\nu_X\colon {\Pth\degree(X) \to \Pth(X)}$ is the kernel of $\chi$. This construction defines a functor $\Pth\degree\colon \RCK \to \GRP$. Most importantly it defines a functor $\Pth\degree\colon{\QND \to \GRP}$ which can be interpreted as sending a quandle to its group of equivalence classes of primitive paths, such that two primitive paths are identified if one can be obtained from the other with respect to the axioms defining quandles. In the same way that $\Pth$ describes homotopy classes of paths in racks, $\Pth\degree$ describes homotopy classes of paths in quandles, as it was already explained in \cite{Eis2014} and we shall rediscover in the covering theory described below.

\paragraph{The transvection group} As in the case of free groups, given a rack $X$, Proposition \ref{PropositionKernelOfPthF} implies that the kernel $\Pth\degree(X)$ of $\chi$ is characterized as the subgroup: 
\begin{equation}\label{EquationPthDegreeCharact}
\Pth\degree(X) = \langle \gr{a}\,\gr{b}^{-1}\, \mid\, a,\, b \in X \rangle_{\Pth(X)}, 
\end{equation}
which is the definition that was used by D.E.~Joyce in \cite{Joy1979}.
Then the restriction of the quotient $s\colon {\Pth(X) \to \Inn(X)}$ (defined in Subsection \ref{SubsectionActionByInnerAut}) yields the normal subgroup 
\[\Inn\degree(X) \defeq \langle  \gr{a}\,\gr{b}^{-1}\, \mid\, a,\, b \in X \rangle_{\Inn(X)},\]
which was called the \emph{transvection group} of $X$ by D.E.~Joyce.

This transvection group plays an important role in the literature. In the context of this work, we understand that the construction $\Pth\degree$ has better properties such as functoriality, and is of more significance to the theory of coverings than its image $\Inn\degree$ within inner automorphisms.

\paragraph{The case of free quandles} Observe that for a set $X$, $\Pth\degree(\Fq(X))=\Fgd(X)$ (for instance by Equation~\eqref{EquationPthDegreeCharact}). As in the case of free racks we get that:
\begin{proposition}
Given a set $A$, we may identify the groups $\Inn\degree(\Fq(A)) = \Pth\degree(\Fq(A)) = \Fgd(A)$, and their actions on $\Fq(A)$. We refer to them as the \emph{group of paths of $\Fq(A)$}. This group acts freely on $\Fq(A)$ by Corollary \ref{CorollaryFreeActionOfFreeGroupQnd}.
\end{proposition}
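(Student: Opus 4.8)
The plan is to reduce the whole statement to the single identification $\Pth(\Fq(A)) = \Fg(A)$ together with the free-action fact already recorded in Corollary \ref{CorollaryFreeActionOfFreeGroupQnd}, and then to run exactly the argument used earlier for free racks. First I would pin down $\Pth(\Fq(A))$. Since racks and quandles have the same group of paths (Section \ref{SubsectionRckAndQndHaveTheSameGrpOfPth}), and $\Fq(A) = \Frq(\Fr(A))$ is the free quandle on the free rack, the isomorphism $\Pth\,\Frq \cong \Pth$ gives $\Pth(\Fq(A)) \cong \Pth(\Fr(A)) = \Fg(A)$. Tracing this through the reflection unit $\Fr(A) \to \Frq(\Fr(A)) = \Fq(A)$ shows that the free generators of $\Fg(A)$ correspond to the symmetry-classes $\gr{(a,e)} = \pth_{\Fq(A)}(a)$ for $a \in A$.

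Next I would identify $\Pth\degree(\Fq(A))$ with $\Fgd(A)$ as a literal subgroup of $\Fg(A)$. By definition $\Pth\degree(\Fq(A)) = \Ker(\chi)$, where $\chi = \Pth(\Cst)\colon \Pth(\Fq(A)) \to \Z = \Pth(1)$. Under the identification above, naturality of $\pth$ sends each generator $\gr{(a,e)}$ to $\pth_1(\ast) = 1 \in \Z$, so $\chi$ coincides with $\Fg(\Cst)$ and hence $\Ker(\chi) = \Fgd(A)$; this matches Equation~\eqref{EquationPthDegreeCharact}. This step involves no new idea, only careful bookkeeping of the identifications, and indeed the equality $\Pth\degree(\Fq(A)) = \Fgd(A)$ is already asserted just before the statement.

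The substance is the identification $\Inn\degree(\Fq(A)) = \Pth\degree(\Fq(A))$, which I would prove by showing that the restriction $s\degree\colon \Pth\degree(\Fq(A)) = \Fgd(A) \to \Inn\degree(\Fq(A))$ of the excess homomorphism $s$ is an isomorphism, exactly as in the free-rack case. Surjectivity is immediate, since $\Inn\degree(\Fq(A))$ is by definition the image of $\Pth\degree(\Fq(A))$ under $s$ (equivalently, $s$ carries the generators $\gr{a}\,\gr{b}^{-1}$ onto the generators of $\Inn\degree$). For injectivity I would invoke freeness: if $h \in \Fgd(A)$ lies in $\Ker(s\degree)$ then $h$ acts as the identity on $\Fq(A)$, so $(a,gh) = (a,g)\cdot h = (a,g)$ for every trail, forcing $h = e$ by Corollary \ref{CorollaryFreeActionOfFreeGroupQnd}. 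Hence $s\degree$ is an isomorphism and the three groups coincide.

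Finally the freeness of the action is nothing but a restatement of Corollary \ref{CorollaryFreeActionOfFreeGroupQnd}, so no further work is needed there. The only real obstacle is the first step: ensuring the abstract isomorphism $\Pth(\Fq(A)) \cong \Fg(A)$ is realized concretely, so that free generators match the classes $\gr{(a,e)}$ and $\chi$ matches $\Fg(\Cst)$ on the nose. Once that is secured, everything reduces to the free-action argument already carried out for $\Fr(A)$.
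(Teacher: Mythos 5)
Your proposal is correct and follows essentially the same route as the paper: the identification $\Pth\degree(\Fq(A)) = \Fgd(A)$ is obtained from the generator description (Equation~\eqref{EquationPthDegreeCharact}), and the core of the argument is that $s\degree\colon \Fgd(A) \to \Inn\degree(\Fq(A))$ is an isomorphism — surjective because it carries generators onto generators, injective because the action of $\Fgd(A)$ on $\Fq(A)$ via $s\degree$ is free (Corollary \ref{CorollaryFreeActionOfFreeGroupQnd}). The extra bookkeeping you do for $\Pth(\Fq(A)) \cong \Fg(A)$ via $\Pth\,\Frq \cong \Pth$ is exactly what the paper establishes in the surrounding text rather than inside the proof.
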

\begin{proof}
Given a set $A$, the morphism $s\degree \colon { \Fgd (A) \to  \Inn\degree(\Fq(A))}$ is a group isomorphism: 
\begin{itemize}
\item it is surjective, since $ \Inn\degree(\Fq(A))$ is generated by the  set $s(A)s(A)^{-1} = \lbrace \Ss_{(a,e)}(\Ss_{(b,e)})^{-1} \mid a,b \in A\rbrace \subset \Inn\degree(\Fq(A))$ which is the image of $AA^{-1}\subset \Fgd(A)$ by $s$;
\item it is injective, as before because of the free action of $\Fgd(A)$ via $s\degree$. \qedhere
\end{itemize}
\end{proof}

\paragraph{Inner automorphism groups}
In the case of quandles, the group of inner automorphisms $\Inn(\Fq(A))$ is not isomorphic to $\Fg(A)$ in general. However, the only counter-example is actually the case $A=\lbrace1\rbrace$: $\Fq(\lbrace1\rbrace) = \lbrace1\rbrace$ is the trivial quandle on one element and $\Inn(\lbrace1\rbrace)=\lbrace e\rbrace$ is the trivial group, whereas $\Fg(\lbrace1\rbrace)$ is $\Z$. Of course we do have $\Fgd(\lbrace1\rbrace) = \lbrace e\rbrace$. Now in all the other cases $\Inn(\Fq(A)) \cong \Fg(A)$. The case $A= \emptyset$ is trivial. Then whenever 
\[   x \qndop^{\delta_x} x\qndop^{\delta_x} \cdots x\qndop^{\delta_x} x  \qndop^{\delta_1} a_1 \cdots \qndop^{\delta_{k}}  a_{k}= x \qndop^{\delta_1} a_1 \cdots \qndop^{\delta_{k}}  a_{k},  \]
it suffices to pick $y\neq x \in A$ and then $y \qndop^{\delta_x} x \qndop^{\delta_x} x\qndop^{\delta_x} \cdots x \qndop^{\delta_1} a_1 \cdots \qndop^{\delta_{k}}  a_{k} \neq y \qndop^{\delta_1} a_1 \cdots \qndop^{\delta_{k}}  a_{k}$, showing that in $\Inn(\Fq(A))$: $\gr x^{\delta_x} \gr x^{\delta_x} \cdots \gr x^{\delta_x} \gr{a_{1}}^{\delta_{1}}  \cdots \gr{a_k}^{\delta_k} \neq \gr{a_{1}}^{\delta_{1}}  \cdots \gr{a_k}^{\delta_k}$, just as in $\Inn(\Fr(A))$.

\section{Covering theory of racks and quandles}\label{SectionCoveringTheoryOfRacksAndQuandles}

In this section we study the relative notion of centrality induced by the sphere of influence of $\SET$ in $\RCK$, with respect to extensions (surjective homomorphisms). Remember that pullbacks of primitive extensions (surjections in $\SET$) along the unit $\eta$ induce the concept of trivial extensions, which we saw are those extensions which reflect \emph{loops}. Central extensions in $\RCK$ are those from which a trivial extension can be reconstructed by pullback along another extension. Equivalently, central extensions are those extensions whose pullback, along a projective presentation of their codomain, is trivial. In Section \ref{SectionOneDimensionalCoverings} we thus look for a condition (C) such that, if a surjective rack homomorphism $f\colon{A \to B}$ satisfies (C), then the pullback $t$ of $f$ along $\epsilon^r_B\colon{\Fr(B) \to B}$ reflects loops (see Section \ref{SectionCategoricalGaloisTheory} and references there).

\subsection{One-dimensional coverings}\label{SectionOneDimensionalCoverings}

Quandle coverings were defined in \cite{Eis2014}, and shown to characterize $\Gamma_q$-central extensions of quandles in \cite{Eve2014}. We give the same definition for rack coverings (already suggested in M.~Eisermann's work), which we then characterize in several ways. In Section \ref{SectionCharacterizingCentralExtensions} we further show that these are exactly the central extensions of racks.

Remember that in dimension zero, a rack $A$ is actually a set, if \emph{zero-dimensional data}, i.e.~an element $a \in A$, acts trivially on any element $x \in A$~: $x \qndop a = x$.
We saw that this may be expressed by the fact that $\Pth(A)$ acts trivially on $A$ or alternatively by the fact that any two elements which are connected by a primitive path are actually equal.

Now in \emph{dimension one}, an extension $f\colon{A \twoheadrightarrow B}$ is a covering if \emph{one-dimensional data}, i.e.~a pair $(a,b)$ in the kernel pair of $f$, acts trivially on any element $x \in A$:
\begin{definition}\label{DefinitionRackCovering}
A morphism of racks $f \colon{A \to B}$ is said to be a \emph{covering} if it is surjective and for each pair $(a,b) \in \Eq(f)$, and any $x\in A$ we have 
\[ x \qndop a \qndiop b = x. \]
\end{definition}

Of course a trivial example is given by surjective functions between sets (the primitive extensions). The following implies that central extensions are coverings:
\begin{lemma}\label{LemmaCoveringsAndPullbacks}
Coverings are preserved and reflected by pullbacks along surjections in $\RCK$.
\end{lemma}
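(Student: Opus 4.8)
The plan is to exploit that $\RCK$ is a variety of algebras, so pullbacks are created by the forgetful functor to $\SET$: the pullback of $f\colon A \to B$ along a surjection $g\colon C \to B$ is the subrack
\[ P = C \times_B A = \lbrace (c,a) \in C\times A \mid g(c) = f(a)\rbrace, \]
carrying the operations of $C \times A$ componentwise, with the pulled-back map being the projection $\bar f = \pi_C\colon P \to C$. Since the covering condition of Definition \ref{DefinitionRackCovering} is an equation that must hold for every relevant element, I would verify it coordinate by coordinate, and the key simplification is that in the first ($C$-)coordinate every instance $d \qndop c \qndiop c = d$ collapses by \R1, so all the content lives in the second ($A$-)coordinate.

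For preservation, I would assume $f$ is a covering. Surjectivity of $\bar f$ is immediate from surjectivity of $f$: given $c \in C$, some $a$ has $f(a) = g(c)$, whence $(c,a) \in P$ lies over $c$. Given $((c,a_1),(c,a_2)) \in \Eq(\bar f)$ one has $f(a_1) = g(c) = f(a_2)$, so $(a_1,a_2) \in \Eq(f)$; then for any $(d,x) \in P$,
\[ (d,x) \qndop (c,a_1) \qndiop (c,a_2) = \big(d \qndop c \qndiop c,\ x \qndop a_1 \qndiop a_2\big) = (d,x), \]
the first coordinate by \R1 and the second because $f$ is a covering. Hence $\bar f$ is a covering. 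Note this direction does not even use surjectivity of $g$.

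For reflection, I would assume $g$ is surjective and $\bar f$ is a covering. First $f$ is surjective: for $b \in B$ choose $c$ with $g(c) = b$, then a preimage $(c,a) \in P$ of $c$ under $\bar f$ gives $f(a) = g(c) = b$. Next, given $(a_1,a_2) \in \Eq(f)$ and an arbitrary $x \in A$, surjectivity of $g$ lets me lift: pick $c$ with $g(c) = f(a_1) = f(a_2)$ and $d$ with $g(d) = f(x)$, so that $(c,a_1),(c,a_2),(d,x) \in P$ and $((c,a_1),(c,a_2)) \in \Eq(\bar f)$. Applying the covering hypothesis for $\bar f$ to $(d,x)$ and reading the $A$-coordinate of the resulting equality $(d,x) \qndop (c,a_1) \qndiop (c,a_2) = (d,x)$ gives exactly $x \qndop a_1 \qndiop a_2 = x$, so $f$ is a covering.

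The hard part is really only the reflection direction, and it is localized in the lifting step: to test an arbitrary pair of $\Eq(f)$ and an arbitrary head $x$ against the covering condition downstairs, one must realize them as coordinates of elements of $P$, which is precisely where surjectivity of $g$ is indispensable. Once the lifts exist, the verification is purely formal, since the $C$-coordinate is always forced by \R1 and the $A$-coordinate transports the condition verbatim between $f$ and $\bar f$.
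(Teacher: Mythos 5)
Your proof is correct, and it is essentially the standard argument: the paper itself gives no written proof here but defers to V.~Even's work (\cite{Eve2015}, see also \cite{Eve2014}), where the verification is exactly this elementary one — pullbacks in $\RCK$ are computed as in $\SET$, the first coordinate of the covering equation collapses by \R1, and the second coordinate transfers the condition between $f$ and $\bar f$, with surjectivity of $g$ needed only to lift elements in the reflection direction. Your write-up in effect supplies the proof the paper outsources to the citation.
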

\begin{proof}
Same proof as in \cite{Eve2015} see also \cite{Eve2014}.
\end{proof}

\subsubsection{Coverings and the group of paths} Observe that given data $f$, $x$, $a$ and $b$, such as in Definition \ref{DefinitionRackCovering}, we have in particular that $x \qndiop a = x\qndiop a \qndop a \qndiop b = x\qndiop b$.  In fact we can easily deduce that $f$ is a covering if and only if for all such $x$, $a$ and $b$ as before
\[ x \qndiop a \qndop b = x. \]
This is to say that $f$ is a covering if and only if any path of the form $\gr{a}\,\gr{b}^{-1}$ or $\gr{a}^{-1}\gr{b} \in \Pth(A)$, for $a$ and $b$ in $A$, such that $f(a)=f(b)$, acts trivially on elements in $A$. But then $f$ is a covering if and only if the subgroup of $\Pth(A)$ generated by those elements acts trivially on elements of $A$. Now, given $g\in \Pth(A)$, if $z \cdot g = z$ for all $z$ in $A$, then also $x \cdot a^{-1} \cdot g \cdot a = (x \qndiop a) \cdot g \cdot a = (x\qndiop a) \cdot a = x$ for all $a \in A$. Hence we conclude that $f$ is a covering if and only if the normal subgroup $\langle\langle ab^{-1} \mid (a,b) \in \Eq(f) \rangle \rangle_{\Pth(A)}$ acts trivially on elements of $A$. Finally by Proposition \ref{PropositionKernelOfPthF} we get the following result which illustrates the importance of $\Pth$ in the covering theory of racks and quandles.

\begin{theorem}\label{PropositionCoveringsAsOrbitCongruences}
Given a surjective morphism $f\colon {A \to B}$ in $\RCK$ (or in $\QND$), the following conditions are equivalent:
\begin{enumerate}
\item $f$ is a covering;
\item the group of $\vec{f}$-symmetric paths $\K{\vec{f}}$ acts trivially on $A$ (as a subgroup of $\Pth(A)$) -- i.e. any $f$-symmetric trail loops in $A$;
\item $\Ker(\vec{f})$ acts trivially on $A$ (as a subgroup of $\Pth(A)$);
\item $\Ker(\vec{f})$ is a subobject of the kernel $\Ker(s)$, where $s\colon{\Pth(A) \to \Inn(A)}$ is the canonical quotient described in Paragraph \ref{ParagraphActionByInnerAutomorphisms}.
\end{enumerate} 
\end{theorem}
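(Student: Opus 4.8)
The plan is to observe that nearly all of the work has already been done in the discussion immediately preceding the statement, so the proof mainly repackages that discussion using Proposition \ref{PropositionKernelOfPthF}, and then adds the translation into the language of $s$. I would establish the cycle by proving (1) $\Leftrightarrow$ (3), the trivial identification (2) $\Leftrightarrow$ (3), and finally (3) $\Leftrightarrow$ (4).

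For (1) $\Leftrightarrow$ (3) I would invoke the chain of reductions already carried out above: starting from Definition \ref{DefinitionRackCovering}, the equivalent reformulation $x \qndiop a \qndop b = x$, and the passage to paths $\gr a\,\gr b^{-1}$, one concludes that $f$ is a covering if and only if the normal subgroup $\langle\langle ab^{-1}\mid (a,b)\in\Eq(f)\rangle\rangle_{\Pth(A)}$ acts trivially on $A$. The one point deserving genuine care here is the justification that the elements of $\Pth(A)$ acting trivially on $A$ form a \emph{normal} subgroup: closure under products and inverses is immediate, and normality is exactly the computation $x\cdot a^{-1}ga = (x\qndiop a)\cdot g\cdot a = (x\qndiop a)\cdot a = x$ valid whenever $g$ acts trivially. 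Granting this, $f$ being a covering is equivalent to each generator $\gr a\,\gr b^{-1}$ (with $f(a)=f(b)$) lying in that normal subgroup, hence to its normal closure acting trivially; and by Proposition \ref{PropositionKernelOfPthF} this normal closure is precisely $\Ker(\vec f)$.

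The equivalence (2) $\Leftrightarrow$ (3) is then immediate, since Proposition \ref{PropositionKernelOfPthF} identifies $\K{\vec f}$ with $\Ker(\vec f)$, so the two conditions are literally the same statement about the same subgroup.

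Finally, for (3) $\Leftrightarrow$ (4) I would use that the action of $\Pth(A)$ on $A$ factors through the homomorphism $s\colon\Pth(A)\to\Inn(A)$ of Paragraph \ref{ParagraphActionByInnerAutomorphisms}: an element $g\in\Pth(A)$ acts on $A$ as the automorphism $s(g)\in\Inn(A)\le\Aut(A)$. Since an automorphism fixing every point of $A$ is the identity, $g$ acts trivially on $A$ if and only if $s(g)=e$, that is, $g\in\Ker(s)$. Applying this elementwise to $\Ker(\vec f)$ shows that $\Ker(\vec f)$ acts trivially if and only if $\Ker(\vec f)\le\Ker(s)$, which is exactly condition (4). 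No change is needed for the quandle case $\QND$, since $\Pth$, the map $s$, and the notion of covering are all inherited unchanged. The only delicate ingredient, as anticipated, is the normality observation in the first step; everything else is a direct application of Proposition \ref{PropositionKernelOfPthF} together with the fact that the $\Pth(A)$-action is mediated by $s$.
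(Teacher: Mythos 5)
Your proposal is correct and follows essentially the same route as the paper: the paper's proof likewise cites the preceding discussion (the reduction of the covering condition to trivial action of the normal closure $\langle\langle ab^{-1}\mid (a,b)\in\Eq(f)\rangle\rangle_{\Pth(A)}$, identified with $\Ker(\vec f)=\K{\vec f}$ via Proposition \ref{PropositionKernelOfPthF}) for the equivalence of (1), (2), (3), and treats (4) as a rephrasing of (3) using the fact that inner automorphisms are determined by their action. Your explicit verification of normality of the trivially-acting subgroup and of the factorization of the action through $s$ simply spells out what the paper leaves implicit.
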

\begin{proof}
The statements $(1)$, $(2)$ and $(3)$ are equivalent by the previous paragraph (and thus by Proposition \ref{PropositionKernelOfPthF}). Statement $(4)$ is merely a way to rephrase $(3)$ using the fact that elements of the inner automorphism groups are defined by their action. 
\end{proof}

As it was observed by M.~Eisermann in $\QND$, we have:
\begin{corollary}\label{CorollaryCoveringInducedAction} A rack covering $f\colon{A \to B}$ induces a surjective morphism $\bar{f} \colon{\Pth(B) \to \Inn(A)}$ such that $\vec{f} \bar{f} = s$ and thus induces an action of $\Pth(B)$ on $A$ given for $g_B \in \Pth(B)$ and $x \in A$ by $x \cdot g_B \defeq x \cdot g_A$, where $g_A$ is any element in the pre-image $\vec{f}^{-1}(g_b)$.
\end{corollary}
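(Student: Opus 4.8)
The plan is to produce $\bar f$ as a factorization of the canonical quotient $s\colon \Pth(A) \to \Inn(A)$ through $\vec f$, and then to read the desired action off this factorization. As preliminary bookkeeping I would first record two surjectivity facts. On the one hand, $\vec f \colon \Pth(A) \to \Pth(B)$ is a surjective group homomorphism: $f$ is surjective and $\Pth$, being a left adjoint, preserves regular epimorphisms, which in $\GRP$ are exactly the surjections (this is how $\vec f$ is already drawn in Proposition \ref{PropositionKernelOfPthF}). On the other hand, $s\colon \Pth(A) \to \Inn(A)$ is surjective, since by construction it sends each generator $\gr{x} = \pth_A(x)$ to the symmetry $\Ss_x$, and the $\Ss_x$ generate $\Inn(A)$.

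The key step is the factorization itself. Since $f$ is a covering, Theorem \ref{PropositionCoveringsAsOrbitCongruences} gives the inclusion of normal subgroups $\Ker(\vec f) = \K{\vec f} \subseteq \Ker(s)$. As $\vec f$ is a surjective homomorphism, $\Pth(B) \cong \Pth(A)/\Ker(\vec f)$, so the universal property of this quotient applied to $s$ (whose kernel contains $\Ker(\vec f)$) yields a unique group homomorphism $\bar f\colon \Pth(B) \to \Inn(A)$ with $\bar f \circ \vec f = s$, that is $\vec f \bar f = s$ in diagrammatic order. Surjectivity of $\bar f$ is then immediate, since $s = \bar f \circ \vec f$ is surjective.

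Finally I would define the action and verify it is well posed. The group $\Inn(A)$ already acts on $A$ by evaluation, so precomposing with the homomorphism $\bar f$ gives an action of $\Pth(B)$ on $A$, which satisfies the action axioms automatically because $\bar f$ is a homomorphism. To match the pre-image description, take $g_B \in \Pth(B)$ and any $g_A \in \vec f^{-1}(g_B)$; then for every $x \in A$,
\[ x \cdot \bar f(g_B) = x \cdot \bar f(\vec f(g_A)) = x \cdot s(g_A) = x \cdot g_A, \]
so the prescription $x \cdot g_B \defeq x \cdot g_A$ is independent of the chosen pre-image. Equivalently, one sees the independence directly: any two pre-images of $g_B$ differ by an element of $\Ker(\vec f)$, which acts trivially on $A$ by the covering condition (Theorem \ref{PropositionCoveringsAsOrbitCongruences}, part $(3)$), so both give the same value of $x \cdot g_A$. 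I expect no serious obstacle, as the whole statement is a repackaging of the inclusion $\Ker(\vec f) \subseteq \Ker(s)$ together with the surjectivity of $\vec f$; the only point deserving care is precisely this well-definedness of the induced action on pre-images, which is exactly what the triviality of the $\Ker(\vec f)$-action supplies.
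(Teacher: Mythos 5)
Your proof is correct and is exactly the argument the paper intends: the corollary is stated as an immediate consequence of Theorem \ref{PropositionCoveringsAsOrbitCongruences}, and your factorization of $s$ through the surjection $\vec{f}$ via the kernel inclusion $\Ker(\vec{f}) \subseteq \Ker(s)$, followed by pulling back the evaluation action of $\Inn(A)$ along $\bar{f}$, spells out precisely that deduction. The well-definedness check on pre-images (any two differ by an element of $\Ker(\vec{f})$, which acts trivially) is the right point to isolate, and your treatment of it is complete.
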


Observe that an easy way to obtain a rack covering is by constructing a quotient $f \colon {A \twoheadrightarrow B}$ such that $\vec{f}$ is an isomorphism.

\begin{example}\label{ExampleUnitFrqIsCovering}
The components of the unit $^r\eta^q$ of the $\Frq$ adjunction are rack coverings. Indeed, we discussed in Paragraph \ref{SubsectionRckAndQndHaveTheSameGrpOfPth} that $\Pth \Frq = \Pth$, also see Paragraph \ref{ParagraphIdempotencyInRacks}. In particular, we look at the one element set $1$ and consider the map $f \defeq\, {^r\eta^q_{\Fr(1)}} \colon {\Fr(1) \to \Fq(1) = 1}$. We then compute that $\vec{f} = \Pth(^r\eta^q_{\Fr(1)})$ and $\Inn(f) = \Inn(^r \eta^q_{\Fr(1)})$ are respectively the morphisms \[ \xymatrix{\Pth(\Fr(1)) = \Z \ar[r]^-{\id_{\Z}}  & \Pth(\Fq(1)) = \Z} \text{ and } \xymatrix{\Inn(\Fr(1)) = \Z_3 \ar[r]^-{} & \Inn(\Fq(1)) = \lbrace e \rbrace}, \] where $\Z$ is the infinite cyclic group, $\Z_3 = \Z/3\Z$ is the cyclic group with $3$ elements and $\lbrace e \rbrace$ the trivial group. In this case $\vec{f}$ is an isomorphism, but $\Inn(f)$ is not.
\end{example}

\begin{remark} In the article \cite{BLRY2010}, Theorem 4.2 says that quandle coverings (such as in $(3)$ of Proposition \ref{PropositionCoveringsAsOrbitCongruences} above) should coincide with \emph{rigid quotients} of quandles, i.e.~surjective morphisms $f\colon{A \to B}$ which induce an isomorphism $\Inn(f) \colon \Inn(A) \to \Inn(B)$. Looking at the proof on page 1150, the authors assume ``by construction'' that the map $\eta$ (between the \emph{excess} of $Q$ and $R$ \cite{FenRou1992}) is surjective, which is equivalent to asking for the bottom right-hand square $c_R\ Adconj(h) = \Inn(h)\ c_Q$ to be a pushout. This doesn't seem to hold in the generality asked for in \cite{BLRY2010}. Note that these results are presented in such a way that they should also hold in $\RCK$, since the idempotency axiom is never used. Then the example above provides a counter-example to \cite[Theorem 4.2]{BLRY2010} in $\RCK$. We further give a counter-example in $\QND$, which shows that \cite[Theorem 4.2]{BLRY2010} must be incorrect.
\end{remark}

\begin{example}
Consider the quandle $Q_{ab\star}$ from Example \ref{ExampleaConnectedComponents}, which by Example \ref{ExamplePthOfQab} is such that $\Pth(Q_{ab\star})= \Z \times \Z$ with $\gr a = \gr b = (1,0)$ and $\gr \star = (0,1)$. Moreover, observe that the trivial quandle with two elements $\pi_0(Q_{ab\star})$ is also such that $\Pth(\pi_0(Q_{ab\star})) = \Fa(\lbrace [a], [\star] \rbrace) = \Z \times \Z$ where $\gr {[a]} = (1,0)$ and $\gr {[\star]} = (0,1)$. Hence the morphism of quandles $f \defeq \eta_{Q_{ab\star}}\colon{Q_{ab\star} \to \pi_0(Q_{ab\star})}$ is such that $\vec{f}= \id_{\Z \times \Z}$. In particular $\Ker(\vec{f}) = \lbrace e \rbrace$ is the trivial group, but $\Inn(f)\colon{\Z/2\Z \to \lbrace e \rbrace}$ is not an isomorphism. 

Other such examples can be built using morphisms between quandles from Example 1.3, as well as Proposition 2.27 and Remark 2.28 in \cite{Eis2014}.
\end{example}

\subsubsection{Visualizing coverings}\label{ParagraphVisualizingCoverings} Coverings are characterized by the trivial action of $f$-symmetric paths, which are the elements $g=g_ag_b^{-1} \in \Pth(A)$ such that $g_a$ and $g_b$ are $f$-symmetric to each other. Notice that an $f$-symmetric pair $g_a$, $g_b$ is obtained from the projections of a primitive path in $\Eq(f)$. We emphasize the geometrical aspect of these 2-dimensional primitive paths by defining \emph{membranes} and \emph{horns}. An $f$-symmetric trail is a compact 1-dimensional concept which remains so when generalized to higher dimensions. The concept of $f$-horn allows for a more visual, geometrical and elementary description of these ingredients as well as their higher-dimensional generalizations.

\begin{definition}\label{DefinitionFMembrane}
Given a morphism $f\colon {A \to B}$ in $\RCK$ (or $\QND$), we define an $f$-membrane $M = ((a_0,b_0),((a_i,b_i),\delta_i)_{1\leq i \leq n})$ to be the data of a primitive trail in $\Eq(f)$ (see Paragraph \ref{ParagraphConnectedComponents}. We call such an $f$-membrane $M$ a $f$-horn if $ a_0=b_0 \eqqcolon x$ which we denote $M = (x, (a_i,b_i, \delta_i)_{1 \leq i \leq n})$. The \emph{associated $f$-symmetric pair} of the membrane or horn $M$ is given by the paths $g^M_a \defeq \gr{a_1}^{\delta_1} \cdots  \gr{a_n}^{\delta_n}$ and $g^M_b \defeq \gr{b_1}^{\delta_1} \cdots  \gr{b_n}^{\delta_n}$ in $\Pth(A)$. The \emph{top trail} is $t_a=(a_0,g^M_a)$ and the \emph{bottom trail} is $t_b=(b_0,g^M_b)$. The \emph{endpoints} of the membrane or horn are given by $a_M = a_0 \cdot g^M_a$ and $b_M = b_0 \cdot g^M_b$. 
\end{definition}

Given an $f$-symmetric trail $(x,g)$ for $g = g_ag_b^{-1} \in \Ker(\vec{f})$ as before, there is an $f$-horn such that its associated $f$-symmetric pair is given by $g_a$ and $g_b$ (in particular the associated $f$-symmetric trail is then $(x,g)$). Given a horn $M =  (x, (a_i,b_i, \delta_i)_{1\leq i \leq n})$, we represent it (with $n=3$ and $\delta_i = 1$ for $1 \leq i \leq 3$) as in the left-hand diagram below. 
\begin{definition} 
A horn $M = (x, (a_i,b_i, \delta_i)_{1\leq i \leq n})$ is said to \emph{close} (into a \emph{disk}) if its endpoints are equal $a^M = x\cdot g^M_a = x\cdot g^M_b = b^M$. The horn $M$ is said to \emph{retract} if for each $ 1 \leq k  \leq n$, the truncated horn $M_{\leq k} \defeq (x, (a_i,b_i,\delta_i)_{1\leq i \leq k})$ closes.
\end{definition}
\[\vcenter{\xymatrix@C=2pt @R=6pt {& & & x  \ar@{{}{-}{}}[dddlll]|-{\dir{>}} _(0.3){\gr{a_1}\, }_(0.5){\gr{a_2}\, }_(0.7){\gr{a_3}\, }   \ar@{{}{-}{}}[dddrrr]|{\dir{>}}^(0.3){\, \gr{b_1}}^(0.5){\, \gr{b_2}}^(0.7){\, \gr{b_3}}     \\
& & \ar@{{}{-}{}}[rr]|-{f} & & & & \\
&  \ar@<0.7ex>@{{}{-}{}}[rrrr]|-{f} & &&& & \\
x \cdot (\gr{a_1}\,\gr{a_2}\,\gr{a_3} ) \ar@<2.3ex>@{{}{-}{}}[rrrrrr]|-{f}  & & & & &  & x \cdot (\gr{b_1}\,\gr{b_2}\,\gr{b_3} )
}} \qquad  \vcenter{\xymatrix@C=4pt @R=6pt {& & x  \ar@/_8ex/@{{}{-}{}}[ddd]|{\dir{>}} _(0.3){\gr{a_1}}_(0.48){\gr{a_2}\ }_(0.65){\gr{a_3}}    \ar@/^8ex/@{{}{-}{}}[ddd]|{\dir{>}}^(0.3){\gr{b_1}}^(0.48){\ \gr{b_2}}^(0.65){\gr{b_3}}    \\
& \ar@<0.52ex>@{{}{-}{}}[rr]|-{f} & & &  \\
& \ar@<1.3ex>@{{}{-}{}}[rr]|-{f} \ar@<-0.95ex>@{{}{-}{}}[rr]|-{f} & & & \\
& & a^M=b^M
}}  \qquad  \vcenter{\xymatrix@C=5pt @R=6pt {x  \ar@<-.5ex>@{{}{-}{}}[ddd]|-{\dir{>}} _(0.2){\gr{a_1}\, }_(0.43){\gr{a_2}\, }_(0.66){\gr{a_3}\, }  \ar@<.5ex>@{{}{-}{}}[ddd]|-{\dir{>}}^(0.2){\, \gr{b_1}}^(0.43){\, \gr{b_2}}^(0.66){\, \gr{b_3}}    \\
 \\
\\
a^M=b^M
}} \]

\begin{corollary}\label{CorollaryCoveringsAsHornRetractors}
A surjective morphism $f\colon {A \twoheadrightarrow B}$ in $\RCK$ (or $\QND$) is a covering if and only if every $f$-horn retracts (or equivalently, if every $f$-horn closes into a disk).
\end{corollary}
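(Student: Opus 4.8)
The plan is to recognize that this corollary is essentially a geometric restatement of Theorem \ref{PropositionCoveringsAsOrbitCongruences}, so most of the genuine content is already available; what remains is a faithful translation between the group-theoretic language of $f$-symmetric trails and the combinatorial language of horns, together with a purely formal comparison of ``closes'' and ``retracts''. Concretely, I would prove the chain of equivalences: $f$ is a covering $\iff$ every $f$-horn closes $\iff$ every $f$-horn retracts.

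First I would dispose of the equivalence between closing and retracting, which needs nothing beyond the definitions. If every $f$-horn retracts then in particular (taking $k=n$) it closes, so one implication is immediate. For the converse, given a horn $M = (x,(a_i,b_i,\delta_i)_{1\leq i \leq n})$ and any $1 \leq k \leq n$, the truncation $M_{\leq k} = (x,(a_i,b_i,\delta_i)_{1\leq i \leq k})$ is again an $f$-horn with the same head $x$; so if every $f$-horn closes then each $M_{\leq k}$ closes, which is exactly the statement that $M$ retracts. Thus ``every $f$-horn closes'' and ``every $f$-horn retracts'' are equivalent, regardless of whether $f$ is a covering.

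Next I would establish ``$f$ is a covering $\iff$ every $f$-horn closes'' by setting up the dictionary between horns and $f$-symmetric trails. Given a horn $M$ with head $x$ and associated $f$-symmetric pair $g^M_a = \gr{a_1}^{\delta_1}\cdots\gr{a_n}^{\delta_n}$, $g^M_b = \gr{b_1}^{\delta_1}\cdots\gr{b_n}^{\delta_n}$, I would observe that $M$ closes, i.e.\ $x \cdot g^M_a = x \cdot g^M_b$, precisely when $x \cdot (g^M_a (g^M_b)^{-1}) = x$ (act by $(g^M_b)^{-1}$ on both sides of the right action of $\Pth(A)$ on $A$). Writing $g^M \defeq g^M_a (g^M_b)^{-1} \in \K{\vec{f}}$, this says exactly that the associated $f$-symmetric trail $(x,g^M)$ loops in $A$. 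So ``$M$ closes'' translates to ``$(x,g^M)$ loops''.

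Finally I would check that this dictionary is exhaustive in both directions, so that ``every $f$-horn closes'' coincides with ``every $f$-symmetric trail loops''. Every horn manifestly yields an $f$-symmetric trail; conversely, as recorded in Paragraph \ref{ParagraphVisualizingCoverings}, every $f$-symmetric trail $(x,g)$ with $g = g_a g_b^{-1}$ arises from some horn with head $x$, since by the very definition of an $f$-symmetric pair the data of $g_a,g_b$ is exactly a sequence $(a_i,b_i,\delta_i)$ with $(a_i,b_i)\in\Eq(f)$. Once this is in place, Theorem \ref{PropositionCoveringsAsOrbitCongruences} (equivalence of ``$f$ is a covering'' with ``every $f$-symmetric trail loops in $A$'') closes the argument. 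The only point requiring any care is this exhaustiveness of the horn–trail correspondence, and in particular that the head $x$ may be chosen freely; but this is immediate from the definitions and the cited paragraph, so I would not expect a real obstacle — the corollary is essentially bookkeeping on top of Theorem \ref{PropositionCoveringsAsOrbitCongruences}.
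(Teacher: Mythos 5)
Your proposal is correct and takes essentially the same route the paper intends: the corollary is left without an explicit proof precisely because it is the translation, via the horn--trail dictionary set up in Paragraph \ref{ParagraphVisualizingCoverings}, of condition (2) of Theorem \ref{PropositionCoveringsAsOrbitCongruences} (``any $f$-symmetric trail loops''), combined with the observation that truncations of horns are again horns. Your bookkeeping --- the equivalence of ``closes'' and ``retracts'' under universal quantification, the identity $x \cdot g^M_a = x \cdot g^M_b \iff x \cdot \bigl(g^M_a (g^M_b)^{-1}\bigr) = x$, and the exhaustiveness of the horn--trail correspondence --- is exactly the implicit argument.
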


\subsubsection{Visualizing normal extensions}
Normal extensions of quandles are described by V.~Even in \cite{Eve2014}. The same description works in racks. We reinterpret it using our own terminology.

\begin{definition} Given a surjective morphism $f\colon{A \to B}$ in $\RCK$, together with an $f$-membrane $M=(a_i,b_i, \delta_i)_{0\leq i \leq n}$, we say that the membrane $M$ \emph{forms a cylinder} if both the top and the bottom trails of $M$ are loops.
\end{definition}

\begin{proposition}
A surjective morphism $f\colon{A \to B}$ in $\RCK$ (or $\QND$) is a normal extension if and only if $f$-membranes are \emph{rigid}, i.e.~if and only if given any $f$-membrane $M = (a_i,b_i, \delta_i)_{0\leq i \leq n}$, $M$ forms a cylinder as soon as either the top or the bottom trail of $M$ is a loop.
\end{proposition}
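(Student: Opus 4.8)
The plan is to reduce the statement to the characterization of trivial extensions recalled earlier, applied separately to the two projections of the kernel pair. Recall that, by definition, $f$ is a normal extension precisely when it is a central extension which is split by itself, i.e.~when the two projections $\pi_1,\pi_2\colon \Eq(f)\rightrightarrows A$ of its kernel pair are trivial extensions; here the centrality of $f$ comes for free, since $f$ is split by the extension $f$ itself (the pullback of $f$ along $f$ is one of these very projections). So I would first replace ``$f$ is normal'' by ``$\pi_1$ and $\pi_2$ are trivial extensions''. Note that the swap $(a,b)\mapsto(b,a)$ is a rack automorphism of $\Eq(f)\subseteq A\times A$ intertwining $\pi_1$ and $\pi_2$, so the two conditions are in fact equivalent; nonetheless I would keep both, in order to match them to the top and the bottom trail of a membrane.

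The key observation is that a primitive trail in the rack $\Eq(f)$ is exactly an $f$-membrane: its head is a pair $(a_0,b_0)\in\Eq(f)$ and its primitive path is a sequence $((a_i,b_i),\delta_i)_{1\le i\le n}$ of symmetries of $\Eq(f)$. Since the operations of $\Eq(f)$ are computed componentwise in $A\times A$, acting with this path on the head produces the endpoint $(a_M,b_M)=(a_0\cdot g^M_a,\,b_0\cdot g^M_b)$, where $g^M_a=\gr{a_1}^{\delta_1}\cdots\gr{a_n}^{\delta_n}$ and $g^M_b=\gr{b_1}^{\delta_1}\cdots\gr{b_n}^{\delta_n}$ in $\Pth(A)$ form the associated $f$-symmetric pair. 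Thus two elements of $\Eq(f)$ are connected if and only if one is the endpoint of an $f$-membrane whose head is the other. I would then feed this into the connectedness characterization of trivial extensions for $t=\pi_1$: two connected elements $(a_0,b_0)$ and $(a_M,b_M)$ have the same image under $\pi_1$ exactly when $a_0=a_M$, i.e.~when the top trail $t_a=(a_0,g^M_a)$ is a loop, and they are equal exactly when moreover $b_0=b_M$, i.e.~when the bottom trail $t_b=(b_0,g^M_b)$ is a loop. Hence $\pi_1$ is a trivial extension if and only if, for every $f$-membrane, the top trail being a loop forces the bottom trail to be a loop; symmetrically (applying the same argument to $\pi_2$, or transporting along the swap automorphism), $\pi_2$ is trivial if and only if a bottom loop forces a top loop.

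Finally I would combine the two conditions. Since a membrane $M$ forms a cylinder exactly when both of its trails are loops, the conjunction ``$\pi_1$ trivial and $\pi_2$ trivial'' says precisely that, for every $f$-membrane, one trail being a loop forces the other, which is the defining condition of rigidity; this closes the equivalence ``$f$ normal'' $\iff$ ``$f$-membranes rigid''. The only genuinely delicate point, and the one I would write out with care, is the dictionary between connectedness in $\Eq(f)$ and $f$-membranes: I must verify that \emph{every} pair of connected elements of $\Eq(f)$ arises as (head, endpoint) of some membrane and conversely, and that the componentwise action really identifies ``equal $\pi_1$-image'' with ``the top trail is a loop''. Once this dictionary is in place, the remainder is a routine manipulation of the already-established characterization of trivial extensions together with the componentwise structure of the kernel pair.
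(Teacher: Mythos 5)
Your proof is correct and takes essentially the same route as the paper's: both reduce normality to triviality of the two kernel-pair projections $\pi_1,\pi_2\colon \Eq(f)\rightrightarrows A$, identify primitive trails in $\Eq(f)$ with $f$-membranes (which is in fact the paper's very definition of a membrane), and translate the characterization of trivial extensions into the rigidity condition. The only cosmetic difference is that you invoke the connectedness characterization of trivial extensions where the paper uses the equivalent ``reflects loops'' formulation stated immediately next to it.
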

\begin{proof}
The surjection $f$ is normal if and only if the projections $\pi_1,\pi_2 \colon{\Eq(f) \rightrightarrows A}$ of the kernel pair of $f$ are trivial. Such projections are trivial if and only if they reflect loops. The $\pi_1$ (resp.~$\pi_2$) projection of a trail $t=((a_0,b_0), h)$ in $\Eq(f)$ loops if and only if there is an $f$-membrane $M=((a_0,b_0),((a_i,b_i),\delta_i)_{1\leq i \leq n})$ such that $\vec{\pi}_1(h)= g^M_a$, $\vec{\pi}_2(h)= g^M_b$ and the top (resp.~bottom) trail of $M$ loops (see also \cite[Proposition 3.2.3]{Eve2014}). 
\end{proof}

\subsection{Characterizing central extensions}\label{SectionCharacterizingCentralExtensions}

V.~Even's strategy to prove the characterization is to split coverings along the weakly universal covers constructed by M.~Eisermann. These weakly universal covers can be understood as the centralization of the canonical projective presentations (using free objects -- see Section \ref{SectionWucAndFundamentalGroupoid}). Their structure and properties used to show V.~Even's result derive from the structure and properties of the free objects we described before. Thus even though V.~Even's proof can be translated to the context of racks, we prefer to work directly with free objects in the alternative proof below. This approach then easily generalizes to higher dimensions without us having to build the weakly universal higher-dimensional coverings from scratch.

\begin{proposition}\label{PropositionCoversingsAboveFreeObjects}
Any rack-covering with free codomain $f \colon {A \to \Fr(B)}$ is a trivial extension.
\end{proposition}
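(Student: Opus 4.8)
The plan is to invoke the characterization of trivial extensions recalled earlier, in its pointwise form: a surjection $t\colon X \to Y$ is trivial exactly when $\Eq(t)\cap \Co X = \Delta_X$, equivalently when any two \emph{connected} elements of $X$ with the same image under $t$ must coincide. So I would start from $a,b \in A$ that are connected and satisfy $f(a)=f(b)$, and aim to prove $a=b$; this suffices since $f$ is assumed to be a covering and hence surjective.

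Since $a$ and $b$ are connected, a primitive path between them (Paragraph \ref{ParagraphConnectedComponents}) provides an element $g \in \Pth(A)$ with $a \cdot g = b$, using that connectedness is witnessed by the action of $\Pth(A)$ (Paragraphs \ref{ParagraphEquivalenceClessesOfPrimitivePaths} and \ref{ParagraphActionByInnerAutomorphisms}). Applying the rack homomorphism $f$ and invoking the functoriality of $\Pth$ (Remark \ref{RemarkFunctorialityOfPth}), I obtain
\[ f(a) \cdot \vec{f}(g) = f(a \cdot g) = f(b) = f(a), \]
so that $\vec{f}(g) \in \Pth(\Fr(B))$ stabilizes the point $f(a)$ of the free rack $\Fr(B)$.

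The decisive input is then that $\Pth(\Fr(B)) \cong \Fg(B)$ acts \emph{freely} on $\Fr(B)=B\rtimes \Fg(B)$. Because $\vec{f}(g)$ fixes $f(a)$, freeness forces $\vec{f}(g)=e$, i.e.\ $g \in \Ker(\vec{f})$. Finally, $f$ being a covering, Theorem \ref{PropositionCoveringsAsOrbitCongruences} tells me that $\Ker(\vec{f})$ acts trivially on $A$; in particular $b = a \cdot g = a$, which is exactly what the characterization of trivial extensions requires.

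The step I expect to carry the real weight is the middle one — transporting the connecting path $g$ along $f$ and landing it in $\Ker(\vec{f})$. This is precisely where the functoriality of $\Pth$ on (here surjective) morphisms is indispensable: the analogous construction through $\Inn$ is not available at this level of generality, which is the whole reason the group of paths is preferred. It is also where freeness of the action on the \emph{free} codomain does the essential work, converting a fixed-point statement into the identity $\vec{f}(g)=e$. Everything else is routine bookkeeping with the recalled characterizations of trivial extensions and of coverings, so I do not anticipate further obstacles.
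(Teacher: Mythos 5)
Your proof is correct and is essentially the paper's own shorter argument: the paper likewise uses freeness of the $\Pth(\Fr(B))$-action on $\Fr(B)$ to force the image path to be trivial, hence $g \in \Ker(\vec{f})$, and then concludes by Theorem \ref{PropositionCoveringsAsOrbitCongruences}. The only cosmetic difference is that you phrase triviality via the condition that connected elements with equal image coincide, while the paper uses the equivalent ``reflects loops'' formulation; the paper's preferred main proof merely replaces the final step by a projective section $s$ of $f$ together with the horn-retraction characterization of coverings, a variant it keeps because it generalizes to higher dimensions.
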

\begin{proof}
In order to test whether $f$ is a trivial extension, consider $x \in A$ and $g = \gr{a_1}^{\delta_1} \cdots \gr{a_n}^{\delta_n} \in \Pth(A)$ for $n\in \N$, $a_1$, $\ldots$, $a_n$ in $A$ and $\delta_1$, $\ldots$, $\delta_n$ in $\lbrace -1,1 \rbrace$. Assume that $f$ sends the trail $(x,g)$ to the loop $(f(x),\vec{f}(g))$:
\[f(a) \cdot (\gr{f(a_1)}^{\delta_1} \cdots \gr{f(a_n)}^{\delta_n}) = f(x) \qndop^{\delta_1} f(a_1) \cdots \qndop^{\delta_n} f(a_n)  = f(x \qndop^{\delta_1} a_1  \cdots  \qndop^{\delta_n} a_n  ) =  f(x), \]
where we write $\gr{f(a_i)} \defeq \pth_{\Fr(B)}(f(a_i))$ (which does not mean that $f(a_i)$ is in $B$).
We have to show that $(x,g)$ was a loop in the first place: \[ x \cdot g = x \qndop^{\delta_1} a_1  \cdots  \qndop^{\delta_n} a_n = x.\]

$(*)$ Since $\Fr(B)$ is projective (with respect to surjective morphisms) and $f$ is surjective, there is a morphism of racks $s\colon{\Fr(B) \to A}$ such that $fs=1_{ \Fr(B)}$. Then $s$ induces a group homomorphism $\vec{s} \colon {\Pth(\Fr(B)) \to \Pth(A)}$ such that for each $1 \leq i \leq n$, $\vec{s}[\gr{f(a_i)}] = \pth_A(sf(a_i)) \eqqcolon \gr{sf(a_i)}$ (see Paragraph \ref{ParagraphUnitAndUniversalPropertyPth}), and thus 
\[e = \vec{s}[\gr{f(a_1)}]^{\delta_1} \cdots \vec{s}[\gr{f(a_n)}]^{\delta_n} = \gr{sf(a_1)}^{\delta_1} \cdots \gr{sf(a_n)}^{\delta_n}.\]
Hence in particular we have 
\[ x \qndop^{\delta_1} sf(a_1) \cdots \qndop^{\delta_n} sf(a_n) = x \cdot ( \gr{sf(a_1)}^{\delta_1} \cdots \gr{sf(a_n)}^{\delta_n}) =\ x \cdot e = x. \]

Finally since for each $1 \leq i \leq n$ we have $f(sf(a_i))=f(a_i)$, $M=(x,(a_i,sf(a_i),\delta_i)_{1\leq i \leq n})$ is an $f$-horn, which has to retract since $f$ is a covering:
\[ x \qndop^{\delta_1} a_1  \cdots  \qndop^{\delta_n} a_n  =x  \qndop^{\delta_1} sf(a_1)  \cdots \qndop^{\delta_n} sf(a_n) = x. \qedhere\] 
\end{proof}

In this one-dimensional context, the characterization of coverings from Proposition \ref{PropositionCoveringsAsOrbitCongruences} allows for a shorter version of this proof. Since a direct generalization of Proposition \ref{PropositionCoveringsAsOrbitCongruences} in higher dimensions is not yet clear to us, we prefer to keep the previous, more visual version of the proof as our main reference. However, you may want to replace what follows $(*)$ in the previous proof by:

\begin{proof} $[...]$ $(*)$ Now since the action of $\Pth(\Fr(B))$ on $\Fr(B)$ is free, any loop in $\Pth(\Fr(B))$ must be trivial, and in particular $\gr{f(a_1)}^{\delta_1} \cdots \gr{f(a_n)}^{\delta_n} = e$. Hence $g \in \Ker(\vec{f})$, and thus by Proposition \ref{PropositionCoveringsAsOrbitCongruences}, $x \cdot g = x$, which concludes the proof.
\end{proof}

Note finally that the exact same proofs work for quandle coverings, using the fact that if $A$ is a quandle, we may then always choose $a_i$'s and $\delta_i$'s such that $\sum_i \delta_i =0$. Then $\gr{f(a_1)}^{\delta_1} \cdots \gr{f(a_n)}^{\delta_n}$ is in $\Pth\degree(\Fq(B))$ which acts freely on $\Fq(B)$. The rest of both proofs remain identical.

\begin{proposition}
If a quandle-covering $f \colon {A \to \Fq(B)}$ has a free codomain, then it is a trivial extension. 
\end{proposition}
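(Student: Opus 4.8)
The plan is to follow the argument of Proposition~\ref{PropositionCoversingsAboveFreeObjects} almost verbatim, testing triviality through the ``reflects loops'' characterization of trivial extensions. First I would fix an arbitrary $x \in A$ together with a path $g = \gr{a_1}^{\delta_1} \cdots \gr{a_n}^{\delta_n} \in \Pth(A)$, and assume that $f$ sends the trail $(x,g)$ to a loop, i.e.\ that $f(x) \cdot \vec{f}(g) = f(x)$ in $\Fq(B)$. The goal is to deduce $x \cdot g = x$, so that $(x,g)$ was a loop to begin with, which is exactly what ``$f$ reflects loops'' demands.

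The one genuinely quandle-specific ingredient comes next. Because $A$ is a quandle, idempotency gives $x \cdot \gr{x}^{\,m} = x$ for every integer $m$, so prepending a suitable power of $\gr{x}$ to $g$ changes neither endpoint $x\cdot g$ nor the image condition (the same idempotency holds for $f(x)$ in $\Fq(B)$). Writing $k = \chi(g)$ and replacing $g$ by $\gr{x}^{-k}g$, I may therefore assume $\sum_i \delta_i = 0$, that is $g \in \Pth\degree(A)$. Since $\chi = \Pth(\Cst)$ is natural, $\vec{f}(g) = \gr{f(a_1)}^{\delta_1} \cdots \gr{f(a_n)}^{\delta_n}$ then lies in $\Pth\degree(\Fq(B)) = \Fgd(B)$. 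By Proposition~\ref{CorollaryFreeActionOfFreeGroupQnd} this subgroup acts \emph{freely} on $\Fq(B)$, so the hypothesis $f(x) \cdot \vec{f}(g) = f(x)$ forces $\vec{f}(g) = e$, i.e.\ $g \in \Ker(\vec{f})$.

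From here the conclusion follows exactly as in the shorter version of the previous proof: since $f$ is a covering, Proposition~\ref{PropositionCoveringsAsOrbitCongruences} gives that $\Ker(\vec{f})$ acts trivially on $A$, whence $x \cdot g = x$. Thus $f$ reflects loops and is a trivial extension. Alternatively, one can keep the more visual route and lift through a section $s \colon \Fq(B) \to A$ of the surjection $f$ (which exists because $\Fq(B)$ is projective), forming the $f$-horn $M = (x,(a_i, sf(a_i),\delta_i)_{1 \le i \le n})$, which must retract by Corollary~\ref{CorollaryCoveringsAsHornRetractors}; this is the version that transcribes cleanly to higher dimensions.

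The main obstacle, and the sole point of departure from the rack case, is that on a free quandle the \emph{whole} group of paths does \emph{not} act freely: for instance $\gr{x}$ fixes $(x,e) \in \Fq(B)$, so one cannot simply invoke freeness of $\Pth(\Fq(B))$ as one does for free racks. The argument therefore hinges on first pushing $g$ into the characteristic-zero subgroup $\Pth\degree = \Fgd$, where freeness is restored by Proposition~\ref{CorollaryFreeActionOfFreeGroupQnd}; once this reduction is in place, every remaining step is identical to the rack proof.
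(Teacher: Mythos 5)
Your proof is correct and follows the paper's own route: the paper likewise reduces to a path of characteristic zero (exactly the idempotency trick of absorbing a power of $\gr{x}$, possible since $A$ and $\Fq(B)$ are quandles), then invokes the free action of $\Pth\degree(\Fq(B)) = \Fgd(B)$ on $\Fq(B)$ to force $\vec{f}(g)=e$, and finishes either via the kernel-action characterization of coverings or via the horn-retraction argument with a section of $f$. The only difference is that you spell out the reduction explicitly where the paper states it in one line.
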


By Lemma \ref{LemmaCoveringsAndPullbacks}, and the previous propositions, the strategy of Section \ref{SectionStrategy} yields Theorem 2 from \cite{Eve2014}, as well as:

\begin{theorem}
Rack coverings are the same as central extensions of racks. 
\end{theorem}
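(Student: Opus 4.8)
The plan is to instantiate the general strategy of Section~\ref{SectionStrategy}, taking the class of coverings (Definition~\ref{DefinitionRackCovering}) as the class of \emph{candidate-coverings}. Two inputs are required to run that strategy, and both are already available. First, coverings are preserved and reflected by pullbacks along surjections; this is exactly Lemma~\ref{LemmaCoveringsAndPullbacks}. Second, the primitive extensions --- the surjections between trivial racks --- are coverings: in a trivial rack one has $x \qndop a = x$ for all $x$ and $a$, so the defining identity $x \qndop a \qndiop b = x$ of Definition~\ref{DefinitionRackCovering} holds vacuously. With these two facts in hand, both inclusions follow formally.

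First I would show that every central extension is a covering. Since primitive extensions are coverings and coverings are stable under pullback along surjections, every trivial extension is a covering: by definition a trivial extension is a pullback of a primitive extension along a (surjective) unit morphism $\eta_{\bullet}$. Now let $f$ be a central extension. By definition $f$ is split by some extension $e$, meaning that the pullback $t$ of $f$ along $e$ is a trivial extension, hence a covering by the previous remark. Because coverings are \emph{reflected} by pullbacks along surjections (Lemma~\ref{LemmaCoveringsAndPullbacks}) and $e$ is a surjection, it follows that $f$ itself is a covering.

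For the converse I would use the canonical projective presentation. Let $f\colon A \to B$ be a covering and pull it back along $\epsilon^r_B\colon \Fr(B) \twoheadrightarrow B$, obtaining $\bar{f}\colon A\times_B \Fr(B) \to \Fr(B)$. By preservation (Lemma~\ref{LemmaCoveringsAndPullbacks}) the map $\bar{f}$ is again a covering, and its codomain $\Fr(B)$ is free, hence projective. Proposition~\ref{PropositionCoversingsAboveFreeObjects} then tells us that $\bar{f}$ is a trivial extension. Thus $\epsilon^r_B$ is an extension splitting $f$ --- the pullback of $f$ along $\epsilon^r_B$ is trivial --- which is exactly the definition of $f$ being a central extension (equivalently, as recalled in Section~\ref{SectionProjectivesAndWuc}, since $\RCK$ has enough projectives an extension is central precisely when it is split by a projective presentation of its codomain, and $\epsilon^r_B$ furnishes such a splitting).

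Combining the two directions identifies rack coverings with central extensions of racks. I expect no genuine obstacle remaining at the level of this theorem: all of the substance has been absorbed into Proposition~\ref{PropositionCoversingsAboveFreeObjects} and Lemma~\ref{LemmaCoveringsAndPullbacks}, so that the statement is a routine assembly of the Galois-theoretic strategy. The single step carrying real weight is Proposition~\ref{PropositionCoversingsAboveFreeObjects} --- that coverings above free objects are trivial --- which itself rests on the freeness of the action of $\Pth(\Fr(B))$ on $\Fr(B)$; once that is granted, the reduction above is purely formal.
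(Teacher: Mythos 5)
Your proposal is correct and follows exactly the paper's own route: the paper proves this theorem by invoking the strategy of Section~\ref{SectionStrategy} with coverings as the candidate-coverings, combining Lemma~\ref{LemmaCoveringsAndPullbacks} (preservation and reflection under pullback along surjections, with primitive extensions trivially being coverings) and Proposition~\ref{PropositionCoversingsAboveFreeObjects} (coverings over free objects are trivial). Your write-up simply makes explicit the formal assembly that the paper leaves implicit in its one-line citation of that strategy.
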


\subsection{Comparing admissible adjunctions by factorization}\label{SectionComparingAdjunctionsByFactorization}
The notions of trivial object and connectedness, or trivialising relation $\Co$, coincide in racks and quandles. These are understood as the zero-dimensional central extensions and centralizing relations. In dimension 1, the notions of central extensions in racks and quandles also coincide. Further we also have coincidence of the centralizing relations and the corresponding notions in dimension 2. Before we move on, we show how these results are no coincidence and can be studied systematically as a consequence of the tight relationship between the $\pi_0$-admissible adjunctions of interest.

Expanding on Paragraph \ref{ParagraphRckAndQndHvTheSameConnComp} we get a factorization as in \ref{SubsectionRckAndQndHaveTheSameGrpOfPth}, where all triangles commute and all the adjunctions are admissible:
\[\xymatrix@C=35pt@R=20pt{ \RCK \ar@<+3pt>@/^7pt/[rr]^-{\Frq}  \ar@<-3pt>@/_7pt/[rdd]|(.4){\pi_0} \ar@{{}{}{}}[rr]|-{\perp}  & & \QND  \ar@<+3pt>@/^7pt/[ll]^-{\I} \ar@<-3pt>@/_7pt/[ddl]|(.63){\pi_0} \\
\\
 & \SET. \ar@<-3pt>@/_7pt/[uur]|(.61){\I} \ar@<-3pt>@/_7pt/[uul]|(.38){\I}  \ar@{{}{}{}}[uur]|(.5){\dashv}  \ar@{{}{}{}}[uul]|(.5){\dashv} &
}\]
Since we are dealing here with several different Galois structures: $\Gamma$ from $\RCK$ to $\SET$, $_r\Gamma_q$ from $\RCK$ to $\QND$ and say $\Gamma_q \defeq (\QND$, $\SET$, $\pi_0$, $\I$, $\eta$, $\epsilon$, $\EE)$; we specify the Galois structure with respect to which the concepts of interest are discussed. 
\begin{lemma}\label{LemmaGammaTrivialImpliesAllTrivial}
If $f\colon{A \to B}$ is a $\Gamma$-trivial extension, then $f$ is also $_r\Gamma_q$-trivial, and the image $\Frq(f)$ of $f$ is a $\Gamma_q$-trivial extension in $\QND$.
\end{lemma}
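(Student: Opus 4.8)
The plan is to avoid pullback squares entirely and argue with the intersection-of-congruences descriptions of triviality proved above. Recall that a surjection $f\colon A \to B$ of racks is $\Gamma$-trivial precisely when $\Co A \cap \Eq(f) = \Delta_A$, that it is ${}_r\Gamma_q$-trivial precisely when $Q_A \cap \Eq(f) = \Delta_A$, and that (applying the same characterization inside $\QND$, which is also strongly Birkhoff over $\SET$) $\Frq(f)$ is $\Gamma_q$-trivial precisely when $\Co(\Frq(A)) \cap \Eq(\Frq(f)) = \Delta_{\Frq(A)}$. Throughout I abbreviate $q \defeq {}^r\eta^q_A$ and $q' \defeq {}^r\eta^q_B$, so that $\Eq(q) = Q_A$ and $\Eq(q') = Q_B$ by Lemma \ref{KACharacterization}, and $\Frq(f)\circ q = q' \circ f$ by naturality of ${}^r\eta^q$. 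Two structural facts drive everything. First, $Q_A \subseteq \Co A$, since $x = y \qndop^{k} y$ exhibits $x$ and $y$ as connected. Second, because the unit of the composite reflection factors as $\eta_A = \eta_{\Frq(A)} \circ q$ (the commuting factorization of Paragraph \ref{ParagraphRckAndQndHvTheSameConnComp} and Paragraph \ref{SubsectionRckAndQndHaveTheSameGrpOfPth}), one has $\Co A = q^{-1}(\Co(\Frq(A)))$, i.e. $(x,y) \in \Co A$ iff $(q(x),q(y)) \in \Co(\Frq(A))$.

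The first assertion is then immediate: from $Q_A \subseteq \Co A$ we get $Q_A \cap \Eq(f) \subseteq \Co A \cap \Eq(f) = \Delta_A$, so $f$ is ${}_r\Gamma_q$-trivial. (That $\Frq(f)$ is a genuine extension needs only that $q'\circ f = \Frq(f)\circ q$ is surjective, forcing $\Frq(f)$ to be surjective.)

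For the second assertion I would take an arbitrary pair $(\bar x, \bar y) \in \Co(\Frq(A)) \cap \Eq(\Frq(f))$ and show it is diagonal. Since $q$ is surjective, lift to $x,y \in A$ with $q(x)=\bar x$, $q(y)=\bar y$. Membership in $\Co(\Frq(A))$ gives $(x,y) \in \Co A$ via $\Co A = q^{-1}(\Co(\Frq(A)))$; membership in $\Eq(\Frq(f))$ gives $q'(f(x)) = q'(f(y))$, that is $(f(x),f(y)) \in Q_B$, so $f(x) = f(y)\qndop^{k} f(y)$ for some $k \in \Z$. The key move is to realize this witness back in $A$: set $z \defeq y \qndop^{k} y$, so $f(z) = f(y)\qndop^{k} f(y) = f(x)$, whence $(x,z) \in \Eq(f)$; meanwhile $(y,z) \in Q_A \subseteq \Co A$, and transitivity with $(x,y) \in \Co A$ gives $(x,z) \in \Co A$. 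Now $\Gamma$-triviality of $f$ forces $(x,z) \in \Co A \cap \Eq(f) = \Delta_A$, so $x = z = y\qndop^{k} y$, i.e. $(x,y) \in Q_A = \Eq(q)$ and therefore $\bar x = q(x) = q(y) = \bar y$. Hence $\Co(\Frq(A)) \cap \Eq(\Frq(f)) = \Delta_{\Frq(A)}$ and $\Frq(f)$ is $\Gamma_q$-trivial.

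The step I expect to be the genuine obstacle is exactly this passage to $\Frq(f)$. The tempting route is to paste the three reflection squares horizontally (the ${}_r\Gamma_q$-square of $f$, then the $\Gamma_q$-square of $\Frq(f)$, with outer rectangle the $\Gamma$-square of $f$) and cancel: but knowing that the outer rectangle and the left-hand square are pullbacks does \emph{not} formally yield that the right-hand square is a pullback, so this fails. That is why I descend to congruences and elements, where the explicit witness $z = y\qndop^{k} y$ lets $Q_B$-relatedness of the images be lifted to an $\Eq(f)$-relation between \emph{connected} elements of $A$, at which point $\Gamma$-triviality applies directly. The only other point needing care is the clean identification $\Co A = q^{-1}(\Co(\Frq(A)))$, which rests on the factorization $\eta_A = \eta_{\Frq(A)} \circ q$ of the composite unit.
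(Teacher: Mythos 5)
Your proof is correct, and it takes a genuinely different route from the paper's. The paper argues diagrammatically: it factorizes the $\Gamma$-reflection square of $f$ as the horizontal composite of the ${}_r\Gamma_q$-reflection square of $f$ with the $\Gamma_q$-reflection square of $\Frq(f)$, observes that both factors are double extensions (strong Birkhoffness of $\QND$ in $\RCK$, Corollary \ref{CorollaryFrqStronglyBirkhoff}, and of $\SET$ in $\QND$), and invokes the fact that a composite of two double extensions is a pullback if and only if both factors are pullbacks. You instead work elementwise with the intersection-of-congruences characterizations of triviality: your two structural inputs ($Q_A \subseteq \Co A$, and $\Co A = q^{-1}(\Co(\Frq(A)))$ via the unit factorization $\eta_A = \eta_{\Frq(A)} \circ q$) are both sound, and the explicit witness $z = y \qndop^{k} y$ — which converts $Q_B$-relatedness of $f(x)$ and $f(y)$ into an $\Eq(f)$-relation between connected elements of $A$, where $\Gamma$-triviality applies — is a concrete substitute for the categorical cancellation lemma. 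What the paper's approach buys is reusability: the same cancellation property of double extensions is used again in Lemma \ref{LemmaInclusionOfQndCentralAndTrivialExtensionInRck} and Proposition \ref{PropositionSplitByCentralization}, and it carries over to the higher-dimensional settings of Parts II--III, where elementwise witnesses are much harder to produce. What your approach buys is self-containedness and transparency at this level, relying only on characterizations already proved.

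One correction to your closing diagnosis, though: the pasting route does not fail, and it is in fact the paper's proof. You are right that bare pullback cancellation (outer rectangle and left-hand square being pullbacks does not formally force the right-hand square to be one) is invalid; but the paper does not use it in that naked form. Because both squares are double extensions, the comparison map of the outer rectangle factors as the comparison map of the left-hand square followed by the pullback along ${}^r\eta^q_B$ of the comparison map of the right-hand square, and both of these factors are regular epimorphisms. If the outer comparison is an isomorphism, the first factor is a monomorphism, hence an isomorphism, and then so is the second factor; since pulling back along the surjection ${}^r\eta^q_B$ reflects isomorphisms (surjections being effective descent morphisms in these varieties), the comparison map of the right-hand square is itself an isomorphism. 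So the double-extension hypothesis is precisely what rescues the cancellation; your elementwise argument can be read as an unfolding of this mechanism in the case at hand.
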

\begin{proof}
The $\Gamma$-canonical square of $f$ in $\RCK$ is given on the left, and factorizes into the composite of double extensions on the right:
\[ \vcenter{\xymatrix@C=15pt@R=15pt{ A \ar[d]_-{f} \ar[rr]^-{\eta_A} & & \pi_0(A) \ar[d]^-{\pi_0(f)}  \\
B \ar[rr]_-{\eta_A} & & \pi_0(B),}} \qquad  \vcenter{\xymatrix@C=30pt@R=15pt{ A \ar[d]_-{f} \ar[r]^-{^r\eta^q_A} & \Frq(A) \ar[d]_-{\Frq(f)} \ar[r]^-{\eta_{\Frq(A)}} & \pi_0(A) = \pi_0(\Frq(A))  \ar[d]^-{\pi_0(f)} \\
B \ar[r]_-{^r\eta^q_B} & \Frq(B) \ar[r]_-{\eta_{\Frq(B)}} & \pi_0(B) = \pi_0(\Frq(B)).}} \]
Hence if $f$ is a trivial extension, then this composite is a pullback square. The composite of two double extensions is a pullback if and only if both double extensions are pullbacks themselves. 
\end{proof}

\begin{lemma}\label{LemmaInclusionOfQndCentralAndTrivialExtensionInRck}
An extension $f\colon{A \to B}$ in $\QND$ is 
\begin{enumerate}[label=(\roman*)]
\item $\Gamma_q$-trivial in $\QND$ if and only if $\I(f)$ is $\Gamma$-trivial in $\RCK$;
\item $\Gamma_q$-central in $\QND$ if and only if $\I(f)$ is $\Gamma$-central in $\RCK$.
\end{enumerate}
\end{lemma}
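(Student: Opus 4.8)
The plan is to exploit that the inclusion $\I\colon{\QND \to \RCK}$ is fully faithful and, being the inclusion of a subvariety, preserves finite limits while (by full faithfulness) also reflecting them; consequently a commutative square in $\QND$ is a pullback exactly when its $\I$-image is a pullback in $\RCK$. Combined with the commuting triangles of the factorization diagram — which give $\pi_0\,\I = \pi_0$, hence $\Co(\I A)=\Co A$, and $\Pth\,\I = \Pth$ (Paragraphs \ref{ParagraphRckAndQndHvTheSameConnComp} and \ref{SubsectionRckAndQndHaveTheSameGrpOfPth}) — this lets me transport the standard characterizations of triviality and centrality back and forth between the two Galois structures.

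For part (i) I would argue through the reflection squares. Since both $\Gamma_q$ and $\Gamma$ are strongly Birkhoff (Corollary \ref{CorollaryFrqStronglyBirkhoff} together with the admissibility of $\Gamma$), an extension is trivial precisely when its reflection square is a pullback (see Section \ref{SectionAdmissibilityStronglyBirkhoff} and Diagram \eqref{DiagramReflectionSquare}). The key step is to observe that the $\Gamma_q$-reflection square of $f$, built from the units $\eta_A\colon{A \to \pi_0(A)}$ and $\eta_B$, is sent by $\I$ to exactly the $\Gamma$-reflection square of $\I(f)$: the inclusion $\SET \to \RCK$ factors through $\SET \to \QND \to \RCK$, and $\pi_0\,\I = \pi_0$ identifies the two sets of connected components, so the units of the two structures agree under $\I$. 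Since $\I$ preserves and reflects pullbacks, $f$ is $\Gamma_q$-trivial if and only if $\I(f)$ is $\Gamma$-trivial. (One implication is already subsumed by Lemma \ref{LemmaGammaTrivialImpliesAllTrivial}.)

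For part (ii) I would reduce centrality to the covering condition at each level. By the identification of central extensions with coverings established above for racks, and by Theorem~2 of \cite{Eve2014} for quandles, $\I(f)$ is $\Gamma$-central iff it is a rack-covering, while $f$ is $\Gamma_q$-central iff it is a quandle-covering. But the covering condition of Definition \ref{DefinitionRackCovering} — that $x \qndop a \qndiop b = x$ for all $(a,b)\in\Eq(f)$ and all $x$ — is an elementary condition on the operations $\qndop,\qndiop$ and the kernel pair $\Eq(f)$, and both the operations and $\Eq(f)$ are preserved verbatim by the forgetful inclusion $\I$. Hence $f$ is a quandle-covering exactly when $\I(f)$ is a rack-covering, which closes the equivalence. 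Equivalently, one may invoke Theorem \ref{PropositionCoveringsAsOrbitCongruences}, which is phrased uniformly for $\RCK$ and $\QND$, together with $\Pth\,\I = \Pth$.

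The routine verifications are light here, so I do not expect a genuine obstacle; the two points demanding care are purely formal. First, one must confirm that $\I$ really does reflect pullbacks, which follows from full faithfulness alone, preservation following from $\QND$ being closed under finite limits in $\RCK$. Second, one must check that the $\Gamma$-reflection square of $\I(f)$ is indeed the $\I$-image of the $\Gamma_q$-reflection square of $f$ — that is, that the reflectors and units of the three Galois structures are compatible under the factorization. This compatibility is precisely what the commuting triangles of the factorization diagram encode, so the main subtlety is simply bookkeeping: keeping the three inclusions and the two copies of $\pi_0$ correctly aligned.
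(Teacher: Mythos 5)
Your proof is correct. Part (i) is essentially the paper's own argument: the $\Gamma_q$-reflection square of $f$ is sent by $\I$ to the $\Gamma$-reflection square of $\I(f)$, and $\I$ preserves and reflects pullbacks, with one direction already contained in Lemma \ref{LemmaGammaTrivialImpliesAllTrivial}. Part (ii), however, takes a genuinely different route. The paper works structurally with the splitting data itself: for the forward direction it applies $\I$ to a splitting pullback square (using part (i)); for the backward direction, given a splitting extension $p\colon{E \to B}$ in $\RCK$, it quotients the pullback square along the unit $^r\eta^q$ of the rack-to-quandle reflection, then uses the cancellation property of double extensions (a composite of double extensions is a pullback if and only if both are) together with Lemma \ref{LemmaGammaTrivialImpliesAllTrivial} to conclude that $f$ is split by $\Frq(p)$ inside $\QND$. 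You instead pivot through the characterization theorems: $\Gamma$-central $=$ rack covering and $\Gamma_q$-central $=$ quandle covering (both established by Section \ref{SectionCharacterizingCentralExtensions}, which precedes this lemma, so there is no circularity), plus the observation that the covering condition of Definition \ref{DefinitionRackCovering} is an elementary condition on the operations and the kernel pair, hence literally invariant under the inclusion $\I$. Your route is shorter and exploits the concrete algebra; the paper's route buys two things: it exhibits explicitly how splittings transfer between the two Galois structures (if $\I(f)$ is split by $p$, then $f$ is split by $\Frq(p)$), and it is the kind of purely Galois-structural argument that generalizes to higher dimensions, where --- as the paper remarks after Proposition \ref{PropositionCoversingsAboveFreeObjects} --- no direct analogue of the element-wise characterization in Theorem \ref{PropositionCoveringsAsOrbitCongruences} is currently available.
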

\begin{proof}
The first point $(i)$ is immediate by the previous lemma, and the fact that the $\pi_0$-canonical squares of $\I(f)$ in $\RCK$ is the same as the image by $\I$ of the $\Gamma_q$-canonical square of $f$ in $\QND$. Note also that $\I$ preserves and reflects pullbacks.

For the second statement $(ii)$, if $f$ is $\Gamma_q$-central, then there is an extension $p\colon{E \to B}$ such that the pullback of $f$ along $p$ is $\Gamma_q$-trivial. We may conclude by taking the image by $\I$ of this pullback square. Now if $\I(f)$ is $\Gamma$-central in $\RCK$, there exists $p\colon{E \to B}$ in $\RCK$ such that the pullback $t$ of $\I(f)$ along $p$ is $\Gamma$-trivial in $\RCK$. Taking the quotient along $^r\eta^q$ of this pullback square (1) yields a factorization of (1):
\[ \xymatrix@C=30pt@R=15pt{ E \times_B A \ar[d]_-{t} \ar[r]^-{^r\eta^q_P} & \Frq(E \times_B A) \ar[d]_-{\Frq(t)} \ar[r]^-{ } & A \ar[d]^-{f} \\
E \ar[r]_-{^r\eta^q_E} & \Frq(E) \ar[r]_-{\Frq(p)} & B.
} \]
Again, since the left hand square is a double extension, and the composite is a pullback, both squares are actually pullbacks and thus $f$ is $\Gamma_q$-central.
\end{proof}

Now since the $\pi_0$-adjunction is strongly Birkhoff (both in $\RCK$ and $\QND$), central extensions are closed by quotients along double extensions in $\EXT\RCK$ (or $\EXT\QND$ -- see also Proposition \ref{PropositionClosureOfCentralExtensionsAlongDoubleExtensions}).

\begin{corollary}\label{CorollaryImageOfACentralExtensionByFrq}
The image by $\Frq$ of a $\Gamma$-central extension $f\colon{A \to B}$ in $\RCK$ is a $\Gamma_q$-central extension in $\QND$.
\end{corollary}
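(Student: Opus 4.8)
The plan is to pull the problem back into $\RCK$, where the hypothesis lives, by means of Lemma \ref{LemmaInclusionOfQndCentralAndTrivialExtensionInRck}(ii): that lemma asserts that a quandle extension is $\Gamma_q$-central in $\QND$ if and only if its image under $\I$ is $\Gamma$-central in $\RCK$. Applied to $\Frq(f)$ — which is indeed an extension, since the reflector $\Frq$ preserves surjections — it tells us that $\Frq(f)$ is $\Gamma_q$-central exactly when $\I\Frq(f)$ is $\Gamma$-central in $\RCK$. Hence the whole task reduces to producing $\I\Frq(f)$ as a $\Gamma$-central extension of racks out of the $\Gamma$-central extension $f$.

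The natural bridge between $f$ and $\I\Frq(f)$ is the reflection square of $f$ with respect to the adjunction $\Frq \dashv \I$, that is, the morphism $(^r\eta^q_A, {}^r\eta^q_B)\colon f \to \I\Frq(f)$ in $\EXT\RCK$:
\[ \xymatrix@C=30pt@R=15pt{ A \ar[r]^-{^r\eta^q_A} \ar[d]_-{f} & \Frq(A) \ar[d]^-{\Frq(f)} \\ B \ar[r]_-{^r\eta^q_B} & \Frq(B). } \]
By Corollary \ref{CorollaryFrqStronglyBirkhoff}, $\QND$ is strongly Birkhoff in $\RCK$, which means precisely that the reflection squares of extensions are themselves double extensions; since $f$ is an extension, the square above is a double extension (both legs are surjective units and, by the strong Birkhoff condition, the comparison map into the pullback is surjective as well). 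Thus the square realizes $\I\Frq(f)$ as a quotient of $f$ along a double extension in $\EXT\RCK$.

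I would then invoke the closure property recalled just before the statement: because the $\pi_0$-adjunction is strongly Birkhoff, $\Gamma$-central extensions in $\RCK$ are closed under quotients along double extensions (Proposition \ref{PropositionClosureOfCentralExtensionsAlongDoubleExtensions}). As $f$ is $\Gamma$-central by hypothesis, this yields that $\I\Frq(f)$ is $\Gamma$-central, and Lemma \ref{LemmaInclusionOfQndCentralAndTrivialExtensionInRck}(ii) converts this back to $\Gamma_q$-centrality of $\Frq(f)$ in $\QND$. The one point I expect to need a word of care is the interplay of the two Galois structures involved: the double extension used is the reflection square of the $\Frq$-structure $_r\Gamma_q$, whereas the centrality hypothesis and the closure property belong to the connected-component structure $\Gamma$. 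This is harmless, since the notion of double extension depends only on the ambient category $\RCK$ together with its class of surjections, and not on any chosen reflector — so the square is a bona fide double extension for both structures and the argument goes through.
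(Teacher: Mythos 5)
Your proposal is correct and follows essentially the same route as the paper's own proof: reduce via Lemma \ref{LemmaInclusionOfQndCentralAndTrivialExtensionInRck}(ii) to showing $\I\Frq(f)$ is $\Gamma$-central, realize $\I\Frq(f)$ as the quotient of $f$ along the $\Frq$-reflection square (a double extension by Corollary \ref{CorollaryFrqStronglyBirkhoff}), and conclude by the closure of $\Gamma$-central extensions under quotients along double extensions. Your explicit identification of the double extension and your remark that ``double extension'' is independent of the chosen reflector simply make precise what the paper leaves implicit.
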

\begin{proof}
The image $\Frq(f)$ is $\Gamma_q$-central extension if and only if $\I(\Frq(f))$ is $\Gamma$-central. Since $\SET$ is strongly Birkhoff in $\RCK$, $\I(\Frq(f))$ is the quotient of a $\Gamma$-central extension in $\RCK$ along a double extension and thus is still $\Gamma$-central in $\RCK$. 
\end{proof}

\begin{proposition}
If the image by $\Frq$ of an $_r\Gamma_q$-trivial extension $f\colon{A \to B}$ in $\RCK$ is a $\Gamma_q$-central extension in $\QND$, then $f$ is $\Gamma$-central in $\RCK$. 
\end{proposition}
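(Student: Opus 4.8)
The plan is to exhibit $f$ as a base-change of a morphism already known to be central, and then close the argument by pullback-stability of coverings. The crucial observation is that $_r\Gamma_q$-triviality of $f$ is exactly the statement that the $\Frq$-reflection square of $f$ is a pullback, so that $f$ presents itself as a pullback of its own reflection $\I(\Frq(f))$ along a unit component.

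First I would translate the hypothesis into $\RCK$. Since $\Frq(f)$ is $\Gamma_q$-central in $\QND$, Lemma \ref{LemmaInclusionOfQndCentralAndTrivialExtensionInRck}$(ii)$ gives that $\I(\Frq(f))\colon{\Frq(A) \to \Frq(B)}$ is $\Gamma$-central in $\RCK$; by the identification of rack coverings with central extensions, this means $\I(\Frq(f))$ is a covering. Next I would use that $_r\Gamma_q$ is strongly Birkhoff (Corollary \ref{CorollaryFrqStronglyBirkhoff}), so that the $_r\Gamma_q$-trivial extension $f$ has a reflection square
\[ \xymatrix@C=30pt@R=15pt{ A \ar[d]_-{f} \ar[r]^-{^r\eta^q_A} & \Frq(A) \ar[d]^-{\Frq(f)}  \\
B \ar[r]_-{^r\eta^q_B} & \Frq(B) } \]
which is a pullback (see Section \ref{SectionAdmissibilityStronglyBirkhoff}). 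In other words, $f$ is precisely the pullback of the covering $\I(\Frq(f))$ along the unit component $^r\eta^q_B$.

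Finally I would conclude: since $\QND$ is a (regular epi)-reflective subcategory of $\RCK$, the unit $^r\eta^q_B$ is a surjection, and Lemma \ref{LemmaCoveringsAndPullbacks} tells us that coverings are preserved by pullbacks along surjections. Hence $f$ is a covering, i.e.~a $\Gamma$-central extension in $\RCK$, as desired.

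The hard part, if any, is purely bookkeeping rather than computation: one must be careful that $_r\Gamma_q$-triviality is genuinely equivalent to the reflection square being a pullback (which is guaranteed by the strongly Birkhoff property of $_r\Gamma_q$), and that the top edge of that square is literally $\I(\Frq(f))$, so that the hypothesis on $\Frq(f)$ — transported through Lemma \ref{LemmaInclusionOfQndCentralAndTrivialExtensionInRck}$(ii)$ and the repleteness of $\I$ — is exactly what feeds into the pullback-stability of coverings. I expect no delicate calculation beyond these identifications.
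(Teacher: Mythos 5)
Your proof is correct, but it follows a genuinely different route from the paper's. You exploit the hypothesis of $_r\Gamma_q$-triviality in the most direct way possible: it hands you, via the admissibility (strong Birkhoff property, Corollary \ref{CorollaryFrqStronglyBirkhoff}) of $_r\Gamma_q$, the pullback presentation $f = ({}^r\eta^q_B)^*\bigl(\I\Frq(f)\bigr)$; you then transport the centrality of $\Frq(f)$ into $\RCK$ through Lemma \ref{LemmaInclusionOfQndCentralAndTrivialExtensionInRck}$(ii)$, and close with the pullback-stability of coverings along surjections (Lemma \ref{LemmaCoveringsAndPullbacks}) together with the identification of coverings with $\Gamma$-central extensions established in Section \ref{SectionCharacterizingCentralExtensions} -- all of which precede this proposition, so there is no circularity. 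The paper instead follows its general strategy of splitting along projective presentations (Section \ref{SectionProjectivesAndWuc}): it pulls $f$ back along $\epsilon^r_B\colon \Fr(B) \to B$, forms a commutative cube whose left face is a pullback because every extension with free codomain is $\Frq$-trivial (Proposition \ref{PropositionAllSurjAreFrqCentral}) and whose right face is a pullback by hypothesis, deduces that the front face is a pullback, and then shows the resulting extension $t$ is $\Gamma$-trivial by factoring $\Frq(\epsilon^r_B)$ through the canonical projective presentation of $\Frq(B)$ in $\QND$, so that $f$ is explicitly split by $\epsilon^r_B$. What your argument buys is brevity and conceptual clarity: the statement becomes an instance of the general fact that central extensions are stable under pullback along extensions, with no cube and no free objects. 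What the paper's argument buys is independence from the covering characterization theorem (it never needs to know that central extensions of racks are exactly the coverings, only that central extensions are split by projective presentations of their codomains), and it rehearses precisely the projective-presentation mechanics that the higher-dimensional theory of Parts II and III is built on.
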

\begin{proof}
Consider the following commutative cube in $\RCK$ where we omit the inclusion $\I\colon{\QND \to \RCK}$. The back face is a pullback by construction. The right hand face is a pullback by assumption, and the left hand face is a pullback by Proposition \ref{PropositionAllSurjAreFrqCentral}. We deduce that the front face is a pullback as well.
\[\xymatrix@1@!0@=38pt{
& P_1 \ar[rr]^-{} \ar[dd]^(.25){t}|-{\hole} \ar[ld]_-{^r\eta^q_{P_1}} && A \ar[dd]^-{f} \ar[ld]|-{^r\eta^q_A} \\
\Frq(P_1) \ar[rr]^(.75){} \ar[dd]_-{\Frq(t)} && \Frq(A) \ar[dd]^(.25){\Frq(f)} \\
& \Fr(B) \ar[ld]|-{^r\eta^q_{\Fr(B)}} \ar[rr]^(.3){\epsilon^r_B}|(.5){\hole} && B \ar[ld]^-{^r\eta^q_B} \\
\Fq(B) \ar[rr]_-{\Frq(\epsilon^r_B)} && \Frq(B)}\]
Since $\Frq(f)$ is $\Gamma_q$-central by assumption, and since $\Frq(\epsilon^r_B)\colon{\Fq(B) = \Frq(\Fr(B)) \to \Frq(B)}$ factorizes as 
\[ \xymatrix@C=50pt{\Fq(B) \ar[r]^-{\Fq(\, ^r\eta^q_B)} & \Fq(\Frq(B))  \ar[r]^-{\Fq(\epsilon^q_{\Frq(B)})} & \Frq(B), } \]
both $\Frq(t)$ and $t$ are $\Gamma$-trivial as the pullback of a trivial extension.
\end{proof}

\begin{example}
Some extensions of racks which are not central, still have central images under $\Frq$. Define the involutive rack with underlying set $\lbrace a, a^2 , b, b^2, 1, 2 \rbrace$, and an operation $\qndop$ such that $a$, $a^2$, $b$ and $b^2$ have the same action and,  moreover,
\begin{center}
\begin{tabular}{c | c c c c c c}
$\triangleleft$    & $a$ & $a^2$ & $b$ & $b^2$ & $1$ & $2$ \\
\hline
$a$ & $a^2$ & $a$ & $b^2$ & $b$ & $1$ & $2$ \\
$1$ & $b$ & $b^2$ & $a$ & $a^2$ & $1$ & $2$ \\
$2$ & $b^2$ & $b$ & $a^2$ & $a$ & $1$ & $2$ \\
\end{tabular}
\end{center}
We may check by hand that the axioms $\R{1}$ and $\R{2}$ are satisfied. Then define the morphism of racks $f$, with codomain the trivial rack $\lbrace x, 1 \rbrace$ and which sends letters to $x$ and numbers to $\star$. We have that $a \qndop 1 = b \neq b^2 = a \qndop 2 $, and thus $f$ is not central. However we compute the morphism $\Frq(f)\colon{Q_{ab\star\star} \to \lbrace x, \star \rbrace}$, where $Q_{ab\star\star}$ is as in Example \ref{ExampleaConnectedComponents} but with two distinct $\star$'s which act in the same way. This morphism merely identifies the letters and the stars and thus it is central.

Of course some rack homomorphisms which are not $_r\Gamma_q$-trivial are still $\Gamma$-central: we already mentioned the important example of $^r\eta^q_A$ for any rack $A$.
\end{example}

Before even studying the next steps of the covering theory, we can predict that what happens in $\QND$ directly follows from what happens in $\RCK$.
\begin{corollary}\label{CorollaryCoveringsAreReflective}
If the full subcategory $\CEXT\RCK$ of central extensions of racks is reflective within the category of extensions $\EXT\RCK$ (see Theorem \ref{TheoremEpiReflectivityOfCExt} for details), then also $\CEXT\QND$ is reflective in $\EXT\QND$ and the reflection is computed as in $\EXT\RCK$, via the inclusion $\I\colon{\QND \to \RCK}$.
\end{corollary}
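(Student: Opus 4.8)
The plan is to run everything through the full and faithful inclusion $\I\colon \QND \to \RCK$ lifted to categories of extensions, using that $\I$ both preserves and reflects $\Gamma$-centrality by Lemma \ref{LemmaInclusionOfQndCentralAndTrivialExtensionInRck}(ii). Since $\I$ preserves surjections (it does not change underlying sets and maps) and is fully faithful, it induces a functor $\EXT\I\colon \EXT\QND \to \EXT\RCK$ which is again fully faithful, and whose essential image is exactly the class of rack extensions both of whose objects happen to be quandles. By Lemma \ref{LemmaInclusionOfQndCentralAndTrivialExtensionInRck}(ii), $\EXT\I$ restricts to a fully faithful functor $\CEXT\QND \to \CEXT\RCK$, and an object in its essential image is $\Gamma$-central in $\RCK$ precisely when its $\QND$-preimage is $\Gamma_q$-central.

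Next I would take an arbitrary extension $f\colon A \to B$ of quandles and centralize $\I(f)$ inside $\CEXT\RCK$, which exists by hypothesis. The one non-formal input enters here: I must check that this $\RCK$-reflection stays in the essential image of $\EXT\I$. This is where I use that the centralization acts by quotients, with reflection unit a double extension $(q,r)\colon \I(f) \to \overline{\I(f)}$ whose components are surjective (indeed, as recalled in the excerpt and produced by Theorem \ref{TheoremEpiReflectivityOfCExt}, the codomain is typically left unchanged and only the domain is quotiented). Since $\QND$ is a subvariety of $\RCK$ it is closed under quotients, so both objects of $\overline{\I(f)}$, being quotients of the quandles $A$ and $B$, are again quandles. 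Hence $\overline{\I(f)}$ lies in the essential image of $\EXT\I$, say $\overline{\I(f)} = \I(\overline{f})$ where $\overline{f}$ is an extension of quandles that is $\Gamma_q$-central by the reflecting half of Lemma \ref{LemmaInclusionOfQndCentralAndTrivialExtensionInRck}(ii). Because the unit runs between the two objects $\I(f)$ and $\I(\overline{f})$ of the essential image, full faithfulness of $\EXT\I$ produces a unique $\eta_f\colon f \to \overline{f}$ in $\EXT\QND$ with $\I(\eta_f)$ equal to the reflection unit.

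Finally I would transfer the universal property across $\EXT\I$. Given any $\Gamma_q$-central extension $c$ of quandles and any $\alpha\colon f \to c$ in $\EXT\QND$, applying $\EXT\I$ gives a morphism $\I(\alpha)\colon \I(f) \to \I(c)$ with $\I(c)$ being $\Gamma$-central; the universal property of the $\CEXT\RCK$-reflection yields a unique $\beta\colon \I(\overline{f}) \to \I(c)$ with $\beta\circ\I(\eta_f) = \I(\alpha)$. As $\beta$ runs between objects of the essential image, full faithfulness makes it $\I(\gamma)$ for a unique $\gamma\colon \overline{f}\to c$ in $\EXT\QND$, and faithfulness then forces $\gamma\circ\eta_f = \alpha$ as well as the uniqueness of such $\gamma$. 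Thus $(\overline{f},\eta_f)$ is the reflection of $f$ into $\CEXT\QND$; this proves that $\CEXT\QND$ is reflective in $\EXT\QND$, and by construction the reflection is exactly the $\RCK$-centralization of $\I(f)$ read back through $\I$, as claimed.

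The step I expect to be the genuine obstacle is the essential-image claim in the second paragraph, i.e.\ showing that the centralization of $\I(f)$ does not leave the subcategory of quandle extensions. Everything before and after it is formal categorical bookkeeping about a fully faithful functor, but this point rests on the concrete shape of the reflection (centralization by quotients) together with the closure of $\QND$ under quotients; once it is secured, the corollary follows without further computation.
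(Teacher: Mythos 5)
Your proposal is correct and follows essentially the same route as the paper's proof: the key point in both is that the centralization unit of Theorem \ref{TheoremEpiReflectivityOfCExt} only quotients the domain (leaving the codomain fixed), so closure of $\QND$ under quotients keeps the reflection inside quandle extensions, Lemma \ref{LemmaInclusionOfQndCentralAndTrivialExtensionInRck}(ii) supplies centrality, and the universal property transfers through the full and faithful inclusion. You merely spell out in detail the bookkeeping that the paper compresses into ``the universality in $\CEXT\QND$ directly derives from the universality in $\CEXT\RCK$ by the same arguments.''
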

\begin{proof}
Since $\QND$ is closed under quotients in $\RCK$, the centralization of an extension in $\QND \preceq \RCK$ yields an extension in $\QND$ which is moreover central by Lemma \ref{LemmaInclusionOfQndCentralAndTrivialExtensionInRck}. The universality in $\CEXT\QND$ directly derives from the universality in $\CEXT\RCK$ by the same arguments.
\end{proof}

\subsection{Centralizing extensions}\label{SectionCentralizingExtensions}
We adapt the result from \cite{DuEvMo2017}, showing the reflectivity of quandle coverings in the category of extensions, to the context of racks. We put the emphasis on our new characterizations of the centralizing relation which works the same for racks and for quandles. We also prepare the ingredients to show the admissibility of coverings within extensions, and the forthcoming covering theory in dimension 2. 

Let us define $\EE_1$ to be the the class of double extensions in $\EXT\RCK$.
\begin{theorem}\label{TheoremEpiReflectivityOfCExt}
The category $\CEXT\RCK$ is an ($\EE_1$)-reflective subcategory of the category $\EXT\RCK$ with left adjoint $\Fi$ and unit $\eta^1$ defined for an object $f\colon{A \to B}$ in $\EXT\RCK$ by $\eta^1_{f} \defeq (\eta^1_A, \id_B)$, where $\eta^1_A\colon{A \to \Fii(A)}$ is the quotient of $A$ by the \emph{centralizing} congruence $\Ci(f)$, which can be defined in the following equivalent ways:
\begin{enumerate}[label=(\roman*)]
\item $\Ci(f)$ is the equivalence relation on $A$ generated by the pairs $(x\qndop a \qndiop b, x)$ for $x$, $a$, and $b$ in $A$ such that $f(a) = f(b)$,
\item a pair $(a,b)$ of elements from $A$ is in the equivalence relation $\Ci(f)$ if and only if $a$ and $b$ are the endpoints of a horn, i.e.~there exists a horn $M=(x,(a_i,b_i,\delta_i)_{1\leq i \leq n})$ such that $x \cdot g^M_a = a$ and $x \cdot g^M_b = b$,
\item $\Ci(f) $ is the orbit relation $ \sim_{\Ker(\vec{f})}$ (or equivalently $\sim_{\K{\vec{f}}}$) induced by the action of the kernel of $\vec{f}$ (i.e. the group of $f$-symmetric paths).
\end{enumerate}
Observing that $\Ci(f) \leq \Eq(f)$, the image of $f$ by $\Fi$ is defined as the unique factorization of $f$ through this quotient:
\[\xymatrix@R=10pt{
A \ar[rd]_-{\eta^1_{A}} \ar[rr]^-{f} && B\\
& \Fii(A) \ar[ur]_-{\Fi(f)}  &
}\]
The definition of $\Fi$ on morphisms $\alpha = (\alpha_1,\alpha_0)\colon{f_A \to f_B}$ decomposes into the top (initial) component $\Fii(\alpha_1)$ defined by the universal property of the quotients $\eta^1_{A_1}$ for $f_A \colon {A_1 \to A_0}$; and the bottom component $\Fio(\alpha_0) = \alpha_0$ which is the identity.
\end{theorem}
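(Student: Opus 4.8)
The plan is to reduce the entire statement to the orbit description (iii) and then to exploit the characterization of coverings by the trivial action of $\Ker(\vec f)$ from Theorem \ref{PropositionCoveringsAsOrbitCongruences}; almost everything then becomes bookkeeping on top of that result, Proposition \ref{PropositionKernelOfPthF}, and the orbit-congruence results of Paragraph \ref{ParagraphOrbitCongruences}.

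\textbf{Equivalence of the three descriptions.} First I would record the key algebraic fact that, although Proposition \ref{PropositionKernelOfPthF} presents $\Ker(\vec f)$ as the \emph{normal closure} of $\{\gr a\gr b^{-1}\mid (a,b)\in\Eq(f)\}$ in $\Pth(A)$, this set already generates $\Ker(\vec f)$ as a plain subgroup. Indeed, using the defining relation $\gr{x\qndop a}=\gr a^{-1}\gr x\gr a$ of $\Pth(A)$, conjugating a generator $\gr a\gr b^{-1}$ by $\gr c^{\mp 1}$ yields $\gr{a\qndop^{\pm}c}\,\gr{b\qndop^{\pm}c}^{-1}$, which is again a generator since $f(a\qndop^{\pm}c)=f(b\qndop^{\pm}c)$; as the $\gr c$ generate $\Pth(A)$, the subgroup is normal and hence equals its normal closure. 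Consequently every $k\in\Ker(\vec f)$ is an honest product $\prod_i\gr{a_i}\gr{b_i}^{-1}$ with $(a_i,b_i)\in\Eq(f)$, and $x\cdot k$ is reached from $x$ by a chain of elementary moves $y\mapsto y\qndop a_i\qndiop b_i$; this yields (i)$\Leftrightarrow$(iii). For (ii)$\Leftrightarrow$(iii): given an $f$-horn $M=(x,(a_i,b_i,\delta_i))$, the equalities $f(a_i)=f(b_i)$ force $\vec f(g^M_a)=\vec f(g^M_b)$, so $(g^M_b)^{-1}g^M_a\in\Ker(\vec f)$ and the endpoints satisfy $a^M=b^M\cdot\big((g^M_b)^{-1}g^M_a\big)$, whence $a^M\sim_{\Ker(\vec f)}b^M$; conversely, given $a=b\cdot k$ with $k=\prod_i\gr{c_i}\gr{d_i}^{-1}$, the horn with head $b$ and successive steps $(c_i,d_i,+1),(d_i,d_i,-1)$ has associated pair $g^M_a=k$, $g^M_b=e$, hence endpoints $a$ and $b$.

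\textbf{Congruence, factorization, centrality.} Since $\Ker(\vec f)$ is a normal subgroup of $\Pth(A)$, description (iii) exhibits $\Ci(f)$ as an orbit congruence, hence a genuine congruence on $A$ (Paragraph \ref{ParagraphOrbitCongruences}); this makes $\Fii(A)\defeq A/\Ci(f)$ a rack and $\eta^1_A$ a surjective homomorphism. Functoriality of $\Pth$ (Remark \ref{RemarkFunctorialityOfPth}) gives $f(b\cdot k)=f(b)\cdot\vec f(k)=f(b)$ for $k\in\Ker(\vec f)$, so $\Ci(f)\le\Eq(f)$ and $f$ factors as $\Fi(f)\circ\eta^1_A$. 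To see $\Fi(f)$ is a covering, take $(\bar a,\bar b)\in\Eq(\Fi(f))$ and $\bar x\in\Fii(A)$; lifting to representatives gives $f(a)=f(b)$, so $\gr a\gr b^{-1}\in\Ker(\vec f)$ and $x\qndop a\qndiop b=x\cdot(\gr a\gr b^{-1})$ is $\Ci(f)$-equivalent to $x$, i.e.\ $\bar x\qndop\bar a\qndiop\bar b=\bar x$. By Definition \ref{DefinitionRackCovering}, $\Fi(f)\in\CEXT\RCK$.

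\textbf{Universal property (reflectivity).} This is the real content. Given a covering $g\colon C\to D$ and a morphism $\alpha=(\alpha_{\ttop},\alpha_{\pperp})\colon f\to g$ in $\EXT\RCK$, I must produce a unique $\beta\colon\Fi(f)\to g$ with $\beta\circ\eta^1_f=\alpha$. As the bottom component of $\eta^1_f$ is $\id_B$, it suffices to factor $\alpha_{\ttop}$ through $\eta^1_A$, i.e.\ to check $\Ci(f)\le\Eq(\alpha_{\ttop})$. For $(a,b)\in\Ci(f)$ write $a=b\cdot k$ with $k=\prod_i\gr{c_i}\gr{d_i}^{-1}$, $(c_i,d_i)\in\Eq(f)$; then $\alpha_{\ttop}(a)=\alpha_{\ttop}(b)\cdot\vec{\alpha_{\ttop}}(k)$, and each factor $\gr{\alpha_{\ttop}(c_i)}\gr{\alpha_{\ttop}(d_i)}^{-1}$ lies in $\Ker(\vec g)$ because $g(\alpha_{\ttop}(c_i))=\alpha_{\pperp}f(c_i)=\alpha_{\pperp}f(d_i)=g(\alpha_{\ttop}(d_i))$. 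Hence $\vec{\alpha_{\ttop}}(k)\in\Ker(\vec g)$, which acts trivially on $C$ since $g$ is a covering (Theorem \ref{PropositionCoveringsAsOrbitCongruences}), giving $\alpha_{\ttop}(a)=\alpha_{\ttop}(b)$. The induced $\beta_{\ttop}$ is unique since $\eta^1_A$ is epic, and $\beta=(\beta_{\ttop},\alpha_{\pperp})$ is the required factorization; this simultaneously yields the adjunction, the formula $\eta^1_f=(\eta^1_A,\id_B)$ for the unit, and (by the usual naturality argument) functoriality of $\Fi$ with the stated components. Finally $\eta^1_f$ is a double extension: both components are surjections and, because the bottom is $\id_B$, the comparison map into the pullback $B\times_B\Fii(A)\cong\Fii(A)$ is just $\eta^1_A$, again a surjection, so the reflection is $(\EE_1)$-reflective. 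I expect the main obstacles to be the two points underpinning the equivalence of descriptions — that $\Ker(\vec f)$ is \emph{already} a plain subgroup generated by the $\gr a\gr b^{-1}$, and the explicit horn realizing an arbitrary orbit move — together with the verification that centrality of $g$ forces $\vec{\alpha_{\ttop}}(k)$ to act trivially.
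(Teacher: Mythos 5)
Your proposal is correct, but it is organized quite differently from the paper's own proof. The paper does not re-prove the reflection at all: it invokes the proof of Theorem 5.5 of \cite{DuEvMo2017} (stated there for quandles, using description $(i)$) and observes that it translates to racks, adds the one-line remark that $\eta^1_f$ is a double extension because its bottom component is an isomorphism, and then spends its actual effort on the equivalence $(i)\Leftrightarrow(ii)$, proved by induction on the length of a horn together with a concatenation-of-horns argument for transitivity; the equivalence $(ii)\Leftrightarrow(iii)$ is then cited as a consequence of Proposition \ref{PropositionKernelOfPthF}. You instead make $(iii)$ the hub and prove everything from scratch. Your key observation --- that the set $\lbrace \gr{a}\,\gr{b}^{-1} \mid (a,b) \in \Eq(f)\rbrace$ is stable under conjugation by the generators $\gr{c}^{\pm 1}$ of $\Pth(A)$ (via $\gr{x \qndop c} = \gr{c}^{-1}\gr{x}\,\gr{c}$), so that it generates $\Ker(\vec{f})$ as a plain subgroup and not merely as a normal closure --- is what makes both the chain argument for $(i)\Leftrightarrow(iii)$ and the explicit horn construction (steps $(c_i,d_i,+1)$ followed by $(d_i,d_i,-1)$, forcing a trivial bottom path) for $(ii)\Leftrightarrow(iii)$ go through; it is also precisely the step the paper leaves implicit when it asserts $(ii)\Leftrightarrow(iii)$ ``by Proposition \ref{PropositionKernelOfPthF}'', since an arbitrary element of $\Ker(\vec{f})$ must be realized as the symmetric pair of a horn with the correct head. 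Likewise, your derivation of the universal property --- $\Ci(f)\le \Eq(\alpha_{\ttop})$ because $\vec{\alpha}_{\ttop}$ carries $f$-symmetric generators into $\Ker(\vec{g})$, which acts trivially when $g$ is a covering by Theorem \ref{PropositionCoveringsAsOrbitCongruences} --- is a clean, self-contained replacement for the argument imported from \cite{DuEvMo2017}. What the paper's route buys is brevity and keeping the geometric horn calculus in the foreground (it is reused later, e.g.\ in Proposition \ref{PropositionCoversingsAboveFreeObjects}); what yours buys is a uniform, fully self-contained proof in which the three descriptions, the congruence property, the centrality of $\Fi(f)$ and the adjunction all flow through the group of paths and the two results \ref{PropositionKernelOfPthF} and \ref{PropositionCoveringsAsOrbitCongruences}.
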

\begin{proof}
Using definition $(i)$ for the centralizing relation, the proof of Theorem 5.5 in \cite{DuEvMo2017} easily translates to the context of racks. Then given an extension $f\colon{A \to B}$, the unit $\eta^1_f =(\eta^1_A,\id_B)$ is indeed a double extension since its bottom component is an isomorphism. It remains to show that the definitions $(ii)$ and $(i)$ are equivalent, since $(iii)$ is equivalent to $(ii)$ by Proposition \ref{PropositionKernelOfPthF}.

First we show by induction on $n \in \N$ that $\Ci(f)$, defined as in $(i)$, contains all pairs that are endpoints of a horn. Then we show that the collection of such pairs defines a congruence containing the generators of $\Ci(f)$. This then concludes the proof.

Step $0$ is satisfied by reflexivity of $\Ci(f)$. Now assume that if $(a,b)$ is a pair of elements in $A$, which are endpoints of a horn $M=(x,(a_i,b_i,\delta_i)_{1\leq i \leq n})$ of length $n \leq k$, for some fixed natural number $k$, then $(a,b) \in \Ci(f)$. We show that the endpoints $a \defeq x \cdot g^M_a$ and $b \defeq x \cdot g^M_b$ of any given horn $M=(x,(a_i,b_i,\delta_i)_{1\leq i \leq k+1})$ of length $k+1$ are in relation by $\Ci(f)$. Indeed, define $a' = a \qndop^{-\delta_{k+1}} a_{k+1} $ and $b' = b \qndop^{-\delta_{k+1}} b_{k+1}$. Then we have that $(a',b') \in \Ci(f)$ by assumption and, moreover,
\[ \left( a = a' \qndop^{\delta_{k+1}} a_{k+1} \right)  \quad  \Ci(f) \quad  \left( b' \qndop^{\delta_{k+1}} a_{k+1} \right)  \quad\Ci(f) \quad \left(  b' \qndop^{\delta_{k+1}} b_{k+1} = b\right), \]
by compatibility of $\Ci(f)$ with the rack operation, together with reflexivity, and further by definition $(i)$ of $\Ci(f)$. We may conclude by transitivity of $\Ci(f)$.

Now define the symmetric set relation $S$ as the subset of $A\times A$, given by pairs of endpoints of $f$-horns. Looking at horns of length $0$ and $1$, $S$ defines a reflexive relation containing the generators of $\Ci(f)$. It is also easy to observe that it is compatible with the rack operation. Thus it remains to show transitivity. In order to do so, for $k$ and $n$ in $\N$, consider a horn $M=(x,(a_i,b_i,\delta_i)_{1\leq i \leq k})$, and its endpoints $a$ and $b$ as before, as well as a horn $N=(z,(c_i,d_i,\gamma_i)_{1\leq i \leq n})$ with endpoints $c = z \cdot g^N_a$ and $d = z \cdot g^N_b$. If $b = c$ then also $(a,d)$ is in $S$ since: 
\begin{align*}
&a = x \qndop^{\delta_1} a_1 \cdots \qndop^{\delta_{k}}  a_{k} \qndop^{-\gamma_{n}} c_{n}   \cdots \qndop^{-\gamma_1} c_1 \qndop^{\gamma_{1}} c_{1}  \cdots \qndop^{\gamma_n}  c_n, \\
&d = x \qndop^{\delta_1} b_1 \cdots \qndop^{\delta_{k}}  b_{k} \qndop^{-\gamma_{n}} c_{n}   \cdots \qndop^{-\gamma_1} c_1 \qndop^{\gamma_{1}} d_{1}   \cdots \qndop^{\gamma_n} d_n.
\qedhere
\end{align*}
\end{proof}

By Corollary \ref{CorollaryCoveringsAreReflective}, what we deduced about the functor $\Fi$ restricts to the domain $\CEXT\QND$, and so also describes the left adjoint to the inclusion in $\EXT\QND$ from Theorem 5.5. in \cite{DuEvMo2017}. In addition to Corollary \ref{CorollaryCoveringsAreReflective}, we further describe how centralization behaves with respect to $\Frq$. 

\subsubsection{Navigating between racks and quandles}
Observe that the adjunction $\Frq \colon{ \RCK \rightleftarrows \QND }\colon \I$ induces (in the obvious way) an adjunction $\Frq^1 \colon{ \EXT\RCK \rightleftarrows \EXT\QND }\colon\I$ with unit given by $_1^r\eta^q = (^r\eta^q,\, ^r\eta^q)$. Then by Corollary \ref{CorollaryImageOfACentralExtensionByFrq} this adjunction restricts to the full subcategories $\Frq^1 \colon{ \CEXT\RCK \rightleftarrows \CEXT\QND }\colon\I$.

\begin{proposition}
We have the following square of adjunctions, in which all possible squares of functors commute (up to isomorphism):
\[\xymatrix@C=20pt@R=15pt{ \EXT\RCK \ar@{{}{}{}}[dd]|-{\dashv}  \ar@<-3pt>@/_7pt/[rr]_-{\Frq^1}  \ar@<-3pt>@/_7pt/[dd]_-{\Fi} \ar@{{}{}{}}[rr]|-{\top}  & & \EXT\QND \ar@{{}{}{}}[dd]|-{\dashv}   \ar@<-3pt>@/_7pt/[ll]_-{\I} \ar@<-3pt>@/_7pt/[dd]_-{\Fi} \\
\\
 \CEXT\RCK  \ar@{{}{}{}}[rr]|-{\top}  \ar@<-3pt>@/_7pt/[uu]_-{\I} \ar@<-3pt>@/_7pt/[rr]_-{\Frq^1}   & &  \CEXT\QND.  \ar@<-3pt>@/_7pt/[ll]_-{\I} \ar@<-3pt>@/_7pt/[uu]_-{\I}
}\]
\end{proposition}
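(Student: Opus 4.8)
The plan is to reduce everything to the uniqueness of adjoints, exploiting the observation that all four right adjoints in the square are inclusion functors. First I would record the four left adjoints — $\Frq^1$ on the top and bottom edges and the centralization reflectors $\Fi$ on the left and right edges — together with their right adjoints, each of which is a version of the inclusion $\I$. The existence of all four adjunctions is already in hand: the $\EXT$-level adjunction $\Frq^1 \dashv \I$ is induced levelwise, it restricts to central extensions by Corollary \ref{CorollaryImageOfACentralExtensionByFrq}, and the reflectivity of $\CEXT$ in $\EXT$ is Theorem \ref{TheoremEpiReflectivityOfCExt} together with Corollary \ref{CorollaryCoveringsAreReflective}. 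The two genuine squares of functors to check are then the square of left adjoints and the square of right adjoints; once these agree, every directed composite between two fixed corners is pinned down up to isomorphism.

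Next I would treat the right-adjoint square, which I expect to commute strictly, on the nose. The two composites $\CEXT\QND \to \EXT\RCK$ are $\CEXT\QND \xrightarrow{\I} \EXT\QND \xrightarrow{\I} \EXT\RCK$ and $\CEXT\QND \xrightarrow{\I} \CEXT\RCK \xrightarrow{\I} \EXT\RCK$; both send a central quandle extension $f$ to the rack extension obtained by applying the inclusion $\I\colon\QND \to \RCK$ to the top and bottom of $f$. By Lemma \ref{LemmaInclusionOfQndCentralAndTrivialExtensionInRck}(ii) this rack extension is again central, so both routes are well defined into their stated codomains and they coincide as functors into $\EXT\RCK$. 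The same lemma certifies that the bottom inclusion $\CEXT\QND \to \CEXT\RCK$ is just the corestriction of $\I$, and Corollary \ref{CorollaryImageOfACentralExtensionByFrq} certifies that $\Frq^1$ restricts along the bottom edge, so the diagram is genuinely available.

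Then the left-adjoint square follows formally. The composite $\Fi \circ \Frq^1 \colon \EXT\RCK \to \CEXT\QND$ is left adjoint to $\I \circ \I \colon \CEXT\QND \xrightarrow{\I} \EXT\QND \xrightarrow{\I} \EXT\RCK$, while $\Frq^1 \circ \Fi \colon \EXT\RCK \to \CEXT\QND$ is left adjoint to $\I \circ \I \colon \CEXT\QND \xrightarrow{\I} \CEXT\RCK \xrightarrow{\I} \EXT\RCK$; by the previous paragraph these two right adjoints are literally the same functor $\CEXT\QND \to \EXT\RCK$. Since a functor determines its left adjoint up to a unique natural isomorphism, this gives $\Fi \circ \Frq^1 \cong \Frq^1 \circ \Fi$, which is the remaining square. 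Any further ``mixed'' squares (an inclusion on one pair of edges against a reflector on the other) then commute up to isomorphism by the mate correspondence, the mate of an identity or isomorphism being an isomorphism.

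The only step carrying real content is the strict commutation of the inclusions, i.e.\ checking that ``central in $\QND$, then regarded in $\RCK$'' and ``regarded in $\RCK$, then recognised as central'' produce the very same rack extension; everything downstream is the formal uniqueness of adjoints. This is precisely what Lemma \ref{LemmaInclusionOfQndCentralAndTrivialExtensionInRck} and Corollary \ref{CorollaryImageOfACentralExtensionByFrq} were set up to deliver, so I anticipate no genuine obstacle beyond assembling them cleanly.
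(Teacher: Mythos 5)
Your skeleton matches the paper's own proof: the square of inclusions commutes strictly, and the square of left adjoints, $\Fi \circ \Frq^1 \cong \Frq^1 \circ \Fi$, then follows by uniqueness of left adjoints --- the paper argues exactly this way. The gap is in your final step. ``All possible squares'' includes the two mixed ones, and your justification for those --- ``the mate of an identity or isomorphism being an isomorphism'' --- is not a valid principle: the mate of an isomorphism is in general only a canonical natural transformation, and its invertibility is precisely a Beck--Chevalley-type \emph{condition}. One of the mixed squares is indeed harmless: $\I \circ \Frq^1 = \Frq^1 \circ \I$ out of $\CEXT\RCK$ holds on the nose because, by Corollary \ref{CorollaryImageOfACentralExtensionByFrq}, the bottom $\Frq^1$ is defined as the corestriction of the top one. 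But the other mixed square, $\Fi \circ \I \cong \I \circ \Fi$ from $\EXT\QND$ to $\CEXT\RCK$, is where the real content of the proposition lies: it says that centralizing a quandle extension in $\RCK$ agrees with centralizing it in $\QND$. This needs that the $\RCK$-centralization of a quandle extension is again a quandle extension (because $\Fii(A)$ is a quotient of $A$ and $\QND$ is closed under quotients in $\RCK$), that it is then central in $\QND$ (Lemma \ref{LemmaInclusionOfQndCentralAndTrivialExtensionInRck}), and that it inherits the universal property --- i.e.\ the full statement of Corollary \ref{CorollaryCoveringsAreReflective}, which is the fact the paper relies on for this square. You cite that corollary only to obtain existence of the right-hand adjunction, and then replace its computational content by the mate argument.

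That no purely formal argument can close this gap is visible already in a poset. Take the chain $x < a < d$ with reflective subcategories $\mathcal{A} = \lbrace a, d \rbrace$ (playing the role of $\CEXT\RCK$) and $\mathcal{B} = \lbrace x, d \rbrace$ (playing the role of $\EXT\QND$), so $\mathcal{A} \cap \mathcal{B} = \lbrace d \rbrace$. Every ingredient you use holds: the square of inclusions is strict, $\mathcal{A}\cap\mathcal{B}$ is reflective in $\mathcal{B}$, the reflector onto $\mathcal{B}$ restricted to $\mathcal{A}$ lands in $\mathcal{A}\cap\mathcal{B}$, and the two composite left adjoints agree. Yet reflecting $x \in \mathcal{B}$ into $\mathcal{A}$ gives $a \notin \mathcal{B}$, whereas including the reflection of $x$ into $\mathcal{A}\cap\mathcal{B}$ gives $d \neq a$: the mixed square fails. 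So your closing claim that ``the only step carrying real content is the strict commutation of the inclusions'' mislocates the difficulty; that step is essentially trivial, and the substance sits in the mixed square, i.e.\ in Corollary \ref{CorollaryCoveringsAreReflective} and, behind it, the Birkhoff property of $\QND$ in $\RCK$. The repair is easy --- invoke that corollary's full statement for this square instead of the mate argument --- but as written the proof is incomplete.
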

\begin{proof}
Corollary \ref{CorollaryComputingCentralizationUsingFrq} gives commutativity of the square $\Fi\I = \I\Fi$ from the top right to the bottom left. In the opposite direction, $\I\Frq^1 = \Frq^1\I$ by Corollary \ref{CorollaryImageOfACentralExtensionByFrq} again. Finally bottom-right to top-left $\I \I = \I \I$ commutes trivially, from which we can deduce, by uniqueness of left adjoints, that $\Frq^1\Fi = \Fi\Frq^1$.
\end{proof}

In particular we have:
\begin{corollary}\label{CorollaryComputingCentralizationUsingFrq}
If $f\colon{A \to B}$ is a morphism of racks, then the centralization \[\Fi(\Frq(f)) \colon{\Fii(\Frq(A)) \to \Frq(B)}\] of $\Frq(f)$ is equal (up to isomorphism) to the reflection $\Frq(\Fi(f)) \colon { \Frq(\Fii(A)) \to \Frq(B)}$ of the centralization $\Fi(f)$ of $f$.
\end{corollary}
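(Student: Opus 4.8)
The plan is to prove the sharper functorial statement $\Fi\,\Frq^1 \cong \Frq^1\,\Fi$ and then obtain the corollary by evaluating at $f$. The cleanest route I would take is a formal one, using only the four adjunctions already in play and the two ``transfer of centrality'' facts proved earlier; the concrete content (that the relevant congruences match) can then be recovered afterwards via the group of paths. Concretely, I would fix a central extension of quandles $g$ and produce a natural isomorphism of hom-sets $\Hom_{\EXT\QND}(\Frq^1 f,\, g)\cong\Hom_{\CEXT\QND}(\Frq^1\Fi(f),\, g)$, and conclude by Yoneda that $\Frq^1\Fi(f)$ is the centralization $\Fi(\Frq^1 f)$.

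The chain of isomorphisms I would assemble is the following. First, the adjunction $\Frq^1\dashv\I$ on extensions gives $\Hom_{\EXT\QND}(\Frq^1 f,\, g)\cong\Hom_{\EXT\RCK}(f,\, \I g)$. By Lemma~\ref{LemmaInclusionOfQndCentralAndTrivialExtensionInRck}$(ii)$ the rack extension $\I g$ underlying the central quandle extension $g$ is itself $\Gamma$-central, so by the universal property of the centralization reflection $\Fi$ (Theorem~\ref{TheoremEpiReflectivityOfCExt}) every morphism $f\to\I g$ factors uniquely through $\eta^1_f$, giving $\Hom_{\EXT\RCK}(f,\,\I g)\cong\Hom_{\CEXT\RCK}(\Fi(f),\, \I g)$. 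Finally the restricted adjunction $\Frq^1\dashv\I$ on central extensions (available by Corollary~\ref{CorollaryImageOfACentralExtensionByFrq}) yields $\Hom_{\CEXT\RCK}(\Fi(f),\,\I g)\cong\Hom_{\CEXT\QND}(\Frq^1\Fi(f),\, g)$. Composing, and noting that $\Frq^1\Fi(f)$ is genuinely central in $\QND$ by Corollary~\ref{CorollaryImageOfACentralExtensionByFrq}, identifies $\Frq^1\Fi(f)$ with $\Fi(\Frq^1 f)$ over $\Frq(B)$, which is exactly $\Frq(\Fi(f))\cong\Fi(\Frq(f))$. This is precisely the ``uniqueness of left adjoints'' observation already used for the preceding square of adjunctions; the only points demanding attention are the two centrality-transfer inputs, namely that inclusion sends central quandle extensions to central rack extensions and that $\Frq$ sends central rack extensions to central quandle extensions.

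As a concrete check, and in the spirit of the paper's emphasis on $\Pth$, I would alternatively verify directly that $\Fi(\Frq(f))$ and $\Frq(\Fi(f))$ are the \emph{same} quotient of $\Frq(A)$. Here $\Frq(\Fi(f))$ has domain $\Frq(\Fii(A))$, a quotient of $\Frq(A)$ since $\Frq$ preserves the regular epimorphism $\eta^1_A$; and $\Fi(\Frq(f))$ has domain $\Frq(A)/\Ci(\Frq(f))$. Using characterization $(iii)$ of Theorem~\ref{TheoremEpiReflectivityOfCExt} one has $\Ci(\Frq(f))=\sim_{\Ker(\overrightarrow{\Frq f})}$ and $\Ci(f)=\sim_{\Ker(\vec f)}$, and since $\Pth\,\Frq\cong\Pth$ (Paragraph~\ref{SubsectionRckAndQndHaveTheSameGrpOfPth}) naturality identifies $\Ker(\overrightarrow{\Frq f})$ with $\Ker(\vec f)$ along $\Pth(^r\eta^q_A)$. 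Because $^r\eta^q_A$ is $\Pth$-equivariant (the functoriality of the action in Remark~\ref{RemarkFunctorialityOfPth}), the image of the orbit congruence $\sim_{\Ker(\vec f)}$ under $^r\eta^q_A\times{}^r\eta^q_A$ is again an orbit congruence, equal to $\Ci(\Frq(f))$; by Barr-exactness of $\QND$ the two regular epimorphisms out of $\Frq(A)$ with this common kernel pair are canonically isomorphic over $\Frq(B)$.

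I expect the main obstacle to sit entirely in this second, concrete verification rather than in the formal argument: since $\Frq$ does \emph{not} preserve binary products (as noted after Paragraph~\ref{ParagraphConnectedComponentsAreNotConnected}), one cannot simply apply $\Frq$ to the subobject $\Ci(f)\subseteq A\times A$ and read off the kernel pair of $\Frq(\eta^1_A)$. The essential step is therefore to show, via equivariance of $^r\eta^q_A$ and surjectivity, that the \emph{image} of the orbit congruence is precisely the orbit congruence of the transported subgroup, so that it already is a congruence and no further ``generated by'' closure is needed. For this reason I would present the formal adjunction argument as the primary proof and keep the orbit-congruence computation as the conceptual explanation of why the two centralizations agree.
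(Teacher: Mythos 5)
Your primary argument is correct and is essentially the paper's own proof: the paper obtains this corollary from the commuting square of adjunctions between $\EXT\RCK$, $\EXT\QND$, $\CEXT\RCK$ and $\CEXT\QND$, deduced ``by uniqueness of left adjoints'' from exactly the two transfer facts you invoke (Lemma~\ref{LemmaInclusionOfQndCentralAndTrivialExtensionInRck}$(ii)$, via Corollary~\ref{CorollaryCoveringsAreReflective}, and Corollary~\ref{CorollaryImageOfACentralExtensionByFrq}), and your hom-set chain plus Yoneda is just the explicit, pointwise form of that same uniqueness argument. Your supplementary orbit-congruence computation through $\Pth\,\Frq \cong \Pth$ is a sound consistency check but is not needed and does not appear in the paper's proof.
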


\subsubsection{Towards admissibility in dimension 2}\label{SectionTowardsAdmissibilityInD2}
A reflector such as $\Fi$, of a subcategory of morphisms containing the identities into a larger class of morphisms can always be chosen such that the bottom component of the unit of the adjunction is the identity \cite[Corollary 5.2]{ImKel1986}. This is important in order to obtain higher order reflections and admissibility, for we relate certain problems back to the first level context (which has the advantage of being complete, cocomplete and Barr-exact). For dimension 2, we need this reflection to be strongly Birkhoff. Below we have the results we need for the permutability condition on the kernel pair of the unit (``strongly'') and for the closure by quotients of central extensions (``Birkhoff'').

\begin{proposition}\label{PropositionPermutabilityKernelPairOfTheUnit}
Given a rack extension $f\colon{A \to B}$ (or in particular an extension in $\QND$) as before, the kernel pair $\Ci(f)$ of the domain-component $\eta^1_A$ of the unit $\eta^1_f \defeq (\eta^1_A, \id_B)$, commutes with all congruences on $A$, in $\RCK$ (and so also in particular in $\QND$).
\end{proposition}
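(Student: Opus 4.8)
The plan is to recognise $\Ci(f)$ as an orbit congruence in the sense of Paragraph \ref{ParagraphOrbitCongruences}, and then to invoke the permutability result of Lemma \ref{LemmaOrbitCongPermute}, which is tailored precisely to such congruences. In other words, almost all of the work has already been done upstream, and the proof reduces to connecting the right characterisation of $\Ci(f)$ to the right lemma.

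First I would recall characterisation $(iii)$ from Theorem \ref{TheoremEpiReflectivityOfCExt}: the centralizing congruence is $\Ci(f) = \sim_{\Ker(\vec{f})}$, the orbit relation induced by the action of the kernel $\Ker(\vec{f})$ of $\vec{f}\colon \Pth(A) \to \Pth(B)$, which by Proposition \ref{PropositionKernelOfPthF} is exactly the group $\K{\vec{f}}$ of $f$-symmetric paths. Being a kernel, $\Ker(\vec{f})$ is a normal subgroup of $\Pth(A)$.

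Next I would pass from $\Pth(A)$ to $\Inn(A)$ using the observation recorded in Paragraph \ref{ParagraphActionByInnerAutomorphisms}: since $\Ker(\vec{f}) \triangleleft \Pth(A)$, its image $s(\Ker(\vec{f}))$ under the canonical quotient $s\colon \Pth(A) \to \Inn(A)$ is a normal subgroup of $\Inn(A)$, and moreover $\sim_{\Ker(\vec{f})} = \sim_{s(\Ker(\vec{f}))}$. Writing $N \defeq s(\Ker(\vec{f})) \triangleleft \Inn(A)$, this exhibits $\Ci(f)$ as the $N$-orbit congruence $\sim_N$ genuinely in the sense of Paragraph \ref{ParagraphOrbitCongruences}. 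Finally, any congruence $R$ on $A$ is in particular a reflexive internal relation, so Lemma \ref{LemmaOrbitCongPermute} applies directly and yields $\sim_N \circ R = R \circ \sim_N$, that is, $\Ci(f)$ commutes with $R$. The quandle case needs no separate argument, since Lemma \ref{LemmaOrbitCongPermute} is stated to hold in $\RCK$ and the inclusion $\I\colon \QND \to \RCK$ preserves and reflects the relevant limits and relations.

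I do not expect a real obstacle here, precisely because the substance has been front-loaded into Theorem \ref{TheoremEpiReflectivityOfCExt}$(iii)$ and Lemma \ref{LemmaOrbitCongPermute}. The single point requiring care is the bookkeeping between normal subgroups of $\Pth(A)$ and of $\Inn(A)$: one must confirm that the orbit congruence determined by the action of $\Ker(\vec{f}) \subseteq \Pth(A)$ coincides with the one determined by a normal subgroup of $\Inn(A)$, so that Lemma \ref{LemmaOrbitCongPermute} (whose hypothesis is phrased in terms of $\Inn(X)$) is literally applicable. This is exactly the content of the remark on $s(N)$ in Paragraph \ref{ParagraphActionByInnerAutomorphisms}, so the identification is immediate.
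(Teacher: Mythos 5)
Your proposal is correct and takes essentially the same route as the paper, whose entire proof is the one-line observation that by Theorem \ref{TheoremEpiReflectivityOfCExt} the relation $\Ci(f)$ is an orbit congruence and hence permutes with every congruence by Lemma \ref{LemmaOrbitCongPermute}. You merely make explicit the bookkeeping the paper leaves implicit, namely passing from $\Ker(\vec{f}) \triangleleft \Pth(A)$ to the normal subgroup $s(\Ker(\vec{f})) \triangleleft \Inn(A)$ with $\sim_{\Ker(\vec{f})} \, = \, \sim_{s(\Ker(\vec{f}))}$, as recorded in Paragraph \ref{ParagraphActionByInnerAutomorphisms}.
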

\begin{proof}
By Theorem \ref{TheoremEpiReflectivityOfCExt}, the centralizing relation $\Ci(f)$ is an orbit congruence which thus commutes with any other congruence on $A$.
\end{proof}

As we shall see in Part II and III, the following property is a consequence of the fact that the Galois structure $\Gamma$, in dimension 0, is strongly Birkhoff. For now we show by hand:

\begin{proposition}\label{PropositionClosureOfCentralExtensionsAlongDoubleExtensions}
If $\alpha$ is a double extension of racks (or in particular quandles)
\[ \vcenter{\xymatrix @R=1pt @ C=11pt{
A_{\ttop} \ar[rrr]^{\alpha_{\ttop}} \ar[rd] |-{p}  \ar[dddd]_{f_A} & & & B_{\ttop}  \ar[dddd]^-{f_B} \\
 & A_{\pperp} \times_{B_{\pperp}} B_{\ttop} \ar[rru]|-{\pi_2} \ar[lddd]|-{\pi_1}  \\
\\
\\
A_{\pperp}  \ar[rrr]_{\alpha_{\pperp}} &  & & B_{\pperp}
}}
\]
then the morphism $\bar{\alpha}$ induced between the centralizing relations $\Ci(f_A)$ and $\Ci(f_B)$ is a regular epimorphism.
Moreover, if $f_A$ is a central extension then $f_B$ is a central extension. 
\end{proposition}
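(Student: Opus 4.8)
\emph{The plan} is to reduce both assertions to a single surjectivity statement about $\bar{\alpha}$ and to establish the latter by lifting horns. First I would fix the bookkeeping. Since $\Ci(f_A)\le\Eq(f_A)$ and $\Ci(f_B)\le\Eq(f_B)$ are congruences carrying the product rack structure, the induced map $\bar{\alpha}$ is just the (co)restriction of $\alpha_{\ttop}\times\alpha_{\ttop}$; its well-definedness is immediate from the horn description $(ii)$ in Theorem~\ref{TheoremEpiReflectivityOfCExt}, because $\alpha_{\ttop}$ carries an $f_A$-horn $M=(x,(a_i,b_i,\gamma_i)_{1\le i\le n})$ to the $f_B$-horn $(\alpha_{\ttop}(x),(\alpha_{\ttop}(a_i),\alpha_{\ttop}(b_i),\gamma_i)_{1\le i\le n})$, the defining condition $f_B\alpha_{\ttop}(a_i)=\alpha_{\pperp}f_A(a_i)=\alpha_{\pperp}f_A(b_i)=f_B\alpha_{\ttop}(b_i)$ holding since $\alpha$ is a morphism of extensions. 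I would also record, using Theorem~\ref{TheoremEpiReflectivityOfCExt}, that $\CEXT\RCK$ is reflective with unit component $\eta^1_A$ equal to the quotient of $A$ by $\Ci(f)$; hence an extension $f$ is central exactly when its reflection unit $\eta^1_f=(\eta^1_A,\id_B)$ is an isomorphism, i.e. exactly when $\Ci(f)$ is the diagonal $\Delta$.

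With this reduction the second statement follows instantly from the first. If $f_A$ is central then $\Ci(f_A)=\Delta_{A_{\ttop}}$, and a surjection $\bar{\alpha}\colon\Delta_{A_{\ttop}}\twoheadrightarrow\Ci(f_B)$ forces every pair $(c,d)\in\Ci(f_B)$ to be the image of a diagonal pair $(a,a)$, whence $c=\alpha_{\ttop}(a)=d$; thus $\Ci(f_B)=\Delta_{B_{\ttop}}$ and $f_B$ is central.

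The heart of the argument is therefore the surjectivity of $\bar{\alpha}$. The key elementary lemma I would isolate is that a double extension induces a surjection $\Eq(f_A)\twoheadrightarrow\Eq(f_B)$ on kernel pairs: given $(c,d)\in\Eq(f_B)$, choose $a_1\in A_{\ttop}$ with $\alpha_{\ttop}(a_1)=c$ (possible as $\alpha_{\ttop}$ is an extension); then $(f_A(a_1),d)$ lies in the pullback $A_{\pperp}\times_{B_{\pperp}}B_{\ttop}$ because $\alpha_{\pperp}f_A(a_1)=f_B(c)=f_B(d)$, and surjectivity of the comparison map $p$ yields $a_2$ with $f_A(a_2)=f_A(a_1)$ and $\alpha_{\ttop}(a_2)=d$, so $(a_1,a_2)\in\Eq(f_A)$ maps to $(c,d)$. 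Using this lemma I would lift an arbitrary $f_B$-horn $N=(z,(c_i,d_i,\gamma_i)_{1\le i\le n})$ witnessing a pair $(c,d)\in\Ci(f_B)$: lift each $(c_i,d_i)\in\Eq(f_B)$ to some $(a_i,b_i)\in\Eq(f_A)$ and $z$ to some $x$ with $\alpha_{\ttop}(x)=z$, obtaining an $f_A$-horn $M=(x,(a_i,b_i,\gamma_i)_{1\le i\le n})$ with $\vec{\alpha}_{\ttop}(g^M_a)=g^N_a$ and $\vec{\alpha}_{\ttop}(g^M_b)=g^N_b$. Its endpoints $(x\cdot g^M_a,\,x\cdot g^M_b)\in\Ci(f_A)$ then satisfy $\bar{\alpha}(x\cdot g^M_a,\,x\cdot g^M_b)=(z\cdot g^N_a,\,z\cdot g^N_b)=(c,d)$ by functoriality of the action (Remark~\ref{RemarkFunctorialityOfPth}), so $\bar{\alpha}$ is surjective and hence a regular epimorphism in $\RCK$.

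The main obstacle I anticipate is precisely this kernel-pair lifting lemma: I must make sure it genuinely uses the full strength of $\alpha$ being a \emph{double} extension (the surjectivity of $p$) and not merely the surjectivity of $\alpha_{\ttop}$ and $\alpha_{\pperp}$, which would be insufficient to lift a constrained pair. Everything downstream---well-definedness of $\bar{\alpha}$, the horn lift, and the passage to endpoints---is then routine, resting on the horn calculus of Theorem~\ref{TheoremEpiReflectivityOfCExt} and the functoriality of $\Pth$. A secondary point to check is that the argument transfers verbatim to $\QND$: the horn description of $\Ci$ and the functoriality of the action hold there as well, and $\QND$ is closed under the relevant quotients, so no separate treatment is required.
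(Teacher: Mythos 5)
Your proof is correct, and its engine is the same as the paper's: both exploit the surjectivity of the comparison map $p$ to lift pairs of $\Eq(f_B)$ to pairs of $\Eq(f_A)$, and both deduce preservation of centrality from surjectivity of $\bar{\alpha}$ exactly as you do. The difference is in what gets lifted. The paper lifts only the \emph{generators} $(x \qndop y,\, x \qndop z)$ with $f_B(y)=f_B(z)$: choose $a \in A_{\pperp}$ over $f_B(y)$, use $p$ to find $t,s \in A_{\ttop}$ over $(a,y)$ and $(a,z)$, lift $x$ to $u$, and observe that the generator $(u\qndop t,\, u \qndop s)$ of $\Ci(f_A)$ maps to the chosen one. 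To conclude surjectivity from this one still needs to know that the image $\bar{\alpha}(\Ci(f_A))$ is itself an equivalence relation -- which is true, since $\Ci(f_A)$ is an orbit congruence and hence permutes with $\Eq(\alpha_{\ttop})$ by Lemma \ref{LemmaOrbitCongPermute}, making the image transitive -- but the paper leaves that step implicit. Your version instead isolates the kernel-pair lifting as a lemma and lifts an \emph{entire} $f_B$-horn to an $f_A$-horn, which by description $(ii)$ of Theorem \ref{TheoremEpiReflectivityOfCExt} reaches an arbitrary element of $\Ci(f_B)$ directly; this makes the surjectivity argument self-contained, at the mild price of invoking the horn calculus throughout, and it is the formulation that generalizes most readily (your closing remark about $\QND$ is also right, for the reasons you give).
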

\begin{proof}
Certainly if we show that $\bar{\alpha}$ is a regular epimorphism, then assuming that $f_A$ is central, then its centralizing relation is trivial, hence the centralizing relation of $f_B$ is trivial, showing that $f_B$ is central (note that in this context, it is enough to have preservation of centrality by quotients along double extensions in order to have surjectivity of $\bar{\alpha}$, see Part II and III).

We pick a pair $(x \qndop y, x \qndop z)$ amongst the generators of $\Ci(f_B)$ (i.e.~with $f_B(y)=f_B(z)$). Since $\alpha_{\pperp}$ is surjective we get $a \in A_{\pperp}$ such that $\alpha_{\pperp}(a) = f_B(y)$. Now both pairs $(a,y)$ and $(a,z)$ are in the pullback $A_{\pperp} \times_{B_{\pperp}} B_{\ttop} $ hence there exist $t$ and $s$ in $A_{\ttop}$ such that $\alpha_{\ttop}(t) = y$, $\alpha_{\ttop}(s) = z$ and $f_A(t) = f_A(s) =a$, by surjectivity of $p$. Now there is also $u \in A_{\ttop}$ such that $t(u) = x$ and the pair $(u \qndop t, u \qndop s)$ is a generator of $\Ci(f_A)$ by definition. It is also sent to $(x \qndop y, x \qndop z) \in \Ci(f_B)$ by $\bar{\alpha}$ by construction. All generators of $\Ci(f_B)$ are thus in the image of $\bar{\alpha}$, and this concludes the proof.
\end{proof}

\begin{corollary}\label{CorollaryStronglyBirkhoffLemma}
Given a morphism $\alpha = (\alpha_{\ttop}, \alpha_{\pperp})\colon{f_A \to f_B}$ in $\EXT\RCK$ such that $\alpha_{\ttop}$ and $\alpha_{\pperp}$ are surjections, then the square below (where $P \defeq \Fii(A_{\ttop}) \times_{\Fii(B_{\ttop})} B_{\ttop}$) is a double extension of racks. Similarly in $\EXT\QND$.
\[\vcenter{\xymatrix @R=1pt @ C=11pt{
A_{\ttop} \ar[rrr]^{\alpha_{\ttop}} \ar[rd] |-{p}  \ar[dddd]_-{\eta^1_{A_{\ttop}}}  & & &  B_{\ttop}  \ar[dddd]^{\eta^1_{B_{\ttop}}} \\
 & P  \ar[rru]|-{\pi_2} \ar[lddd]|-{\pi_1}  \\
\\
\\
\Fii(A_{\ttop})  \ar[rrr]_{\Fii(\alpha_{\ttop})} &  & & \Fii(B_{\ttop}) 
}}
\]
\end{corollary}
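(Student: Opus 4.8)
The plan is to read this statement as the \emph{strongly Birkhoff} lemma for the centralization reflector $\Fi$ in $\EXT\RCK$, and to prove it by the two-step strategy announced in Section~\ref{SectionAdmissibilityStronglyBirkhoff}: a ``Birkhoff'' (closure) ingredient supplied by Proposition~\ref{PropositionClosureOfCentralExtensionsAlongDoubleExtensions}, and a ``strongly'' (permutability) ingredient supplied by Proposition~\ref{PropositionPermutabilityKernelPairOfTheUnit}. First I would dispose of the four edges of the square. The maps $\eta^1_{A_{\ttop}}$ and $\eta^1_{B_{\ttop}}$ are quotients, hence regular epimorphisms; $\alpha_{\ttop}$ is a surjection by hypothesis; and $\Fii(\alpha_{\ttop})$ is a surjection because the composite $\eta^1_{B_{\ttop}}\circ\alpha_{\ttop} = \Fii(\alpha_{\ttop})\circ\eta^1_{A_{\ttop}}$ is. So only the comparison map $p\colon A_{\ttop}\to P$ remains to be understood, and since $\RCK$ is Barr-exact the whole content reduces to proving that $p$ is a regular epimorphism.

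Next I would translate surjectivity of $p$ into a statement about congruences on $A_{\ttop}$. Writing $R\defeq\Eq(\eta^1_{B_{\ttop}}\circ\alpha_{\ttop}) = (\alpha_{\ttop}\times\alpha_{\ttop})^{-1}(\Ci(f_B))$ for the kernel pair of the top-then-right composite, a direct element chase (using surjectivity of $\alpha_{\ttop}$ to name the second coordinate of a point of $P$) shows that $p$ is surjective precisely when
\[ R \;\subseteq\; \Ci(f_A)\circ\Eq(\alpha_{\ttop}), \]
where the relational product means ``first move by $\Ci(f_A)$, then by $\Eq(\alpha_{\ttop})$''. The reverse inclusion, and the inclusions $\Ci(f_A)\subseteq R$ and $\Eq(\alpha_{\ttop})\subseteq R$, are automatic from functoriality of $\Fi$ and the definition of $R$.

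Here the permutability enters. By Theorem~\ref{TheoremEpiReflectivityOfCExt}(iii) the centralizing congruence $\Ci(f_A)$ is the orbit congruence $\sim_{\Ker(\vec{f_A})}$, so Proposition~\ref{PropositionPermutabilityKernelPairOfTheUnit} applies and $\Ci(f_A)$ permutes with $\Eq(\alpha_{\ttop})$. Consequently $\Ci(f_A)\circ\Eq(\alpha_{\ttop}) = \Ci(f_A)\vee\Eq(\alpha_{\ttop})$, and it suffices to prove the single inclusion $R\subseteq\Ci(f_A)\vee\Eq(\alpha_{\ttop})$. Since $\alpha_{\ttop}$ is the quotient by $\Eq(\alpha_{\ttop})$, this is equivalent to showing that $\Ci(f_B)$ is covered by the image $\bar\alpha(\Ci(f_A))$, i.e. that the induced map $\bar\alpha\colon\Ci(f_A)\to\Ci(f_B)$ on centralizing relations is a surjection, which is exactly Proposition~\ref{PropositionClosureOfCentralExtensionsAlongDoubleExtensions}. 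Concretely I would take a generating pair $(z\qndop c\qndiop d, z)$ of $\Ci(f_B)$ with $(c,d)\in\Eq(f_B)$, choose a preimage $v$ of $z$ and preimages $\tilde c,\tilde d$ of $c,d$, and then correct $\tilde d$ to a preimage $\tilde d'$ of $d$ with $f_A(\tilde d') = f_A(\tilde c)$: the pair $(\tilde c,\tilde d')\in\Eq(f_A)$ then gives a genuine generator $(v\qndop\tilde c\qndiop\tilde d', v)$ of $\Ci(f_A)$, while $(\tilde d,\tilde d')\in\Eq(\alpha_{\ttop})$ absorbs the discrepancy, exhibiting the chosen lift $(v\qndop\tilde c\qndiop\tilde d, v)$ as an element of $\Ci(f_A)\circ\Eq(\alpha_{\ttop})$.

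The main obstacle is precisely this correction step producing $\tilde d'$: one must find a single element of $A_{\ttop}$ lying simultaneously over $d\in B_{\ttop}$ (for $\alpha_{\ttop}$) and over $f_A(\tilde c)\in A_{\pperp}$ (for $f_A$). This is the lifting of a point of the pullback $A_{\pperp}\times_{B_{\pperp}}B_{\ttop}$ through the comparison map of $\alpha$, and it is exactly where the surjectivity of $\alpha_{\pperp}$ together with the pullback structure of $\alpha$ is used, in the manner of the proof of Proposition~\ref{PropositionClosureOfCentralExtensionsAlongDoubleExtensions}; without the permutability of the previous step one would only reach the transitive closure (the join) of these single moves, so the permutability is what collapses the join back into the single relational composite required for $p$ to be surjective. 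Everything else is formal, and Barr-exactness converts the resulting congruence identity into the double extension property. The same argument applies in $\QND$ by Corollary~\ref{CorollaryCoveringsAreReflective} and the fact that all the ingredients restrict along the inclusion $\I\colon\QND\to\RCK$.
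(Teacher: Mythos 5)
Your proposal is, in substance, the paper's own proof written out in elementary terms. The paper argues in exactly your two steps: Proposition~\ref{PropositionClosureOfCentralExtensionsAlongDoubleExtensions} gives surjectivity of $\bar{\alpha}\colon \Ci(f_A)\to\Ci(f_B)$, whence the square is a pushout by Lemma~1.2 of \cite{Bou2003}; then Proposition~5.4 of \cite{CKP1993}, i.e.\ the permutability of the orbit congruence $\Ci(f_A)=\Eq(\eta^1_{A_{\ttop}})$ with arbitrary congruences (Proposition~\ref{PropositionPermutabilityKernelPairOfTheUnit}), upgrades the pushout to a regular pushout. Your relation calculus --- $p$ surjective iff $R\subseteq\Ci(f_A)\circ\Eq(\alpha_{\ttop})$, permutability turning the composite into the join, and the reduction of that inclusion to surjectivity of $\bar{\alpha}$ --- is a correct unfolding of those two citations; the equivalences you assert do hold.

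The step you must not gloss over is your ``correction step'', where you appeal to ``the surjectivity of $\alpha_{\pperp}$ together with the pullback structure of $\alpha$''. Producing $\tilde d'$ lying simultaneously over $d\in B_{\ttop}$ and over $f_A(\tilde c)\in A_{\pperp}$ is precisely surjectivity of the comparison map of $\alpha$, i.e.\ the hypothesis that $\alpha$ is a double extension --- the hypothesis of Proposition~\ref{PropositionClosureOfCentralExtensionsAlongDoubleExtensions}, but strictly stronger than the corollary's stated hypothesis that $\alpha_{\ttop}$ and $\alpha_{\pperp}$ are surjections. No argument can repair this, because under the stated hypothesis the conclusion is false. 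Take $T$ the two-element trivial rack and $R_3$ the dihedral quandle on $\Z/3\Z$ (so $x\qndop y=2y-x$), and let $f_A=\alpha_{\ttop}=\pr_1\colon R_3\times T\to R_3$, $A_{\pperp}=B_{\ttop}=R_3$, $B_{\pperp}=1$, with $f_B$ and $\alpha_{\pperp}$ the terminal maps; both components of $\alpha$ are surjective and the defining square commutes. Since $w\qndop(c,i)\qndiop(c,j)=w$, we get $\Ci(f_A)=\Delta$, while $x\qndop y\qndiop z=x+2(z-y)$ gives $\Ci(f_B)=\nabla$; the displayed square then has an isomorphism on the left and $R_3\to 1$ on the right, so $P\cong(R_3\times T)\times R_3$ has $18$ elements against the $6$ of $A_{\ttop}$, and $p$ cannot be surjective (the square is not even a pushout). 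All racks here are quandles, so this is equally a counter-example in $\QND$. Thus both your proof and the paper's establish the corollary only under the hypothesis that $\alpha$ is a double extension --- which is what holds in every application in the paper (e.g.\ Proposition~\ref{PropositionSplitByCentralization}, where $\alpha$ is a pullback of surjections) --- and your write-up should state that hypothesis explicitly instead of borrowing the pullback structure silently.
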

\begin{proof}
By Lemma 1.2 in \cite{Bou2003}, this square is a pushout as a consequence of Proposition \ref{PropositionClosureOfCentralExtensionsAlongDoubleExtensions}. Then by Proposition 5.4 in \cite{CKP1993}, $p$ is a surjection as well, making $\alpha$ into a double extension.
\end{proof}

In Part II we complete the proof that $\Gamma_1 = (\EXT\RCK,\CEXT\RCK,\Fi,\I,\eta^1,\epsilon^1, \EE^1)$ forms an admissible Galois structure such that morphisms in $\EE^1$ are of \emph{effective $\EE^1$-descent} \cite{JanTho1994,JanSoTho2004}.

\subsection{Weakly universal covers and the fundamental groupoid}\label{SectionWucAndFundamentalGroupoid}
We insist on the importance of the new results of this Section and the following, in achieving a precise theoretical understanding and expansion of M.~Eisermann's covering theory of quandles (as a continuation of V.~Even's contributions).
\subsubsection{Centralizing the canonical presentations}
Weakly universal covers (w.u.c.) for quandles were described by M.~Eisermann. He also indicated how to adapt his theory to the case of racks. In this section, we recover his constructions from the centralization of the canonical projective presentations as explained in the introduction. Note that the difference between the w.u.c.~in racks and in quandles is then due to the difference between the canonical projective presentations rather than the centralizations which are the same. 

Given the canonical projective presentation of a rack $\epsilon^r_X \colon{\Fr(X) \to X}$, we saw in Paragraph \ref{ParagraphConstructionOfPthAsColimit} that the induced morphism $\vec{\epsilon}^{\ r}_X$ is actually the quotient map $\vec{\epsilon}^{\ r}_X = q_X\colon{\Fg(X) \to \Pth(X)}$ from Subsection \ref{SubsectionGroupOfPath}. Hence the kernel of $\vec{\epsilon}^{\ r}_X$ is given by \[\Ker(\vec{\epsilon}^{\ r}_X) = \langle \langle  \gr{c}^{-1}\gr{a}^{-1}\gr{x}\,\gr{a} \mid a,x,c \in X \text{ and } c= x \qndop a \rangle \rangle_{\Fg(X)}.\]
Since the action of $\Pth(\Fr(X)) = \Fg(X)$ is by right multiplication, two elements $(a,g)$ and $(b,h)$ in $\Fr(X)$ are identified by the centralizing relation $\Ci(\epsilon^r_X)$ if and only if $a=b$ and there is $k \in \Ker(\vec{\epsilon}^{\ r}_X)$ such that $g = hk$. In other words, the domain component $\eta^1_{\Fr(X)}$ of the centralization unit is given by the product 
\[ \xymatrix@C=50pt{ X \rtimes \Fg(X) \ar[r]^-{\id_X \times q_X} & X \rtimes \Pth(X),}\] where the operation in $\tilde{X} \defeq X \rtimes \Pth(X)$ is defined as in Paragraph \ref{ParagraphTerminologyAndVisualRepr}, Equation~\eqref{EquationActionByCodomainRacks}.
\begin{definition}
Given a rack $X$, we define the associated weakly universal cover of $X$ to be the centralised map $\omega_X \defeq \Fi(\epsilon^r_X)$ \[\xymatrix{\tilde X \defeq X \rtimes \Pth(X) \ar[r]^-{\omega_X} & X,}\]
where $\omega_X$ sends a trail $(a,g) \in \tilde{X}$ to its endpoint $a \cdot g$, and trails in $\tilde{X}$ ``act by endpoint'' as in $\Fr(X)$. Note that this construction is functorial in $X$, yielding a functor $\tilde{-} \colon \RCK \to \RCK$ which sends a morphism of racks $f\colon{A \to B}$ to the morphism $\tilde{f} \defeq f \times \vec{f} \colon {\tilde{A} \to \tilde{B}}$; and a natural transformation $\omega \colon{\tilde{-} \to \id_{\RCK}}$, whose component at $X$ is $\omega_X$.
\end{definition}
Then the action of $\Pth(X)$ induced by the covering $\omega_X$ on $\tilde{X} = X \rtimes \Pth(X)$ is by right multiplication, and is thus free. Given any other covering $f\colon{B \to X}$, together with a splitting function $s\colon{X \to B}$ in $\SET$ such that $fs=\id_X$, a factorization $\omega_f\colon{\tilde{X} \to B}$ of $\omega_X$ through $f$ is given by $\omega_f(a,e) \defeq s(a)$ and compatibility with the action of $\Pth(X)$ on $\tilde{X}$ and $B$ (see Corollary \ref{CorollaryCoveringInducedAction}).

Starting with the canonical projective presentation of a quandle $\epsilon^q_X \colon{X \rtimes \Pth\degree(X) \to X}$, the same reasoning yields a w.u.c. with the same properties
\[ \xymatrix{\tilde{X}\degree \defeq X \rtimes \Pth\degree(X) \ar[r]^-{\omega^q_X}  & X,}\]
such that the quandle structure on $X \rtimes \Pth\degree(X)$ is as for $\Fq(X)$ (Paragraph \ref{ParagraphDefinitionFreeQuandle}). As in the case of racks, this describes a functor as well as a natural transformation whose component at any quandle $A$ is $\omega^q_A$. Observe that Corollary \ref{CorollaryComputingCentralizationUsingFrq} implies that $\tilde{X}\degree \defeq X \rtimes \Pth\degree(X)$ is actually the free quandle on the rack $\tilde{X} \defeq  X \rtimes \Pth(X)$ and thus if $X$ is a quandle, then $\omega^q_X$ is merely the image of $\omega_X$ by $\Frq$.

As it was proved by V.~Even \cite{Eve2015}, every covering of $X$ is split by $\omega^q_X$ in $\QND$ and a similar argument shows that every covering of $X$ is split by $\omega_X$ in $\RCK$. This derives more generally from Corollary \ref{CorollaryStronglyBirkhoffLemma}:
\begin{proposition}\label{PropositionSplitByCentralization}
If the extension $c\colon A \to B$ is split by an extension $e\colon E \to B$, then it is also split by the centralization $\Fi(e)\colon E \to B$ of this extension $e$. As a consequence, $c$ must be split by any weakly universal cover above $B$.
\end{proposition}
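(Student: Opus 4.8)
The plan is to exploit the factorization $e = \Fi(e)\circ\eta^1_E$ of $e$ through the centralization unit $\eta^1_E\colon E\to\Fii(E)$, and to transport the triviality of the pullback of $c$ along $e$ to the pullback of $c$ along $\Fi(e)$ by a descent argument along the double extension induced by $\eta^1_E$. Here I read $\Fi(e)$ as the centralization $\Fi(e)\colon\Fii(E)\to B$, so that $\Fi(e)$ is a central extension above $B$ and $e$ factors through it via the (surjective) centralizing quotient $\eta^1_E$.

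First I would form the two pullback squares. Let $\bar c\colon E\times_B A\to E$ be the pullback of $c$ along $e$, which is a trivial extension by the hypothesis that $e$ splits $c$; and let $\bar c'\colon\Fii(E)\times_B A\to\Fii(E)$ be the pullback of $c$ along $\Fi(e)$, whose triviality is exactly what must be established in order to conclude that $\Fi(e)$ splits $c$. Since $e = \Fi(e)\circ\eta^1_E$ and pullbacks paste, $\bar c$ is the pullback of $\bar c'$ along $\eta^1_E$. Writing $\alpha = (\alpha_{\ttop},\alpha_{\pperp})$ for the resulting morphism $\bar c\to\bar c'$ in $\EXT\RCK$, with $\alpha_{\pperp} = \eta^1_E$, both components are surjective (regular epimorphisms are pullback-stable in the Barr-exact category $\RCK$) and the comparison map of $\alpha$ is an isomorphism because the square is a pullback; hence $\alpha$ is a double extension.

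Now the crux is to descend the triviality of $\bar c$ to $\bar c'$ along the double extension $\alpha$. This is precisely the statement that trivial extensions, relative to the Galois structure $\Gamma=(\RCK,\SET,\pi_0,\I,\eta,\epsilon,\EE)$, are closed under quotients along double extensions, which is available because $\SET$ is strongly Birkhoff in $\RCK$ (the same governing principle that, in the two-dimensional setting, is recorded in Corollary \ref{CorollaryStronglyBirkhoffLemma}). Concretely, $\bar c$ is trivial and $\alpha$ exhibits $\bar c'$ as a quotient of $\bar c$ along a double extension, so $\bar c'$ is trivial. I expect this closure step to be the main obstacle: the elementary characterizations of triviality (via $\Eq\cap\Co=\Delta$, or ``reflecting loops'') do not transfer naively across the surjection $\alpha_{\ttop}$, since both connectedness and the relevant kernel pairs change under the centralizing quotient $\eta^1_E$ (note that $\Fii(E)=E/\Ci(e)$ identifies whole orbits, so fixing a point of $\Fii(E)$ is weaker than fixing a point of $E$). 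It is exactly the strong Birkhoff property — reflection squares of extensions being double extensions — that bridges this gap, so I would either invoke the general closure lemma or reprove it directly in the concrete rack setting.

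Finally, for the consequence, let $\omega\colon W\to B$ be any weakly universal cover above $B$. Since $\Fi(e)$ is a central extension with codomain $B$, and $\omega$, by weak universality, factors through every central extension above $B$, there is a morphism $\psi\colon W\to\Fii(E)$ over $B$ with $\omega = \Fi(e)\circ\psi$. The pullback of $c$ along $\omega$ is then the pullback of $\bar c'$ along $\psi$; as $\bar c'$ is trivial by the first part and trivial extensions are pullback-stable (by admissibility of $\Gamma$), this pullback is trivial, that is, $\omega$ splits $c$.
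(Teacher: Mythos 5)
Your setup is fine as far as it goes: factoring $e=\Fi(e)\circ\eta^1_E$, pasting pullbacks so that $\bar c$ is the pullback of $\bar c'$ along $\eta^1_E$, observing that $\alpha\colon\bar c\to\bar c'$ is a double extension, and the closing argument about weakly universal covers are all correct and match the paper's skeleton. The gap is exactly at the step you yourself flag as the crux: the ``general closure lemma'' you invoke --- trivial extensions are closed under quotients along double extensions --- is \emph{false}, and it cannot follow from the strong Birkhoff property. Indeed, by definition an extension $g$ is central precisely when some extension $p$ pulls it back to a trivial extension; that pullback square has surjective sides and invertible comparison map, so it is a double extension exhibiting $g$ as a quotient, along a double extension, of a trivial extension. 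If your closure principle held, every central extension would be trivial, which is absurd: for instance $^r\eta^q_{\Fr(1)}\colon\Fr(1)\to\Fq(1)=1$ is a covering (Example~\ref{ExampleUnitFrqIsCovering}), hence central, but not trivial, since $\Eq(^r\eta^q_{\Fr(1)})\cap\Co\Fr(1)=\Fr(1)\times\Fr(1)\neq\Delta_{\Fr(1)}$. Note that your $\alpha$ is itself a pullback square, so what your data literally says is that $\bar c'$ is split by the extension $\eta^1_E$, i.e.\ that $\bar c'$ is \emph{central}; this is also all that the true closure statement (Proposition~\ref{PropositionClosureOfCentralExtensionsAlongDoubleExtensions}, which is about central, not trivial, extensions) can give you. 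The proposition requires the strictly stronger conclusion that $\bar c'$ is \emph{trivial}.

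What saves the statement --- and what your argument never uses --- is the specific nature of $\eta^1_E$: its kernel pair $\Ci(e)$ is contained in the connectedness congruence $\Co E$ (the generators $(x\qndop a\qndiop b,\,x)$ of $\Ci(e)$ in Theorem~\ref{TheoremEpiReflectivityOfCExt} are pairs of connected elements), so centralization does not change $\pi_0$. The paper exploits this as follows: writing $P\defeq E\times_B A$ with projections $t\colon P\to E$ (your $\bar c$) and $f\colon P\to A$, it applies Corollary~\ref{CorollaryStronglyBirkhoffLemma} to the morphism $(t,c)\colon f\to e$ in $\EXT\RCK$ to see that the square $\eta^1_E\, t=\Fii(t)\,\eta^1_P$ is a double extension; cancellation against the given pullback $et=cf$ (a composite of double extensions is a pullback if and only if both factors are) shows that this square is a pullback and that $\Fii(t)$ is the pullback of $c$ along $\Fi(e)$, i.e.\ your $\bar c'$; then, since $\eta_E=\eta_{\Fii(E)}\,\eta^1_E$ and $\eta_P=\eta_{\Fii(P)}\,\eta^1_P$ (this is where $\Ci\leq\Co$ enters), the $\pi_0$-reflection square at $t$ decomposes as that pullback followed by the reflection square at $\Fii(t)$, and a second application of cancellation yields that $\Fii(t)\cong\bar c'$ is trivial. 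If you insist on staying with your $\alpha$, you would first have to prove that $\pi_0(\alpha_{\ttop})$ is invertible --- which is not automatic, since $\pi_0$ preserves neither pullbacks nor even finite products; it can be done by lifting the generators $k^{-1}\gr{a}\,\gr{b}^{-1}k$ of $\Ker(\vec e)$ to paths in $\Pth(E\times_B A)$ with trivial second projection, using surjectivity of $c$ --- before the reflection square at $\bar c$ can be factored through that of $\bar c'$ and cancellation applied. That missing argument is the actual content of the proposition; as written, your descent step is an appeal to a false statement rather than a proof.
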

\begin{proof}
Consider the reflection by $\Fi$ of the pullback $P$ of $e$ and $c$ as on the right-hand side of Diagram \eqref{DiagramReflectionOfSplittingSquare}.
\begin{equation}\label{DiagramReflectionOfSplittingSquare}
\vcenter{\xymatrix@1@!0@=36pt{
& \pi_0(P) \ar@{{<}{-}{}}[rr]^-{\eta_P} \ar[dd]|(.3){\pi_0(t)}|-{\hole} \ar@{{}{=}{}}[ld]_-{} & & P \pullback \ar[rr]^-{f} \ar[dd]^(.25){t}|-{\hole} \ar[ld]|-{\eta^1_{P}} & & A \ar[dd]^-{c} \ar@{{}{=}{}}[ld]|-{} \\
\pi_0(P) \ar@{{<}{-}{}}[rr]_(.70){\eta_{\Fii(p)}} \ar[dd]_-{\pi_0(t)=\pi_0(\Fii(t))} & &\Fii(P) \ar[rr]_(.75){\Fi(f)} \ar[dd]^(.3){\Fii(t)} && A \ar[dd]^(.25){c} \\
& \pi_0(E) \ar@{{}{=}{}}[ld]_-{} \ar@{{<}{-}{}}[rr]^(.3){\eta_E}|(.5){\hole} & & E \ar[ld]|-{\eta^1_{E}} \ar[rr]^(.3){e}|(.5){\hole} && B \ar@{{}{=}{}}[ld]^-{} \\
\pi_0(E) \ar@{{<}{-}{}}[rr]_-{\eta_{\Fii(E)}} && \Fii(E) \ar[rr]_-{\Fi(e)} && B}}
\end{equation}
Since the composite of two double extensions is a pullback if and only if both double extensions are pullbacks themselves, Corollary \ref{CorollaryStronglyBirkhoffLemma} implies that the commutative squares $\Fii(t) \eta^1_P = \eta^1_E t$ and $c\Fi(f) = \Fi(e) \Fii(t)$ are pullback squares. Hence, since $\eta_E = \eta_{\Fii(E)} \eta^1_E$, and similarly $\eta_P = \eta_{\Fii(P)} \eta^1_P$, the $F$-reflection square $\eta_E t = \pi_0(t) \eta_P$ at $t$ (which is a pullback by assumption) factors through the $F$-reflection square $\pi_0(\Fii(t)) \eta_{\Fii(P)} = \eta_{\Fii(E)} \Fii(t)$ at $\Fii(t)$ via the pullback square $\Fii(t) \eta^1_P = \eta^1_E t$. Since the square $\pi_0(\Fii(t)) \eta_{\Fii(P)} = \eta_{\Fii(E)} \Fii(t)$ is a double extension, it is actually a pullback, which shows that $\Fii(t)$ is a trivial extension. We conclude by observing that a weakly universal cover above $B$ factors through $\Fi(e)$ and trivial extensions are stable by pullbacks (see also Diagram \ref{DiagramSplitByProjectivePresentations}).
\end{proof}
Given any $X$ in $\RCK$ (respectively $\QND$), the covering $\omega_X$ (respectively $\omega^q_X$) is split by itself and thus it is a normal covering. Hence its kernel pair is sent to a groupoid by the reflection $\pi_0$ (see \cite[Lemma 5.1.22]{BorJan1994}) and thus we can construct the \emph{fundamental groupoid} (see \emph{Galois groupoid} of a \emph{weakly universal central extension} as in \cite{BorJan1994}) yielding functors $\pi^r_1 \colon{\RCK \to \GRPD}$ and $\pi^q_1 \colon{\QND \to \GRPD}$, with codomain the category of ordinary groupoids $\GRPD$ (i.e. the category of internal groupoids in $\SET$).

\begin{definition}
The functor $\pi_1 \colon{\RCK \to \GRPD}$ is defined on objects by sending a rack $X$ to $\pi^r_1(X)$, the image by $\pi_0$ of the groupoid induced by taking the kernel pair of $\omega_X$. Functoriality is induced by functoriality of $\omega$.

Similarly the functor $\pi^q_1 \colon{\QND \to \GRPD}$ is defined by sending a quandle $X$ to $\pi^q_1(X)$, the image by $\pi_0$ of the groupoid induced by taking the kernel pair of $\omega^q_X$. 
\end{definition}

From there, the Galois theorem yields an equivalence of categories between the category of coverings of $X$ and the category of internal covariant presheaves over $\pi_1(X)$ (and similarly for $\QND$, see Section \ref{SectionCategoricalGaloisTheory} and references).

\subsubsection{The fundamental groupoid}

We show that the fundamental groupoid $\pi_1(X)$ (respectively $\pi_1^q(X)$) for an object $X$ in the category $\RCK$ (respectively $\QND$) is indeed the groupoid induced by the action of $\Pth(X)$ (respectively $\Pth\degree(X)$) on $X$, as suggested in M.~Eisermann's work (see \cite[Section 8]{Eis2014}). As was mentioned in the introduction, these results, and categorical Galois theory, give a positive answer to M.~Eisermann's questions about the relevance of his analogies with topology. Results about the fundamental group of a connected pointed quandle were given by V.~Even in \cite{Eve2014}. We generalize these results to the non-connected, non-pointed context in both categories $\RCK$ and $\QND$. Exploiting the analogy with the covering theory of locally connected topological spaces, this result confirms the intuition that the elements of the group $\Pth(X)$ (respectively $\Pth\degree(X)$) are representatives of the classes of \emph{homotopically equivalent paths} which connect elements in the rack (respectively quandle) $X$. 

\begin{definition}\label{DefinitionCanonicalGroupoidFromGrpAction}
Given a set $X$ and a group $G$ together with an action of $G$ on $X$, we build the ordinary groupoid $\mathcal{G}_{(X,G)}$ (in $\SET$)
\[\xymatrix@C=60pt { 
X_2 \ar@<2ex>[r]^-{p_1} \ar@<-2ex>[r]_-{p_2} \ar[r]|-{m} & X_1 \ar@(ur,ul)[]|{-1} \ar@<2ex>[r]^{c} \ar@<-2ex>[r]_{d} & X \ar[l] |{i}
}\] 
where $X_0 \defeq X$, $X_1 \defeq X \times G$ and for $a \in X_0$, $(a,g) \in X_1$,
\[ d(a,g) \defeq a; \quad c(a,g) \defeq a \cdot g; \quad i(a) \defeq (a,e);  \quad (a,g)^{-1} \defeq (a \cdot g,g^{-1});\]
$p_1, p_2 \colon {X_2 \rightrightarrows X_1}$ form the pullback of $c$ and $d$; and $m$ is the composition function defined for $\langle(a,g),(b,h)\rangle$ in $X_2$ by $m\langle(a,g),(b,h)\rangle \defeq (a,g) \cdot (b,h) \defeq (a,gh)$. 

Note that this construction actually defines a functor from the category of group actions to the category of ordinary groupoids.
\end{definition}

\begin{theorem}\label{TheoremCharactOfFundGrpdLem}
Given an object $X$ in $\RCK$ (respectively $\QND$), the fundamental groupoid $\pi_1(X)$ (resp.~$\pi^q_1(X)$) is given by the set groupoid $\mathcal{G}_{(X,\Pth(X))}$ (resp.~$\mathcal{G}_{(X,\Pth\degree(X))}$). Moreover, the groupoid morphisms induced by $f\colon{X \to Y}$ via $\Pth$ (resp.~$\Pth\degree$) and $\mathcal{G}$ correspond to $\pi_1(f)$ (resp.~$\pi^q_1(f)$).
\end{theorem}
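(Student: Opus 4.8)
The plan is to unwind the definition: $\pi_1(X)$ is the image under $\pi_0$ of the internal groupoid obtained from the kernel pair of the weakly universal cover $\omega_X\colon\tilde X=X\rtimes\Pth(X)\to X$, which is a groupoid in $\SET$ because $\omega_X$ is a normal covering. Thus I must compute $\pi_0(\tilde X)$ (the objects), $\pi_0(\Eq(\omega_X))$ (the morphisms), and the induced structure maps, and match them with $\mathcal{G}_{(X,\Pth(X))}$ from Definition \ref{DefinitionCanonicalGroupoidFromGrpAction}. First I would show $\pi_0(\tilde X)\cong X$: connected components are orbits of the action of $\Pth(X)$ on $\tilde X$ (which by Corollary \ref{CorollaryCoveringInducedAction} and the construction of $\omega_X$ is the free right multiplication $(a,g)\cdot k=(a,gk)$), and since the symmetry representatives $g^{-1}\overline x g$ generate all of $\Pth(X)$, the orbit of $(a,g)$ is exactly $\{a\}\times\Pth(X)$; hence the head projection $(a,g)\mapsto a$ realises $\eta_{\tilde X}$ and $\pi_0(\tilde X)=X$ (writing $\overline{y}\defeq\pth_X(y)$).

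Next I would compute the morphisms via the \emph{invariant map} $\Phi\colon\Eq(\omega_X)\to X\times\Pth(X)$, $\big((a,g),(b,h)\big)\mapsto(a,gh^{-1})$, noting that $gh^{-1}\in\Pth(X)$ and that $a\cdot g=b\cdot h$ forces $b=a\cdot(gh^{-1})$. The crucial step is that $\Phi$ is constant on connected components of $\Eq(\omega_X)$: a symmetry at a point $\big((c,p),(d,q)\big)\in\Eq(\omega_X)$ satisfies $c\cdot p=d\cdot q$, so by the identity $\overline{x\cdot g}=g^{-1}\overline x g$ of Paragraph \ref{ParagraphActionByInnerAutomorphisms} it right‑multiplies \emph{both} path components by the common representative $\overline{c\cdot p}=\overline{d\cdot q}$; this cancels in the ratio $gh^{-1}$, while the head $a$ is unchanged. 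Hence $\Phi$ factors through $\pi_0(\Eq(\omega_X))$.

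Then I would prove that the induced map is a bijection. Surjectivity is immediate since $(a,k)$ is the image of $\big((a,k),(a\cdot k,e)\big)$. For injectivity I would reduce each $\big((a,g),(b,h)\big)$ to the canonical form $\big((a,k),(a\cdot k,e)\big)$ with $k=gh^{-1}$: applying the symmetries $\Ss_{((y,e),(y,e))}^{\pm1}$ for $y\in X$ right‑multiplies both components by $\overline{y}^{\pm1}$, and products of these realise the diagonal free action by an arbitrary $m\in\Pth(X)$; choosing $m=h^{-1}$ reaches the canonical form, so each fibre of $\Phi$ is a single connected component and $\pi_0(\Eq(\omega_X))\cong X\times\Pth(X)$. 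It then remains to match the structure maps: $\pi_0$ of the two projections gives $d(a,k)=a$ and $c(a,k)=a\cdot k$, $\pi_0$ of the diagonal gives $i(a)=(a,e)$, and $\pi_0$ of the swap gives $(a,k)^{-1}=(a\cdot k,k^{-1})$; for composition I would lift composable classes $(a,k)$, $(a\cdot k,k')$ to genuinely composable kernel‑pair elements $\big((a,k),(a\cdot k,e)\big)$ and $\big((a\cdot k,e),((a\cdot k)\cdot k',k'^{-1})\big)$ (sharing the middle $(a\cdot k,e)$), whose internal composite has invariant $(a,kk')$, exactly $m$ in $\mathcal{G}_{(X,\Pth(X))}$. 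Naturality follows since $\tilde f=f\times\vec f$ sends the invariant by $(a,k)\mapsto(f(a),\vec f(k))$, matching the morphism $\mathcal{G}$ assigns to the action map $(f,\vec f)$. The quandle case is verbatim with $\Pth\degree$, $\omega^q_X$ and the free $\Pth\degree(X)$‑action, observing that $\vec f$ preserves characteristic so restricts to $\Pth\degree$, and that $gh^{-1}\in\Pth\degree(X)$ because $g,h$ have characteristic zero. The main obstacle is precisely the morphism computation—showing $gh^{-1}$ is a \emph{complete} connected‑component invariant (invariance through representative cancellation, and fibre‑connectedness through the diagonal free action)—together with the care needed to compute composition on $\pi_0$ by first lifting to genuinely composable representatives in the internal kernel‑pair groupoid.
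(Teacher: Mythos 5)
Your rack-case argument is, up to presentation, the paper's own proof: the paper also identifies $\pi_0(\tilde X)=X$ via the head projection, uses the same invariant $\mu\colon\langle(a,g),(b,h)\rangle\mapsto(a,gh^{-1})$ (your $\Phi$), proves its invariance by the common-right-multiplier cancellation coming from $\gr{x\cdot g}=g^{-1}\gr{x}\,g$, and proves that the fibres of $\mu$ are connected by acting with diagonal pairs $\langle(y,e),(y,e)\rangle$ (the paper connects two fibre elements directly where you pass through a canonical form -- an immaterial difference). On composition your route differs slightly: the paper explicitly proves that $\pi_0$ sends the object $X_2'$ of composable pairs to the pullback $X_2$, using normality of $\omega_X$ (the kernel pair projections $d_1$, $d_2$ are trivial extensions) together with admissibility; this is a real point, since $\pi_0$ is not semi-left-exact, so preservation of this pullback is not automatic. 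Your lifting computation is correct but silently relies on exactly this identification, which you delegate to ``the image is a groupoid because $\omega_X$ is normal''; that reliance is acceptable, since the paper's own definition of $\pi_1$ rests on the same fact \cite[Lemma 5.1.22]{BorJan1994}.

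The genuine gap is the closing claim that ``the quandle case is verbatim''. It is not: $\tilde X\degree=X\rtimes\Pth\degree(X)$ carries the quandle operation of Paragraph \ref{ParagraphDefinitionFreeQuandle}, $(a,g)\qndop(b,h)=(a,\gr a^{-1}gh^{-1}\gr b\, h)$, whose extra factor $\gr a^{-1}$ is precisely what keeps paths at characteristic zero. Consequently, acting on $\langle(a,g),(b,h)\rangle$ with the symmetry at $\langle(c,p),(d,q)\rangle$ right-multiplies the two path components not by $\gr{c\cdot p}$ but by $\gr{a\cdot g}^{-1}\gr{c\cdot p}$, and the diagonal symmetries $\Ss^{\pm1}_{\langle(y,e),(y,e)\rangle}$ multiply by $\gr{p}^{\mp1}\gr{y}^{\pm1}$ (with $p$ the current common endpoint), not by $\gr{y}^{\pm1}$. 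Both of your key steps still survive, but only after redoing the computations: the multiplier is still common to the two legs (because $a\cdot g=b\cdot h$ and $c\cdot p=d\cdot q$), so $\Phi$ is still invariant; and a product of diagonal symmetries $\qndop^{\delta_0}(x_0,e),\dots,\qndop^{\delta_n}(x_n,e)$ realizes right multiplication by $\gr{p_0}^{-\sum_i\delta_i}\,\gr{x_0}^{\delta_0}\cdots\gr{x_n}^{\delta_n}$, which equals the required $m=h^{-1}\in\Pth\degree(X)$ precisely because $\chi(m)=\sum_i\delta_i=0$. This is exactly why the paper runs the two cases in parallel with a different common factor $k$ (namely $k=(h')^{-1}\gr{a'}h'$ for racks but $k=\gr{(a\cdot h)}^{-1}(h')^{-1}\gr{a'}h'$ for quandles) and inserts the remark that $\sum_i\delta_i=0$ in the context of $\QND$. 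Your characteristic-zero observations are the right ingredients, but as written the quandle half asserts identities that are false there; it needs the twisted multipliers above to become a proof.
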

\begin{proof}
Given the kernel pair $d_1, d_2 \colon {X_1' \rightrightarrows X'}$ of the weakly universal cover $\omega_X\colon{\tilde{X} \to X}$ (resp.~$\omega^q_X \colon{\tilde{X}\degree \to X}$), we define the groupoid $\mathcal{G}$ as:
\[\xymatrix@C=60pt { 
X_2' \ar@<2ex>[r]^-{p_1'} \ar@<-2ex>[r]_-{p_2'} \ar[r]|-{m'} & X_1' \ar@(ur,ul)[]|{-1} \ar@<2ex>[r]^{d_2} \ar@<-2ex>[r]_{d_1} & X'\ar[l] |{u}
}\] 
where $X_2'$ is the pullback of $d_2$ and $d_1$, and $m'$ is the composition function defined by the unique factorization of $d_2 \circ p_2', d_1 \circ p_1' \colon {X_2'   \rightrightarrows X' }$ through $d_2, d_1 \colon {X_1'\rightrightarrows X'}$.

Remember that a trail $(a,g) \in X'$ is represented as an arrow $g\colon \xymatrix@C=15pt{a \ar@{{}{-}{}}[r]|-{\dir{>}} & a\cdot g}$; and the action of a trail on another is as in Paragraph \ref{ParagraphTerminologyAndVisualRepr}, Equation~\eqref{EquationActionByCodomainRacks}, where the composition of arrows is understood by multiplication in $\Pth(X)$ (resp.~$\Pth\degree(X)$). 

By definition, the elements in $X_1'$ are then pairs of trails with same endpoint (diagram on the left), and the rack (resp.~quandle) operation is defined component-wise such that we have the equality on the right:
\begin{equation}\label{EquationCharactOfFundGrpd}
\vcenter{\xymatrix@C=5pt @R=15pt { & a \cdot g = b \cdot h &  \\
a \ar@{{}{-}{}}[ur]|{\dir{>}}^(0.3){g}   &  & b \ar@{{}{-}{}}[lu]|-{\dir{>}} _(0.3){h} 
}} \quad ; \qquad \vcenter{\xymatrix@C=5pt @R=15pt { & a'\cdot h' = b' \cdot g' \ar@{{}{}{}}[d] |{\bigtriangledown} & & & & a \cdot (h k) = b \cdot (g k)  &   \\
a' \ar@{{}{-}{}}[ur]|{\dir{>}}^(0.3){h'}   &  a \cdot h = b \cdot g & b' \ar@{{}{-}{}}[lu]|-{\dir{>}} _(0.3){g'} & = & &  a \cdot h = b \cdot g \ar@{{}{-}{}}[u]|{\dir{>}}^-{k }&  \\
a \ar@{{}{-}{}}[ur]|{\dir{>}}^(0.3){h}   &  & b \ar@{{}{-}{}}[lu]|-{\dir{>}} _(0.3){g} &  & a \ar@{{}{-}{}}[ur]|{\dir{>}}^(0.3){h}   &  & b \ar@{{}{-}{}}[lu]|-{\dir{>}} _(0.3){g} & 
}}
\end{equation}
where $k \defeq (h')^{-1} \gr{a'} h'$ (resp.~$k \defeq \gr{(a \cdot h)}^{-1} (h')^{-1} \gr{a'} h'$).
Finally observe that $X_2'$ is composed of pairs of elements in $X_1'$ with one matching leg (such as represented on the left), which images by $m'$ are given as in the right-hand diagram:
\[\vcenter{\xymatrix@C=7pt @R=12pt { & a\cdot g = b \cdot h = a' \cdot g'  & \\
a \ar@{{}{-}{}}[ur]|{\dir{>}}^(0.3){g}   &  b \ar@{{}{-}{}}[u]|{\dir{>}}^-{h} & a' \ar@{{}{-}{}}[lu]|-{\dir{>}}_(0.3){g'} 
}} \xymatrix{\ \ar@{{|}{-}{>}}[r]^-{m'} & \ } \vcenter{\xymatrix@C=7pt @R=12pt { & a \cdot g = a' \cdot g'  & \\
a \ar@{{}{-}{}}[ur]|{\dir{>}}^(0.3){g}   &   & a' \ar@{{}{-}{}}[lu]|-{\dir{>}} _(0.3){g'} 
}} \] 
Again the operation in $X_2'$ is defined component-wise and behaves as in $X_1'$. 

We compute the image $\pi_0(\mathcal{G})$ which is $\pi_1(X)$ (resp.~$\pi_1^q(X)$) by definition. Working on each object separately, first observe that as for $\Fr(X)$ (resp.~$\Fq$), the unit $\eta_{X'} \colon$ ${X' \to \pi_0(X') = X}$ sends a trail $(a,g) \in X \rtimes \Pth(X)$ (resp.~in $X \rtimes \Pth\degree(X)$) to its head $a \in X$, i.e.~$\eta_{X'}$ is given by the product projection on $X$.
Now for each pair of trails $\alpha =\langle (a,g),(b,h) \rangle$ in $X_1'$, we define the trail $\mu(\alpha) \defeq (a,gh^{-1})$ in $X'$: 
\[\alpha =\quad \vcenter{\xymatrix@C=7pt @R=12pt { & a \cdot g = b \cdot h &  \\
a \ar@{{}{-}{}}[ur]|{\dir{>}}^(0.3){g}   &  & b \ar@{{}{-}{}}[lu]|-{\dir{>}} _(0.3){h} 
}}\quad \mapsto\quad \vcenter{\xymatrix@C=7pt @R=12pt { & a \cdot g = b \cdot h \ar@{{}{-}{}}[rd]|-{\dir{>}} ^(0.7){h^{-1}}  &  \\
a \ar@{{}{-}{}}[ur]|{\dir{>}}^(0.3){g}   &  & b
}} \quad \eqqcolon \mu(\alpha). \]
Observe that this trail $\mu(\alpha)$ is invariant under the action on $\alpha$, of other pairs $\beta = \langle (a',g'),(b',h') \rangle$ in $X_1'$, since $\mu(\alpha \qndop \beta) = (a, hkk^{-1}g^{-1})= \mu(\alpha)$, where $k = (h')^{-1} \gr{a'} h'$ (resp.~$k=\gr{(a \cdot h)}^{-1}$ $(h')^{-1} \gr{a'} h'$) is the common part of both left and right legs as in Equation~\eqref{EquationCharactOfFundGrpd}. Conversely suppose that $\alpha$, $\alpha'$ in $X_1'$ have the same image by $\mu$, we show that $\alpha$ and $\alpha'$ are connected in $X_1'$. Indeed, $\alpha$ and $\alpha'$ must then be of the form $\alpha = \langle(a,g),(b,h)\rangle$ and $\alpha' = \langle(a,g'),(b,h')\rangle$, such that moreover  $gh^{-1} = g'h'^{-1}$. Then the path $l \defeq h^{-1}h'= g^{-1}g' \in \Pth(X)$ (resp.~in $\Pth\degree(X)$) decomposes as a product $l = \gr{x_0}^{\delta_0}\cdots\gr{x_n}^{\delta_n}$, such that all the pairs $\langle (x_i,e), (x_i,e) \rangle$ are in $X_1'$ (and we have moreover $\sum_{i=0}^n \delta_i = 0$ in the context of $\QND$). By acting with these pairs ``$- \qndop^{\delta_i} \langle (x_i,e), (x_i,e) \rangle$'' on $\alpha$, we may obtain $\alpha'$ as in the diagram on the right:
\[\alpha \defeq \vcenter{\xymatrix@C=7pt @R=15pt { & a \cdot g = b \cdot h &  \\
a \ar@{{}{-}{}}[ur]|{\dir{>}}^(0.3){g}   &  & b \ar@{{}{-}{}}[lu]|-{\dir{>}} _(0.3){h} 
}} \text{ and }\quad \alpha' \defeq \vcenter{\xymatrix@C=7pt @R=15pt { & a \cdot g' = b \cdot h' &  \\
a \ar@{{}{-}{}}[ur]|{\dir{>}}^(0.3){g'}   &  & b \ar@{{}{-}{}}[lu]|-{\dir{>}} _(0.3){h'} 
}} \quad = \quad \vcenter{\xymatrix@C=7pt @R=12pt { & a \cdot (gl) = a\cdot (hl) & \\
 & a \cdot g = b \cdot h \ar@{{}{-}{}}[u]|{\dir{>}}^-{l} &  \\
a \ar@{{}{-}{}}[ur]|{\dir{>}}^(0.3){g}   &  & b \ar@{{}{-}{}}[lu]|-{\dir{>}} _(0.3){h} 
}}\]
Hence we have the unit morphism $\eta_{X_1'} = \mu\colon{ X_1' \to \pi_0(X_1')}$ where $\pi_0(X_1')$ is $\pi_0(\Eq(\omega_X)) = X \times \Pth(X)$ (resp.~$\pi_0(\Eq(\omega^q_X)) =  X \times \Pth\degree(X)$). We may then compute $\pi_0(d_2) = c$, $\pi_0(d_1) = d$, $\pi_0(i) = u$ and $\pi_0(-1) = -1$, as displayed in the commutative diagram of plain arrows,
\[ 
\xymatrix@C=60pt @R=20pt { 
X_2' \ar@{{}{..}{>}}[d]_-{\eta_{X_2'} = \mu \times \mu} \ar@<1.5ex>[r]^-{p_1'} \ar@<-1.5ex>[r]_-{p_2'} \ar[r]|-{m'} & X_1' \ar[d]|-{\eta_{X_1'} = \mu} \ar@(ur,ul)[]|{-1} \ar@<1.5ex>[r]^{d_2} \ar@<-1.5ex>[r]_{d_1} & X' \ar[d]^-{\eta_{X'} = d} \ar[l] |{u}  \ar[r]^-{\omega_X \text{ (resp. }\omega^q_X\text{)}} & X\\
X_2 \ar@<1.5ex>[r]^-{p_1} \ar@<-1.5ex>[r]_-{p_2} \ar[r]|-{m} & X_1 \ar@(dr,dl)[]|{-1} \ar@<1.5ex>[r]^-{c} \ar@<-1.5ex>[r]_-{d} & X  \ar[l] |-{i} &
} \]
where the bottom groupoid is the inclusion in $\RCK$ (resp.~$\QND$) of the groupoid $\mathcal{G}_{(X,\Pth(X))}$ (resp.~$\mathcal{G}_{(X,\Pth\degree(X))}$) from $\SET$. Hence $X_1 = X \times \Pth(X)$ (resp.~$X_1 = X \times \Pth\degree(X)$) has the same underlying set as $X'$, and the underlying functions of $\eta_{X'}$ and $d$ are both given by ``projection on $X$''.

Then since $\omega_X$ (resp.~$\omega^q_X$) is a normal covering, $d_1$ and $d_2$ are trivial extensions, so that the commutative squares $d d_1 = d \mu$ and $d d_2 = c \mu$ are actually pullback squares. Hence the pullback $p_1', p_2'\colon{X_2' \rightrightarrows X_1'}$ of $d_2$ and $d_1$ and the pullback $p_1, p_2\colon{X_2 \rightrightarrows X_1}$ of $c$ and $d$, induce a morphism $f\colon{X_2' \to X_2}$ which is thus the pullback of $\eta_{X_1'} = \mu$ and computed component-wise as $f = \mu \times \mu$. By admissibility of the Galois structure $\Gamma$ (see Paragraph \ref{ParagraphAdmissibilityForGalois} and \cite{JK1994}), this morphism is also the unit component $f = \eta_{X_2'}$. Finally the commutativity of the square $\mu m' = m \eta_{X_2'}$ is given by construction (and easy to check by hand), which concludes the proof that $\pi_1(X) = \pi_0(\mathcal{G}) = \mathcal{G}_{(X,\Pth(X))}$ (resp.~$\pi^q_1(X) = \mathcal{G}_{(X,\Pth\degree(X))}$ in $\QND$). 
\end{proof}

\paragraph{Remarks}\label{ParagraphRamarksFundamentalGroupoid} Remember from Paragraph \ref{ParagraphConnectedComponentsAreNotConnected} that the notion of connectedness is not local. Now relate this fact to the \emph{regularity} of the fundamental groupoid of a rack, whose domain map is the projection map of a cartesian product: given a rack $A$, the set of homotopy classes of paths of a given domain $a \in A$ is always $\Pth(A)$ and thus independent of the domain $a$. Since every path is invertible, the same is true for the homotopy classes of paths of a given endpoint. 

One of D.E.~Joyce's main results is to show that the \emph{knot quandle} is a complete invariant for oriented knots. Now the \emph{knot group} \cite{Rei1928} of an oriented knot, which is the fundamental group of the ambient space of the knot, is also computed as the group of paths of the knot quandle. In other words, the knot group is the \emph{fundamental group} of the knot quandle, in the sense of the covering theory of racks (not in the sense of the covering theory of quandles).

Finally observe that $\pi_1(X)$ (resp.~$\pi^q_1(X)$) can be equipped with a non-trivial ad-hoc structure of rack (resp.~quandle) making it into an internal groupoid in $\RCK$ (resp.~$\QND$) with internal object of objects the rack (resp.~quandle) $X$. Given two trails $(a,g)$ and $(b,h)$ in $X_1$, define $(a,g) \qndop (b,h) \defeq (a \qndop b, \gr{b}^{-1} g h^{-1} \gr{b} h)$ (note that if $g$, $h \in \Pth\degree(X)$, then $\gr{b}^{-1} g h^{-1} \gr{b} h \in \Pth\degree(X)$). Unlike in $\hat{X}$ (resp.~$\hat{X}\degree$), trails act on each other with both their heads and end-points, which means that both projections to $X$ are morphisms in $\RCK$ (resp.~$\QND$). The rest of the structure is easy to derive. 

\paragraph{Working with skeletons} As we shall see in the next section, we are interested in the fundamental groupoid, up to equivalence. Given a rack $A$, we thus also describe a \emph{skeleton} $S$ of $\pi_1(A)$ (in the sense of \cite[Section IV.4]{McLane1997}). The resulting groupoid $S$ is not regular like $\pi_1(A)$, it is totally disconnected and its vertices are the connected components of $A$. With the objective of interpreting the fundamental theorem of Galois theory, the homotopical information contained in $\pi_1(A)$ can be made more explicit using its skeleton. 

\begin{definition}\label{DefinitionLoopGroup}
Given an object $A$ in $\RCK$ (respectively in $\QND$), we call a \emph{pointing} of $A$ any choice of representatives $I \defeq \lbrace a_i \rbrace_{i \in \pi_0(A)} \subseteq A$ such that $\eta_A(a_i) = [a_i] = i$ for each equivalence class $i \in \pi_0(A)$. Then for any element $a \in A$, define $\Loop_{a}$ as the group of loops $l \in \Pth(A)$ (resp.~$l \in \Pth\degree(A)$) such that $a\cdot l = a$. Observe that if $[a] = [b]$, for some $a$ and $b$ in $A$, then there is $g \in \Pth(A)$ (resp.~$g \in \Pth\degree(A)$) such that $a = b \cdot g$ and thus the subgroups $\Loop_{a}$ and $\Loop_{b}$ are isomorphic, via the automorphism of $\Pth(A)$ (resp.~$\Pth\degree$) given by conjugation with $g$.

Let us fix a pointing $I \defeq \lbrace a_i \rbrace_{i \in \pi_0(A)} \subseteq A$ of $A$, then we define the groupoid $\pi_1(A,I)$ (resp.~$\pi^q_1(A,I)$) as
\[ 
\xymatrix@C=50pt @R=20pt {
A_2 \ar@<1.5ex>[r]^-{p_1} \ar@<-1.5ex>[r]_-{p_2} \ar[r]|-{m} & A_1 \ar@(ul,ur)[]|{-1} \ar@<1.5ex>[r]^-{c} \ar@<-1.5ex>[r]_-{d} & \pi_0(A),  \ar[l] |-{i}
} \]
where $A_1 \defeq \coprod_{i\in \pi_0(A)}\Loop_{a_i}$ is defined as the disjoint union, of the underlying sets of $\Loop_{a_i}$'s indexed by $i \in \pi_0(A)$. The domain and codomain maps send a loop  $l \in \Loop_{a_i}$ to the index $i \in \pi_0(A)$. The set $A_2$ is then the disjoint union of products $A_2 \defeq \coprod_{i\in \pi_0(A)} (\Loop_{a_i} \times \Loop_{a_i})$ and $m$ is defined by multiplication in $\Loop_{a_i} \leq \Pth(A)$ (resp.~$\Loop_{a_i} \leq \Pth\degree(A)$). 
\end{definition}

From the description of the skeleton of a groupoid obtained as in Definition \ref{DefinitionCanonicalGroupoidFromGrpAction}, we deduce:

\begin{lemma}
For each $I$ pointing of $A$ object of $\RCK$ (respectively of $\QND$),  $\pi_1(A,I)$ (respectively $\pi^q_1(A,I)$) is a skeleton of the fundamental groupoid $\pi_1(A)$ (respectively $\pi^q_1(A)$).
\end{lemma}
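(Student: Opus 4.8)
The plan is to recognise $\pi_1(A,I)$ as (an isomorphic copy of) the full subgroupoid of $\pi_1(A)$ spanned by the chosen representatives $I$, and then to verify the two conditions defining a skeleton in the sense of \cite[Section IV.4]{McLane1997}: that this full subgroupoid is skeletal and that its inclusion is an equivalence of categories.

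First I would pass to the concrete model furnished by Theorem \ref{TheoremCharactOfFundGrpdLem}, namely $\pi_1(A) = \G_{(A,\Pth(A))}$ (resp.~$\G_{(A,\Pth\degree(A))}$), whose objects are the elements of $A$ and whose morphisms $a \to b$ are the pairs $(a,g)$ with $a \cdot g = b$. The key observation is that two objects $a$ and $b$ are isomorphic in this groupoid precisely when they lie in the same orbit of the $\Pth(A)$-action (resp.~$\Pth\degree(A)$-action); by Paragraph \ref{ParagraphActionByInnerAutomorphisms} (see also Paragraph \ref{ParagraphEquivalenceClessesOfPrimitivePaths}) this orbit relation is exactly the connectedness relation, so $a \cong b$ if and only if $[a]=[b]$ in $\pi_0(A)$. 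Hence the isomorphism classes of objects of $\pi_1(A)$ are in bijection with $\pi_0(A)$, and a pointing $I = \lbrace a_i \rbrace_{i \in \pi_0(A)}$ selects exactly one object from each class.

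Next I would identify $\pi_1(A,I)$ with the full subgroupoid $\G_I$ of $\pi_1(A)$ on the objects $I$. Since distinct representatives $a_i$, $a_j$ (for $i \neq j$) lie in distinct connected components, there is no morphism between them in $\pi_1(A)$, so $\G_I$ is totally disconnected and skeletal; its hom-set at each vertex $i$ is the endomorphism monoid $\pi_1(A)(a_i,a_i) = \lbrace (a_i,l) \mid a_i \cdot l = a_i \rbrace$, which is canonically $\Loop_{a_i}$, with composition given by multiplication in $\Pth(A)$ (resp.~$\Pth\degree(A)$). Matching the domain, codomain, identity, inverse and composition of $\G_I$ against Definition \ref{DefinitionLoopGroup} (the general description of the skeleton of a groupoid of the form considered in Definition \ref{DefinitionCanonicalGroupoidFromGrpAction}) then yields an isomorphism of internal groupoids in $\SET$, $\G_I \cong \pi_1(A,I)$.

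Finally I would check that the inclusion $\G_I \hookrightarrow \pi_1(A)$ is an equivalence: fullness and faithfulness are automatic for a full subgroupoid, and essential surjectivity holds because every $a \in A$ lies in some component $i = [a]$, so $a$ and $a_i$ are connected and there is $g \in \Pth(A)$ (resp.~$\Pth\degree(A)$) with $a_i \cdot g = a$, giving an isomorphism $(a_i,g)\colon a_i \to a$. Thus $\G_I$, and hence the isomorphic $\pi_1(A,I)$, is a skeleton of $\pi_1(A)$; the quandle case is settled by the same argument verbatim, replacing $\Pth$ with $\Pth\degree$ throughout. The only step requiring genuine input is the identification of the isomorphism relation on objects with connectedness in $\pi_0(A)$ — but this is exactly the orbit-congruence description already recalled — while the remainder is the routine unwinding of Definitions \ref{DefinitionCanonicalGroupoidFromGrpAction} and \ref{DefinitionLoopGroup}.
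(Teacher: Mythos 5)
Your proof is correct and is exactly the paper's (unwritten) argument: the paper's entire justification is the remark that the lemma follows ``from the description of the skeleton of a groupoid obtained as in Definition \ref{DefinitionCanonicalGroupoidFromGrpAction}'', and your unwinding --- isomorphism of objects in $\G_{(A,\Pth(A))}$ is the orbit relation, which is connectedness, so a pointing selects one object per isomorphism class and the full subgroupoid on $I$ is (canonically isomorphic to) $\pi_1(A,I)$ --- is precisely that deduction made explicit. One small caveat: in the quandle case the step identifying $\Pth\degree(A)$-orbits with connected components is not quite ``verbatim'' from the paragraphs you cite (which concern $\Inn(A)$- resp.\ $\Pth(A)$-orbits); it needs the idempotency axiom, since if $a\cdot g=b$ with $g\in\Pth(A)$ one replaces $g$ by $\gr{a}^{-\chi(g)}g\in\Pth\degree(A)$, which still sends $a$ to $b$ because $a\qndop a=a$.
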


\subsection{The fundamental theorem of categorical Galois theory}\label{SectionFundamentalTheorem} In sections 5, 6 and 7 of \cite{Eis2014}, M.~Eisermann studies in detail different classification results for quandle coverings. We will not go into so much depth ourselves, however we show how to recover and extend the main theorems from these sections using categorical Galois theory.

Given an object $A$ in $\RCK$ (respectively $\QND$), the category of \emph{internal covariant presheaves} over $\pi \defeq \pi_1(A)$ (resp.~$\pi \defeq \pi^q_1(A)$) are externally described as the category of functors from $\pi$ to $\SET$ and thus as the category of $\pi$-groupoid actions on sets $\SET^{\pi}$. Given a pointing $I$ of $A$, define $\pi(I) \defeq \pi_1(A,I)$ (resp.~$\pi(I) \defeq  \pi^q_1(A,I)$ and deduce from $\pi(I) \cong \pi$ that $\SET^{\pi} \cong \SET^{\pi(I)}$. Now $\pi(I)$ is totally disconnected, thus the category of $\pi(I)$-actions is equivalent to the category $\coprod_{i\in \pi_0(A)} \SET^{\Loop_{a_i}}$ whose objects are sequences of $\Loop_{a_i}$-group actions (see Definition \ref{DefinitionLoopGroup}), indexed by $i \in \pi_0(A)$, and morphisms between these are $\pi_0$-indexed sums of group-action morphisms. From the fundamental theorem of categorical Galois theory (see for instance \cite[Theorem 6.2]{JK1994}), classifying central extensions above an object we deduce in particular:

\begin{theorem}
Given an object $A$ in $\RCK$ and a pointing $I \defeq \lbrace a_i \rbrace_{i \in \pi_0(A)} \subseteq A$ of $A$, there is a natural equivalence of categories between the category $\CEXT(A)$ of central extensions above $A$ and the category $\SET^{\pi_1(A)}$. The latter category is then also equivalent (but not naturally) to $\SET^{\pi_1(A,I)} \cong \coprod_{i\in \pi_0(A)} \SET^{\Loop_{a_i}}$. The same theorem holds in $\QND$, using the appropriate definition of $\Loop_{a_i}$ and using $\pi^q_1(A)$ and $\pi^q_1(A,I)$ instead of $\pi_1(A)$ and $\pi_1(A,I)$. 
\end{theorem}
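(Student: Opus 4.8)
The plan is to obtain the first (natural) equivalence as a direct instance of the fundamental theorem of categorical Galois theory, and then to read off the two remaining descriptions by passing to a skeleton of the fundamental groupoid. First I would verify that the hypotheses of \cite[Theorem 6.2]{JK1994} hold for the Galois structure $\Gamma$: it is admissible by Paragraph \ref{ParagraphAdmissibilityForGalois}, the class $\EE$ of surjections consists of effective descent morphisms since $\RCK$ is Barr-exact, so that regular epimorphisms are of effective descent (cf.~\cite{JanTho1994,JanSoTho2004}), and, crucially, the weakly universal cover $\omega_A\colon \tilde{A} \to A$ is a normal extension which splits every central extension above $A$ by Proposition \ref{PropositionSplitByCentralization}. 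Consequently the category of extensions split by $\omega_A$ coincides with $\CEXT(A)$, and the fundamental theorem delivers an equivalence $\CEXT(A) \simeq \SET^{\mathrm{Gal}(\omega_A)}$, where $\mathrm{Gal}(\omega_A)$ is the image by $\pi_0$ of the groupoid arising from the kernel pair of $\omega_A$ — which is exactly $\pi_1(A)$ by definition.

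Next I would establish naturality. Since $\omega$ is a natural transformation and $\pi_1$ is functorial — Theorem \ref{TheoremCharactOfFundGrpdLem} identifies $\pi_1(f)$ with the groupoid morphism induced by $\Pth(f)$ — the equivalence $\CEXT(A) \simeq \SET^{\pi_1(A)}$ is natural in $A$, the comparison functor supplied by the fundamental theorem being itself functorial. Using Theorem \ref{TheoremCharactOfFundGrpdLem} I would then replace $\pi_1(A)$ by its explicit model $\mathcal{G}_{(A,\Pth(A))}$, so that $\SET^{\pi_1(A)}$ becomes the category of $\Pth(A)$-groupoid actions.

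For the second, non-natural, equivalence I would pass to the skeleton: by the Lemma preceding the statement, $\pi_1(A,I)$ is a skeleton of $\pi_1(A)$, hence equivalent to it in $\GRPD$ via the inclusion determined by the pointing $I$. Equivalent groupoids have equivalent categories of set-valued functors, whence $\SET^{\pi_1(A)} \simeq \SET^{\pi_1(A,I)}$; this equivalence depends on the chosen representatives $I \defeq \lbrace a_i \rbrace_{i \in \pi_0(A)}$ and is therefore not natural in $A$, which accounts for the wording of the statement. Finally, since $\pi_1(A,I)$ is totally disconnected — a disjoint union over $i \in \pi_0(A)$ of the one-object groupoids attached to the groups $\Loop_{a_i}$ (Definition \ref{DefinitionLoopGroup}) — a set-valued functor on it is precisely a family of $\Loop_{a_i}$-sets indexed by $\pi_0(A)$, giving $\SET^{\pi_1(A,I)} \cong \coprod_{i\in\pi_0(A)} \SET^{\Loop_{a_i}}$ exactly as recorded in the discussion preceding the statement. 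The quandle case is verbatim the same, replacing $\Pth$, $\omega_A$ and $\pi_1$ by $\Pth\degree$, $\omega^q_A$ and $\pi^q_1$.

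The bulk of the difficulty has in fact been discharged before reaching this point: the genuinely hard work is the identification of the abstract Galois groupoid $\mathrm{Gal}(\omega_A)$ with the concrete action groupoid $\mathcal{G}_{(A,\Pth(A))}$, carried out in Theorem \ref{TheoremCharactOfFundGrpdLem}, together with the splitting property of $\omega_A$ from Proposition \ref{PropositionSplitByCentralization}. Granting these, the present statement is essentially an assembly of citations, and the only points requiring genuine care are (i) confirming the effective-descent hypothesis so that \cite[Theorem 6.2]{JK1994} applies, and (ii) being precise about why the first equivalence is natural while the skeletal description is not — the latter being mediated by the non-canonical choice of pointing $I$.
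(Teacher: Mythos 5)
Your proposal is correct and follows essentially the same route as the paper: the paper likewise obtains the first equivalence by applying \cite[Theorem 6.2]{JK1994} to the normal weakly universal cover $\omega_A$ (which splits every central extension above $A$ by Proposition \ref{PropositionSplitByCentralization}, its Galois groupoid being $\pi_1(A)$ by definition, identified concretely via Theorem \ref{TheoremCharactOfFundGrpdLem}), and then passes to the skeleton $\pi_1(A,I)$, whose total disconnectedness yields $\coprod_{i\in \pi_0(A)} \SET^{\Loop_{a_i}}$. The only difference is one of emphasis: you make explicit the effective-descent hypothesis and the naturality/non-naturality discussion that the paper leaves implicit in the paragraph preceding the theorem.
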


\begin{corollary}
The category of central extensions above a connected rack $A$ is equivalent to the category of $\Loop_{a}$-actions (from Definition \ref{DefinitionLoopGroup}), for any given element $a \in A$. The same is true in $\QND$.
\end{corollary}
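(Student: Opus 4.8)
The plan is to read off the corollary from the preceding Theorem by specializing to the case where the index set $\pi_0(A)$ collapses to a single point. First I would observe that if $A$ is connected then, by definition of connectedness, $\pi_0(A) = \lbrace \ast \rbrace$ is a one-element set; consequently any pointing $I = \lbrace a_i \rbrace_{i \in \pi_0(A)}$ of $A$ in the sense of Definition \ref{DefinitionLoopGroup} consists of a single representative, and since all elements of $A$ lie in the same connected component this representative may be taken to be any prescribed element $a \in A$. Feeding this into the Theorem, the coproduct $\coprod_{i \in \pi_0(A)} \SET^{\Loop_{a_i}}$ has exactly one summand and therefore reduces to $\SET^{\Loop_a}$. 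Chaining the equivalences supplied by the Theorem, $\CEXT(A) \simeq \SET^{\pi_1(A)} \simeq \SET^{\pi_1(A,I)} \simeq \SET^{\Loop_a}$, which is the asserted equivalence.

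It then remains to justify the phrase ``for any given element $a \in A$'', i.e.\ that $\SET^{\Loop_a}$ is independent (up to equivalence) of the chosen base-point. This is precisely the observation already recorded in Definition \ref{DefinitionLoopGroup}: for a connected rack, given $a$ and $b$ in $A$ there is $g \in \Pth(A)$ with $a = b \cdot g$, and conjugation by $g$ furnishes a group isomorphism $\Loop_a \cong \Loop_b$. Since an isomorphism of groups induces an isomorphism, hence in particular an equivalence, of their respective categories of actions $\SET^{\Loop_a} \cong \SET^{\Loop_b}$, the choice of $a$ is immaterial for the statement.

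The quandle case I would handle identically, replacing $\pi_1$ by $\pi^q_1$ and $\Pth$ by $\Pth\degree$ throughout and using the variant of $\Loop_a$ built from loops in $\Pth\degree(A)$ as in Definition \ref{DefinitionLoopGroup}; the same collapse of the index set and the same conjugation argument then apply verbatim. I do not expect any genuine obstacle here, as the corollary is a direct reading of the Theorem in the case $|\pi_0(A)| = 1$; the only point deserving a word of care is the base-point independence of the loop group, which is already supplied by the isomorphisms noted in Definition \ref{DefinitionLoopGroup}.
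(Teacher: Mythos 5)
Your proposal is correct and matches the paper's (implicit) argument: the corollary is read off directly from the preceding Theorem by noting that connectedness means $\pi_0(A) = \lbrace \ast \rbrace$, so the coproduct $\coprod_{i\in \pi_0(A)} \SET^{\Loop_{a_i}}$ collapses to a single summand $\SET^{\Loop_a}$, with base-point independence supplied by the conjugation isomorphism $\Loop_a \cong \Loop_b$ already recorded in Definition~\ref{DefinitionLoopGroup}. The quandle case is handled the same way, exactly as you indicate.
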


\begin{example}
We illustrate this result on a trivial example, to show the difference between the context of $\RCK$ and that of $\QND$. Consider the one element set $1$. The coverings above $1$ in $\QND$ should all be surjective maps to $1$ in $\SET$, whereas the coverings above $1$ in $\RCK$ include for instance the unit morphism $^r\eta^q_{\Fr(1)} = \eta_{\Fr_1} \colon \Fr(1) \to \Fq(1) = 1$, whose domain is not a set. Then observe that $\Pth(1) = \Z$ and thus $\Pth\degree(1) = \lbrace e \rbrace$ and since there is only one element $* \in 1$, $\Loop_*$ is the former in $\RCK$ and the latter in $\QND$. Hence the category of coverings above $1$ in $\QND$ is $\SET^{\lbrace e \rbrace}$ which is indeed equivalent to $\SET$. The category of coverings above $1$ in $\RCK$ is given by $\SET^{\Z}$, the category of $\Z$-actions on sets, where $\Z$ is the additive group of integers.
\end{example}

\subsection{Relationship to groups and abelianization}\label{SectionRelationshipToAbelianisation}

The following relationship between $\pi_0 \dashv \I$ in $\RCK$ (or $\QND$) and the abelianization in groups has played an important role in the study of the present paper, and in the identification of the relevant centrality conditions in higher dimensions described in Part II and III. 

Let us comment first of all that the subvariety of sets is absolutely not a \emph{Mal'tsev category}, and the adjunction $\pi_0 \dashv \I$ does not arise from an \emph{abelianization adjunction} like, for instance, in the case of \emph{abelian sym quandles} studied in \cite{EveGrMo2016} (a quandle is \emph{sym} if $\qndop$ is commutative). For instance, the distinction with the study in \cite{EveGrMo2016} is clear since the only connected \emph{sym quandle} is $\lbrace \ast \rbrace$, also the only group whose conjugation is sym is the trivial group $\lbrace e \rbrace$. The relation between centrality in racks/quandles and the classical notions of centrality induced by Mal'tsev or partial Mal'tsev contexts, appears to us as more subtle than: one being merely an example of the other. 

The following comments also apply to the context of $\QND$, however we like to work in the more ``primitive'' context of $\RCK$ considering the role of $\Pth$ in the comparison with groups, and its tight relationship with the axioms of racks.

We study which squares of functors commute in the following square of adjunctions. Starting with an abelian group $G$, conjugation in $G$ is trivial hence $\Conj(\I(G))$ is the trivial quandle on the underlying set of $G$.  Since also both composites send a morphism to the underlying function we have $\Conj \I = \I \U$ and thus the restriction of $\Conj$ to abelian groups gives the forgetful functor to $\SET$. By uniqueness of left adjoints we must also have $\Fa \pi_0 = \ab \Pth$. A direct proof easily follows from the corresponding group presentations. 
\[\xymatrix@C=25pt@R=15pt{ \RCK \ar@{{}{}{}}[dd]|-{\dashv}  \ar@<-3pt>@/_7pt/[rr]_-{\pi_0}  \ar@<-3pt>@/_7pt/[dd]_-{\Pth} \ar@{{}{}{}}[rr]|-{\top}  & & \SET \ar@{{}{}{}}[dd]|-{\dashv}   \ar@<-3pt>@/_7pt/[ll]_-{\I} \ar@<-3pt>@/_7pt/[dd]_-{\Fa} \\
\\
 \GRP  \ar@{{}{}{}}[rr]|-{\top}  \ar@<-3pt>@/_7pt/[uu]_-{\Conj} \ar@<-3pt>@/_7pt/[rr]_-{\ab}   & &  \AB  \ar@<-3pt>@/_7pt/[ll]_-{\I} \ar@<-3pt>@/_7pt/[uu]_-{\U}
}\]

Now starting with a set $X$ in $\SET$ we may consider it as a trivial quandle by application of $\I$. Then we compute 
\[ \Pth(\I(X)) \defeq \Fg(X)/\langle (x\qndop a)^{-1}a^{-1} xa | a,x \in X \rangle = \Fg(X)/\langle  x^{-1}a^{-1} xa | a,x \in X \rangle, \]
which shows that for each set $X$ we have $\Pth(\I(X)) = \I\Fa(X)$, which then easily gives $\Pth \I = \I \Fa$, i.e.~the restriction of $\Pth$ to trivial racks gives the free abelian group functor. 

Observe that we cannot use uniqueness of adjoints to derive that $\pi_0 \Conj$ is the same as $\U \ab$. Indeed we compute that ($\pi_0$, $\Conj$, $\U$, $\ab$) is the only square of functors that doesn't commute. Given a group $G$, the image $\pi_0(\Conj(G))$ is given by the set of conjugacy classes. The corresponding congruence in $\QND$ is given by 
\begin{equation}\label{EquationConnCompCongruenceForGroups}
a \sim b \Leftrightarrow (\exists c \in G)(c^{-1} a c = b). 
\end{equation} 
Then the abelianization $\ab(G)$ is the quotient of $G$ by the congruence generated  in $\GRP$ by the identities $\lbrace c^{-1}ac = a\ |\ a,c \in G \rbrace$. We may show that in general the equivalence relation defined in \eqref{EquationConnCompCongruenceForGroups} does not define a group congruence. A counter-example is given by the group of permutations $S_3$. It has three conjugacy classes given by cycles, two permutations and the unit. The derived subgroup is the alternating group $A_3$ which is of order 2. This shows  that there are less elements in the abelianization of $S_3$ than there are conjugacy classes in $S_3$. 

Understand that an ``image'' of the covering theory in $\RCK$, arising from the adjunction $\pi_0 \dashv \I$ can be studied in groups through the functor $\Pth$ and its restriction to sets $\Fa$. Note that $\Pth$ is neither full, or faithful, nor essentially surjective. The functor $\Fa$ is full and faithful. We will not study what information to extract from this image. Yet, again, we have been using ingredients of this image to describe centrality in $\RCK$ such as in Theorem \ref{TheoremEpiReflectivityOfCExt}. Observe moreover that any covering in racks induces a central extension between the groups of paths \cite[Proposition 2.39]{Eis2014}. However, certain morphisms, such as $f\colon{Q_{ab\star} \to \lbrace \ast \rbrace}$, which are not central in $\RCK$ (or $\QND$) are sent by $\Pth$ to central extensions of groups, e.g. 
\[\vec{f}\  \colon \ \Pth(Q_{ab\star}) = \Z \times \Z \to \Z = \Pth(\lbrace \ast \rbrace) \ \colon \ (k,l) \mapsto  k+l.  \] 

In the other direction an ``image'' of the theory of central extensions of groups can be studied in $\RCK$ via the ``inclusion'' $\Conj$ and its restriction $\U$ on abelian groups.  Both $\Conj$ and $\U$ are not full but faithful, $\U$ is moreover surjective. Again we shall not develop the full potential of this study. Observe nonetheless that a morphism of groups is central if and only if it gives a covering in racks \cite[Example 2.34]{Eis2014}, see also \cite[Example 1.2]{Eis2014} and comments below. We give some more results about this relationship in Part II.

\section*{Acknowledgements}
Many thanks to my supervisors Tim Van der Linden and Marino Gran for their support, advice and careful proofreading of this work.

\bibliography{Part1.bib}
\bibliographystyle{amsplain-nodash}
\end{document}